\let\OLDthebibliography\thebibliography
\renewcommand\thebibliography[1]{
  \OLDthebibliography{#1}
  \setlength{\parskip}{3pt}
  \setlength{\itemsep}{2.5pt plus 0.5ex}
}
\renewcommand{\tilde}{\widetilde}
\newcommand{\KR}{\mathrm{KR}}
\newcommand{\TV}{\mathrm{TV}}
\newcommand{\tKR}{\widetilde{\mathrm{KR}}}
\newcommand{\BV}{\mathrm{BV}}
\newcommand{\BVKR}{\BV_{\KR}(\cI; \cM_+(\Td))}
\newcommand{\WKR}{W_\KR^{1,1}(\cI; \cM_+(\Td))}
\newcommand{\sa}{\mathsf{v}}
\newcommand{\sd}{\mathsf{d}}
\newcommand{\sm}{\mathsf{m}}
\newcommand{\sz}{\mathsf{z}}
\newcommand{\sJ}{\mathsf{J}}
\renewcommand{\hom}{\mathrm{hom}}
\DeclareMathOperator{\Eff}{\mathsf{Eff}}
\newcommand{\Effg}{\mathsf{Eff}_{\mathrm{geo}}}
\DeclareMathOperator{\Rep}{{\mathsf{Rep}}}
\DeclareMathOperator*{\Conv}{Conv}
\newcommand{\cA}{\mathcal{A}}
\newcommand{\cE}{\mathcal{E}}
\newcommand{\cF}{\mathcal{F}}
\newcommand{\cH}{\mathcal{H}}
\newcommand{\cI}{\mathcal{I}}
\newcommand{\cM}{\mathcal{M}}
\newcommand{\cP}{\mathcal{P}}
\newcommand{\cS}{\mathcal{S}}
\newcommand{\cT}{\mathcal{T}}
\newcommand{\cZ}{\mathcal{Z}}
\newcommand{\cW}{\mathcal{W}}
\newcommand{\cX}{\mathcal{X}}
\newcommand{\cC}{\mathcal {C}}
\newcommand{\cMA}{\mathcal{MA}}
\newcommand{\bMA}{\mathbb{MA}}
\newcommand{\V}{V}
\newcommand{\XQ}{\cX^Q}
\newcommand{\EQ}{\cE^Q}
\newcommand{\Haus}{\mathscr{H}}
\newcommand{\bA}{\mathbb{A}}
\newcommand{\bF}{\mathbb{F}}
\newcommand{\T}{\mathbb{T}}
\newcommand{\one}{{{\bf 1}}}
\newcommand{\Lip}{\mathrm{Lip}}
\newcommand{\Leb}{\mathscr{L}}
\newcommand{\bCE}{\mathbb{CE}}
\newcommand{\cCE}{\mathcal{CE}}
\newcommand{\CE}{\mathsf{CE}}
\newcommand{\R}{\mathbb{R}}
\newcommand{\Z}{\mathbb{Z}}
\newcommand{\N}{\mathbb{N}}
\DeclareMathOperator{\id}{id}
\newcommand{\eps}{\varepsilon}
\newcommand{\conv}{\mathrm{conv}}
\newcommand{\Meps}{\R_+^{\cX_\eps}}
\newcommand{\Mdeps}{\R_a^{\cE_\eps}}
\newcommand{\MT}{\cM_+(\T^d)}
\newcommand{\MdT}{\cM^d(\T^d)}
\newcommand{\tand}{\quad\text{ and }\quad}
\newcommand{\dd}{\, \mathrm{d}}
\newcommand{\ddd}{\mathrm{d}}
\newcommand{\ip}[1]{\langle {#1}\rangle}
\newcommand{\bip}[1]{\big\langle {#1}\big\rangle}
\newcommand{\suchthat}{\ensuremath{\ : \ }} 
\renewcommand{\hat}{\widehat}
\theoremstyle{plain}
\newtheorem{theorem}{Theorem}[section]
\newtheorem{corollary}[theorem]{Corollary}
\newtheorem{lemma}[theorem]{Lemma}
\newtheorem{proposition}[theorem]{Proposition}
\newtheorem{definition}[theorem]{Definition}
\newtheorem{assumption}[theorem]{Assumption}
\theoremstyle{remark}
\newtheorem{remark}[theorem]{Remark}
\newtheorem{example}[theorem]{Example}
\newtheorem*{claim*}{Claim}
\newtheorem*{remark*}{Remark}
\newtheorem*{example*}{Example}
\newtheorem*{notation*}{Notation}
\numberwithin{equation}{section}
\definecolor{jan}{rgb}{0.0,0.2,0.5}
\definecolor{mat}{rgb}{0.0,0.5,0.3}
\def\bfm{{\pmb{m}}}  
  \def\bfr{\pmb{r}}
  \def\bfu{\pmb{u}}
\def\bfJ{\pmb{J}}   
\def\bfM{\pmb{M}}  
  \def\bfU{\pmb{U}}
\def\bfV{\pmb{V}}
\def\BS{\boldsymbol}
    \def\bfmu{{\BS\mu}}
\def\bfnu{{\BS\nu}}            \def\bfxi{{\BS\xi}}
\def\bfsigma{{\BS\sigma}}
\DeclareMathOperator{\dive}{\mathsf{div}}
\DeclareMathOperator{\supp}{supp}
\DeclareMathOperator{\diam}{diam}
\newcommand{\bW}{\mathbb{W}}
\newcommand{\Td}{\mathbb{T}^d}
\newcommand{\fm}{\mathfrak{m}}
\def\moverlay{\mathpalette\mov@rlay}
\def\mov@rlay#1#2{\leavevmode\vtop{%
		\baselineskip\z@skip \lineskiplimit-\maxdimen
		\ialign{\hfil$\m@th#1##$\hfil\cr#2\crcr}}}
\newcommand{\charfusion}[3][\mathord]{
	#1{\ifx#1\mathop\vphantom{#2}\fi
		\mathpalette\mov@rlay{#2\cr#3}
	}
	\ifx#1\mathop\expandafter\displaylimits\fi}
\renewcommand{\phi}{\varphi}
\newcommand{\tP}{\mathrm{P}}
\newcommand{\etP}{\mathsf{P}}
\newcommand{\cG}{\mathcal{G}}
\DeclareMathOperator{\Dom}{\mathsf{D}}
\title{Homogenisation of dynamical optimal transport on periodic graphs}
\author{Peter Gladbach}
\address{Institut f\"ur angewandte Mathematik, Universit\"at Bonn, Endenicher Allee 60, 53115 Bonn, Germany}
\email{gladbach@iam.uni-bonn.de}
\author{Eva Kopfer}
\address{Institut f\"ur angewandte Mathematik, Universit\"at Bonn, Endenicher Allee 60, 53115 Bonn, Germany}
\email{eva.kopfer@iam.uni-bonn.de}
\author{Jan Maas}
\address{Institute of Science and Technology Austria (IST Austria),
Am Campus 1, 3400 Klosterneuburg, Austria}
\email{jan.maas@ist.ac.at}
\author{Lorenzo Portinale}
\address{Institut f\"ur angewandte Mathematik, Universit\"at Bonn, Endenicher Allee 60, 53115 Bonn, Germany}
\email{portinale@iam.uni-bonn.de}
\let\oldtocsection=\tocsection
\let\oldtocsubsection=\tocsubsection
\let\oldtocsubsubsection=\tocsubsubsection
\renewcommand{\tocsection}[2]{\vspace{0.3ex}\hspace{0em}\oldtocsection{#1}{#2}}
\renewcommand{\tocsubsection}[2]{\vspace{0.3ex}\hspace{1em}\oldtocsubsection{#1}{#2}}
\renewcommand{\tocsubsubsection}[2]{\hspace{2em}\oldtocsubsubsection{#1}{#2}}
\begin{document}

\begin{abstract}
This paper deals with the large-scale behaviour of dynamical optimal transport on $\Z^d$-periodic graphs with general lower semicontinuous and convex energy densities. 
Our main contribution is a homogenisation result that describes the effective behaviour of the discrete problems in terms of a continuous optimal transport problem. The effective energy density can be explicitly expressed in terms of a cell formula, which is a finite-dimensional convex programming problem that depends non-trivially on the local geometry of the discrete graph and the discrete energy density.

Our homogenisation result is derived from a $\Gamma$-convergence result for action functionals on curves of measures, which we prove under very mild growth conditions on the energy density.  
We investigate the cell formula in several cases of interest, including finite-volume discretisations of the Wasserstein distance, where non-trivial limiting behaviour occurs.

\end{abstract}

\maketitle

\setcounter{tocdepth}{1}
\tableofcontents

\section{Introduction}

In the past decades there has been intense research activity in the field of optimal transport, both in pure mathematics and in applied areas  \cite{Villani:2003,Villani:2009,santambrogio,Peyre-Cuturi:2019}. 
In continuous settings, a central result in the field is the \emph{Benamou--Brenier formula} \cite{Benamou-Brenier:2000}, which establishes the equivalence of static and dynamical optimal transport. 
It asserts that the classical Monge--Kantorovich problem, in which a cost functional is minimised over couplings of given probability measures $\mu_0$ and $\mu_1$, is equivalent to a dynamical transport problem, in which an energy functional is minimised over all solutions to the continuity equation connecting $\mu_0$ and $\mu_1$.

In discrete settings, the equivalence between static and dynamical optimal transport breaks down, and it turns out that the dynamical formulation \cite{Maas:2011,Mielke:2011,CHLZ11} is essential in applications to evolution equations,
 discrete Ricci curvature,
 and functional inequalities \cite{Erbar-Maas:2012,Mielke:2013,Erbar-Maas:2014, Erbar-Maas-Tetali:2015, Fathi-Maas:2016,Erbar-Henderson-Menz:2017,Erbar-Fathi:2018}.
Therefore, it is an important problem to analyse the discrete-to-continuum limit of dynamical optimal transport in various setting.

This limit passage turns out to be highly nontrivial. In fact, seemingly natural discretisations of the Benamou--Brenier formula do not necessarily converge to the expected limit, 
even in one-dimensional settings \cite{gladbach2020}. 
The main result in \cite{GlKoMa18} asserts that, for a sequence of meshes on a bounded convex domain in ${\mathbb{R}}^d$, an isotropy condition on the meshes is required to obtain the convergence of the discrete dynamical transport distances to ${\mathbb{W}}_2$. 
This is in sharp contrast to the scaling behaviour of the corresponding gradient flow dynamics, for which no additional symmetry on the meshes is required to ensure the convergence of discretised evolution equations to the expected continuous limit \cite{Disser-Liero:2015,Forkert2020}.

The goal of this paper is to investigate the large-scale behaviour of dynamical optimal transport on  graphs with a $\Z^d$-periodic structure. 
Our main contribution is a homogenisation result that describes the effective behaviour of the discrete problems
in terms of a continuous optimal transport problem, in which the effective energy density depends non-trivially on the geometry of the discrete graph and the discrete transport costs.

\subsection*{Main results}

We give here an informal presentation of the main results of this paper, ignoring several technicalities for the sake of readability.
Precise formulations and a more general setting can be found from Section \ref{sec:discrete} onwards.

\subsubsection*{Dynamical optimal transport in the continuous setting}

For $1 \leq p < \infty$, 
let $\bW_p$ be the Wasserstein--Kantorovich--Rubinstein distance between probability measures on a metric space $(X,\sd)$: for $\mu^0, \mu^1 \in \cP(X)$,
\begin{align*}
	\bW_p(\mu^0, \mu^1)
	:= 
	\inf_{
		\gamma 
		 \in \Gamma(\mu^0,\mu^1)
		}
	\bigg\{ 
		\int_{X \times X}
			\sd(x,y)^p
		\dd \gamma(x,y)
	\bigg\}^{1/p},
\end{align*}
where $\Gamma(\mu^0,\mu^1)$ denotes the set of couplings of $\mu^0$ and $\mu^1$, i.e., all measures $\gamma \in \cP(X \times X)$ with marginals $\mu^0$ and $\mu^1$.
On the torus $\T^d$ (or more generally, on Riemannian manifolds),
the Benamou--Brenier formula \cite{Benamou-Brenier:2000, Ambrosio-Gigli-Savare:2008} provides an equivalent dynamical formulation for $p > 1$, namely
\begin{align}
	\label{eq: bb}
\bW_p(\mu^0, \mu^1) 
	 = \inf_{(\rho, j)
	  } 
		\bigg\{ 
			\int_0^1 \int_{\T^d} 
				\frac{|j_t(x)|^p}{\rho_t^{p-1}(x)}
			\dd x \dd t
		\bigg\}^{1/p},
\end{align}
where the infimum runs over all solutions $(\rho, j)$  to the continuity equation 
	$\partial_t \rho + \nabla \cdot j = 0$ with boundary conditions 
	$\rho_0(x) \dd x = \mu^0(\ddd x)$
and 	
	$\rho_1(x) \dd x = \mu^1(\ddd x)$.

\medskip

In this paper we consider general lower semicontinuous and convex energy densities 
 	$f : \R_+ \times \R^d \to \R \cup \{ + \infty\}$ 
under suitable (super-)linear growth conditions.
(The Benamou--Brenier formula above corresponds to the special case 
	$f(\rho,j) = \frac{|j|^p}{\rho^{p-1}}$).
For sufficiently regular curves of measures 	
$
\bfmu  : (0,1) \to \cM_+(\Td) 
$, 
we consider the continuous action 
\begin{align}	\label{eq:intro_A}
	\bA(\bfmu) 
	:= \inf_{\bfnu
	} 
        \bigg\{ 
            \int_0^1
            \int_{\T^d}
            f
                \bigg( 
                    \frac{ \ddd \mu_t}{\ddd \Leb^d}, 
                    \frac{ \ddd \nu_t}{\ddd \Leb^d}
                \bigg) 
            \dd x 	
            \dd t 
             \ : \
            (\bfmu, \bfnu) \in \bCE
        \bigg\}.
\end{align}
Here, the infimum runs over all time-dependent vector-valued measures 
$\bfnu : (0,1) \to \cM^d(\T^d)$ 
satisfying the continuity equation $(\bCE)$ 
	$\partial_t \mu_t + \nabla \cdot \nu_t = 0$ 
in the sense of distributions.

\subsubsection*{Dynamical optimal transport in the discrete setting} 

A  natural discrete counterpart to \eqref{eq:intro_A} can be defined 
on finite (undirected) graphs $(\cX, \cE)$.
For each edge $(x,y) \in \cE$ we fix a lower semicontinuous and convex energy density\footnote{
	In the sequel we consider more general discrete energy densities $F(m,J)$, not necessarily sums of edge-energies.}
	$F_{xy}: \R_+ \times \R_+ \times \R \to \R_+$.
For sufficiently regular curves
	$\bfm : (0,1) \to \cM_+(\cX)$
we then consider the discrete action
\begin{align}	\label{eq:intro_cA}
	\cA(\bfm) := \inf_{\bfJ}
		\bigg\{
			\int_0^1
				\sum_{(x,y) \in \cE}
					F_{xy}
					\big(
						m_t(x), m_t(y), J_t(x,y) 
					\big) 
						\dd t
			\suchthat
				(\bfm, \bfJ) \in \cCE		
		\bigg\}.
\end{align}
Here, the infimum runs over all time-dependent ``discrete vector fields'', i.e., all anti-symmetric functions
	$\bfJ : (0,1) \to \R^\cE$
satisfying the 
discrete continuity equation $(\cCE)$
	$\partial_t m_t(x) 
		+ \dive J_t(x) 
		= 0
	$ 
for all $x \in \cX$,
where
	$\dive J_t(x):= 
	\sum_{y: (x,y) \in \cE} J_t(x,y)$
denotes the discrete divergence.
Variational problems of the form \eqref{eq:intro_cA} arise naturally in the formulation of jump processes as generalised gradient flows \cite{Peletier-Rossi-Savare-Tse:2020}.

\subsubsection*{Dynamical optimal transport on $\Z^d$-periodic graphs} 

In this work we fix a $\Z^d$-periodic graph $(\cX,\cE)$ embedded in $\R^d$, as in Figure \ref{fig:per_graph_intro}. 
For sufficiently small $\eps > 0$ with 
	$1/\eps \in \N$,
we then consider the finite graph 
	$(\cX_\eps, \cE_\eps)$ 
obtained by scaling $(\cX, \cE)$ by a factor $\eps$, 
and wrapping the resulting graph around the torus, 
so that the resulting graph is embedded in $\Td$. 
We are interested in the behaviour of the rescaled discrete action, defined for curves 
	$\bfm : (0,1) \to \cM_+(\cX_\eps)$
by
\begin{align}
	\label{eq:Aeps-intro}
	\cA_\eps(\bfm) := \inf_{\bfJ}
		\bigg\{
			\int_0^1
				\sum_{(x,y) \in \cE_\eps}
				\eps^d 
					F_{xy}
				\bigg(
					\frac{m_t(x)}{\eps^d},
					\frac{m_t(y)}{\eps^d}, \frac{J_t(x,y)}{\eps^{d-1}}
				\bigg) 
			\dd t
				\suchthat
			(\bfm, \bfJ) \in \cCE_\eps	
		\bigg\}.	
\end{align}
As above, the infimum runs over all time-dependent ``discrete vector fields''
	$\bfJ : (0,1) \to \R^{\cE_\eps}$
satisfying the 
discrete continuity equation $(\cCE_\eps)$
on the rescaled graph $(\cX_\eps, \cE_\eps)$.
\begin{figure}[h!]
	\includegraphics[scale=0.25]{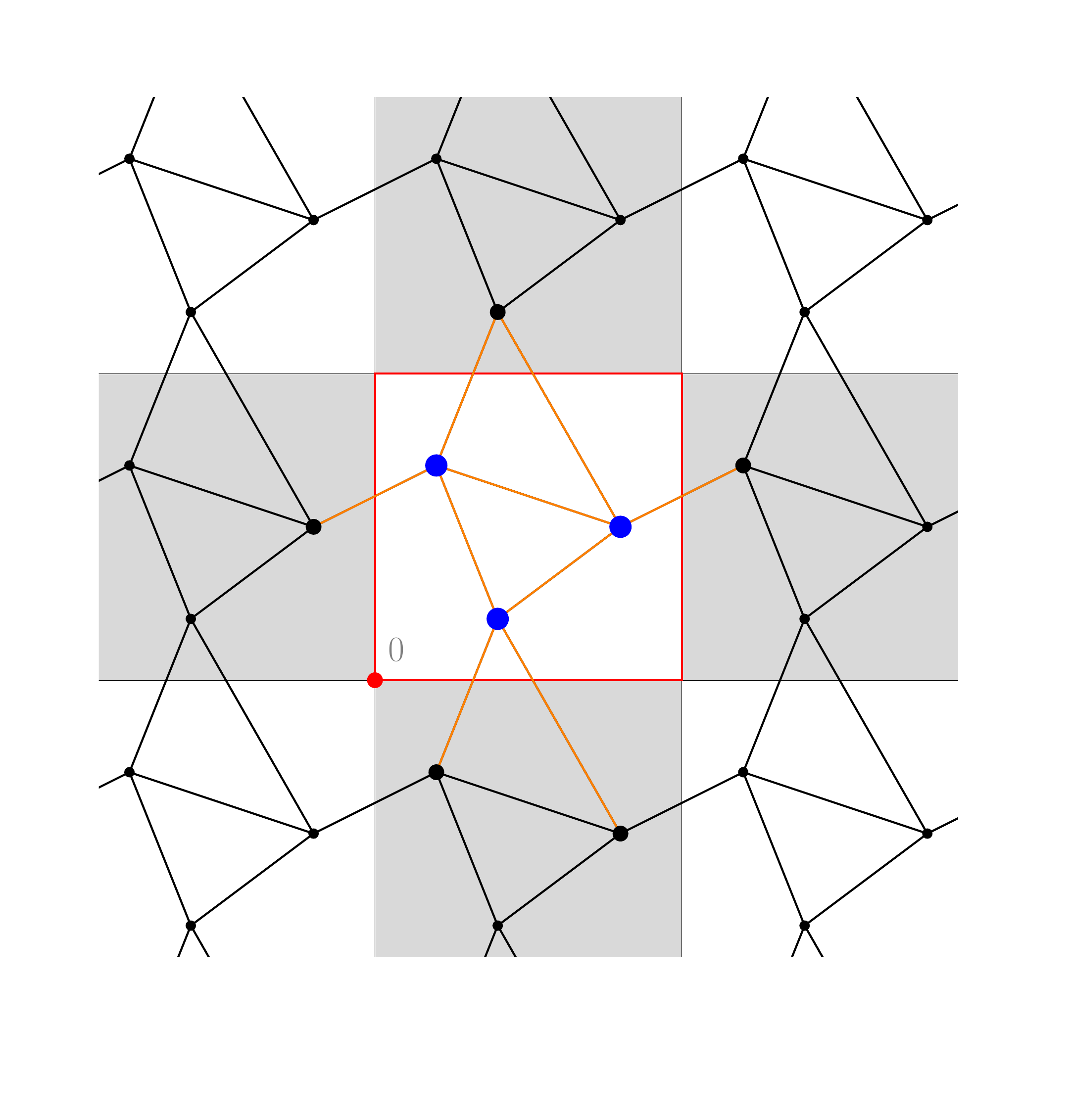}
	\vspace{-5mm}
	\caption{A fragment of a $\Z^d$-periodic graph $(\cX, \cE)$. The unit cube $Q:= [0,1)^d \subset \R^d$ is shown in red. In blue and in orange, respectively, $\cX^Q$ and $\cE^Q$.
	}
	\label{fig:per_graph_intro}
\end{figure} 

\subsubsection*{Convergence of the action}

One of our main results (Theorem \ref{thm:main}) asserts that, as $\eps \to 0$, the action functionals $\cA_\eps$ converge to a limiting functional $\bA = \bA_\hom$ of the form \eqref{eq:intro_A}, 
with an effective energy density 
	$f = f_\hom$
which depends non-trivially on the geometry of the graph $(\cX,\cE)$ and the discrete energy densities $F_{xy}$.
We only require a very mild linear growth condition on the energy densities $F_{xy}$:

\begin{center}
	\emph{As $\eps \to 0$, 
	the functionals 
	$\cA_\eps$ 
	$\Gamma$-converge to 
	$\bA_{\hom}$ \\
	in the weak (and vague) topology of 
	$\cM_+\big((0,1) \times \T^d\big)$. }
\end{center}
The precise formulation of this result involves an extension of $\bA_\hom$ to measures on 
	$(0,1) \times \Td$; 
see Section \ref{sec:continuous} below.

Let us now explain the form of the effective energy density $f_\hom$, which is given by a cell formula.
For given $\rho \geq 0$ and $j \in \R^d$, $f_\hom(\rho, j)$ is obtained by minimising 
	the discrete energy per unit cube 
	among all periodic mass distributions $m : \cX \to \R_+$
	representing $\rho$, 
	and all periodic divergence-free discrete vector fields $J : \cE \to \R$ representing $j$ in the following sense.
Set $\cX^Q := \cX\cap [0,1)^d$ and $\cE^Q:=  \big\{ (x,y) \in \cE \suchthat x \in \cX^Q \big\}$. 
Then $f_\hom:\R_+ \times \R^d \to \R_+$ is given by
\begin{align}	\label{eq:intro_fhom}
	f_\hom(\rho,j) :=
		\inf_{m,J}
			\bigg\{
				\sum_{(x,y) \in \cE^Q}
					F_{xy}
				\big(
					m(x), m(y), J(x,y) 
				\big) 
			\suchthat 
				(m,J) \in \Rep(\rho,j) 
			\bigg\},
\end{align}
where the set of representatives $\Rep(\rho,j)$ consists of all $\Z^d$-periodic functions $m: \cX \to \R_+$ and all $\Z^d$-periodic discrete divergence-free vector fields satisfying
\begin{align}	\label{eq:intro_effective}
	\sum_{x \in \cX^Q} m(x) = \rho 	
		\tand
	\Eff (J) :=  \frac12 
		\sum_{(x,y) \in \cE^Q} J(x,y) (y-x) = j. 
\end{align}

\subsubsection*{Boundary value problems}

Our second main result deals with the corresponding boundary value problems, which arise by minimising the action functional among all curves with given boundary conditions, as in the Benamou--Brenier formula \eqref{eq: bb}. 
We define
\begin{align*}
	\cMA_\eps(m^0,m^1) & :=
		\inf_{\bfm}
			\big\{
				\cA_\eps(\bfm)
					\ : \
						m_0 = m^0,
							\;
						m_1 = m^1
			\big\} &&  \text{for }  m^0, m^1 \in \cP(\cX_\eps), \\
			\bMA_\hom(\mu^0,\mu^1) &:=
			\inf_{\bfmu}
				\big\{
					\bA_\hom(\bfmu)
						\ : \
							\mu_0 = \mu^0,
								\;
								\mu_1 = \mu^1
				\big\}
				&& \text{for } \mu^0, \mu^1 \in \cP(\Td).
\end{align*}
We then obtain the following result (Theorem \ref{theorem: uniform compactness 2}):
\begin{center}
	\emph{As $\eps \to 0$, 
	the minimal actions 
	$\cMA_\eps$
	$\Gamma$-converge to 
	$\bMA_{\hom}$ \\
	in the weak topology of 
	$\cM_+(\Td) \times \cM_+(\Td)$.
	}
\end{center}
This result is proved under a superlinear growth condition on the discrete energy densities, which holds for discretisations of the Wasserstein distance $\bW_p$ for $p > 1$. 

A special case of interest is the case where $\cMA_\eps$ is a Riemannian transport distance associated to a gradient flow structure for Markov chains as in \cite{Maas:2011,Mielke:2011}.
In this situation, we show that the discrete transport distances converge to a $2$-Wasserstein distance on the torus. 
Interestingly, the underlying distance is induced by a Finsler metric, which is not necessarily Riemannian.

We also investigate transport distances with nonlinear mobility \cite{Dolbeault-Nazaret-Savare:2009}, \cite{lisiniMarigonda2010} and their finite-volume discretisations on the torus $\Td$. 
In the spirit of \cite{GlKoMa18}, we give a geometric characterisation of finite-volume meshes for which the discretised transport distances converge to the expected limit.

\subsubsection*{Compactness}

The results for boundary value problems are obtained by combining our first main result with a compactness result for sequence of measures with bounded action, which is of independent interest. 
We obtain two results of this type.

In the first compactness result (Theorem \ref{thm: compactness}) 
we assume at least linear growth of the discrete energies $F_{xy}$ at infinity. 
Under this condition we prove compactness in the space $\BV_{\KR}\big((0,1) ; \cM_+(\Td) \big)$, which consists of curves of bounded variation, with respect to the Kantorovich--Rubinstein ($\KR$) norm on the space of measures. 
The convergence holds for almost every $t\in (0,1)$. 

In the second compactness result (Theorem \ref{theorem: uniform compactness}), which is used in the analysis of the boundary value problems, we assume a stronger condition of at least superlinear growth on the energy densities $F_{xy}$. 
We then obtain compactness in the space 
	$W_\KR^{1,1}\big((0,1); \cM_+(\Td)\big)$, 
which consists of absolutely continuous curves with respect to the $\KR$-norm. 
The convergence is uniform for $t \in (0,1)$. 
We refer to the Appendix for precise definitions of these spaces.

\subsubsection*{Related works}
For a classical reference to the study of flows on networks, we refer to Ford and Fulkerson \cite{FordFulkerson}.

Many works are devoted to discretisations of continuous energy functionals in the framework of Sobolev and BV spaces, e.g., \cite{Piatniski-Remy:2001,Alicandro-Cicalese-Gloria:2011, Alicandro-Cicalese:2004, Bach-Braides-Cicalese:2020}.
Cell formulas appear in various discrete and continuous variational homogenisation problems; see, e.g., \cite{marcellini1978periodic, braides1998, Alicandro-Cicalese:2004, braides2000, gladbach2019limits}.

The large scale behaviour of optimal transport on random point clouds has been studied by Garcia--Trillos, who proved convergence to the Wasserstein distance \cite{Garcia-Trillos:2020}.

\subsection*{Organisation of the paper}
Sections \ref{sec:discrete} and \ref{sec:continuous} contain the necessary definitions as well as the assumptions we use throughout the article in the discrete and continuous settings. 
Section \ref{sec:homogenised} deals with the definition of the homogenised action functional. 
In Section \ref{sec: results} we present the rigorous statements of our main results, including the $\Gamma$-convergence of the discrete energies to the effective homogenised limit and the compactness theorems for curves of bounded discrete energies. The proof of our main results can be found in Section \ref{sec: compactness} (compactness and convergence of the boundary value problems) and Sections \ref{sec: lower} and \ref{sec: upper} ($\Gamma$-convergence of $\cA_\eps$). Finally, in Section \ref{sec:examples}, we discuss several examples and apply our results to some common finite-volume and finite-difference discretisations.

\subsection{Sketch of the proof of Theorem \ref{thm:main}}
In the last part of this section, we shortly sketch a non-rigorous proof of our main result on the convergence of $\cA_\eps$ to the homogenised limit described by $f_\hom$ (Theorem \ref{thm:main}). Crucial tools to show both the lower bound and the upper bound in Theorem \ref{thm:main} are regularisation procedures for solutions to the continuity equation, both at the discrete and at the continuous level. 

In this section, we use the informal notation $\lesssim$ and $\gtrsim$ to mean that the corresponding inequality holds up to a small error in $\eps>0$, e..g $A_\eps \lesssim B_\eps$ means that $A_\eps \leq B_\eps + o_\eps(1)$ where $o_\eps(1) \to 0$ as $\eps \to 0$.

For $x \in \cX_\eps \subset \Td$, we denote by $x_\sz$ the unique element of $\Z_\eps^d$ satisfying $x \in Q_\eps^{x_\sz} = [0,\eps)^d + \eps x_\sz$. Note that $\{Q_\eps^z \suchthat z \in \Z_\eps^d\}$ defines a partition of $\Td$.

In order to compare discrete and continuous measures, we make use of the embedding maps for $m \in \cP(\cX_\eps)$ and anti-symmetric $J:\cE_\eps \to \R$
	\begin{align*}
		\iota_\eps m 
		&	:= 
		\eps^{-d}	
		\sum_{x \in \cX_\eps}
		m(x)
		\Leb^d|_{Q_\eps^{x_\sz}} \in \cP(\Td)
			, \\
		\iota_\eps J
		&	:= 
		\eps^{-d+1}	
		\sum_{(x,y) \in \cE_\eps}
		\frac{J(x,y)}{2}
		\bigg(\int_0^1
		\Leb^d|_{ Q_\eps^{(1-s)x_\sz + s y_\sz}}
		\dd s\bigg)
		(y_\sz - x_\sz) \in \cM^d(\Td), 
	\end{align*}
as they preserve the continuity equation: if $(\bfm, \bfJ) \in \cCE_\eps$, then $(\iota_\eps \bfm, \iota_\eps \bfJ) \in \bCE$. 

We also use the notation $\cF_\eps(m,J) := \sum_{(x,y) \in \cE_\eps}
\eps^d 
F_{xy}
\big(
\frac{m(x)}{\eps^d},
\frac{m(y)}{\eps^d}, \frac{J(x,y)}{\eps^{d-1}}
\big) $.

\medskip
\noindent
\textit{Sketch of the liminf inequality}. \
Consider curves
	$(m_t^\eps)_{t \in (0,1)} \subseteq \cM_+(\cX_\eps)$	
and let
	$\bfm^\eps \in \cM_+((0,1) \times \cX_\eps)$ 
be the corresponding measure on space-time defined by 
	$\bfm^\eps(\ddd x, \ddd t) 
		= m_t^\eps(\ddd x) \dd t$.
Suppose that 
$\iota_\eps \bfm^\eps \to\bfmu$ vaguely 
 in $\cM_+((0,1) \times \Td)$ 
as $\eps \rightarrow 0$.  
The goal is to show the \textit{liminf inequality}
\begin{align}	\label{eq:sketch_lb}	
	\liminf_{\eps \to 0} 
	\cA_\eps(\bfm^\eps) \geq \bA_{\hom}(\bfmu).
\end{align}

We can assume that $\cA_\eps(\bfm^\eps) = \cA_\eps(\bfm^\eps, \bfJ^\eps) \leq C <\infty$ for every $\eps>0$, for some sequence of vector fields $\bfJ^\eps$ such that $(\bfm^\eps, \bfJ^\eps) \in \cCE_\eps$.
As we are going to see in \eqref{eq:density_embedded_flux}, the embedded solutions to the continuity equation $(\iota_\eps \bfm^\eps, \iota_\eps \bfJ^\eps) \in \bCE$ defines curves of measures with densities with respect to $\Leb^d$ on $\Td$ of the form, for every $u \in Q_\eps^{\bar z} \subset \Td$
\begin{align*}
	\rho_t(u)
	= \eps^{-d} 
	\sum_{\substack{x \in \cX_\eps\\
			x_\sz = \bar z}} 
	m_t^\eps(x)
	\tand
	j_t(u)
	=  
	\frac{1}{2\eps^{d-1}}
	\sum_{\substack{(x,y) \in \cE_\eps\\
			x_\sz = \bar z}} 
	J_{t,u}^\eps(x,y) 
	\big( 
	y_\sz - x_\sz 
	\big),
\end{align*} 
where $J_{t,u}^\eps \in \R^{\cE_\eps}$ is a convex combination of 
$
\big\{
J_t^\eps\big( \cdot -\eps z \big)
\, : \, 
z \in \Z_\eps^d, \, |z|_\infty \leq R_0 + 1	
\big\}$. 

As we will estimate the discrete energies at any time $t \in (0,1)$, for simplicity we drop the time dependence and write $\rho= \rho_t$, $j= j_t$, $m^\eps=m_t^\eps$, $J^\eps=J_t^\eps$, $J_u^\eps = J_{t,u}^\eps$. 

The main goal is to  construct, for every $u \in Q_\eps^{\bar z}$, a representative 
\begin{align}	\label{eq:sketch_competitor_lb}
	\bigg(
	\frac{\hat m_u}{\eps^d}, 
	\frac{\hat J_u}{\eps^{d-1}}
	\bigg)
	\in
	\Rep\big( 
	\rho(u), j(u)
	\big)
\end{align}
which is approximately equal to the values of $(m^\eps, J^\eps)$ close to $\cX \cap \{ x_\sz = \bar z \}$.
The lower bound \eqref{eq:sketch_lb} would then follow by integrating in time the static estimate
\begin{align}	\label{eq:sketch_lb_estimate}
	\begin{aligned}
		\cF_\eps(m, J)
		\gtrsim 
		\eps^d
		\sum_{\bar z \in \Z_\eps^d}
		\cF	\bigg(
		\frac{\hat m_{\eps \bar z}}{\eps^d}, 
		\frac{\hat J_{\eps \bar z}}{\eps^{d-1}}
		\bigg) 
		\gtrsim 
		\int_{\T^d}
		f_\hom\big( \rho(u), j(u) \big)	
		\dd u
		= \bF_\hom(\iota_\eps m, \iota_\eps J),
	\end{aligned}
\end{align}
and using the lower semicontinuity of $\bA_\hom$, where in the last inequality we used the very definition of the  homogenised density $f_\hom(\rho(u),j(u))$, which corresponds to the minimal microscopic cost with total mass $\rho(u)$ and flux $j(u)$.

In order to find the sought representatives in \eqref{eq:sketch_competitor_lb}, the natural choice is to define $\hat m_u \in \R_+^{\cX}$
and	
$\tilde J_u \in \R_a^{\cE}$ by taking the values of 
$m$ and $J_u$ 
in the $\eps$-cube at $\bar z$, 
and insert these values at every cube in $(\cX, \cE)$, so that the result is $\Z^d$-periodic. Precisely:
\begin{align*}
	\hat m_u(x) 
	:= 
	m(\eps \bar x)
		, \quad 
	\tilde J_u ( x,y )
	:= 
	J_u ( \eps \bar x, \eps (y - x_\sz + \bar z ) )
		, \quad
 		\text{for } ( x,y ) \in \cE ,	
\end{align*}
where $\bar x := x - x_\sz + \bar z$.
This would ensure that $\eps^{-d} \hat m_u \in \Rep\big( \rho(u) \big)$.
Unfortunately, this construction would produce a vector field
$\eps^{-(d-1)}\tilde J_u$ which (in general) does not belong to $\Rep\big(j(u)\big)$: 
indeed, while $\tilde J_u$ has the desired effective flux 
(i.e., $\Eff(\eps^{-(d-1)}\tilde J_u) = j(u)$, as given in  \eqref{eq:intro_effective}),
it would not be (in general) divergence-free.

In order to deal with this complication, we shall introduce a \emph{corrector field} 
$\bar J_u$, i.e., an anti-symmetric and $\Z^d$-periodic function $ \bar J_u : \cE \to \R$ satisfying
\begin{align}
	\label{eq:sketch_corrector-prop}
	\dive \bar J_u 
	= -\dive \tilde J_u, \quad
	\Eff(\bar J_u ) 
	= 0, \tand
	\big\| \bar J_u \big\|_{\ell^\infty(\EQ)} 
	\leq 
	\tfrac12 \big\| \dive \tilde J_u \big\|_{\ell^1(\XQ)} , 
\end{align}
whose existence we prove in Lemma \ref{lemma:bounds_divergence_eq 2}.

It is clear that if we set 
$\hat J_u
:= \tilde J_u 
+  \bar J_u$ 
by construction we have	
$\dive \hat J_u = 0$ 
and 
$\Eff\big(\eps^{-(d-1)}\hat J_u\big) = j(u)$, 
thus
\begin{align*}
	\frac{\hat J_u}{\eps^{d-1}}
	:= \frac{\tilde J_u 
		+  \bar J_u}
	{{\eps^{d-1}}}
	\in \Rep\big(j_u \big).
\end{align*}

To carry out this program and prove a lower bound of the form \eqref{eq:sketch_lb_estimate}, we need to quantify the error we perform passing from $(m^\eps, J^\eps)$ to $\big\{ (\hat m_u, \hat J_u) \suchthat u \in \Td \big\}$. It is evident by construction and from \eqref{eq:sketch_corrector-prop} that spatial and time regularity of $(m^\eps, J^\eps)$ are crucial to this purpose. For example, an  $\ell^\infty$-bound on the time derivative of the form $\| \partial_t m_t^\eps \|_\infty \leq C \eps^d$ (or, in other words, a Lipschitz bound in time for $\rho_t$) together with $(\bfm^\eps, \bfJ^\eps) \in \cCE_\eps$ would imply a control on $\dive J$ and thus a control of the error in \eqref{eq:sketch_corrector-prop} of the form $\| \eps^{1-d}\bar J_u \|_\infty \leq C \eps$.

This is why a key, first step in our proof is a regularisation procedure at the discrete level: for any given sequence of curves $\big\{ (\bfm^\eps, \bfJ^\eps) \in \cCE_\eps \suchthat \eps>0 \big\}$ of (uniformly) bounded action $\cA_\eps$, we can exihibit another sequence $\big\{ (\tilde \bfm{}^\eps, \tilde \bfJ{}^\eps) \in \cCE_\eps \suchthat \eps>0 \big\}$, quantitatively close as measures and in action $\cA_\eps$ to the first one, which enjoy good Lipschitz and $l^\infty$ properties and for which the above explained program can be carried out.

This result is the content of Proposition \ref{prop:regularisation_discrete} and it is based on a three-fold regularisation, that is in energy, in time, and in space (see Section \ref{sec:disc-reg}).

\medskip
\noindent
\textit{Sketch of the limsup inequality}. \
The goal is to show that, for every $(\bfmu,\bfnu) \in \bCE$, we can find $\bfm^\eps \in \cM_+((0,1) \times \cX_\eps)$ such that $\iota_\eps \bfm^\eps \to \bfmu$ weakly in $\cM_+((0,1) \times \Td)$ and 
\begin{align}	\label{eq:sketch_ub}
	\limsup_{\eps \to 0} 
	\cA_\eps(\bfm^\eps) \leq \bA_{\hom}(\bfmu, \bfnu).
\end{align}
In a similar fashion as in the the proof of the liminf inequality, the first step is a regularisation procedure, this time at the continuous level (Proposition \ref{prop:density}). Thanks to this approximation result, in the sketch we can without loss of generality assume that 
\begin{align}	\label{eq:sketch_assumptions_smooth_upperb}
	\bA_\hom(\bfmu, \bfnu) < \infty
	\tand
	\Big\{
	\big(\rho_t(x),j_t(x)\big) 
	\suchthat (t,x) \in [0,1] \times \Td 
	\Big\} 
	\Subset  
	\Dom(f_{\hom})^\circ,
\end{align}
where $(\rho_t,j_t)_t$ are the \textit{smooth} densities of $(\bfmu,\bfnu)\in \bCE$ with respect to $\Leb^{d+1}$ on $(0,1) \times \Td$.

Note that the convexity of $f_\hom$ ensures its Lipschitz-continuity on every compact set $K \Subset \Dom(f_{\hom})^\circ$, hence the assumption \eqref{eq:sketch_assumptions_smooth_upperb} allows us to assume such regularity for the rest of the proof.

The idea is to split the proof of the upper bound into several steps. In short, we first discretise the continuous measures $(\bfmu,\bfnu)$ and identify \textit{optimal discrete microstructures}, i.e. minimisers of the cell problem described by $f_\hom$, on each $\eps$-cube $Q_\eps^z$, $z \in \Z_\eps^d$. A key difficulty at this stage is that the optimal selection has the flaw of not preserving the continuity equation, hence an additional \textit{correction} is needed.  To this purpose, we first apply the discrete regularisation result Proposition \ref{prop:regularisation_discrete} to obtain regular discrete curves and then find suitable \textit{small} correctors that provide discrete competitors for $\cA_\eps$, that is solutions to $\cCE_\eps$ which are \textit{close} to the optimal selection.

Let us explain these steps in more detail.

\medskip 
\noindent
\underline{Step 1}: \ 
For every $z \in \Z_\eps^d$, $t \in (0,1)$,  and each cube $Q_\eps^z$ we consider the natural discretisation of $(\bfmu,\bfnu)$, that we denote by
$
\big(
\tP_\eps \mu_t(z),\tP_\eps \nu_t(z) 
\big)_{t ,z} 
\subset 
\R_+ \times \R^d
$, given by
\begin{align*}
	\tP_\eps \mu_t(z):=\mu_t(Q_\eps^z), 
	\quad 
	\tP_\eps \nu_t(z):=
	\left(
	\int_{\partial Q_\eps^z\cap \partial 
		Q_\eps^{z+ e_i}}j_t\cdot e_i\dd\cH^{d-1}
	\right)_{i=1}^d.
\end{align*}

An important feature of the operator $\tP_\eps$ is that it preserves the continuity equation from $\Td$ to $\Z_\eps^d$, in the sense that for $t \in (0,1)$ and $z \in \Z_\eps^d$
\begin{align*} 
	\partial_t \tP_\eps \mu_t(z)
	+ \sum_{i=1}^d 
	\big(
	\tP_\eps \nu_t(z) - 
	\tP_\eps \nu_t(z - e_i)
	\big)
	\cdot e_i
	= 0.
\end{align*}

\medskip
\noindent
\underline{Step 2}: \
We build the associated \textit{optimal discrete microstructure} for the cell problem for each cube $Q_\eps^z$, meaning we select 
$
(\bfm,\bfJ) = 
\big(
	m_t^z, J_t^z
\big)_{t\in(0,1), z \in \Z_\eps^d}
$ 
such that
\begin{align*}	
	\bigg(
	\frac{m_t^z}{\eps^d},\frac{J_t^z}{\eps^{d-1}}
	\bigg)
	\in \Rep_o \bigg( \frac{\tP_\eps \mu_t(z)}{\eps^d}
	,
	\frac{\tP_\eps \nu_t(z)}{\eps^{d-1}} \bigg) 
	,
\end{align*}
 where $\Rep_o$ denotes the set of optimal representatives in the definition of the cell-formula \eqref{eq:intro_fhom}.
Using the smoothness of $\bfmu$ and $\bfnu$, one can in particular show that
\begin{align}
	\label{eq:sketch_ub_est1}
	\sum_{z \in \Z_\eps^d} \eps^d \cF_1 \left(  \frac{ m_t^z}{\eps^d} , \frac{ J_t^z}{\eps^{d-1}} \right) 
	\lesssim \bF_{\hom}(\mu_t, \nu_t).
\end{align}

\medskip
\noindent
\underline{Step 3}: \
The next step is to glue together the microstructures 
$(\bfm, \bfJ)$ 
defined for every $z \in \Z_\eps^d$ via a \textit{gluing operator} $\cG_\eps$ (Definition \ref{def:gluing}) to produce a global one 
$(\hat \bfm{}^\eps, \hat \bfJ{}^\eps) \in \cM_+((0,1) \times \cX_\eps) \times \cM_a((0,1) \times \cE_\eps)$. 

Thanks to the fact the gluing operators are mass preserving and that $m_t^z \in \Rep(\tP_\eps \mu_t(z))$, it is not hard to see that $\iota_\eps \hat \bfm{}^\eps \to \bfmu$ weakly in $\cM_+((0,1) \times \Td)$ as $\eps \to 0$.

\medskip
\noindent
\underline{Step 4}: \
In contrast to $\tP_\eps$, the latter operation produces curves $(\hat \bfm{}^\eps, \hat \bfJ{}^\eps)$ which would (in general) \textit{not be a solution} to the discrete continuity equation $\cCE_\eps$. Therefore, we seek to find suitable \textit{corrector vector fields} in order to obtain a discrete solution, and thus a candidate for $\cA_\eps(\hat \bfm{}^\eps)$. 

To this purpose, the next step is to regularise $(\hat \bfm{}^\eps, \hat \bfJ{}^\eps)$ by applying Proposition \ref{prop:regularisation_discrete} and obtaining a regular curve which is quantitatively close as measures and in energy to the first one. Note that no discrete regularity is (in general) guaranteed to $(\hat \bfm{}^\eps, \hat \bfJ{}^\eps)$, despite the smoothness assumption on $(\bfmu,\bfnu)$, due to possible singularities of $F_{xy}$.

For the sake of the exposition, we shall discuss the last steps of the proof assuming that $(\hat \bfm{}^\eps, \hat \bfJ{}^\eps)$ already enjoy the Lipschitz and $\ell^\infty$--regularity properties ensured by Proposition \ref{prop:regularisation_discrete}.

\medskip
\noindent
\underline{Step 5}: \
For sufficiently regular $(\hat \bfm{}^\eps, \hat \bfJ{}^\eps)$, we seek a discrete competitor for $\cA_\eps(\hat \bfm{}^\eps)$ which is close to $(\hat \bfm{}^\eps, \hat \bfJ{}^\eps)$. 
As the latter does not necessary belong to $\cCE_\eps$, we find suitable correctors $\bfV^\eps$ such that the corrected curves 
$(\hat \bfm{}^\eps, \hat \bfJ{}^\eps + \bfV^\eps)$ 
belong to $\cCE_\eps$, with $\bfV^\eps$ \textit{quantitative small}, i.e. satisfying a bound of the form
\begin{align}
	\label{eq:sketch_ub_est2}
	\sup_{t \in (0,1)}
	\big\|
	\eps^{1-d} V_t^\eps 
	\big\|_{\ell_\infty(\cE_\eps )}
	\leq
	C \eps.	
\end{align}

The existence of the corrector $\bfV^\eps$, together with the quantitative bound, is quite involved and possibly the most difficult part of the proof. It is based on a localisation argument (Lemma \ref{lemma:localisation}) and the study of the divergence equation on periodic graphs (Lemma \ref{lemma:bounds_divergence_eq}), performed at the level of each cube $Q_\eps^z$, for every $z \in \Z_\eps^d$.

The regularity of $(\hat \bfm{}^\eps, \hat \bfJ{}^\eps)$ is crucial in order to obtain the estimate \eqref{eq:sketch_ub_est2}.

\medskip
\noindent 
\underline{Step 6}: \
The final step consists in estimating the action of the measures defined as $\bfm^\eps:= \hat \bfm{}^\eps \to \bfmu$ weakly as $\eps \to 0$, and the vector fields $\bfJ^\eps:= \hat \bfJ{}^\eps + \bfV^\eps$. 

Using the regularity assumption on $(\hat \bfm{}^\eps, \hat \bfJ{}^\eps)$, the smoothness \eqref{eq:sketch_assumptions_smooth_upperb} of $(\bfmu,\bfnu)$, and the convexity of $f_\hom$, together with the quantitative bound \eqref{eq:sketch_ub_est2} and \eqref{eq:sketch_ub_est1} for the corrector we obtain
\begin{align*}
	\cF_\eps(m_t^\eps, J_t^\eps) 
	\lesssim
	\cF_\eps(\hat m_t^\eps, \hat J_t^\eps)
	\lesssim 
	\sum_{z \in \Z_\eps^d} \eps^d 
 \cF_1 \left(  \frac{ m_t^z}{\eps^d} , \frac{ J_t^z}{\eps^{d-1}} \right) 
	\lesssim 
	\bF_{\hom}(\mu_t, \nu_t).
\end{align*}

Using this bound and that $(\bfm^\eps, \bfJ^\eps) \in \cCE_\eps$, we integrate in time and get
\begin{align*}
	\limsup_{\eps \to 0} \cA_\eps(\bfm^\eps) 
	\leq 
	\limsup_{\eps \to 0}  \cA_\eps(\bfm^\eps, \bfJ^\eps)
	\leq 
	\bA_\hom(\bfmu,\bfnu),
\end{align*}
which is the sought upper bound \eqref{eq:sketch_ub}.

\section{Discrete dynamical optimal transport on 
\texorpdfstring{$\Z^d$}{Z^d}-periodic graphs}
\label{sec:discrete}

This section contains the definition of the optimal transport problem in the discrete periodic setting.
In Section \ref{sec:data} we introduce the basic objects: a $\Z^d$-periodic graph $(\cX, \cE)$ and an admissible cost function $F$.
Given a triple $(\cX, \cE, F)$, we introduce a family of discrete transport actions on rescaled graphs $(\cX_\eps, \cE_\eps)$ in Section \ref{sec:rescaled}.

\subsection{Discrete $\Z^d$-periodic setting}
\label{sec:data}

Our setup consists of the following data:

\begin{assumption}
	\label{ass:XE}	
	$(\cX, \cE)$ is a locally finite and $\Z^d$-periodic connected graph of bounded degree.
\end{assumption}	
	
More precisely, we assume that 
\begin{align*}
	\cX  = \Z^d \times \V,
\end{align*}
where $\V$ is a finite set.
The coordinates of $x = (z, v) \in \cX$ will be denoted by
\begin{align*}
	x_\sz := z, \qquad 
	x_\sa := v.
\end{align*} 
The
\begin{figure}[h]
	\includegraphics[scale=0.30]{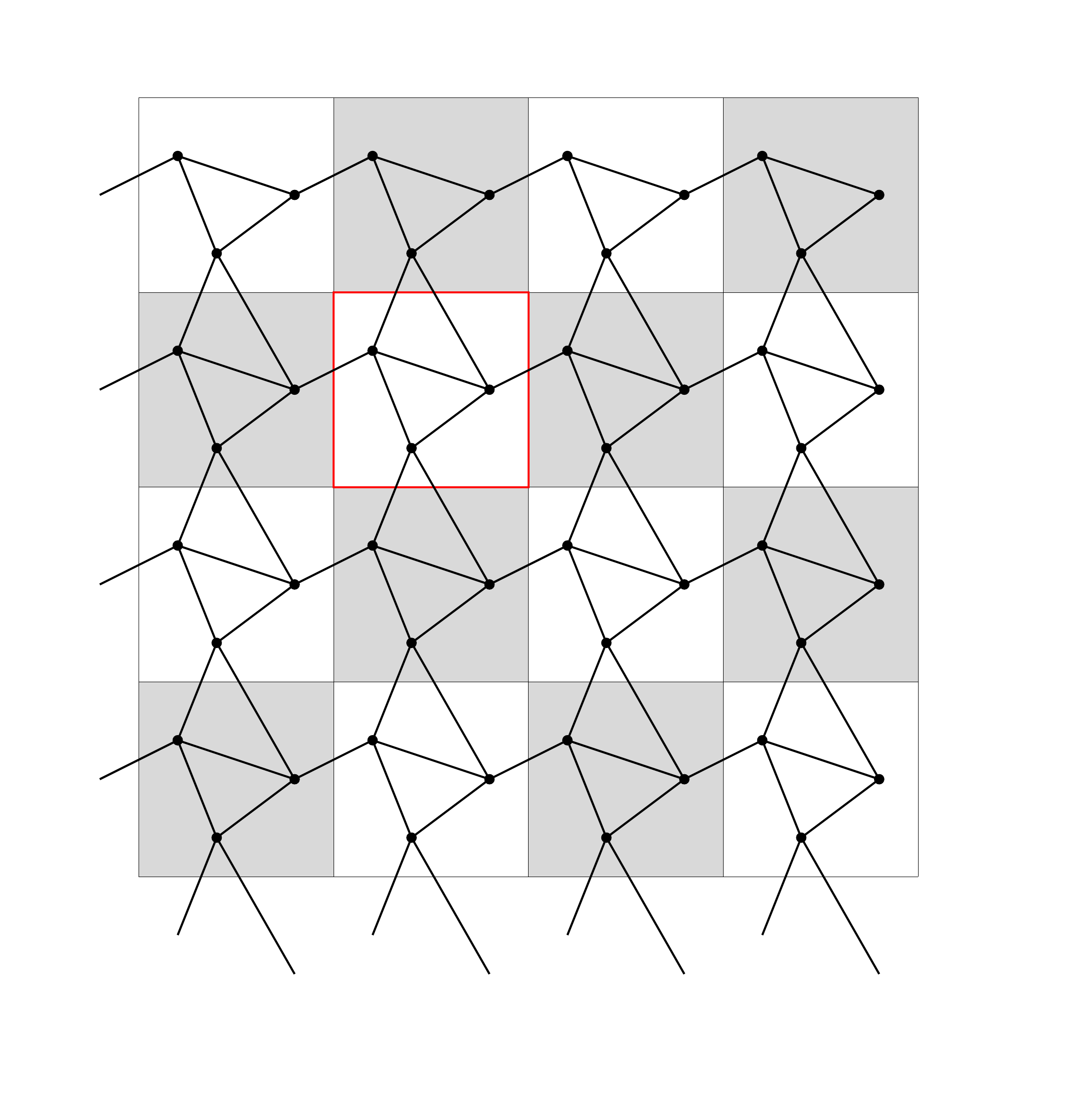}
	\vspace{-5mm}
	\caption{A fragment of a $\Z^d$-periodic graph $(\cX, \cE)$. The blue nodes represent $\cX^Q$ and the orange edges represent $\cE^Q$.   
	}
	\label{fig:per_graph}
\end{figure}
set of edges $\cE \subseteq \cX \times \cX$ is symmetric and $\Z^d$-periodic, in the sense that 
\begin{align*}
	(x, y) \in \cE 
		\quad \text{iff} \quad
	\big( S^z(x), S^z(y) \big) \in \cE
		\text{ for all } z \in \Z^d.
\end{align*}
Here, $S^{\bar z} : \cX \to \cX$ is the \emph{shift operator}
defined by
\begin{align*}
	S^{\bar z} (x) = (\bar z + z, v) \quad 
		\text{for } x = (z, v) \in \cX.
\end{align*}
We write $x \sim y$ whenever $(x,y) \in \cE$.

Let 
	$R_0 := 
		\max_{(x,y) \in \cE} 
			|  x_\sz - y_\sz|_{\ell_\infty^d}
	$ 
be the maximal edge length, 
measured with respect to the supremum norm 
	$|\cdot|_{\ell_\infty^d}$ on $\R^d$.
It will be convenient to use the notation
\begin{align*}
	\XQ :=
	\{
		x \in \cX
		\ : \ x_\sz = 0
	\} 
		\tand
	\EQ := 
		\big\{
			(x, y) \in \cE	
			\ : \
			x_\sz = 0
		\big\}.
\end{align*}

\begin{remark}[Abstract vs.~embedded  graphs]
	\label{rem:embedded_graph}
	Rather than working with abstract $\Z^d$-periodic graphs, it is possible to regard $\cX$ as a $\Z^d$-periodic subset of $\R^d$, 
	by choosing $\V$ to be a subset of $[0,1)^d$ and using the identification $(z,v) \equiv z + v$, see Figure \ref{fig:per_graph}. 
	Since the embedding plays no role in the formulation of the discrete problem, we work with the abstract setup.
\end{remark}

\begin{assumption}[Admissible cost function]
	\label{ass:F}
	The function 
		$F : \R_+^\cX \times \R_a^\cE \to \R \cup \{+\infty\}$ 
	is assumed to have the following properties:
	\begin{enumerate}[(a)]
	\item\label{item:F0}
	$F$ is convex and lower semicontinuous.
		\item\label{item:F1}
		 $F$ is \emph{local} in the sense that there exists $R_1 < \infty$ such that $F(m,J) = F(m',J')$ whenever $m, m' \in \R_+^\cX$ and $J, J' \in \R_a^\cE$ agree within a ball of radius $R_1$, i.e.,  
		 \begin{align*}
			m(x) &  = m'(x) 
				&& 
				\text{for all } x \in \cX 
				\text{ with } |x_\sz|_{\ell_\infty^d} \leq R_1, \quad and
		\\
			J(x,y) & = J'(x,y)	
				&& 
				\text{for all } (x,y) \in \cE
				\text{ with } |x_\sz|_{\ell_\infty^d},
								|y_\sz|_{\ell_\infty^d} \leq R_1.
		 \end{align*}
		\item\label{item:F2}
		 $F$ is of at least \emph{linear growth}, i.e., there exist 
			$c > 0$ and $C < \infty$ 
		such that 
		\begin{align}
		\label{eq: growth}
		F(m,J) 
			\geq 
			c \sum_{ (x, y) \in \EQ } 
					|J(x,y)| 
			- C \Bigg(
					1 + \sum_{\substack{
								x\in \cX\\ 
								|x|_{\ell_\infty^d} \leq R
							}} 
						m(x) 
				\Bigg)
		\end{align}
		for any $m \in \R_+^\cX$ and $J \in \R_a^\cE$. Here, $R := \max\{R_0, R_1\}$.
		\item\label{item:F3} 
		There exist a $\Z^d$-periodic function 
			$m^\circ\in \R_+^\cX$ 
		and a $\Z^d$-periodic and 
		divergence-free vector field
			$J^\circ\in \R_a^\cE$ 
		such that
		\begin{align}\label{eq: int dom}
			( m^\circ, J^\circ ) \in \Dom(F)^\circ.
		\end{align}
	\end{enumerate}
\end{assumption}	

\begin{remark}
	As $F$ is local, it depends on finitely many parameters. Therefore, $\Dom(F)^\circ$, the topological interior of its domain $\Dom(F)$ is defined unambiguously.
\end{remark}

\begin{remark}
	\label{rem:examples}
In many examples, the function $F$ takes one of the following forms, for suitable functions $F_x$ and $F_{xy}$:
\begin{align*}	
		F(m,J)& =
			\sum_{x\in \XQ}
		F_x\Big( m(x), \big( J(x,y) \big)_{y\sim x} \Big),
		& F(m,J) =
		 \frac12
		 \sum_{(x,y) \in \EQ}
		F_{xy}\Big(m(x),m(y),J(x,y)\Big).
	\end{align*}
We then say that $F$ is vertex-based (respectively, edge-based). 	
\end{remark}

\begin{remark}
	Of particular interest are edge-based functions of the form
	\begin{align}\label{eq: Wp}
		F(m,J) 
		= \frac12 \sum_{(x,y) \in \EQ} 
			\frac{|J(x,y)|^p}{\Lambda\big(q_{xy} m(x), q_{yx} m(y)\big)^{p-1}},
	\end{align}
	where 
	$1 \leq p < \infty$, the constants
	$q_{xy}, q_{yx} > 0$ are fixed parameters defined for $(x,y)\in \EQ$, 
	and $\Lambda$ is a suitable mean (i.e., $\Lambda : \R_+ \times \R_+ \to \R_+$ is a jointly concave and $1$-homogeneous function satisfying $\Lambda(1,1) = 1$).
	Functions of this type arise naturally in discretisations of Wasserstein gradient-flow structures \cite{Maas:2011,Mielke:2011,CHLZ11}.

	We claim that these cost function satisfy the growth condition \eqref{eq: growth}.
	Indeed, using Young's inequality 
		$|J| \leq \tfrac1p 	 \tfrac{|J|^p}{\Lambda^{p-1}}
			  + \tfrac{p-1}p \Lambda$
	we infer that			  
	\begin{align*}
		\sum_{(x,y) \in \EQ}
				|J(x,y)|
		&	\leq
		\frac{1}{p} \sum_{(x,y) \in \EQ}
			\frac{|J(x,y)|^p}{\Lambda\big(
					q_{xy} m(x), q_{yx} m(y)\big)^{p-1}}
		\\& \qquad + 
		\frac{p-1}{p} \sum_{(x,y) \in \EQ} 
		\Lambda\big(q_{xy} m(x), q_{yx} m(y)\big)
		\\&	\leq
		\frac2p F(m,J)
		+ 
			C\sum_{ x \in \cX, 
					|x|_{\ell_\infty^d} \leq R_0
			 } m(x),
	\end{align*}
	with constant $C>0$ depending on $\max_{x,y} (q_{xy} + q_{yx})$.
	This shows that \eqref{eq: growth} is satisfied.
\end{remark}

\subsection{Rescaled setting}
\label{sec:rescaled}

Let $(\cX, \cE)$ be a locally finite and $\Z^d$-periodic graph as above. 
Fix $\eps > 0$ such that $\frac1\eps \in \N$.
\emph{The assumption that $\frac1\eps \in \N$ remains in force throughout the paper.}

\smallskip
\noindent\emph{The rescaled graph.} \
Let $\T_\eps^d = (\eps \Z / \Z)^d$ be the discrete torus of mesh size $\eps$.
The corresponding equivalence classes 
are denoted by 
	$[\eps z]$ for $z \in \Z^d$. 
To improve readability, we occasionally omit the brackets.
Alternatively, we may write
	$\T_\eps^d = \eps \Z_\eps^d$
where
	$\Z_\eps^d = \big(\Z / \tfrac1\eps \Z \big)^d$. 

The rescaled graph 
	$(\cX_\eps, \cE_\eps)$ 
is constructed by rescaling the $\Z^d$-periodic graph 
	$(\cX, \cE)$ 
and wrapping it around the torus.
More formally, we consider the finite sets
\begin{align*}
	\cX_\eps := 
		\T_\eps^d \times \V
	\quad\text{and}\quad
	\cE_\eps := 
		\big\{
			\big( T_\eps^0 (x), T_\eps^0 (y) \big)
				\ : \
			(x, y) \in \cE
		\big\}
\end{align*}
where, for $\bar z \in \Z_\eps^d$,
\begin{align}
\label{eq:def-T}
	T_\eps^{\bar z} : \cX \to \cX_\eps, 
\qquad
	(z, v) 
		\mapsto 
	\big( [\eps (\bar z + z)], v \big).
\end{align}
Throughout the paper we always assume that $\eps R_0 < \frac12$, to avoid that  edges in $\cE$ ``bite themselves in the tail'' when wrapped around the torus. 
For $x = \big([\eps z], v\big) \in \cX_\eps$ we will write
\begin{align*}
	x_\sz := z \in \Z_\eps^d, \qquad 
	x_\sa := v\in \V.
\end{align*} 

\smallskip
\noindent\emph{The rescaled energies.} \
Let $F : \R_+^\cX \times \R_a^\cE \to \R \cup \{ + \infty\}$ be a cost function satisfying Assumption \ref{ass:F}. 
For $\eps > 0$ satisfying the conditions above, we shall define a corresponding energy functional $\cF_\eps$ in the rescaled periodic setting.

First we introduce some notation, which we  use to transfer functions defined on $\cX_\eps$ to $\cX$ (and from $\cE_\eps$ to $\cE$).
Let $\bar z \in \Z_\eps^d$.
Each function 
	$\psi : \cX_\eps \to \R$ 
induces a $\frac{1}{\eps}\Z^d$-periodic function 
\begin{align*}
	\tau_\eps^{\bar z} \psi 
		: \cX \to \R, 
		\qquad
	\big(\tau_\eps^{\bar z} \psi \big)(x)
		:=
	\psi\big( T_\eps^{\bar z}(x) \big)
	\quad \text{ for } 
	x \in \cX.
\end{align*}
see Figure \ref{fig:deftau}. Similarly, each function
	$J : \cE_\eps \to \R$ 
induces a $\frac{1}{\eps}\Z^d$-periodic function
\begin{align*}
	\tau_\eps^{\bar z} J : \cE \to \R, \qquad  
	\big( \tau_\eps^{\bar z} J \big)(x,y)
		:=
	J
		\big( 
			T_\eps^{\bar z}(x),
			T_\eps^{\bar z}(y)
		\big)
	\quad \text{ for } 
	(x,y) \in \cE.
\end{align*}

\begin{figure}[h]
	\begin{subfigure}{.5\textwidth}
		\centering
	\end{subfigure}%
	\begin{subfigure}{.5\textwidth}
		\centering
		\includegraphics[scale=1.2]{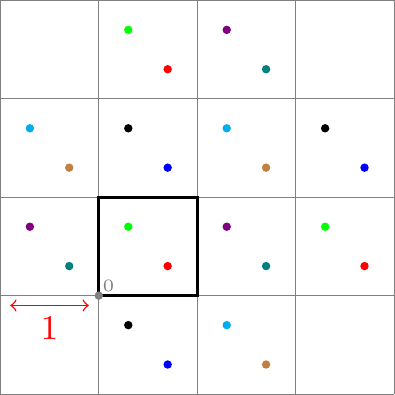}
	\end{subfigure}%
	\caption{On the left, the value of a function $\psi: \cX_\eps \to \R$ correspond to different colors over the nodes. On the right, the corresponding values of $\tau_\eps^z \psi: \cX \to \R$.}
	\label{fig:deftau}
\end{figure}

\begin{definition}[Discrete energy functional]
\label{eq:discrete-energy}
	The rescaled energy
	is defined by
	\begin{align*}
			\cF_\eps : 
				\Meps \times \Mdeps 
					&\to 
				\R \cup \{ + \infty\},	
		\qquad 
			(m, J) 
				\mapsto 
			\sum_{z \in \Z_\eps^d}
				\eps^d
		F\bigg(
			\frac{\tau_\eps^z m}
				{\eps^d}
			,
			\frac{\tau_\eps^z J}
				{\eps^{d-1}}
		\bigg).
	\end{align*}
\end{definition}

\begin{remark}
		\label{rem:well_posed_Feps}
	We note that
	$\cF_\eps(m,J)$ 
	is well-defined as an element in 
	$\R \cup \{ + \infty\}$.
	Indeed, the (at least) linear growth condition \eqref{eq: growth} yields
	\begin{align*}
		\cF_\eps( m, J ) 
		= 
		\sum_{z \in \Z_\eps^d}
		\eps^d
		F\bigg(
		\frac{\tau_\eps^z m}
		{\eps^d}
		,
		\frac{\tau_\eps^z J}
		{\eps^{d-1}}
		\bigg)
		& \geq 
		- C 
		\sum_{z \in \Z_\eps^d}
		\eps^d	
		\Bigg(
		1 + \sum_{\substack{
				x\in \cX\\ 
				|x|_{\ell_\infty^d} \leq R
		}} 
		\frac{\tau_\eps^z m(x)}{\eps^d}
		\Bigg)
		\\& \geq
		- C 
		\bigg( 
		1 + (2R + 1)^d 
		\sum_{x \in \cX_\eps} m(x)
		\bigg) 
		> - \infty.
	\end{align*} 
\end{remark}

For ${\bar z} \in \Z_\eps^d$
it will be useful to consider the \emph{shift operator} 
	$S^{\bar z}_\eps : \cX_\eps \to \cX_\eps$
and 
	$S^{\bar z}_\eps : \cE_\eps \to \cE_\eps$
defined by
\begin{align*}
	S^{\bar z}_\eps (x) & = 
		\big( [\eps (\bar z + z)], v\big) 
	& & 
	\text{for  } x = ([\eps z], v) \in \cX_\eps,
\\
	S^{\bar z}_\eps (x,y) & = 
		\big( 
			S^{\bar z}_\eps (x), S^{\bar z}_\eps (y)
		\big) 
	& &
	\text{for } (x,y) \in \cE_\eps.
\end{align*}
Moreover, for $\psi:\cX_\eps \to \R$ and $J: \cE_\eps \to \R$ we define
\begin{align}	
\begin{aligned} 
	\label{eq:def_sigma}
	&\sigma_\eps^{\bar z} \psi : \cX_\eps \to \R,		
	\qquad 
	&&\big(\sigma_\eps^{\bar z} \psi\big)(x) 
		:= \psi(S_\eps^{\bar z}(x)) 
	&& \text{for} \; x \in \cX_\eps, \\
	&\sigma_\eps^{\bar z}J : \cE_\eps \to \R, 
	\qquad
		&&\big(\sigma_\eps^{\bar z} J\big)(x,y) := J(S_\eps^{\bar z}(x,y)) 
	&& \text{for} \; (x,y) \in \cE_\eps.
\end{aligned}
\end{align}

\begin{definition}[Discrete continuity equation]	
	A pair $(\bfm, \bfJ)$ is said to be a solution to the discrete continuity equation if 
		$\bfm : \cI \to \Meps$ is continuous, 
		$\bfJ : \cI \to \Mdeps$ is Borel measurable, and 
	\begin{align}	\label{eq:def_discrete_CE}
		\partial_t m_t(x) + \sum_{y\sim x}J_t(x,y) = 0 
	\end{align}
	for all $x \in \cX_\eps$ in the sense of distributions.
	We use the notation 
	\begin{align*}
		(\bfm, \bfJ) \in \cCE_\eps^{T}.
	\end{align*}	
\end{definition}
	
\begin{remark}
	We may write \eqref{eq:def_discrete_CE} 
	as
	$
		\partial_t m_t + \dive J_t = 0
	$
	using the notation \eqref{eq:divergence}.
\end{remark}

\begin{lemma}[Mass preservation]
	\label{lem:mass-preservation}
	Let $(\bfm, \bfJ) \in\cCE_\eps^\cI$. 
	Then we have $m_s(\cX_\eps) = m_t(\cX_\eps)$ 
	for all $s, t \in \cI$.
\end{lemma}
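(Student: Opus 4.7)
The plan is to sum the discrete continuity equation over all vertices and exploit the antisymmetry of $\bfJ$. Since $(\bfm,\bfJ) \in \cCE_\eps^\cI$ satisfies \eqref{eq:def_discrete_CE} in the sense of distributions for each fixed $x \in \cX_\eps$, and $\cX_\eps$ is a \emph{finite} set, we may sum over $x$ to obtain, in the distributional sense on $\cI$,
\begin{equation*}
\ddt \sum_{x \in \cX_\eps} m_t(x)
\;=\; -\sum_{x \in \cX_\eps} \sum_{y \sim x} J_t(x,y)
\;=\; -\sum_{(x,y) \in \cE_\eps} J_t(x,y).
\end{equation*}
Because $\bfJ$ takes values in $\Mdeps = \R_a^{\cE_\eps}$, the map $(x,y) \mapsto J_t(x,y)$ is antisymmetric, so pairing each ordered edge $(x,y)$ with its reverse $(y,x)$ shows that this final sum vanishes identically in $t$.

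Thus $t \mapsto m_t(\cX_\eps)$ has zero distributional derivative on $\cI$. Since $\bfm : \cI \to \Meps$ is continuous by the definition of $\cCE_\eps^\cI$, the real-valued function $t \mapsto m_t(\cX_\eps)$ is continuous, and a continuous function with vanishing distributional derivative on an interval is constant. Therefore $m_s(\cX_\eps) = m_t(\cX_\eps)$ for all $s,t \in \cI$, which is the claim.

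There is no serious obstacle here; the only point worth being careful about is that $J_t$ is a priori only Borel measurable in $t$, so the cancellation must be done at the level of distributions rather than pointwise in $t$, and one then invokes continuity of $\bfm$ to upgrade ``constant as a distribution'' to ``constant pointwise''.
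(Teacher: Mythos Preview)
Your proof is correct and follows essentially the same approach as the paper: sum the continuity equation over the finite vertex set and use antisymmetry of $J$ to kill the divergence term. The paper orders the steps slightly differently (it first integrates in time against an approximation of $\chi_{[s,t]}$ to obtain $m_t(x)-m_s(x)=\int_s^t \dive J_r(x)\,\ddd r$, and then sums over $x$), but the content is identical.
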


\begin{proof}
	Without loss of generality, suppose that 
		$s, t \in \cI$ with $s < t$. 
	Approximating the characteristic function $\chi_{[s,t]}$ by smooth test functions, we obtain, for all $x \in \cX_\eps$,
	\begin{align*}
		m_t(x) - m_s(x)
			= 
		\int_s^t	
			\sum_{y \sim x}
				J_r(x,y)
		\dd r.
	\end{align*}
	Summing \eqref{eq:def_discrete_CE} over $x\in \cX_\eps$ and using the anti-symmetry of $\bfJ$, the result follows.
\end{proof}

We are now ready to define one of the main objects in this paper. 

\begin{definition}[Discrete action functional]		
\label{def:A_eps}
	For any continuous function $\bfm: \cI \to \R_+^{\cX_\eps}$ such that $t \mapsto \sum_{x \in \cX_\eps} m_t(x) \in L^1(\cI)$ and any Borel measurable function $\bfJ: \cI \to \R_a^{\cE_\eps}$, we define
	\begin{align*}
				\cA_\eps^\cI(\bfm, \bfJ) & :=	
					\int_\cI 
						\cF_\eps( m_t, J_t ) 
					\dd t
				\in \R \cup \{ + \infty\}.	
	\end{align*}
	Furthermore, we set
	\begin{align*}
		\cA_\eps^\cI(\bfm)
			& :=
			\inf \Big\{
				\cA_\eps^\cI(\bfm, \bfJ) 
				\ : \
				(\bfm, \bfJ) \in \cCE_\eps^\cI
				\Big\}.
	\end{align*}			 
\end{definition}

\begin{remark}
	We claim that 
		$\cA_\eps^\cI(\bfm, \bfJ)$ 
	is well-defined as an element in 
	 	$\R \cup \{ + \infty\}$.
	Indeed, the (at least) linear growth condition \eqref{eq: growth} yields as in Remark \ref{rem:well_posed_Feps}
	\begin{align*}
		\cF_\eps( m_t, J_t ) 
		\geq
		 - C 
			\bigg( 
				1 + (2R + 1)^d 
					\sum_{x \in \cX_\eps} m_t(x)
			\bigg).
	\end{align*} 
	for any $t \in \cI$. Since $t \mapsto \sum_{x \in \cX_\eps} m_t(x) \in L^1(\cI)$, the claim follows.

In particular, $\cA_\eps^\cI(\bfm, \bfJ)$ is well-defined whenever $(\bfm, \bfJ) \in \cCE_\eps^\cI$, 
since $t \mapsto \sum_{x \in \cX_\eps} m_t(x)$ is constant by Lemma \ref{lem:mass-preservation}.
\end{remark}

\begin{remark}
\label{rem:time-interval}
If the time interval is clear from the context, we often simply write $\cCE_\eps$ and $\cA_\eps$.
\end{remark}

The aim of this work is to study the asymptotic behaviour of the energies $\cA_\eps^\cI$ as $\eps \to 0$. 

\section{Dynamical optimal transport in the continuous setting}
\label{sec:continuous}

We shall now define a corresponding class of dynamical optimal transport problems on the continuous torus $\T^d$.
We start in Section \ref{sec:firsttry} by defining the natural continuous analogues of the discrete objects from Section \ref{sec:discrete}.
In Section \ref{sec:generalised} we define generalisations of these objects that have better compactness properties.

\subsection{Continuous continuity equation and action functional}
\label{sec:firsttry}

First we define solutions to the continuity equation on a bounded open time interval $\cI$.

\begin{definition}[Continuity equation]
	\label{def:CE0}	
		A pair $(\bfmu, \bfnu)$ is said to be a solution to the  continuity equation $\partial_t \bfmu + \nabla \cdot \bfnu = 0 $ if the following conditions holds:
		\begin{enumerate}[(i)]
			\item $\bfmu : \cI \to \cM_+(\T^d)$ is vaguely continuous;
			\item $\bfnu : \cI \to \cM^d(\T^d)$ is a Borel family satisfying $\int_\cI |\nu_t|(\T^d) \dd t < \infty$;
			\item The equation
			\begin{align}	\label{eq:def_cont_CE}
				\partial_t \mu_t(x) + \nabla \cdot \nu_t(x) = 0 
			\end{align}
		holds in the sense of distributions, i.e.,
		for all $\phi \in \cC_c\big(\cI \times \T^d \big)$,
		\begin{align*}
			\int_\cI \int_{\T^d}
				\partial_t \phi_t(x)
			\dd \mu_t(x) \dd t
			+ 
			\int_\cI \int_{\T^d}
				\nabla \phi_t(x) 
				\cdot
			\dd \nu_t(x) \dd t = 0.
		\end{align*}
\end{enumerate}
		We use the notation 
		\begin{align*}
			(\bfmu, \bfnu) \in \cCE^\cI.
		\end{align*}
	\end{definition}

We will consider the energy densities $f$ with the following properties.	

\begin{assumption}
	\label{ass:f}
Let 
	$f : \R_+ \times \R^d \to \R \cup \{+\infty\}$ 
be a lower semicontinuous and convex function, 
whose domain has nonempty interior. 
We assume that there exist constants
	$c > 0$ and $C < \infty$ 
such that the (at least) linear growth condition
\begin{align}\label{eq:growth_f}
	f(\rho, j)
		\geq c |j| - C (\rho + 1)
\end{align}	
holds for all
	$\rho \in \R_+$ and $j \in \R^d$.
\end{assumption}

The corresponding \emph{recession function} 
	$f^\infty : \R_+ \times \R^d \to \R \cup \{+\infty\}$
is defined by 
\begin{align*}
	f^\infty(\rho,j) 
		:= 
	\lim_{t \to + \infty}
	\frac{f(\rho_0 + t \rho, j_0 + t j)}{t},
\end{align*}
where $(\rho_0, j_0) \in \Dom(f)$ is arbitrary.
It is well known that the function $f^\infty$ is lower semicontinuous and convex, and it satisfies 
\begin{align}	\label{eq:growth_finfty}
	f^\infty(\rho,j) 
		\geq 
		 c|j| - C\rho.
\end{align}
We refer to \cite[Section 2.6]{amfupa} for a proof of these facts.

Let $\Leb^d$ denote the Lebesgue measure on $\T^d$.
For $\mu \in \cM_+(\T^d)$ 
and $\nu \in \cM^d(\T^d)$ 
we consider the Lebesgue decompositions given by 
\begin{align*}
	\mu = \rho \Leb^d + \mu^\perp, \qquad
	\nu = j \Leb^d + \nu^\perp
\end{align*}
for some $\rho \in L_+^1(\T^d)$ and $j \in L^1(\T^d; \R^d)$.
It is always possible to introduce a measure 
	$\sigma \in \cM_+(\T^d)$ such that 
\begin{align*}
	\mu^\perp = \rho^\perp \sigma, \qquad
	\nu^\perp = j^\perp \sigma,
\end{align*}
for some 
	$\rho^\perp \in L_+^1(\sigma)$
and 
	$j^\perp \in L^1(\sigma;\R^d)$. 
(Take, for instance, $\sigma = \mu^\perp + |\nu^\perp|$.)
Using this notation we define the continuous energy as follows.

\begin{definition}[Continuous energy functional]
\label{def:energyCont}
	Let $f : \R_+ \times \R^d \to \R \cup \{ + \infty\}$ satisfy Assumption \ref{ass:f}. 
	We define the continuous energy functional by
	\begin{align*}
		&\bF : 
			\cM_+( \Td ) 
				\times  
			\cM^d(\T^d) 
				\to 
			\R \cup \{ +\infty \}, \\
		&\bF(\mu, \nu) 
			:= 
			\int_{\Td} 
				f
					\big( 
						\rho(x), 
						j(x) 
					\big) 
			\dd x
			+
			\int_{\Td} 
				f^\infty 
					\big(
						\rho^\perp(x), j^\perp(x) 
					\big)
			\dd \sigma(x).
	\end{align*}
\end{definition}	

\begin{remark}
	By $1$-homogeneity of $f^\infty$, this definition does not depend on the choice of the measure $\sigma \in \cM_+(\T^d)$.	
\end{remark}

\begin{definition}
	[Action functional]		
	\label{def:bA1}
	For any curve 
		$\bfmu : \cI \to \cM_+(\T^d)$ 
	with $\int_\cI \mu_t(\T^d) \dd t < \infty$
	and any Borel measurable curve 
		$\bfnu : \cI \to \cM^d(\T^d)$ 
	we define
	\begin{align*}
				\bA^\cI(\bfmu, \bfnu) & :=	
					\int_\cI 
						\bF( \mu_t, \nu_t ) 
					\dd t.	
	\end{align*}
	Furthermore, we set
	\begin{align*}
		\bA^\cI(\bfmu)
			& :=
			\inf_{\bfnu} 
				\Big\{
					\bA^\cI(\bfmu, \bfnu) 
					\ : \
					(\bfmu, \bfnu) \in \bCE^\cI
				\Big\}.
	\end{align*}			 
\end{definition}

\begin{remark}
	As $f(\rho, j) \geq - C (1 + \rho)$
	by \eqref{eq:growth_f},
	the assumption $\int_\cI \mu_t(\T^d) \dd t < \infty$
	ensures that $\bA^\cI(\bfmu, \bfnu)$ is well-defined 
	in $\R \cup \{ + \infty \}$.
\end{remark}

\begin{remark}[Dependence on time intervals]
	Remark \ref{rem:time-interval} applies in the continuous setting as well. 
	If the time interval is clear from the context, we often simply write $\bCE$ and $\bA$.
	\end{remark}

Under additional assumptions on the function $f$, 
it is possible to prove compactness for families of solutions to the continuity equation with bounded action; see \cite[Corollary 4.10]{Dolbeault-Nazaret-Savare:2009}.
However, in our general setting, such a compactness result fails to hold, as the following example shows.  

\begin{example}[Lack of compactness]
	\label{ex:lack-of-compactness}
	To see this, let $y^\eps(t)$ be the position of a particle of mass $m$ that moves 
	from $0$ to 	
	$\bar y \in [0,\frac12]^d$ 
	in the time interval 
		$(a_\eps, b_\eps) 
			:= 
		\big(\tfrac{1-\eps}{2}, \tfrac{1+\eps}{2}\big)$
	with constant speed $\frac{|\bar y|}{\eps}$. 
	At all other times in the time interval $\cI = (0,1)$ the particle is at rest:
\begin{align*}
	y^\eps(t) = 
	\begin{cases}
		0, 	&	t \in [0, a_\eps],
		\\
		\big(t - \tfrac12(1 - \eps)\big) 
			\eps^{-1} \bar y,  	
			& 	t \in \big(a_\eps, b_\eps),\\
		\bar y  & 	t \in [b_\eps, 1].\\
	\end{cases}
\end{align*}
The associated solution $(\bfmu^\eps, \bfnu^\eps)$ to the continuity equation $\partial_t \bfmu^\eps + \nabla \cdot \bfnu^\eps = 0$ is given by 
\begin{align*}
	\mu_t^\eps(\ddd x) 
	:= m \delta_{y^\eps(t)}(\ddd x) , 
		\qquad
	\nu_t^\eps(\ddd x) 
	:= \frac{m|\bar y|}{\eps}  
		\chi_{(a_\eps, b_\eps)}(t)
		\delta_{y^\eps(t)}(\ddd x).		
\end{align*}

Let $f(\rho, j) = |j|$ be the total momentum, which satisfies  Assumption \ref{ass:f}.
We then have 
	$\bF(\mu_t^\eps, \nu_t^\eps) 
		= \frac{m|\bar y|}{\eps}  
	\one_{(a_\eps, b_\eps)}(t)
	$, 
hence $\bA^\cI(\bfmu^\eps, \bfnu^\eps) = m \bar y$, independently of $\eps$.

However, as $\eps \to 0$, the motion converges 
to the discontinuous curve given by 
	$\mu_t = \delta_0$ for $t \in [0,\tfrac12)$ 
and $\mu_t = \delta_{\bar y}$ for $t \in (\tfrac12, 1]$.
In particular, it does not satisfy the continuity equation in the sense above. 
\end{example}

\subsection{Generalised continuity equation and action functional}
\label{sec:generalised}
In view of this lack of compactness, we will extend the definition of the continuity equation and the action functional to more general objects.

\begin{definition}[Continuity equation]
	\label{def:CE1}	
	A pair of measures $(\bfmu, \bfnu) \in 
	\cM_+\big(\cI \times \T^d\big) 	
	\times 
	\cM^d\big(\cI \times \T^d\big)$
	is said to be a solution to the continuity equation 
	\begin{align}	\label{eq:def_CE}
	\partial_t \bfmu + \nabla \cdot \bfnu = 0 
	\end{align}
	if, for all $\phi\in \cC_c^1\big(\cI\times \Td\big)$, we have
	\begin{align*}
		\int_{\cI \times \Td} 
			\partial_t \phi
		\dd \bfmu 
		+ \int_{\cI \times \Td} 
			\nabla \phi 
		\cdot \dd \bfnu 
		= 0.
	\end{align*}
	As above, we use the notation $(\bfmu, \bfnu) \in \bCE^\cI$.
\end{definition}

Clearly, this definition is consistent with Definition \ref{def:bA1}.

Let us now extend the action functional $\bA^\cI$ as well.
For this purpose, 
let $\Leb^{d+1}$ denote the Lebesgue measure on $\cI \times \T^d$.
For $\bfmu \in \cM_+\big( \cI \times \T^d\big)$ 
and $\bfnu \in \cM^d\big( \cI \times \T^d\big)$ 
we consider the Lebesgue decompositions given by 
\begin{align*}
\bfmu = \rho \Leb^{d+1} + \bfmu^\perp, \qquad
\bfnu = j \Leb^{d+1} + \bfnu^\perp
\end{align*}
for some 
	$\rho \in L_+^1\big(  \cI \times \T^d \big)$ 
and 
	$j \in L^1\big( \cI \times \T^d; \R^d\big)$.
As above, it is always possible to introduce a measure $\bfsigma \in \cM_+( \cI \times \T^d)$ such that 
\begin{align}	\label{eq:decomp_sigma}
\bfmu^\perp = \rho^\perp \bfsigma, \qquad
\bfnu^\perp = j^\perp \bfsigma,
\end{align}
for some $\rho^\perp \in L_+^1(\bfsigma)$ and $j^\perp \in L^1(\bfsigma;\R^d)$.

\begin{definition}[Action functional]\label{def: A2}
	We define the action by
	\begin{align*}
	&\bA^\cI : 
	\cM_+\big( \cI\times \Td \big) 
	\times  
	\cM^d\big( \cI\times \T^d\big) 
	\to 
	\R \cup \{ +\infty \}, \\
	&\bA^\cI(\bfmu, \bfnu) 
	:= 
	\int_{\cI \times \Td} 
	f
	\big( 
	\rho_t(x), 
	j_t(x) 
	\big) 
	\dd x \dd t  
	+
	\int_{ \cI \times \Td} 
	f^\infty 
	\big(
	\rho^\perp_t(x), j^\perp_t(x) 
	\big)
	\dd \bfsigma(t,x).
	\end{align*}
	Furthermore, we set
	\begin{align*}
	\bA^\cI(\bfmu)
		:=
		\inf\{
				\bA^\cI(\bfmu,\bfnu) 
			: 
			(\bfmu,\bfnu) \in \bCE^\cI
		\}.
	\end{align*}
\end{definition}	

\begin{remark}
	This definition does not depend on the choice of $\bfsigma$, in view of the $1$-homogeneity of $f^\infty$. As $f(\rho,j) \geq - C(1+\rho)$ and $f_\infty(\rho,j) \geq - C \rho$ from \eqref{eq:growth_f} and \eqref{eq:growth_finfty}, the fact that $\bfmu(\cI \times \Td) <\infty$ ensures that  $\bA^\cI(\bfmu, \bfnu)$ is well-defined 
	in $\R \cup \{ + \infty \}$.
\end{remark}

\begin{example}[Lack of compactness]
	\label{ex:lack-of-compactness-cont}
	Continuing Example \ref{ex:lack-of-compactness}, 
	we can now describe the limiting jump process as a solution to the generalised continuity equation.
	Consider the measures 
		$\bfmu^\eps \in \cM_+(\cI \times \T^d)$ 
	and 
		$\bfnu^\eps \in \cM^d(\cI \times \T^d)$ 
	defined by
	\begin{align*}
		\bfmu^\eps(\ddd  x, \ddd  t)
			= \mu_t^\eps(\ddd x) \dd t, \qquad
		\bfnu^\eps(\ddd x, \ddd t)
			= \nu_t^\eps(\ddd x) \dd t.
	\end{align*}
	Then we have
		$\bfmu^\eps \to \bfmu$ 
	and 
		$\bfnu^\eps \to \bfnu$
	weakly, respectively, in $\cM_+(\cI\times \Td)$ and $\cM^d(\cI \times \Td)$,
	where $\bfmu$ represents the discontinuous curve
	\begin{align*}
		\bfmu(\ddd x, \ddd t) = \dd \mu_t(x) \dd t, \quad \text{where }
		\mu_t = 
		\begin{cases}
			\delta_0, &  t \in [0,\tfrac12), \\
			\delta_{\bar y}, &  t \in (\tfrac12, 1].
		\end{cases}
	\end{align*} 
	The measure $\bfnu$ does \emph{not} admit a disintegration with respect to the Lebesgue measure on $\cI$; in other words, it is not associated to a curve of measures on $\T^d$. We have 
	\begin{align*}
		\bfnu_t(\ddd x, \ddd t) 
			= m|\bar y|
			\Haus^1|_{[0,\bar y]}(\ddd x)
				\delta_{1/2}(\ddd t).
	\end{align*}
	Here $\Haus^1|_{[0,\bar y]}$ denotes the $1$-dimensional Hausdorff measure on the (shortest) line segment connecting $0$ and $\bar y$.

	Note that $(\bfmu, \bfnu)$ solves the continuity equation, as $\bCE^\cI$ is stable under joint weak-convergence.
	Furthermore, we have
		$\bA^\cI(\bfmu, \bfnu) = m \bar y$.
\end{example}

The next result shows that any solution to the continuity equation $(\bfmu, \bfnu) \in \bCE^\cI$ induces a (not necessarily continuous) curve of measures $(\mu_t)_t \in \cI$.
The measure $\bfnu$ is not always associated to a curve of measures on $\cI$; see Example \ref{ex:lack-of-compactness-cont}.
We refer to Appendix \ref{sec:norms} for the definition of $\BVKR$.

\begin{lemma}[Disintegration of solutions to $\bCE^\cI$]
	\label{lemma:disintegration}
	Let $(\bfmu, \bfnu) \in \bCE^{\cI}$. 
	Then 
		$\dd \bfmu(t,x) = \dd \mu_t(x) \dd t$ 
	for some measurable curve 
		$t \mapsto \mu_t \in \cM_+(\T^d)$
	with finite constant mass.
	If $\bA^{\cI}(\bfmu) < \infty$, 
	then this curve 
	belongs to $\BVKR$ and 
	\begin{align}
		\| \bfmu \|_{\BVKR}
			\leq 
		|\bfnu| \big( \cI \times \Td \big).
	\end{align}
\end{lemma}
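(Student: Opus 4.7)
The plan is to proceed in two steps: first establish the disintegration $\bfmu = \mu_t \otimes dt$ with constant mass, and then upgrade this to the claimed $\BV_{\KR}$ bound using tensorised test functions in the continuity equation.

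\emph{Step 1 (Disintegration).} I would first test the continuity equation (Definition~\ref{def:CE1}) against functions of the form $\phi(t,x) = \eta(t)$ with $\eta \in C_c^1(\cI)$. Since $\nabla_x\phi \equiv 0$, this forces $\int_{\cI\times\Td}\eta'(t)\,\dd\bfmu = 0$, which means the push-forward $\pi_\#\bfmu$ under the time projection $\pi : \cI\times\Td \to \cI$ has vanishing distributional derivative. Being a finite Borel measure on $\cI$, it must equal $c\,\Leb^1|_\cI$ for a unique constant $c \geq 0$. The disintegration theorem applied to $\pi$ then yields a measurable family $(\mu_t)_{t \in \cI} \subset \cM_+(\Td)$ with $\bfmu(\dd t,\dd x) = \mu_t(\dd x)\,\dd t$ and $\mu_t(\Td) = c$ for a.e.\ $t$, establishing the first assertion.

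\emph{Step 2 ($\BV_{\KR}$-bound).} Under $\bA^\cI(\bfmu) < \infty$, the growth condition \eqref{eq:growth_f} combined with $\bfmu(\cI\times\Td) < \infty$ yields $|\bfnu|(\cI\times\Td) < \infty$. For $\psi \in C^1(\Td)$ and $\eta \in C_c^1(\cI)$, testing against $\phi(t,x) = \eta(t)\psi(x)$ produces
\[
\int_\cI \eta'(t)\, g_\psi(t)\,\dd t \;=\; -\int_\cI \eta(t)\,\dd\lambda_\psi(t), \qquad g_\psi(t) := \int_{\Td}\psi\,\dd\mu_t,
\]
where $\lambda_\psi := \pi_\#(\nabla\psi \cdot \bfnu)$ is a finite signed Borel measure on $\cI$ satisfying $|\lambda_\psi|(A) \leq \|\nabla\psi\|_\infty\,|\bfnu|(A\times\Td)$ for every Borel $A \subseteq \cI$. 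Hence $g_\psi \in \BV(\cI)$ with distributional derivative $\lambda_\psi$, and admits a right-continuous representative $\tilde g_\psi$.

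\emph{Step 3 (Selection of a good representative).} I would then fix a countable family $\{\psi_n\} \subset C^1(\Td)$ which is dense, in uniform norm, in the unit ball $B := \{\psi : \|\psi\|_\infty \leq 1,\ \Lip(\psi) \leq 1\}$. On a set $E \subseteq \cI$ of full measure I can simultaneously pick representatives so that $\int \psi_n\,\dd\mu_t = \tilde g_{\psi_n}(t)$ for every $n$ and every $t \in E$. For $s,t \in E$ with $s < t$ and any $\psi_n$,
\[
\Big|\int\psi_n\,\dd\mu_t - \int\psi_n\,\dd\mu_s\Big| \leq |\lambda_{\psi_n}|([s,t]) \leq |\bfnu|([s,t]\times\Td).
\]
Density in $B$ upgrades this to all $\psi \in B$, hence $\|\mu_t - \mu_s\|_{\KR} \leq |\bfnu|([s,t]\times\Td)$. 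Summing over an arbitrary partition of $\cI$ yields $\|\bfmu\|_{\BV_{\KR}} \leq |\bfnu|(\cI\times\Td)$, and extending the family by right-continuity defines a genuine representative on all of $\cI$; taking $\psi\equiv 1$ (which has zero Lipschitz constant) shows the mass is literally constant.

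\emph{Main obstacle.} The substantive step is~Step~3: passing from the a.e.-defined disintegration to a pointwise-defined curve for which the KR-increment inequality holds for \emph{every} pair $s < t$. This requires the simultaneous BV-representative selection across a countable dense family of observables, followed by a density argument in the 1-Lipschitz unit ball. Everything else is a direct manipulation of the distributional definition of $\bCE^\cI$ combined with the disintegration theorem and the growth hypothesis~\eqref{eq:growth_f}.
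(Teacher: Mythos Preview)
Your Step~1 is essentially identical to the paper's argument: both test against time-only functions to identify the time-marginal as a multiple of Lebesgue measure and then invoke the disintegration theorem.

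For the $\BV_{\KR}$ bound, however, you take a considerably longer route than the paper. The paper's proof is a one-liner: it simply recalls that the $\BVKR$-seminorm is \emph{defined} distributionally (equation~\eqref{eq:BV-def}) as the supremum of $\int_{\cI\times\Td}\partial_t\phi\,\dd\mu_t\,\dd t$ over $\phi\in\cC_c^1(\cI;\cC^1(\Td))$ with $\max_t\|\phi_t\|_{\cC^1}\le 1$, and then observes that the continuity equation rewrites this integral as $-\int\nabla\phi\cdot\dd\bfnu\le|\bfnu|(\cI\times\Td)$. No representatives, no density arguments, no partitions. Your Steps~2--3 instead establish the \emph{pointwise} variation bound $\|\mu_t-\mu_s\|_{\KR}\le|\bfnu|([s,t]\times\Td)$ by controlling each observable $g_\psi$ in $\BV(\cI)$, selecting right-continuous representatives simultaneously across a countable dense family in the Lipschitz unit ball, and summing over partitions. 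This is correct and buys you something the paper's proof does not: an explicit good representative of the curve with the increment estimate holding for \emph{every} pair $s<t$. But for the seminorm inequality as stated, it is substantial overkill.

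One small remark on your Step~2: your appeal to the growth condition~\eqref{eq:growth_f} to deduce $|\bfnu|(\cI\times\Td)<\infty$ from $\bA^\cI(\bfmu)<\infty$ is slightly misdirected, since that hypothesis concerns the infimum over all admissible $\bfnu$, not the given one. In fact no argument is needed here: $\bfnu\in\cM^d(\cI\times\Td)$ is by definition a finite measure, so $|\bfnu|(\cI\times\Td)<\infty$ is automatic.
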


\begin{proof}
	Let $\lambda \in \cM_+(\cI)$ be the time-marginal of $\bfmu$, i.e., 
		$\lambda := (e_1)_{\#} \bfmu$ 
	where $e_1: \cI \times \Td \to \cI$, $e_1(t,x) = t$. 
	We claim that $\lambda$ is a constant multiple of the Lebesgue measure on $\cI$.
	By the disintegration theorem (see, e.g., \cite[Theorem 5.3.1]{Ambrosio-Gigli-Savare:2008}), this implies the first part of the result.
	
	To prove the claim, note that 
	the continuity equation $\bCE^{\cI}$ yields
	\begin{align}
		\label{eq:disi}
		\int_{\cI} 
			\partial_t \phi(t) \dd \lambda(t) 	
			=
		\int_{\cI \times \T^d} 
			\partial_t \phi(t) \dd \bfmu(t,x) 
			= 0
	\end{align} 
	for all $\phi \in C_c^\infty(\cI)$.
	
	Write $\cI = (a,b)$, 
	let $\psi \in C_c^\infty(\cI)$ be arbitrary, 
	and set 
		$\bar\psi := \frac{1}{|\cI|}\int_\cI \psi(t) \dd t$.
	We define
		$\phi(t) = \int_a^t \psi(s) \dd s 
					- (t-a) \bar \psi$. 
	Then $\phi \in C_c^\infty(\cI)$ and $\partial_t \phi = \psi - \bar \psi$.	
	Applying \eqref{eq:disi} we obtain 
		$\int_\cI (\psi - \bar \psi)\dd \lambda = 0$,
	which implies the claim, and hence the first part of the result.
	
	\smallskip

	To prove the second part, 
	suppose that 
		$\bfmu\in \cM_+(\cI\times \Td)$ 
	has finite action, 
	and let 
		$\bfnu\in \cM^d\big( \cI \times \Td \big)$ 
	be a solution to the continuity equation 
	\eqref{eq:def_CE}. 
	Applying \eqref{eq:def_CE} to a test function 
	$\phi \in 
	\cC_c^1\big( \cI; \cC^1(\Td) \big) 	
	\subseteq 
	\cC_c^1\big( \cI \times \Td \big)$ 
	such that $\max_{t\in \cI}\|\phi_t\|_{\cC^1(\Td)} \leq 1$, we obtain
	\begin{align}	\label{eq:BV_C1star}
		\int_{\cI \times \T^d} 
			\partial_t \phi_t 
		\dd \mu_t \dd t 
		= 	- \int_{\cI \times \T^d} 
					\nabla \phi \cdot 
			\dd \bfnu
		\leq 
			|\bfnu| \big( \cI \times \Td \big) 
		< \infty,
	\end{align}
	which implies the desired bound in view of \eqref{eq:BV-def}.
\end{proof}

The next lemma deals with regularity properties for curves of measures with finite action and fine properties for the functionals $\bA$ defined in Definition \ref{def: A2} with $f=f_\hom$.

\begin{lemma}
	[Properties of $\bA^\cI$]	
	\label{lemma: action lsc}
	Let $\cI \subset \R$ be a bounded open interval. 
	The following statements hold:
	\begin{enumerate}[(i)]
		\item  
		The functionals 
		$(\bfmu, \bfnu) \mapsto \bA^\cI(\bfmu, \bfnu)$ 
		and 
		$\bfmu \mapsto \bA^\cI(\bfmu)$
		are convex.
		\item 
		Let $\bfmu \in \cM_+(\cI \times \Td)$.
		Let $\{\cI_n\}_n$ be a sequence of bounded open intervals such that 
			$\cI_n \subseteq \cI$ and 
			$|\cI \setminus \cI_n| \to 0$ as $n \to \infty$. 
		Let $\bfmu^n \in \cM_+(\cI_n \times \Td)$
		be such that 
		\footnote{We regard measures on $\cI_n \times \Td$ as measures on the bigger set $\cI \times \Td$ by the canonical inclusion.} 
		\begin{align*}
			\bfmu^n \to \bfmu
				\ \text{vaguely in} \ \cM_+(\cI \times \Td)
			 \ \text{ and } \ 
			\bfmu^n(\cI_n \times \Td) \to \bfmu(\cI \times \Td).
		\end{align*}
		as $n \to \infty$.
		Then:
		\begin{align}	\label{eq:lsc_varying_intervals}
			\liminf_{n \to \infty} 
			\bA^{\cI_n}(\bfmu^n)
		\geq 
			\bA^{\cI}(\bfmu).
		\end{align}
		If, additionally, 
		$\bfnu \in \cM^d(\cI \times \Td)$ 
		and 
		$\bfnu^n \in \cM^d(\cI_n \times \Td)$
		satisfy 
		$\bfnu^n \to \bfnu$ vaguely in $\cM^d(\cI \times \Td)$,
		then we have
		\begin{align}\label{eq:lsc_varying_intervals_2}
			\liminf_{n \to \infty} 
				\bA^{\cI_n}(\bfmu^n, \bfnu^n) 
			\geq 
				\bA^{\cI}(\bfmu, \bfnu).
		\end{align}
		In particular,
		the functionals 
		$(\bfmu, \bfnu) \mapsto \bA^\cI(\bfmu, \bfnu)$ 
		and 
		$\bfmu \mapsto \bA^\cI(\bfmu)$
		are lower semicontinuous with respect to (joint) vague convergence.
	\end{enumerate}
\end{lemma}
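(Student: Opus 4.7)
For part (i), joint convexity of $\bA^\cI$. Fix a reference measure $\bfsigma \in \cM_+(\cI \times \Td)$ with $\bfsigma \geq |\bfmu^\perp| + |\bfnu^\perp|$ (we may take $\bfsigma$ depending on the pair, but the value of the functional is independent of this choice by $1$-homogeneity of $f^\infty$); writing both absolutely continuous and singular densities with respect to $\Leb^{d+1}$ and $\bfsigma$ respectively, the maps $(\bfmu,\bfnu) \mapsto (\rho, j)$ and $(\bfmu,\bfnu) \mapsto (\rho^\perp, j^\perp)$ become affine, and the convexity of $f$ and $f^\infty$ transfers under the integral. For the second functional, I would note that $\bCE^\cI$ is an affine subspace of $\cM_+ \times \cM^d$, so $\bfmu \mapsto \bA^\cI(\bfmu) = \inf\{\bA^\cI(\bfmu,\bfnu) : (\bfmu,\bfnu) \in \bCE^\cI\}$ is the inf-projection onto the first coordinate of a jointly convex function over a convex constraint, hence convex.

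The main tool for part (ii) is a dual representation of the action. I would establish the identity
\begin{align*}
	\bA^\cI(\bfmu,\bfnu)
	= \sup_{(a,b) \in \cK_c}
	\bigg[ \int_{\cI \times \Td} a \dd \bfmu
	+ \int_{\cI \times \Td} b \cdot \dd \bfnu \bigg],
\end{align*}
where $\cK_c := \{(a,b) \in \cC_c(\cI \times \Td; \R \times \R^d) : f^*(a(t,x), b(t,x)) \leq 0 \text{ for every } (t,x)\}$ and $f^*$ is the Legendre transform of $f$. One direction follows from Fenchel's inequality applied pointwise to both the Lebesgue-continuous and singular parts (using that $f^\infty$ is the support function of $\Dom(f^*)$, so $a\rho^\perp + b \cdot j^\perp \leq f^\infty(\rho^\perp,j^\perp)$ whenever $f^*(a,b)\leq 0$). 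The reverse inequality is the classical result of Bouchitté--Valadier on the integral representation of convex functionals of measures, applicable here since $f$ is convex, lower semicontinuous, and has nonempty interior domain (Assumption \ref{ass:f}).

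\textbf{Proof of \eqref{eq:lsc_varying_intervals_2}.} Fix $(a,b) \in \cK_c$. Since $\supp(a,b) \Subset \cI \times \Td$ and $|\cI \setminus \cI_n| \to 0$, for all $n$ sufficiently large we have $\supp(a,b) \subseteq \cI_n \times \Td$; extending by zero, $(a,b)$ is also admissible on $\cI_n \times \Td$, so that
\begin{align*}
	\int a \dd \bfmu^n + \int b \cdot \dd \bfnu^n
	\leq \bA^{\cI_n}(\bfmu^n,\bfnu^n).
\end{align*}
By vague convergence on $\cI \times \Td$ applied to the compactly supported $(a,b)$, the left-hand side converges to $\int a \dd \bfmu + \int b \cdot \dd \bfnu$. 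Taking the supremum over $(a,b) \in \cK_c$ and invoking the dual representation yields \eqref{eq:lsc_varying_intervals_2}. In particular, joint vague lower semicontinuity follows as the special case $\cI_n \equiv \cI$.

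\textbf{Proof of \eqref{eq:lsc_varying_intervals}.} Assume $L := \liminf_n \bA^{\cI_n}(\bfmu^n) < \infty$; pass to a subsequence realising the liminf and choose $\bfnu^n \in \cM^d(\cI_n \times \Td)$ with $(\bfmu^n,\bfnu^n) \in \bCE^{\cI_n}$ and $\bA^{\cI_n}(\bfmu^n,\bfnu^n) \leq \bA^{\cI_n}(\bfmu^n) + \tfrac{1}{n}$. The growth bounds \eqref{eq:growth_f}, \eqref{eq:growth_finfty} give
\begin{align*}
	c\, |\bfnu^n|(\cI_n \times \Td)
	\leq \bA^{\cI_n}(\bfmu^n,\bfnu^n) + C \big( \bfmu^n(\cI_n \times \Td) + |\cI_n| \big),
\end{align*}
and the right-hand side is bounded because $\bfmu^n(\cI_n \times \Td) \to \bfmu(\cI \times \Td) < \infty$. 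By Banach--Alaoglu in $\cM^d(\cI \times \Td)$, extract a further subsequence with $\bfnu^n \weakstarto \bfnu$ vaguely. Passing the continuity equation to the limit against test functions $\phi \in \cC_c^1(\cI \times \Td)$ (whose support lies in $\cI_n \times \Td$ for large $n$) gives $(\bfmu,\bfnu) \in \bCE^\cI$. Finally, applying \eqref{eq:lsc_varying_intervals_2} to this pair,
\begin{align*}
	\bA^\cI(\bfmu) \leq \bA^\cI(\bfmu,\bfnu)
	\leq \liminf_n \bA^{\cI_n}(\bfmu^n,\bfnu^n) = L.
\end{align*}
The main obstacle is the dual representation itself: the combination of a non-compact base $\cI \times \Td$, a possibly extended-real integrand $f$, and the singular-part contribution governed by $f^\infty$ requires invoking the Bouchitté--Valadier machinery rather than elementary duality, while the rest of the argument is a routine compactness-plus-passage-to-the-limit.
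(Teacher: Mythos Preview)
Your treatment of (i) and your deduction of \eqref{eq:lsc_varying_intervals} from \eqref{eq:lsc_varying_intervals_2} are correct and agree with the paper. The gap is in the dual formula on which your proof of \eqref{eq:lsc_varying_intervals_2} rests.

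The identity you write,
\[
\bA^\cI(\bfmu,\bfnu)=\sup_{(a,b)\in\cK_c}\Big[\int a\dd\bfmu+\int b\cdot\dd\bfnu\Big],\qquad \cK_c=\{(a,b)\in\cC_c: f^*(a,b)\leq 0\},
\]
is \emph{not} the Bouchitt\'e--Valadier representation; the correct version carries the extra term $-\int_{\cI\times\Td} f^*(a,b)\dd\Leb^{d+1}$. Dropping it is only legitimate when $f^*$ is $\{0,+\infty\}$-valued and $(0,0)\in\{f^*=0\}$, which forces $f$ to be nonnegative and positively $1$-homogeneous. Assumption~\ref{ass:f} allows neither. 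For instance $f(\rho,j)=|j|-\rho$ satisfies Assumption~\ref{ass:f}, yet $\{f^*\leq 0\}=\{a\leq -1,\ |b|\leq 1\}$, so every compactly supported $(a,b)$ violates the constraint outside its support and your set $\cK_c$ is empty. Even the corrected dual with compactly supported test functions runs into trouble here, since $f^*(0,0)=-\inf f$ may equal $+\infty$.

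There is a structural red flag confirming this: your argument for \eqref{eq:lsc_varying_intervals_2} never invokes the hypothesis $\bfmu^n(\cI_n\times\Td)\to\bfmu(\cI\times\Td)$. That hypothesis is genuinely needed, because under mere vague convergence mass can escape to $\partial\cI$, and the linear contribution $-C\,\bfmu(\cI\times\Td)$ hidden in $f$ then fails to be lower semicontinuous. The paper handles this by first shifting to the nonnegative integrand $g(\rho,j):=f(\rho,j)+C(\rho+1)$. For $\bA_g$ one has monotonicity in the domain, so $\bA_g^{\cI_n}(\bfmu^n,\bfnu^n)\geq\bA_g^{\tilde\cI}(\bfmu^n,\bfnu^n)$ for any $\tilde\cI\Subset\cI$ (eventually $\tilde\cI\Subset\cI_n$); on the fixed open set $\tilde\cI$ one applies the standard lower semicontinuity result for functionals of measures, then takes the supremum over $\tilde\cI$, and finally undoes the shift via
\[
\bA_g^{\cI_n}(\bfmu^n,\bfnu^n)=\bA^{\cI_n}(\bfmu^n,\bfnu^n)+C\big(\bfmu^n(\cI_n\times\Td)+|\cI_n|\big),
\]
which is exactly where the mass-convergence hypothesis enters.
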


\begin{proof}		
	\emph{(i)}: \
	Convexity of $\bA^\cI$ follows from convexity of 
	$f$, $f^\infty$,
	and the linearity of the constraint \eqref{eq:def_CE}.
	
	\smallskip
	
	\emph{(ii)}: \
	First we show \eqref{eq:lsc_varying_intervals_2}.
	Consider the convex energy density 
	$g(\rho,j) := f(\rho,j) + C(\rho + 1)$,
	which is nonnegative by \eqref{eq: growth}. 
	Let $\bA_g$ be the corresponding action functional defined using $g$ instead of $f$.
	Using the nonnegativity of $g$, 
	the fact that $|\cI \setminus \cI_n| \to 0$,
	and the lower semicontinuity result from \cite[Theorem 2.34]{amfupa},
	we obtain
	\begin{align*}
		\liminf_{n\to \infty} 
		\bA_g^{\cI_n}(\bfmu^n,\bfnu^n )
		\geq 
		\liminf_{n\to \infty} 
		\bA_g^{\tilde \cI}(\bfmu^n,\bfnu^n )
		\geq
		\bA_g^{\tilde \cI}(\bfmu, \bfnu).
	\end{align*}
	for every open interval $\tilde \cI \Subset \cI$. 
	Taking the supremum over $\tilde \cI$, we obtain
	\begin{align}	\label{eq:lsc_Ag}
		\liminf_{n\to \infty} 
		\bA_g^{\cI_n}(\bfmu^n,\bfnu^n )
		\geq
		\bA_g^{\cI}(\bfmu, \bfnu).
	\end{align}
	Since we have
	$\bfmu^n\big( \cI_n \times \Td \big) \to
	\bfmu\big( \cI \times \Td \big)$ by assumption, 
	the desired result
	\eqref{eq:lsc_varying_intervals_2} 
	follows from  \eqref{eq:lsc_Ag} 
	and the identity
	\begin{align*}
		\begin{aligned}
			\bA_g^{\cI_n}(\bfmu^n, \bfnu^n)
			= 
			\bA^{\cI_n}(\bfmu^n, \bfnu^n) 
			+ C \Big( \bfmu^n(\cI_n\times \Td) + 1 \Big).
		\end{aligned}
	\end{align*}
	
	\smallskip 
	
	Let us now show \eqref{eq:lsc_varying_intervals}.
	Let
	$\{\bfmu^n\}_n 
		\subseteq
	\cM_+\big( \cI_n \times \Td \big)
	$ 
	be such that 
		$\sup_n \bA^{\cI_n}(\bfmu^n) < \infty$ 
	and 
		$\bfmu^n \to \bfmu$ vaguely in $\cM_+(\cI \times \Td)$. 
	Let
		$\bfnu^n \in \cM^d(\cI_n \times \Td)$ 
	be such that 
		$(\bfmu^n, \bfnu^n) \in \bCE^{\cI_n}$ and 
	\begin{align*}
		 \bA^{\cI_n}(\bfmu^n, \bfnu^n) 
		 \leq \bA^{\cI_n}(\bfmu^n) + \frac1n.
	\end{align*}
	From Lemma \ref{lemma:disintegration}, we infer that 
		$\dd \bfmu^n(t,x) = \dd \mu_t^n(x) \dd t$ 
	where $(\mu_t^n)_{t \in \cI_n}$ 
	is a curve of constant total mass 
	$c_n := \mu_t^n(\T^d)$.
	Moreover, $M := \sup_n c_n <+\infty$, since $\bfmu^n \to \bfmu$ vaguely. 
	The growth condition \eqref{eq:growth_f} implies that
	\begin{align*}
		\sup_n 
		|\bfnu^n|\big( \cI^n \times \Td\big ) 
		\leq 
		\frac1c \sup_n \bA_\hom^{\cI_n}(\bfmu^n)
		+ \frac{C|\cI|}{c} 
			\big( M + 1 \big)
		< \infty.
	\end{align*}
	Hence, by the Banach--Alaoglu theorem, 
	there exists a subsequence of $\{\bfnu^n\}_n$ 
	(still indexed by $n$) 
	such that 
		$\bfnu^n \to \bfnu$ 
			vaguely in $\cM^d(\cI \times \Td)$
	and 
		$(\bfmu, \bfnu) \in \bCE^{\cI}$. 
	Another application of Lemma \ref{lemma:disintegration} ensures that 
		$\dd \bfmu(t,x) = \dd \mu_t(x) \dd t$ 
	where $(\mu_t)_{t \in \cI}$ is of constant mass 
		$c := \mu_t(\T^d) 
			= \lim_{n \to \infty} c_n
		$.
	
	We can thus apply the first part of \emph{(ii)} to obtain
	\begin{align*}
		\bA^{\cI}(\bfmu)
			\leq 
		\bA^{\cI}(\bfmu, \bfnu) 
			\leq 
		\liminf_{n \to \infty} 
			\bA^{\cI_n}( \bfmu^n, \bfnu^n ) 
		= 
		\liminf_{n \to \infty} 
			\bA^{\cI_n}( \bfmu^n ),
	\end{align*}
	which ends the proof.
\end{proof}

\section{The homogenised transport problem}
\label{sec:homogenised}

Throughout this section we assume that 
$(\cX,\cE)$ safisfies Assumption \ref{ass:XE} 
and 
$F$ safisfies Assumption \ref{ass:F}.

\subsection{Discrete representation of continuous measures and vector fields}

To define $f_\hom$, the following definition turns out to be natural.  

\begin{definition}[Representation]\label{def: representation}
	\mbox{}
		\begin{enumerate}[(i)]
			\item We say that 
				$m\in \R_+^{\cX}$ represents 
				$\rho \in \R_+$ 
			if $m$ is $\Z^d$-periodic and 
			\begin{align*} 
				\sum_{x \in \XQ} 
					m(x) 
				= \rho.
			\end{align*}	
			\item We say that $J\in \R_a^{\cE}$ represents a vector $j \in \R^d$ if 
			\begin{enumerate}
				\item $J$ is $\Z^d$-periodic;
				\item $J$ is divergence-free (i.e., 
					$\dive J(x) = 0$ 
					  for all $x \in \cX$);
				\item The effective flux of $J$ equals $j$; i.e., $\Eff(J) = j$, where
				\begin{align}
				\label{eq:Eff_J}
					\Eff(J) 
						:=
						\frac12
						\sum_{(x,y) \in \EQ} 
							J(x,y) 
							\big( 
								y_\sz - x_\sz 
							\big).
					\end{align}
			\end{enumerate}
		\end{enumerate}
		We use the (slightly abusive) notation 
			$m \in \Rep(\rho)$ and $J \in \Rep(j)$.
		We will also write 
			$\Rep(\rho,j) = \Rep(\rho) \times \Rep(j)$.
	\end{definition}
	
	\begin{remark}
	Let us remark that $x_\sz = 0$ in the formula for $\Eff(J)$, since $x_\sz \in \XQ$.
	\end{remark}
	
	\begin{remark}
		The definition of the effective flux $\Eff(J)$ is natural in view of Lemmas \ref{lem:conteq-embed} 
		and
		\ref{lemma:density_embedded_flux} below. 
		These results show that a solution to the continuous continuity equation can be constructed
		starting from a solutions to the discrete continuity equation, with a vector field of the form \eqref{eq:Eff_J}.
	\end{remark}

	Clearly, 
		$\Rep(\rho) \neq \emptyset$ 
	for every $\rho \in \R_+$. 
	It is also true, though less obvious, that
		$\Rep(j) \neq \emptyset$
	for every $j \in \R^d$.
	We will show this in Lemma \ref{lem:J_Pq} 
	using the $\Z^d$-periodicity 
	and the connectivity of $(\cX, \cE)$.
	
	To prove the result, we will first introduce a
	natural vector field associated to each
	simple directed path $P$ on $(\cX,\cE)$,
	For an edge 
		$e =(x,y) \in \cE$, 
	the corresponding reversed edge will be denoted by 
		$\overline e = (y,x) \in \cE$. 
	
	\begin{definition}[Unit flux through a path]	\label{def:vectoralongpath}
		Let $P := \{x_i\}_{i=0}^m$ be a simple path in $(\cX, \cE)$, 
		thus $e_i = (x_{i-1}, x_i) \in \cE$ 
		for $i = 1, \ldots, m$,
		and $x_i \neq x_k$ for $i \neq k$. 
		The \emph{unit flux through $P$}
		is the discrete field 
			$J_{P} \in \R^{\cE}_a$ 
		given by
		\begin{align}	\label{eq:divergence_J_Pq}
			J_P(e) = 
		\begin{cases}
		1  & \text{if } e = e_i \text{ for some } i, \\
		-1 & \text{if } e = \overline e_i  \text{ for some } i,\\
		0  & \text{otherwise}
		\end{cases}
		\end{align}
		The \emph{periodic unit flux through $P$} is the vector field 
			$\tilde J_P \in\R^{\cE}_a$
		defined by
		\begin{align}
		\label{eq:periodic-unit-flux}
			\tilde J_P(e) 
				= \sum_{z \in \Z^d} 
					J_P(T_z e)
				\quad 
			\text{for } e \in \cE.
		\end{align}	
	\end{definition}
	
	In the next lemma we collect some key properties of these vector fields.
	Recall the definition of the discrete divergence in \eqref{eq:divergence}.

	\begin{lemma}[Properties of $J_P$]	\label{lem:J_Pq}
		Let $P := \{x_i\}_{i=0}^m$ be a simple path in $(\cX, \cE)$.
		\begin{enumerate}[(i)]
			\item 
				The discrete divergence of the associated unit flux 
					$J_P : \cE \to \R$  
				is given by
				\begin{align}
					\dive J_P
					= 
					\one_{\{x_0\}} - \one_{\{x_m\}}.
				\end{align} 
			\item 
				The discrete divergence of the periodic unit flux 
					$\tilde J_P : \cE \to \R$  
				is given by
				\begin{align}
					\dive \tilde J_P(x)
					= 
					\one_{\{(x_0)_\sa\}}(x_\sa)
						-
					\one_{\{(x_m)_\sa\}}(x_\sa),
					\quad
					x \in \cX.
				\end{align} 
				In particular, $\dive \tilde J_P \equiv 0$ 
				iff $(x_0)_\sa = (x_m)_\sa$.
			\item 
			The periodic unit flux 
				$\tilde J_P : \cE \to \R$ 
			satisfies $\Eff(\tilde J_P) = (x_m)_\sz - (x_0)_\sz$. 
			\item For every $j \in \R^d$ we have $\Rep(j) \neq \emptyset$.
		\end{enumerate}
	\end{lemma}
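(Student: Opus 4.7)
The first three parts are direct computations; the substantive claim is (iv), where connectivity of $(\cX,\cE)$ enters.

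For (i), I would perform a case analysis at each vertex. For $1 \leq i \leq m-1$, the only neighbours of $x_i$ on which $J_P$ does not vanish are $x_{i-1}$ and $x_{i+1}$, contributing $J_P(x_i,x_{i-1})=-1$ and $J_P(x_i,x_{i+1})=+1$, which cancel. At $x_0$ (resp.\ $x_m$) only one incident edge of $P$ is present and contributes $+1$ (resp.\ $-1$). At all other vertices every incident edge is assigned value $0$. Simplicity of $P$ (distinct vertices, hence distinct edges) guarantees that the three cases do not interfere. For (ii), $\Z^d$-periodicity of $\cE$ means that for each $z \in \Z^d$ the shift $S^z$ restricts to a bijection between the neighbours of $x$ and those of $S^z x$. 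Interchanging the two summations therefore yields
\begin{equation*}
    \dive \tilde J_P(x) = \sum_{z \in \Z^d} \dive J_P(S^z x),
\end{equation*}
and (i) reduces this to counting $z \in \Z^d$ with $S^z x \in \{x_0,x_m\}$. Since $S^z x = (z+x_\sz, x_\sa)$, the equation $S^z x = x_0$ has a unique solution $z = (x_0)_\sz - x_\sz$ precisely when $x_\sa = (x_0)_\sa$, and similarly for $x_m$. This gives the stated formula and, as a corollary, the characterisation of when $\dive \tilde J_P \equiv 0$.

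For (iii), I would again change the order of summation: writing out the definition of $\tilde J_P$ in the formula for $\Eff(\tilde J_P)$ and substituting $(x',y') = S^z(x,y)$ for each $(x,y) \in \EQ$, the shift invariance of $y_\sz - x_\sz$ (recall $x_\sz = 0$ on $\EQ$) and the bijection between $\EQ \times \Z^d$ and $\cE$ collapse the double sum to
\begin{equation*}
    \Eff(\tilde J_P) \;=\; \tfrac12 \sum_{(x',y') \in \cE} J_P(x',y')(y'_\sz - x'_\sz).
\end{equation*}
The only contributing edges are $\{e_i,\overline{e}_i\}_{i=1}^{m}$, and each reversed pair contributes $2((x_i)_\sz - (x_{i-1})_\sz)$ because both $J_P$ and $(y'_\sz - x'_\sz)$ flip sign under reversal. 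Summation telescopes to $(x_m)_\sz - (x_0)_\sz$.

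Finally, (iv) follows from (ii) and (iii) once connectivity is used. The set of $\Z^d$-periodic, divergence-free $J \in \R_a^\cE$ is a linear subspace, and $\Eff$ restricts to a linear map on it, so $\Rep(j) \neq \emptyset$ for every $j \in \R^d$ is equivalent to surjectivity of $\Eff$. By linearity, it suffices to exhibit, for each standard basis vector $e_k \in \R^d$, a preimage. Fix any $x_\circ \in \cX$; by connectivity there is a walk from $x_\circ$ to $S^{e_k}(x_\circ)$, and the standard cycle-erasure procedure extracts from it a simple path $P_k$ with the same endpoints. Since these endpoints share the $\sa$-coordinate $(x_\circ)_\sa$, part (ii) shows that $\tilde J_{P_k}$ is divergence-free, and part (iii) gives $\Eff(\tilde J_{P_k}) = (S^{e_k} x_\circ)_\sz - (x_\circ)_\sz = e_k$. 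Thus $\tilde J_{P_k} \in \Rep(e_k)$, and the only non-computational ingredient in the entire lemma is precisely this invocation of connectivity.
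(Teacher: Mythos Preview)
Your proof is correct and follows essentially the same approach as the paper's. The paper dismisses (i) as ``straightforward to check'' and (ii) as ``a direct consequence'', while you spell out the case analysis and the summation interchange explicitly; your treatment of (iii) via the bijection $\cE^Q \times \Z^d \to \cE$ and telescoping, and of (iv) via connectivity plus linearity of $\Eff$ on divergence-free periodic fields, matches the paper's argument line by line.
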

	\begin{figure}[h]
		\begin{subfigure}{.5\textwidth}
			\centering
			\includegraphics[scale=0.45]{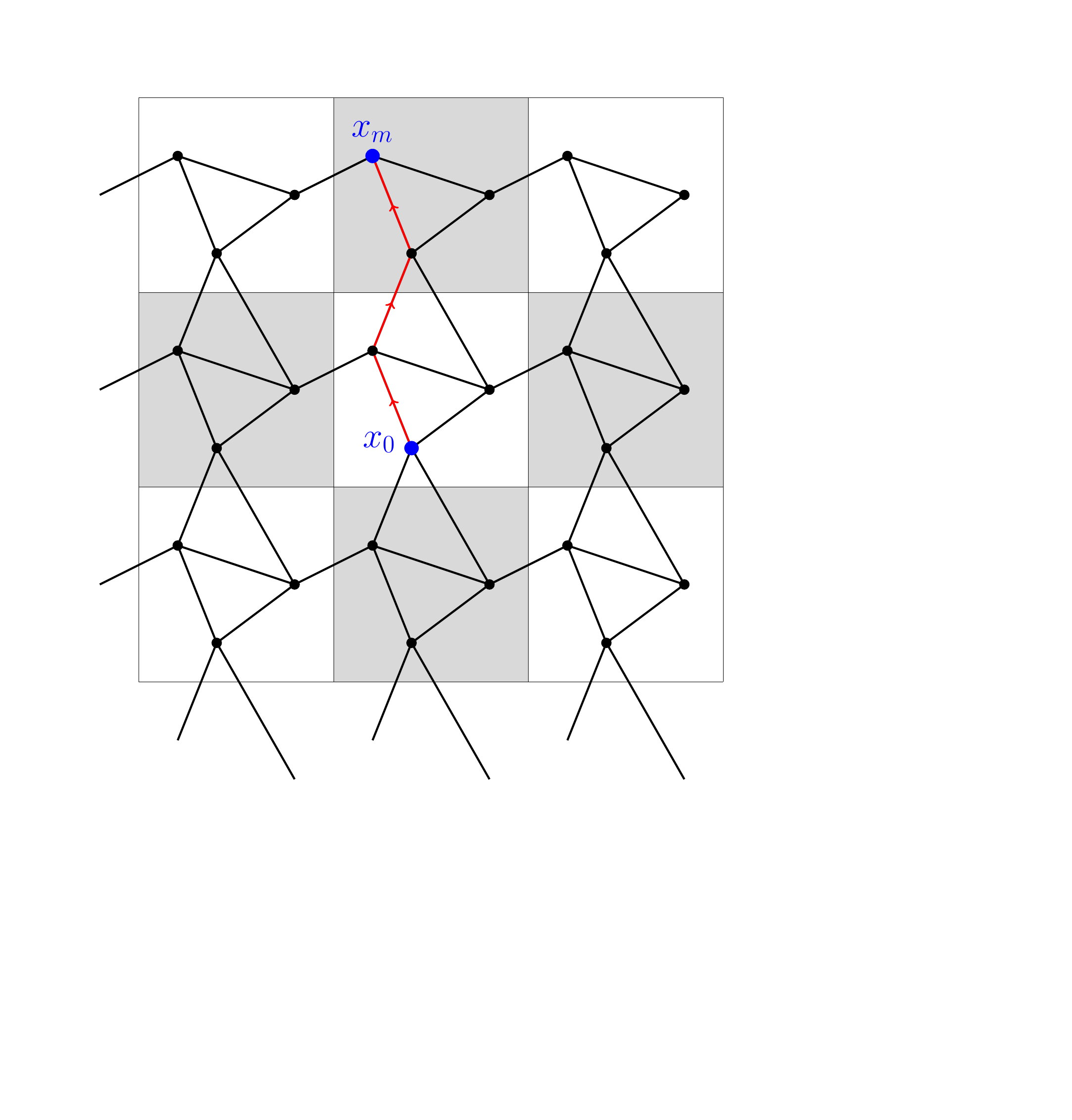}
		\end{subfigure}%
	\begin{subfigure}{.5\textwidth}
		\centering
		\includegraphics[scale=0.45]{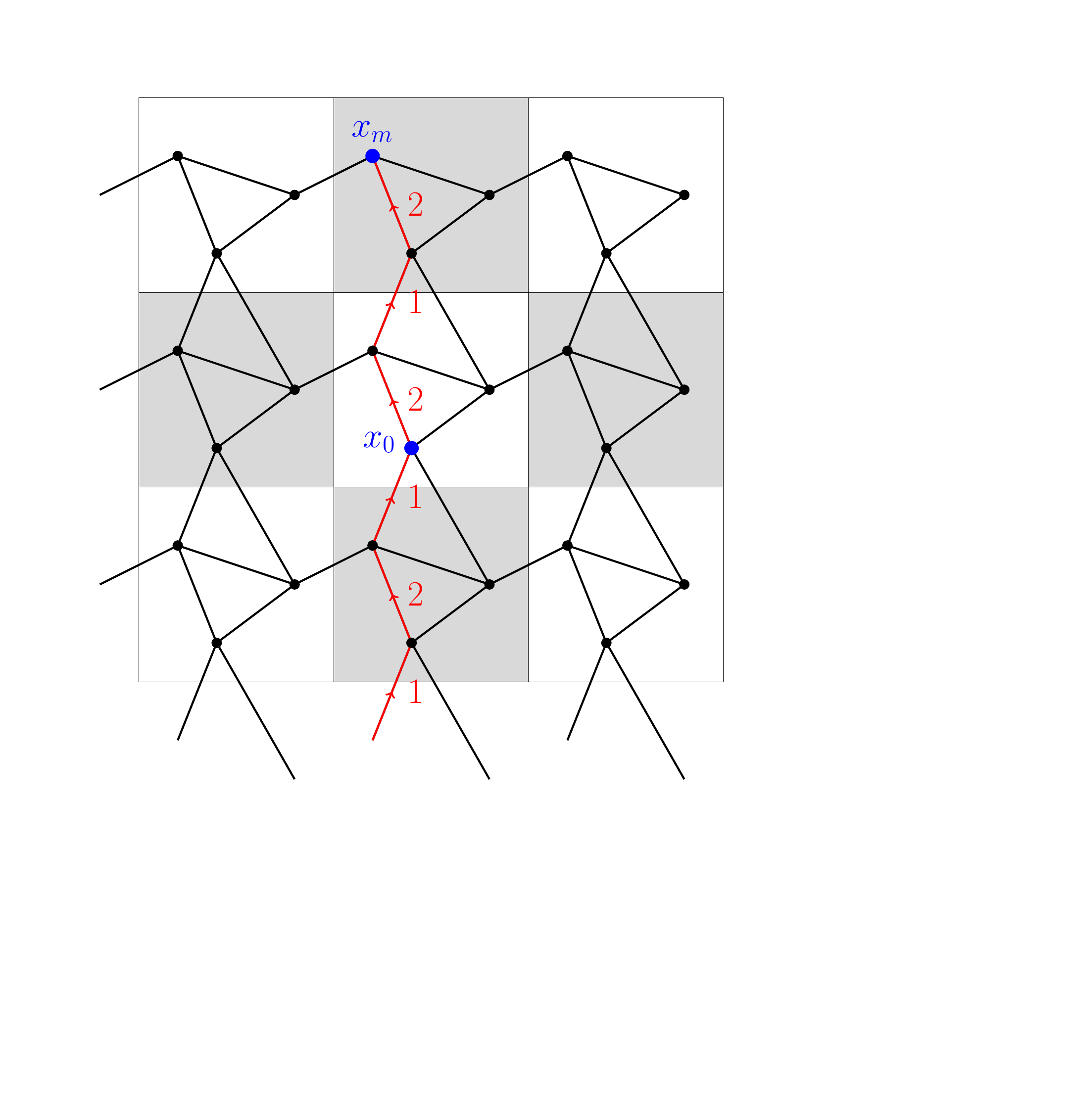}
	\end{subfigure}%
\vspace{-2.5cm}
		\caption{In the first figure, in red, the (directed) path $P$ from $x_0$ to $x_m$, support of the vector field $J_P$. In the second one, in red, the support of the vector field $\tilde J_P$ and its values.} 
		\label{fig:JPq2}
	\end{figure}
	
	\begin{proof}
		\emph{(i)} is straightforward to check, and \emph{(ii)} is a direct consequence. 
	
		To prove \emph{(iii)}, we use the definition of $\tilde J_{P,q}$ to obtain
		\begin{align*}
				\sum_{ (x,y) \in \EQ} 
						\tilde J_P(x,y) 
						\big( y_\sz - x_\sz \big) 
			& = \sum_{ (x,y) \in \EQ}  
					\sum_{z \in \Z^d} 
						J_P( T_z x, T_z y ) 
						\big(y_\sz - x_\sz \big) 
			\\& 
			= 
					 \sum_{ (x,y) \in \cE } 
						 J_P(x,y) 
						 \big(y_\sz - x_\sz \big).
		\end{align*}
	By construction, we have
		\begin{align*}
			\frac12 \sum_{(x,y) \in \cE} 
				J_P(x,y)
					\big(y_\sz - x_\sz \big)
				= \sum_{j=1}^m 
					(x_j)_\sz 
					- (x_{j-1})_\sz 
				= (x_m)_\sz - (x_0)_\sz,
		\end{align*}
	which yields the result.
	
	For \emph{(iv)}, taking $j = e_i$, we use the connectivity and nonemptyness of $(\cX,\cE)$ to find a simple path connecting some $(v,z) \in \cX$ to $(v,z+e_i) \in \cX$. The resulting $\tilde J_P \in \R_a^\cE$ is divergence-free by \emph{(ii)} and $\Eff(\tilde J_P)= e_i$ by \emph{(iii)}, so that $\tilde J_P \in \Rep(e_i)$. For a general $j=\sum_{i=1}^d j_i e_i$ we have $\Rep(j) \supseteq \sum_{i=1}^d j_i \Rep(e_i) \neq \emptyset$.
	\end{proof}

\subsection{The homogenised action}	

We are now in a position to define the homogenised energy density.

\begin{definition}[Homogenised energy density]
	The \emph{homogenised energy density}
	$f_{\hom} : \R_+ \times \R^d \to \R \cup \{ +\infty \}$ 
	is defined by the cell formula
	\begin{equation}
	\begin{aligned}	\label{eq: fhom}
		f_{\hom}(\rho,j) 
		& := 
		\inf\big\{ 
					F(m,J) 
				 \ : \ 
					(m,J) \in \Rep(\rho,j) 
			\big\}.
	\end{aligned}
	\end{equation}
	For $(\rho, j)\in \R_+ \times \R^d$, 
	we say that $(m,J) \in \Rep(\rho,j)$ 
	is an \emph{optimal representative} if 
		$F(m,J) = f_{\hom}(\rho,j)$.
	The set of optimal representatives is denoted by 
	\begin{align*}
		\Rep_o(\rho, j).	
	\end{align*}
\end{definition}	

In view of Lemma \ref{lem:J_Pq}, the set of representatives $\Rep(\rho, j)$ is nonempty for every 
	$(\rho, j)\in \R_+ \times \R^d$.
The next result shows that $\Rep_o(\rho, j)$ is nonempty as well.

\begin{lemma}[Properties of the cell formula]
	\label{lem:cell-formula}
	Let $(\rho, j)\in \R_+ \times \R^d$.
	If $f_\hom(\rho,j) < + \infty$, then the set of optimal representatives $\Rep_o(\rho, j)$ is nonempty, closed, and convex. 
\end{lemma}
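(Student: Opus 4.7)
The plan is to recognise that the cell formula \eqref{eq: fhom}, although stated over the infinite sets $\cX$ and $\cE$, is in fact a finite-dimensional convex minimisation problem. Every $\Z^d$-periodic $m \in \R_+^\cX$ is determined by its restriction to the finite set $\XQ$, and every $\Z^d$-periodic antisymmetric $J \in \R_a^\cE$ by its restriction to $\EQ$. Under this identification $\Rep(\rho,j)$ becomes a closed convex subset of the finite-dimensional space $\R_+^{\XQ} \times \R_a^{\EQ}$: the constraints $m \geq 0$, antisymmetry of $J$, periodicity, divergence-freeness of $J$, and the affine identities $\sum_{x \in \XQ} m(x) = \rho$ and $\Eff(J) = j$ are all preserved under limits and convex combinations. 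Moreover, by the locality of $F$ (Assumption~\ref{ass:F}), $F$ descends to a lower semicontinuous convex function on this finite-dimensional space.

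With this reduction in hand, convexity of $\Rep_o(\rho,j)$ is immediate: it is the set of minimisers of a convex function over a convex set. Closedness follows from lower semicontinuity: if $(m_n, J_n) \in \Rep_o(\rho,j)$ converges to $(m,J)$, then $(m,J) \in \Rep(\rho,j)$ by the closedness noted above, while $F(m,J) \leq \liminf_n F(m_n,J_n) = f_\hom(\rho,j)$ forces equality, whence $(m,J) \in \Rep_o(\rho,j)$.

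The main point is nonemptiness, for which I would use the direct method combined with the coercivity supplied by the at-least-linear growth condition \eqref{eq: growth}. Pick a minimising sequence $(m_n, J_n) \in \Rep(\rho,j)$ with $F(m_n,J_n) \to f_\hom(\rho,j) < \infty$; the mass constraint $\sum_{x \in \XQ} m_n(x) = \rho$ bounds $m_n$ in $\R_+^{\XQ}$, and by $\Z^d$-periodicity the quantity $\sum_{x \in \cX,\ |x_\sz|_{\ell_\infty^d} \leq R} m_n(x)$ is bounded as well. Substituting these bounds into \eqref{eq: growth} then yields a uniform $\ell^1(\EQ)$-bound on $J_n$, so the whole sequence lives in a bounded subset of the finite-dimensional parameter space. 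Extracting a convergent subsequence and invoking closedness of $\Rep(\rho,j)$ together with lower semicontinuity of $F$ produces the desired optimal representative. I do not anticipate any genuine obstacle: once the right finite-dimensional picture is set up, the argument reduces to a standard application of the direct method, with the only point worth emphasising being that the $\ell^1(\EQ)$-term in \eqref{eq: growth} is precisely what controls $J$ on all of $\EQ$ rather than merely on the locality ball.
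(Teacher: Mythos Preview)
Your proposal is correct and matches the paper's approach exactly: the paper's proof is the single line ``This follows from the coercivity of $F$ and the direct method of the calculus of variations,'' and what you have written is precisely the unpacking of that sentence in the periodic finite-dimensional setting.
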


\begin{proof}
This follows from the coercivity of $F$ and the direct method of the calculus of variations.
\end{proof}

\begin{lemma}[Properties of $f_\hom$ and $f_\hom^\infty$]	
	\label{lemma:properties_fhom}
	The following properties hold:
\begin{enumerate}[(i)]
	\item 
	The functions $f_\hom$ and $f_\hom^\infty$ are lower semicontinuous and convex. 
	\item 
	There exist constants $c > 0$ and $C < \infty$ such that, for all $\rho \geq 0$ and $j \in \R^d$,
	\begin{align}\label{eq: growth hom}
	f_\hom(\rho,j)
		\geq c|j| - C (\rho + 1),
		\qquad 
	f_\hom^\infty(\rho, j) 
		\geq c|j| - C \rho.
	\end{align}
	
	\item
	The domain 
	$\Dom(f_\hom) \subseteq \R_+ \times \R^d$ 
	has nonempty interior. 
	In particular, for any pair 
		$(m^\circ, J^\circ)$
	satisfying \eqref{eq: int dom}, 
	the element 
		$(\rho^\circ, j^\circ) 
			\in (0,\infty) \times \R^d$
	defined by
	\begin{align}	\label{eq:def_r0j0}
		(\rho^\circ, j^\circ) 	 
			:= 
		\bigg(
			\sum_{x \in \XQ} 
				m^\circ(x)
		,
		\frac12\sum_{(x,y) \in \EQ} 
				J^\circ(x,y) \big( y_\sz - x_\sz \big)
		\bigg)			
	\end{align}
	belongs to $\Dom(f_\hom)^\circ$.
\end{enumerate}
\end{lemma}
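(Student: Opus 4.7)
The strategy is to handle convexity, the growth bound, lower semicontinuity, and the interior property in sequence, exploiting the fact that through $\Z^d$-periodicity the cell formula lives effectively on the finite-dimensional space indexed by $\XQ$ and $\EQ$.

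\emph{Convexity of $f_\hom$ and $f_\hom^\infty$.} The set of representatives has an affine structure: if $(m_i, J_i) \in \Rep(\rho_i, j_i)$ for $i = 0,1$ and $\lambda \in [0,1]$, then $\lambda(m_0,J_0) + (1-\lambda)(m_1,J_1)$ lies in $\Rep\bigl(\lambda(\rho_0,j_0) + (1-\lambda)(\rho_1,j_1)\bigr)$, since both $\sum_{\XQ}$ and $\Eff$ are linear and $\Z^d$-periodicity and the divergence-free property are preserved under convex combinations. Together with the convexity of $F$, this yields convexity of $f_\hom$ after taking infima. Convexity and lower semicontinuity of $f_\hom^\infty$ then follow from standard convex analysis (see, e.g., \cite[Section 2.6]{amfupa}).

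\emph{Growth bound (ii).} For any $(m,J) \in \Rep(\rho,j)$, $\Z^d$-periodicity yields
\[
	\sum_{\substack{x\in\cX\\ |x_\sz|_{\ell^d_\infty}\leq R}} m(x)
	= (2R+1)^d \sum_{x \in \XQ} m(x) = (2R+1)^d \rho,
\]
while $|j| = |\Eff(J)| \leq \tfrac{\sqrt{d} R_0}{2} \sum_{(x,y)\in\EQ} |J(x,y)|$. Plugging both estimates into Assumption~\ref{ass:F}\eqref{item:F2} gives $F(m,J) \geq c'|j| - C'(1+\rho)$ for constants independent of $(m,J)$; taking the infimum produces the claimed bound on $f_\hom$. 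The corresponding bound for $f_\hom^\infty$ follows by dividing by $t$ and sending $t\to\infty$ in $f_\hom(\rho_0 + t\rho, j_0 + tj)$.

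\emph{Lower semicontinuity of $f_\hom$.} Let $(\rho_n, j_n) \to (\rho, j)$ with $L := \liminf_n f_\hom(\rho_n,j_n) < \infty$; passing to a subsequence, assume the $\liminf$ is a limit. Choose $(m_n, J_n) \in \Rep(\rho_n, j_n)$ with $F(m_n, J_n) \leq f_\hom(\rho_n,j_n) + 1/n$. The growth bound just established controls $\sum_{(x,y)\in\EQ} |J_n(x,y)|$ uniformly in $n$, while $\sum_{x\in\XQ} m_n(x) = \rho_n$ is bounded. Since $(m_n|_\XQ, J_n|_\EQ)$ lives in a bounded subset of the finite-dimensional space $\R_+^\XQ \times \R^\EQ$, we extract a subsequential limit $(m,J)$; extending periodically, the limit satisfies $\dive J = 0$, $\sum_\XQ m = \rho$ and $\Eff(J) = j$, i.e.\ $(m,J) \in \Rep(\rho,j)$. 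By pointwise convergence and lower semicontinuity of $F$ (Assumption \ref{ass:F}\eqref{item:F0}), $f_\hom(\rho, j) \leq F(m, J) \leq L$.

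\emph{Interior of the domain (iii).} This is the most delicate step. The idea is to perturb $(m^\circ, J^\circ)$ explicitly to reach $(\rho, j)$ near $(\rho^\circ, j^\circ)$ while preserving the representation constraints. For each $i = 1,\ldots,d$ let $\tilde J_{P_i} \in \Rep(e_i)$ be the periodic unit flux from Lemma~\ref{lem:J_Pq}\emph{(iv)}, and set
\[
	m^{\rho,j}(x) := m^\circ(x) + \frac{\rho - \rho^\circ}{|\V|}, \qquad
	J^{\rho,j}(x,y) := J^\circ(x,y) + \sum_{i=1}^d (j - j^\circ)_i\, \tilde J_{P_i}(x,y).
\]
By construction $m^{\rho,j}$ is $\Z^d$-periodic with $\sum_\XQ m^{\rho,j} = \rho$, and $J^{\rho,j}$ is $\Z^d$-periodic, divergence-free, and satisfies $\Eff(J^{\rho,j}) = j$, so $(m^{\rho,j}, J^{\rho,j}) \in \Rep(\rho, j)$ provided $m^{\rho,j} \geq 0$. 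Since $(m^\circ, J^\circ) \in \Dom(F)^\circ$ forces $m^\circ(x) > 0$ on the ball of radius $R_1$, periodicity propagates this to all of $\cX$, and positivity of $m^{\rho,j}$ holds for $(\rho,j)$ in a neighbourhood of $(\rho^\circ,j^\circ)$. As $(\rho,j) \to (\rho^\circ,j^\circ)$ the pair $(m^{\rho,j}, J^{\rho,j})$ converges pointwise to $(m^\circ, J^\circ)$, and since $F$ depends on only finitely many coordinates by the locality Assumption~\ref{ass:F}\eqref{item:F1}, the pair remains in $\Dom(F)$ for $(\rho,j)$ close enough to $(\rho^\circ,j^\circ)$. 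Hence $f_\hom(\rho,j) \leq F(m^{\rho,j}, J^{\rho,j}) < \infty$ on a whole neighbourhood of $(\rho^\circ,j^\circ)$, proving $(\rho^\circ,j^\circ) \in \Dom(f_\hom)^\circ$.

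The main obstacle is the interior statement (iii): the construction hinges on the availability of divergence-free periodic representatives of every target vector $j$, which is precisely the content of Lemma~\ref{lem:J_Pq}\emph{(iv)}, together with the locality of $F$ to reduce an infinite-dimensional membership problem to a finite-dimensional one.
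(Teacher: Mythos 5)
Your proof is correct and follows essentially the same route as the paper's: convexity from affinity of the constraints plus convexity of $F$; lower semicontinuity by extracting a convergent subsequence of near-optimal competitors in the (finite-dimensional, by periodicity) space $\R_+^\XQ\times\R^\EQ$ using the growth bound, then invoking lower semicontinuity of $F$; the growth bound by plugging $|\Eff(J)|\lesssim\sum_{\EQ}|J|$ and $\sum_{|x_\sz|\leq R}m=(2R+1)^d\rho$ into Assumption~\ref{ass:F}\eqref{item:F2}; and the interior statement by perturbing $(m^\circ,J^\circ)$ with a constant mass shift and a linear combination of periodic unit fluxes from Lemma~\ref{lem:J_Pq}\emph{(iv)}. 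The one place where you are slightly more explicit than the paper is the observation that $(m^\circ,J^\circ)\in\Dom(F)^\circ$ forces $m^\circ>0$ on the local window (hence everywhere by periodicity), which is needed to keep $m^{\rho,j}\geq 0$ for $\rho$ slightly below $\rho^\circ$ — a point the paper's proof uses implicitly; making it explicit is a small improvement in clarity rather than a genuine departure.
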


\begin{proof} 

\emph{(i)}:
	The convexity of $f_\hom$ follows from the convexity of $F$ and the affinity of the constraints. 
	Let us now prove lower semicontinuity of $f_\hom$. 
	
	Take $(\rho, j) \in \R_+ \times \R^d$ and sequences
	$\{\rho_n\}_n \subseteq \R_+$ and $\{j_n\}_n \subseteq \R^d$ converging to $\rho$ and $j$ respectively. 
	Without loss of generality we may assume that 
		$L := \sup_{n \to \infty} 
				f_\hom(\rho_n, j_n) 
			< \infty$. 
	By definition of $f_\hom$, there exist 
		$(m_n, J_n) \in \Rep(\rho_n, j_n)$ 
	such that 
		$F(m_n, J_n) \leq f_\hom(\rho_n, j_n) + \frac1n$.
	From the growth condition \eqref{eq: growth} we deduce that
	\begin{align*}
		\sup_n 
			\sum_{ x \in \XQ } 
				m_n(x) 
		= \sup_n \rho_n < \infty
	\tand	
		\sup_n
			\sum_{(x,y) \in \EQ}
				|J_n(x,y)| 
		\lesssim 1 + L + \sup_n r_n
		< \infty.
	\end{align*}
	From the Bolzano--Weierstrass theorem 
	we infer subsequential convergence 
	of 
		$\{(m_n, J_n)\}_n$ 
	to some $\Z^d$-periodic pair
		$(m, J) \in \R_+^\cX \times \R^\cE$.
	Therefore, by lower semicontinuity of $F$, it follows that
	\begin{align}
		F(m, J)
		\leq \liminf_{n \to \infty}
				F(m_n, J_n) 
		\leq \liminf_{n \to \infty}
				f_\hom(\rho_n, j_n)
	\end{align}
	Since 
		$(m,J) \in \Rep(\rho,j)$, 
	we have
		$f_\hom(\rho, j) \leq F(m, J)$,
	which yields the desired result.  
	Convexity and lower semicontinuity of $f_\hom^\infty$ follow from the definition, see \cite[Section 2.6]{amfupa}.
	\smallskip

	\emph{(ii)}
	Take $\rho \in \R_+$ and $j \in \R^d$.
	If $f_\hom(\rho, j) = + \infty$, the assertion is trivial, so we assume that 
		$f_\hom(\rho, j) < + \infty$.
	Then there exists a competitor 
		$(m, J) \in \Rep(\rho,j)$ 
	such that 
		$F(m,J) \leq f_\hom(\rho,j) + 1$.
	The growth condition \eqref{eq: growth} asserts that
	\begin{align*}
		F(m,J)
		& \geq c 
			\sum_{(x,y) \in \EQ} 
					|J(x,y)| 
			- C \sum_{x \in \XQ} 
				m(x) 
			- C 
	\end{align*}
	Therefore, the claim follows from the fact that 
	\begin{align*}
		R_0 \sum_{(x,y) \in \EQ} |J(x,y)| 
				\gtrsim |j|
		\tand
		\sum_{x \in \XQ} m(x) = r,
	\end{align*}		
	where $R_0 = 
		\max_{(x,y) \in \cE} 
			|  x_\sz - y_\sz|_{\ell_\infty^d}
	$ .

	\emph{(iii)}: 
	Let $( m^\circ, J^\circ ) \in \Dom(F)^\circ$ satisfy Assumption \ref{ass:F}, and define 
		$(\rho^\circ, j^\circ) 
			\in (0,\infty) \times \R^d$ 
	by \eqref{eq:def_r0j0}.
	For $i = 1, \ldots, d$, 
	let $e_i$ be the coordinate unit vector.
	Using Lemma \ref{lem:J_Pq} \emph(iv) we take 
		$J^i \in \Rep(e_i)$. 
	For 
		$\alpha \in \R$ 
			with $|\alpha|$ sufficiently small, and 
		$\beta =  \sum_{i=1}^d \beta_i e_i \in \R^d$
	we define 
	\begin{align*}
		m_\alpha(x) 
			&:= m^\circ(x) + \frac{\alpha }{\#(\XQ)}
			&& 	x \in \cX, \\ 
		J_\beta(x,y) 
			&:= J^\circ(x,y) + \sum_{i=1}^d \beta_i J^i(x,y)
			&& 	(x,y) \in \cE. 
	\end{align*}
	It follows that 
		$(m_\alpha, J_\beta) \in 
			\Rep(\rho^\circ + \alpha, j^\circ + \beta)$, 
	and therefore, 
		$f_\hom(\rho^\circ + \alpha, j^\circ + \beta) 
			\leq F(m_\alpha, J_\beta)$.
	By Assumption \ref{ass:F}, the right-hand side is finite for $|\alpha| + |\beta|$ sufficiently small. 
	This yields the result.
	\end{proof}

The homogenised action $\bA^\cI_\hom$ can now be defined by taking $f = f_\hom$ in Definition \ref{def: A2}.

\subsection{Embedding of solutions to the discrete continuity equation} \label{sec: embedding}

For $\eps > 0$ and $z \in \Z$ (or more generally, for $z \in \R$) let $Q_\eps^z := \eps z + [0, \eps)^d \subseteq \T^d$ 
denote the cube of side-length $\eps$ based at $\eps z$.
For $m \in \Meps$ and $J \in \Mdeps$ we define 
	$\iota_\eps m
		 \in \MT$ 
and 
	$\iota_\eps J 
		\in \MdT$
by 	
\begin{subequations}
	\label{eq:embeddings}
\begin{align}
		\label{eq: iota m}
	\iota_\eps m 
	&	:= 
		\eps^{-d}	
		\sum_{x \in \cX_\eps}
			m(x)
			\Leb^d|_{Q_\eps^{x_\sz}}, \\
				\label{eq: iota J}
	\iota_\eps J
		&	:= 
		\eps^{-d+1}	
		\sum_{(x,y) \in \cE_\eps}
			\frac{J(x,y)}{2}
			\bigg(\int_0^1
				\Leb^d|_{ Q_\eps^{(1-s)x_\sz + s y_\sz}}
			\dd s\bigg)
			(y_\sz - x_\sz), 
\end{align}
\end{subequations}

The embeddings \eqref{eq:embeddings} are chosen to ensure that solutions to the discrete continuity equation are mapped to solutions to the continuous continuity equation, as the following result shows.

\begin{lemma}
	\label{lem:conteq-embed}
	Let $(\bfm, \bfJ) \in\cCE_\eps^\cI$ solve the discrete continuity equation and 
	define 
		$\mu_t = \iota_\eps m_t$
	 and 
		$\nu_t = \iota_\eps J_t$.
	Then $(\bfmu, \bfnu)$ solves the continuity equation
	(i.e., $(\bfmu, \bfnu) \in \bCE^\cI$).
\end{lemma}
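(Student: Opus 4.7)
The strategy is a direct computation: test the continuous equation against an arbitrary $\phi \in C_c^1(\cI \times \T^d)$, reshuffle the integrals by Fubini, and reduce everything to a sum involving the discrete continuity equation against a suitable discrete test function. The central observation is that the cube-averaging
\begin{equation*}
    \phi_t^\eps(x) := \eps^{-d} \int_{Q_\eps^{x_\sz}} \phi_t(u) \, \ddd u, \qquad x \in \cX_\eps,
\end{equation*}
converts the continuous pairings with $\iota_\eps m_t$ and $\iota_\eps J_t$ into discrete pairings against $m_t$ and $J_t$.

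First I would handle the time-derivative term. From \eqref{eq: iota m} one obtains immediately
\begin{equation*}
    \int_{\T^d} \partial_t \phi_t \, \ddd \mu_t
        = \sum_{x \in \cX_\eps} m_t(x) \, \partial_t \phi_t^\eps(x).
\end{equation*}
Next, for the spatial term, I would use that $Q_\eps^{(1-s)x_\sz + s y_\sz}$ is just $Q_\eps^{x_\sz}$ translated by $s\eps(y_\sz - x_\sz)$ (viewed on the torus; since $\eps R_0 < \tfrac12$ the relevant representatives do not wrap around). By Fubini and the fundamental theorem of calculus,
\begin{equation*}
    (y_\sz - x_\sz) \cdot \int_0^1 \int_{Q_\eps^{(1-s)x_\sz + s y_\sz}} \!\!\nabla \phi_t(u) \, \ddd u\, \ddd s
    \ = \ \eps^{-1}\!\!\int_{Q_\eps^{x_\sz}} \!\!\bigl[\phi_t(u + \eps(y_\sz - x_\sz)) - \phi_t(u)\bigr] \ddd u
    \ = \ \eps^{d-1}\bigl[\phi_t^\eps(y) - \phi_t^\eps(x)\bigr].
\end{equation*}
Plugging this into \eqref{eq: iota J} and using the antisymmetry of $J_t$ to rearrange the edge sum into a vertex sum gives
\begin{equation*}
    \int_{\T^d} \nabla \phi_t \cdot \ddd \nu_t
    = \tfrac12 \sum_{(x,y) \in \cE_\eps} J_t(x,y)\bigl[\phi_t^\eps(y) - \phi_t^\eps(x)\bigr]
    = -\sum_{x \in \cX_\eps} \phi_t^\eps(x) \sum_{y \sim x} J_t(x,y).
\end{equation*}

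To conclude, I would integrate in time and apply, for each fixed $x \in \cX_\eps$, the discrete continuity equation \eqref{eq:def_discrete_CE} tested against the scalar function $t \mapsto \phi_t^\eps(x)$. Since $\phi \in C_c^1(\cI \times \T^d)$, the map $t \mapsto \phi_t^\eps(x)$ lies in $C_c^1(\cI)$, and a standard mollification argument shows that the distributional identity $\partial_t m_t(x) + \sum_{y\sim x} J_t(x,y) = 0$ extends from $C_c^\infty(\cI)$ to $C_c^1(\cI)$ test functions. Summing over the (finitely many) $x \in \cX_\eps$ yields
\begin{equation*}
    \int_\cI \sum_{x \in \cX_\eps} m_t(x) \, \partial_t \phi_t^\eps(x) \, \ddd t
    \ = \
    \int_\cI \sum_{x \in \cX_\eps} \phi_t^\eps(x) \sum_{y \sim x} J_t(x,y) \, \ddd t,
\end{equation*}
which combined with the two displays above proves $(\bfmu, \bfnu) \in \bCE^\cI$ in the sense of Definition \ref{def:CE1}.

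The only delicate point is the identification $Q_\eps^{(1-s)x_\sz + s y_\sz} = Q_\eps^{x_\sz} + s\eps(y_\sz - x_\sz)$ as subsets of $\T^d$, since $x_\sz, y_\sz$ live a priori in $\Z_\eps^d$. I would handle this by lifting to $\R^d$: choose a representative $\tilde y_\sz \in \Z^d$ of $y_\sz$ with $|\tilde y_\sz - x_\sz|_{\ell^\infty_d} = |y_\sz - x_\sz|_{\ell^\infty_d} \leq R_0$, which is possible by the standing hypothesis $\eps R_0 < \tfrac12$; all computations above are intrinsic and unaffected by the choice of representative. Once this lifting is in place, the rest is a routine verification.
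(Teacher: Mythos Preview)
Your proof is correct and follows essentially the same route as the paper: both arguments expand the definitions of $\iota_\eps m$ and $\iota_\eps J$, recognise via the fundamental theorem of calculus that the $s$-integral of $\nabla\phi\cdot\eps(y_\sz-x_\sz)$ over the moving cube yields the difference of cube averages $\phi_t^\eps(y)-\phi_t^\eps(x)$, and then invoke the discrete continuity equation. Your explicit introduction of the averaged test function $\phi_t^\eps$ and your careful treatment of the torus lifting are cosmetic improvements over the paper's presentation, but the underlying computation is identical.
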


\begin{proof}
	Let $\phi : \cI \times \T^d \to \R$ 
	be smooth with compact support. Then:
\begin{align*}
	&\int_\cI
		\int_{\T^d} 
			\nabla \phi \cdot
		\dd \nu_t
	\dd t
	\\& = 	\frac{1}{2  \eps^d}
		\sum_{(x,y) \in \cE_\eps}
			\int_\cI
				J_t(x,y)
				\int_0^1 
					\int_{Q_\eps^{(1-s) x_\sz + s y_\sz}}
						\nabla \phi(t,x) \cdot \eps (y_\sz - x_\sz)
					\dd \Leb^d
				\dd s
			\dd t
	\\&	= \frac{1}{2 \eps^d}
			\sum_{(x,y) \in \cE_\eps}	
			\int_\cI	
				J_t(x,y)	
				\int_0^1 
					\partial_s 
					\bigg(
						\int_{Q_\eps^{(1-s) x_\sz + s y_\sz}}
							\phi
						\dd \Leb^d
					\bigg)
				\dd s
			\dd t
	\\&	= \frac{1}{2 \eps^d}
			\sum_{(x,y) \in \cE_\eps}
				\int_\cI
				J_t(x,y)
				\bigg(
						\int_{Q_\eps^{y_\sz}}
							\phi 
							\dd \Leb^d
					-
						\int_{Q_\eps^{x_\sz}}
							\phi 
							\dd \Leb^d
					\bigg)
			\dd t.
	\end{align*}

On the other hand, the discrete continuity equation yields
\begin{align*}
	\int_\cI
		\int_{\T^d} 
			\partial_t \phi
		\dd \mu_t
	\dd t
	& = \frac{1}{\eps^{d}}	 
	\sum_{x \in \cX_\eps}
	\int_\cI
		m_t(x)
		\partial_t \bigg(\int_{Q_\eps^{x_\sz}} 
		\phi \dd \Leb^d \bigg)
	\dd t		
	\\ & = \frac{1}{2\eps^d}	 
	\sum_{(x,y) \in \cE_\eps}
	\int_\cI
	J_t(x,y) \bigg(\int_{Q_\eps^{x_\sz}} 
			\phi \dd \Leb^d 
			-
			\int_{Q_\eps^{y_\sz}} 
			\phi \dd \Leb^d \bigg)
	\dd t.
\end{align*}		
Comparing both expressions, we obtain the desired identity $\partial_t \bfmu + \nabla \cdot \bfnu = 0$ in the sense of distributions.
\end{proof}

The following result provides a useful bound for the norm of the embedded flux.

\begin{lemma}
	\label{eq:norm-embedded-flux}
	For $J \in \Mdeps$ we have
	\begin{align*}
		|\iota_\eps J|(\T^d)
			\leq 
		\frac{\eps R_0 \sqrt{d}}{2}
		\sum_{(x,y) \in \cE_\eps}
			|J(x,y)|.
	\end{align*}
\end{lemma}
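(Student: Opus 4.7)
The plan is a direct computation: unwind the definition of $\iota_\eps J$ and apply the triangle inequality for total variation of a vector-valued measure, so that the only ingredients needed are (a) the mass of the averaged cube measure $\int_0^1 \Leb^d|_{Q_\eps^{(1-s)x_\sz + sy_\sz}} \dd s$ and (b) a Euclidean bound on the edge displacement $y_\sz - x_\sz$.

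\smallskip

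First, by the triangle inequality applied termwise to the defining sum \eqref{eq: iota J}, we have the pointwise bound between measures
\begin{equation*}
|\iota_\eps J|
	\leq
	\eps^{-d+1}
	\sum_{(x,y) \in \cE_\eps}
		\frac{|J(x,y)|}{2}
		\, |y_\sz - x_\sz| \,
		\bigg(\int_0^1
			\Leb^d\big|_{Q_\eps^{(1-s)x_\sz + s y_\sz}}
		\dd s\bigg),
\end{equation*}
where $|y_\sz - x_\sz|$ denotes the Euclidean norm of the lift of $y_\sz - x_\sz$ in $\Z^d$ induced by the edge in $(\cX,\cE)$ producing $(x,y) \in \cE_\eps$.

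\smallskip

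Next, since every translate $Q_\eps^z$ has Lebesgue measure $\eps^d$ on $\Td$, Fubini gives
\begin{equation*}
	\bigg(\int_0^1
		\Leb^d\big|_{Q_\eps^{(1-s)x_\sz + s y_\sz}}
	\dd s\bigg)(\Td)
	= \int_0^1 \eps^d \dd s = \eps^d.
\end{equation*}
For the Euclidean factor, Assumption \ref{ass:XE} together with the standard inequality $|\cdot| \leq \sqrt{d}\,|\cdot|_{\ell_\infty^d}$ on $\R^d$ yields
\begin{equation*}
	|y_\sz - x_\sz| \leq \sqrt{d} \, |y_\sz - x_\sz|_{\ell_\infty^d} \leq \sqrt{d} \, R_0
	\qquad \text{for every } (x,y) \in \cE_\eps.
\end{equation*}

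\smallskip

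Combining these two bounds and taking the total mass on $\T^d$ in the termwise inequality,
\begin{equation*}
	|\iota_\eps J|(\T^d)
	\leq
	\eps^{-d+1}
	\sum_{(x,y) \in \cE_\eps}
		\frac{|J(x,y)|}{2} \cdot \sqrt{d}\, R_0 \cdot \eps^d
	= \frac{\eps R_0 \sqrt{d}}{2}
	\sum_{(x,y) \in \cE_\eps} |J(x,y)|,
\end{equation*}
which is the claimed estimate. There is no real obstacle here: the mild subtlety is only to remember that $y_\sz - x_\sz$ must be read as the lift in $\Z^d$ of length at most $R_0$ coming from the periodic graph $(\cX, \cE)$ rather than as an element of $\Z_\eps^d$ (so that no issue with the wrap-around appears, which is why we assumed $\eps R_0 < \tfrac12$ at the start of Section \ref{sec:rescaled}).
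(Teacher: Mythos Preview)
Your proof is correct and is essentially the same argument as the paper's: both use that each averaged cube measure has total mass $\eps^d$ and that $|y_\sz - x_\sz| \leq R_0\sqrt{d}$, then apply the triangle inequality to the defining sum. The only cosmetic difference is that the paper phrases the computation via the density formula \eqref{eq:density_embedded_flux}, whereas you work directly from the definition \eqref{eq: iota J}; this makes no substantive difference.
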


\begin{proof}
	This follows immediately from 
		\eqref{eq:density_embedded_flux},
	since 
		$\Leb^d \big(Q_\eps^{(1-s)x_\sz + s y_\sz}\big) 
			= \eps^d
		$
	and
		$|y_\sz - x_\sz| \leq R_0 \sqrt{d}$ 
	for $(x,y) \in \cE_\eps$.
\end{proof}

Note that both measures in \eqref{eq:embeddings} 
are absolutely continuous 
with respect to the Lebesgue measure. 
The next result provides an explicit expression 
for the density of the momentum field. Recall the definition of the shifting operators $\sigma_\eps^{\bar z}$ in \eqref{eq:def_sigma}.

\begin{lemma}[Density of the embedded flux]
\label{lemma:density_embedded_flux}
	Fix $\eps <\frac1{2R_0}$. 
	For $J \in \R_a^{\cE_\eps}$ we have 
	$\iota_\eps J = j_\eps \Leb^d$ 
	where $j_\eps : \Td \to \R^d$ is given by
\begin{align}	\label{eq:density_embedded_flux}
	j_\eps(u)
		& = 
			\eps^{-d+1}
				\sum_{z \in \Z_\eps^d}
					\chi_{Q_\eps^z}(u)
					\Bigg(\frac12	
						\sum_{
							\substack{(x, y) \in \cE_\eps \\
									x_\sz = z}
								}
					J_u(x, y) 
		\big( y_\sz - x_\sz \big)
	\Bigg) \quad \text{for } u \in \Td.
\end{align}	
Here, $J_u(x, y)$ is a convex combination of $\big\{  \sigma_\eps^{\bar z} J  (x,y )\big\}_{{\bar z}\in \Z_\eps^d}$, i.e., 	
\begin{align*}
	J_u(x, y) 
		& = \sum_{\bar z \in \Z_\eps^d}
				\lambda_u^{\eps,\bar z}(x,y)
				 \sigma_\eps^{\bar z} J  (x,y ),
\end{align*}
where 
	$\lambda_u^{\eps, {\bar z}}(x,y) \geq 0$ 
and 
	$\sum_{{\bar z} \in \Z_\eps^d} 
		\lambda_u^{\eps,{\bar z}}(x,y) 
	= 1$.
Moreover, 
\begin{align}	\label{eq:prop_support_lambda}
	\lambda_u^{\eps, {\bar z}}(x,y) = 0
	 \quad \text{whenever} \ 
	u \in Q_\eps^{x_\sz}, \, |{\bar z}|_\infty > R_0 + 1.	
\end{align}

\end{lemma}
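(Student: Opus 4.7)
The plan is to extract the Radon--Nikodym density of $\iota_\eps J$ with respect to $\Leb^d$ directly from the defining formula \eqref{eq: iota J}, then reindex the edge sum via the shift operators $\sigma_\eps^{\bar z}$, and finally verify that the resulting weights form a convex combination using the tiling property of the $\eps$-cubes.

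First, I would observe that \eqref{eq: iota J} exhibits $\iota_\eps J$ as a finite linear combination of measures of the form $\Leb^d|_{Q_\eps^w}$, so $\iota_\eps J \ll \Leb^d$. The standing assumption $\eps R_0 < \tfrac12$ guarantees that after lifting each edge to $\cE$, the cube $Q_\eps^{(1-s) x'_\sz + s y'_\sz}\subset \Td$ is unambiguously defined for every $s \in [0,1]$. Reading off the density then yields
\[ j_\eps(u) = \eps^{1-d}\sum_{(x',y') \in \cE_\eps} \frac{J(x',y')}{2}\lambda_u(x',y')(y'_\sz - x'_\sz), \quad \lambda_u(x',y') := \int_0^1 \chi_{Q_\eps^{(1-s)x'_\sz + s y'_\sz}}(u)\dd s. \]

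Next, I would fix $z_0 \in \Z_\eps^d$ and $u \in Q_\eps^{z_0}$ and reindex: every $(x',y') \in \cE_\eps$ has a unique representation $(x',y') = (S_\eps^{\bar z}(x), S_\eps^{\bar z}(y))$ with $x_\sz = z_0$ and $\bar z := x'_\sz - z_0 \in \Z_\eps^d$. Since the shift preserves offsets, $y'_\sz - x'_\sz = y_\sz - x_\sz$, and $J(x',y') = \sigma_\eps^{\bar z} J(x,y)$ by \eqref{eq:def_sigma}. Substituting and setting $\lambda_u^{\eps,\bar z}(x,y) := \lambda_u(S_\eps^{\bar z}(x), S_\eps^{\bar z}(y))$ produces the announced formula \eqref{eq:density_embedded_flux}, with $J_u$ of the desired convex-combination form modulo checking that the weights sum to one.

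The key verification is this normalisation. For a fixed reference edge $(x,y)$ with $x_\sz = z_0$ and fixed $s \in [0,1]$, the collection $\{Q_\eps^{(1-s)z_0 + sy_\sz + \bar z}\}_{\bar z \in \Z_\eps^d}$ is a $\Td$-translate of the standard tiling $\{Q_\eps^w\}_{w \in \Z_\eps^d}$, hence still tiles $\Td$ exactly once up to a null set. Thus $\sum_{\bar z} \chi_{Q_\eps^{(1-s)z_0 + sy_\sz + \bar z}}(u) = 1$ for a.e.\ $u$, and integrating in $s$ gives $\sum_{\bar z} \lambda_u^{\eps, \bar z}(x,y) = 1$. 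For the support property \eqref{eq:prop_support_lambda}, positivity of $\lambda_u^{\eps,\bar z}(x,y)$ supplies some $s \in [0,1]$ with $u - \eps((1-s)z_0 + sy_\sz + \bar z) \in [0,\eps)^d$; writing $u = \eps z_0 + \xi$ with $\xi \in [0,\eps)^d$ gives $\eps \bar z = \eps s (z_0 - y_\sz) + (\xi - \eta)$ for some $\eta \in [0,\eps)^d$, whence $|\bar z|_\infty \leq R_0 + 1$. I do not anticipate a substantial obstacle; the argument is essentially a bookkeeping exercise organised around the tiling property of the $\eps$-cubes.
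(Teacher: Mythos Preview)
Your proposal is correct and follows essentially the same route as the paper: read off the density from \eqref{eq: iota J}, reindex every edge as a shift of one anchored at the cube containing $u$, use the tiling of $\Td$ by the shifted family of $\eps$-cubes to get $\sum_{\bar z}\lambda_u^{\eps,\bar z}=1$, and bound $|\bar z|_\infty$ via a distance estimate for the support claim. The paper phrases the last step as a triangle-inequality argument while you do the equivalent coordinate computation, but this is only cosmetic.
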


\begin{proof}
	Fix $\eps < \frac{1}{2R_0}$, 
	let $z \in \Z_\eps^d$ 
	and $u \in Q_\eps^z$.
	We have 
	\begin{align*}
		j_\eps(u)
		& =
		\eps^{-d+1}	
		\sum_{(x,y) \in \cE_\eps}
			\frac{J(x,y)}{2}
			\bigg(\int_0^1
				\chi_{Q_\eps^{(1-s)x_\sz + s y_\sz}}(u)
			\dd s\bigg)
			(y_\sz - x_\sz)
	\\&	=
		\eps^{-d+1}	
		\sum_{	
				\substack{(x, y) \in \cE_\eps \\
						x_\sz = z}
			}
		\sum_{\bar z \in \Z_\eps^d}	
			\frac{\sigma_\eps^{\bar z} J  (x,y)}{2}
			\bigg(\int_0^1
				\chi_{Q_\eps^{\bar z + (1-s)x_\sz + s y_\sz}}(u)
			\dd s\bigg)
			(y_\sz - x_\sz),
	\end{align*}
	which is the desired form \eqref{eq:density_embedded_flux} with 
	\begin{align*}
		\lambda_u^{\eps, {\bar z}}(x,y)
		=
		\bigg(\int_0^1
				\chi_{Q_\eps^{\bar z + (1-s)x_\sz + s y_\sz}}(u)
			\dd s\bigg)
	\end{align*}
	for $(x, y) \in \cE_\eps$ 
	with
		$x_\sz = z$.
	Since the family of cubes
	$
		\big\{
			Q_\eps^{\bar z + sy_\sz+(1-s)x_\sz }
		\big\}_{\bar z \in \Z_\eps^d} 			
	$
	is a partition of $\Td$, it follows that 
	$\sum_{{\bar z} \in \Z_\eps^d} 
			\lambda_u^{\eps,{\bar z}}(x,y) 
		= 1$.

To prove the final claim, 
let $(x, y) \in \cE_\eps$ 
with
	$x_\sz = z$ as above and
take $\bar z \in \Z_\eps^d$ with $|{\bar z}|_\infty > R_0 + 1$.
Since
	$|x_\sz - y_\sz| \leq R_0$,
the triangle inequality yields
\begin{align*}
	\big\| 
		\big( 
			\bar z 
				+ 
			s y_\sz + (1-s) x_\sz
		\big)
			- x_\sz 
	\big\|_\infty 
		\geq 
		\| \bar z \|_\infty - (1-s) 
			\| y_\sz - x_\sz \|_\infty 
		> 
			1,
\end{align*}
for $s \in [0,1]$. 
Therefore, $u \in Q_\eps^z$ implies
	$\chi_{Q_\eps^{\bar z + (1-s)x_\sz + s y_\sz}}(u) = 0$, 
hence 
	$\lambda_u^{\eps, {\bar z}}(x,y) = 0$ 
as desired.
\end{proof}

\section{Main Results}
\label{sec: results}

In this section we present the main result of this paper, which asserts that the discrete action functionals $\cA_\eps$ converge to a continuous action functional $\bA = \bA_\hom$ with the nontrivial homogenised action density function $f = f_\hom$ defined in Section \ref{sec:homogenised}.

\subsection{Main convergence result}

We are now ready to state our main result. 
We use the embedding 	
	$\iota_\eps : \Meps \to \cM_+(\Td)$ 
defined in \eqref{eq: iota m}. 
The proof of this result is given in Section \ref{sec: lower} and \ref{sec: upper}.

\begin{theorem}[$\Gamma$-convergence] \label{thm:main}
	Let $(\cX, \cE)$ be a locally finite and $\Z^d$-periodic connected graph of bounded degree (see Assumption \ref{ass:XE}).
	Let 
		$F : \R_+^\cX \times \R_a^\cE \to \R \cup \{+\infty\}$ 
	be a cost function satisfying Assumption \ref{ass:F}.
	Then the functionals 
		$\cA^\cI_\eps$ 
	$\Gamma$-converge to 
		$\bA^\cI_{\hom}$ as $\eps \to 0$ 
	with respect to the weak (and vague) topology. 
	More precisely:
	\begin{enumerate}[(i)]
\item (\textbf{liminf inequality})
Let $\bfmu \in \cM_+(\cI \times \T^d)$.
For any sequence of curves 
	$\{\bfm^\eps\}_{\eps}$ 
with 
	$\bfm^\eps = (m_t^\eps)_{t \in \cI} 
		\subseteq \R_+^{\cX_\eps}$ 
such that 
	$\iota_\eps \bfm^\eps 	
		  \to \bfmu$ vaguely 
 in $\cM_+(\cI \times \Td)$ 
as $\eps \rightarrow 0$, 
we have the lower bound
\begin{align}	\label{eq:liminf}
	\liminf_{\eps \to 0} 
			\cA^\cI_\eps(\bfm^\eps) \geq \bA^\cI_{\hom}(\bfmu).
\end{align}
\item (\textbf{limsup inequality})
For any 
	$\bfmu \in \cM_+(\cI \times \T^d)$ 
there exists a sequence of curves 
	$\{\bfm^\eps\}_{\eps}$ 
with 
	$\bfm^\eps = (m_t^\eps)_{t \in \cI} 
		\subseteq \R_+^{\cX_\eps}$ 
such that 
	$\iota_\eps \bfm^\eps \to  \bfmu$ 
	 weakly in $\cM_+(\cI \times \T^d)$
as $\eps \to 0$, and we have the upper bound
\begin{align}	\label{eq:limsup}
	\limsup_{\eps \to 0} 
		\cA^\cI_\eps(\bfm^\eps) \leq \bA^\cI_{\hom}(\bfmu).
\end{align}
\end{enumerate}
\end{theorem}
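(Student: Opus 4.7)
The plan is to prove the two inequalities in Theorem~\ref{thm:main} separately, treating the liminf as an essentially pointwise-in-time computation (once enough regularity is secured) and the limsup as a careful construction of admissible discrete curves from a continuous target. In both directions the decisive preliminary step will be regularization: a discrete regularization procedure at scale $\eps$ (Proposition~\ref{prop:regularisation_discrete}) in the liminf, and a continuous regularization (Proposition~\ref{prop:density}) producing smooth densities with values in a compact subset of $\Dom(f_\hom)^\circ$ in the limsup. These reductions are what allow me to sidestep the merely linear growth and possible lack of strict convexity of $F$ and $f_\hom$.

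For the liminf I would start from a sequence $(\bfm^\eps,\bfJ^\eps)\in\cCE_\eps$ with $\sup_\eps\cA_\eps(\bfm^\eps,\bfJ^\eps)<\infty$ and $\iota_\eps\bfm^\eps\to\bfmu$ vaguely; after discrete regularization I may assume $(m^\eps_t,J^\eps_t)$ is Lipschitz in $t$ and bounded in $\ell^\infty$, so that in particular $\|\partial_t m^\eps_t\|_\infty\lesssim\eps^d$, which by the discrete continuity equation controls $\dive J^\eps_t$. The embedded densities $(\rho_t,j_t)$ are then explicit local averages on each $\eps$-cube, with $j_t$ expressed through the convex combination formula of Lemma~\ref{lemma:density_embedded_flux}. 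For each $\bar z\in\Z_\eps^d$ and $u\in Q_\eps^{\bar z}$ I define periodic candidates $\hat m_u\in\R_+^\cX$ and $\tilde J_u\in\R_a^\cE$ by copying the values of $m^\eps_t$ and $J^\eps_{t,u}$ from the cube at $\bar z$ into every cube of $(\cX,\cE)$; by construction $\eps^{-d}\hat m_u\in\Rep(\rho_t(u))$, while $\tilde J_u$ has the correct effective flux but may fail to be divergence-free. I would then invoke Lemma~\ref{lemma:bounds_divergence_eq 2} to produce a periodic corrector $\bar J_u$ with $\dive\bar J_u=-\dive\tilde J_u$, $\Eff(\bar J_u)=0$ and $\|\bar J_u\|_\infty\lesssim\|\dive\tilde J_u\|_1$; the regularization makes this corrector of size $O(\eps)$, so that $\eps^{-(d-1)}(\tilde J_u+\bar J_u)\in\Rep(j_t(u))$ becomes an admissible cell-problem competitor. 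Summing over $\bar z$, invoking the very definition of $f_\hom$, integrating in $t$, and then applying lower semicontinuity of $\bA^\cI_\hom$ from Lemma~\ref{lemma: action lsc} yields \eqref{eq:liminf}.

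For the limsup, after continuous regularization I may assume that $(\rho_t,j_t)$ is smooth and takes values in a fixed compact subset of $\Dom(f_\hom)^\circ$, in particular that $f_\hom$ is locally Lipschitz along the trajectory. I would then discretize $(\bfmu,\bfnu)$ via the cube-wise operator $\tP_\eps$, which commutes with the continuity equation, and on each cube $Q_\eps^z$ select an \emph{optimal microstructure} $(m^z_t,J^z_t)$ with $(\eps^{-d}m^z_t,\eps^{-(d-1)}J^z_t)\in\Rep_o(\eps^{-d}\tP_\eps\mu_t(z),\eps^{-(d-1)}\tP_\eps\nu_t(z))$, realizing $f_\hom$ in the sense of \eqref{eq:sketch_ub_est1}. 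The family $\{(m^z_t,J^z_t)\}_z$ is glued into a single pair $(\hat\bfm^\eps,\hat\bfJ^\eps)$ via the operator $\cG_\eps$; mass preservation of $\cG_\eps$ gives $\iota_\eps\hat\bfm^\eps\to\bfmu$ weakly. After a further discrete regularization of $(\hat\bfm^\eps,\hat\bfJ^\eps)$, I would repair the failure of the discrete continuity equation by adding an antisymmetric corrector $\bfV^\eps$ satisfying $\dive V^\eps_t=-(\partial_t\hat m^\eps_t+\dive\hat J^\eps_t)$ with $\sup_t\|\eps^{1-d}V^\eps_t\|_\infty\lesssim\eps$. Convexity of $f_\hom$, smoothness of $(\rho,j)$, and the smallness of $\bfV^\eps$ then allow me to compare $\cF_\eps(m^\eps_t,\hat J^\eps_t+V^\eps_t)$ to $\bF_\hom(\mu_t,\nu_t)$ with vanishing error, and integrating in $t$ gives \eqref{eq:limsup}.

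The main obstacle, in both directions but most acutely in the limsup, is the construction of the correctors with a quantitative $\ell^\infty$ bound. The liminf corrector is a discrete Bogovski\u\i-type solver on the periodic graph, delivered by Lemma~\ref{lemma:bounds_divergence_eq 2}, but its usefulness depends entirely on first establishing enough discrete Lipschitz regularity to control $\dive\tilde J_u$. The limsup corrector $\bfV^\eps$ must be produced globally on $(\cX_\eps,\cE_\eps)$ while remaining pointwise of order $\eps$; I would build it by localizing to each cube $Q_\eps^z$ via Lemma~\ref{lemma:localisation} so that the cube-wise divergence mismatches can be routed along nearby edges through a local divergence-solver (Lemma~\ref{lemma:bounds_divergence_eq}), and then gluing the local corrections together. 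It is precisely the $\ell^1\to\ell^\infty$ gain afforded by the two regularization steps that turns the prescribed divergence (of order $\eps^d$) into a corrector of size $O(\eps)$, which is in turn what keeps the extra contribution to $\cF_\eps$ of order $o_\eps(1)$ via the local Lipschitz continuity of $f_\hom$.
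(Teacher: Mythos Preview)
Your proposal is correct and follows essentially the same route as the paper: discrete regularisation (Proposition~\ref{prop:regularisation_discrete}) plus the periodic corrector of Lemma~\ref{lemma:bounds_divergence_eq 2} for the liminf, and continuous regularisation (Proposition~\ref{prop:density}) followed by optimal microstructures, gluing via $\cG_\eps$, a second discrete regularisation, and the localised corrector built from Lemmas~\ref{lemma:localisation} and~\ref{lemma:bounds_divergence_eq} for the limsup. The only point you leave implicit is the two-stage limit passage (first $\eps\to 0$ at fixed regularisation parameter $\eta$, then $\eta\to 0$ using the $\KR$-approximation bounds and Lemma~\ref{lemma: action lsc} on shrinking intervals $\cI^\eta$), and the analogous diagonal extraction in the limsup; both are routine once the estimates you describe are in place.
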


\begin{remark}[Necessity of the interior domain condition]
	Assumption \ref{ass:F} is crucial in order to obtain the $\Gamma$-convergence of the discrete energies. Too see this, let us consider the one-dimensional graph $\cX=\Z$ and the edge-based cost associated with 
	\begin{align*}
		F_{xy}(m(x),m(y),J(x,y)) :=
		\begin{cases}
			\frac
				{J(x,y)^2}{m(x)}
				&
					\text{if } m(x)=m(y) \neq 0 ,\\
			0	
				& \text{if } J(x,y)=m(x)=m(y)=0 ,\\
			\infty
				& \text{otherwise}.
		\end{cases}	
	\end{align*}
	Clearly $F$ satisfies conditions $(a)-(c)$ from Assumption \ref{ass:F}, but $(d)$ fails to hold. The constraint $m(x)=m(y)$ on neighbouring $x,y \in \cX$ forces every $\bfm: \cI \to \R_+^{\cX_\eps}$ with $\cA_\eps(\bfm)<\infty$ to be constant in space (and hence in time, by mass preservation). Therefore, the $\Gamma$-limit of the $\cA_\eps$ is finite only on constant measures $\bfmu= \alpha\Leb^{d+1}$, with $\alpha \in \R_+$. On the other hand, we have\footnote{See also Section \ref{sec:NNI}.} that $f_\hom(\rho,j)=\frac{|j|^2}{\rho}$, which corresponds to the $\bW_2$ action on the line.
	
	It is interesting to note that if the constraint is replaced by something of the form $|m(x)-m(y)|\leq \delta$, for some $\delta >0$, then all the assumptions are satisfied and our Theorem can be applied. In this case, the limit coincides with the $\bW_2$ action.
	
See also Section \ref{sec:NNI} for a general treatment of the cell formula on the integer lattice $\cX=\Z^d$.
\end{remark}

\subsection{Scaling limits of Wasserstein transport problems}

For $1 \leq p < \infty$, recall that the energy density associated to the Wasserstein metric $\bW_p$ on $\R^d$ is given by $f(\rho, j) = \frac{|j|^p}{\rho^{p-1}}$. 
This function satisfies the scaling relations
$f(\lambda \rho, \lambda j) 
= \lambda f(\rho, j)$
and 
$f(\rho, \lambda j) 
= |\lambda|^p f(\rho, j)$
for $\lambda \in \R$.
  
In discrete approximations of $\bW_p$ on a periodic graph $(\cX,\cE)$, it is reasonable to assume analogous scaling relations for the function $F$, namely
$F(\lambda m, \lambda J) = \lambda F(m, J)$ 
and 
$F(m, \lambda J) = |\lambda|^p F(m, J)$.
The next result shows that if such scaling relations are imposed, we always obtain convergence to $\bW_p$ with respect to some norm on $\R^d$.
This norm does not have to be Hilbertian (even in the case $p=2$) and is characterised by the cell problem \eqref{eq: fhom}.

\begin{corollary}
Let $1 \leq p < \infty$, and suppose that $F$ has the following scaling properties
for  $m \in \R_+^\cX$ and $j \in \R_a^\cE$:
\begin{enumerate}[(i)]
	\item $F(\lambda m, \lambda J) = \lambda F(m, J)$ 
			for all $\lambda \geq 0$;
	\item $F(m, \lambda J) = |\lambda|^p F(m, J)$ 
			for all $\lambda \in \R$.
\end{enumerate}
Then $f_\hom(\rho, j) 
		= \frac{\| j \|^p}{\rho^{p-1}}$ 
for some norm $\|\cdot\|$ on $\R^d$.
\end{corollary}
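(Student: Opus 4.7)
\medskip
\noindent \textbf{Proof proposal.}

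The plan is to first upgrade assumptions (i) and (ii) to a joint scaling identity that transfers to $f_\hom$, then reduce the problem to showing that a certain slice of $f_\hom$ is the $p$-th power of a norm. Combining (i) and (ii), for $\lambda>0$ and $\mu\in\R$ we get
\[
F(\lambda m,\mu J) \;=\; \lambda\,F\bigl(m,(\mu/\lambda)J\bigr) \;=\; \lambda^{1-p}|\mu|^p\,F(m,J).
\]
Since $m\mapsto\lambda m$ and $J\mapsto\mu J$ preserve $\Z^d$-periodicity, divergence-freeness, total mass, and effective flux (by linearity of $\Eff$), the map $(m,J)\mapsto(\lambda m,\mu J)$ is a bijection from $\Rep(\rho,j)$ onto $\Rep(\lambda\rho,\mu j)$. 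Taking the infimum in the cell formula \eqref{eq: fhom} gives
\[
f_\hom(\lambda\rho,\mu j) \;=\; \lambda^{1-p}|\mu|^p\,f_\hom(\rho,j), \qquad \lambda>0,\ \mu\in\R.
\]
Setting $\lambda=1/\rho$, $\mu=1$ for $\rho>0$ and defining $g(j):=f_\hom(1,j)$ we obtain $f_\hom(\rho,j)=g(j)/\rho^{p-1}$, and it only remains to prove that $g^{1/p}$ is a norm.

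From Lemma \ref{lemma:properties_fhom}, $g$ is convex and lower semicontinuous; from the scaling it is $p$-homogeneous ($g(\mu v)=|\mu|^pg(v)$), even, and satisfies $g(0)=0$. Convexity combined with evenness yields $0=g(0)\leq\tfrac12 g(v)+\tfrac12 g(-v)=g(v)$, so $g\geq 0$. Moreover, $g(v)>0$ for $v\neq 0$: otherwise $p$-homogeneity would give $g(\lambda v)=0$ for all $\lambda$, contradicting the linear growth bound $f_\hom(1,\lambda v)\geq c|\lambda v|-2C$ from \eqref{eq: growth hom} as $|\lambda|\to\infty$. The main technical point is finiteness of $g$ on all of $\R^d$. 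By Lemma \ref{lemma:properties_fhom}\emph{(iii)}, pick $(\rho^\circ,j^\circ)\in\Dom(f_\hom)^\circ$ with $\rho^\circ>0$; by the scaling identity $g$ is finite on an open ball $B(v^\circ,r)$ with $v^\circ:=j^\circ/\rho^\circ$. By evenness $g$ is also finite on $B(-v^\circ,r)$, and convexity extends finiteness to the set of midpoints, which is precisely $B(0,r)$. Finally, $p$-homogeneity ($g(v)=|\lambda|^{-p}g(\lambda v)$ for small $\lambda$) extends finiteness from $B(0,r)$ to all of $\R^d$.

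Define $\|v\|:=g(v)^{1/p}$. Then $\|\cdot\|$ is nonnegative, finite, vanishes only at $0$, is even, and $1$-homogeneous. For the triangle inequality, take $v,w\neq 0$ and set $a:=\|v\|$, $b:=\|w\|$; by $p$-homogeneity $g(v/a)=g(w/b)=1$, and convexity yields
\[
g\!\left(\frac{v+w}{a+b}\right)
\;=\; g\!\left(\frac{a}{a+b}\cdot\frac{v}{a}+\frac{b}{a+b}\cdot\frac{w}{b}\right)
\;\leq\; \frac{a}{a+b}+\frac{b}{a+b} \;=\; 1,
\]
whence $g(v+w)\leq(a+b)^p$, i.e., $\|v+w\|\leq\|v\|+\|w\|$. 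Thus $\|\cdot\|$ is a norm and $f_\hom(\rho,j)=\|j\|^p/\rho^{p-1}$ for $\rho>0$, with the usual convention ($+\infty$ for $\rho=0$, $j\neq 0$) matching the scaling. The principal obstacle is the finiteness of $g$ on all of $\R^d$, which is settled by the combined use of the interior-point hypothesis, symmetry, convexity, and homogeneity; the triangle inequality is then a standard consequence of convexity plus $p$-homogeneity.
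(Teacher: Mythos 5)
Your proof is correct and follows the same overall strategy as the paper's: derive the joint scaling identity for $f_\hom$, reduce to showing that $\|j\| := f_\hom(1,j)^{1/p}$ is a norm, and prove the triangle inequality from convexity plus $p$-homogeneity. Your triangle-inequality argument with weights $a/(a+b)$, $b/(a+b)$ is the paper's argument with $\lambda \in (0,1)$ followed by the substitution $\lambda = \|j_2\|/(\|j_1\|+\|j_2\|)$, so these are the same. The one genuine addition on your side is the careful verification that $g := f_\hom(1,\cdot)$ is finite on all of $\R^d$ (interior point from Lemma \ref{lemma:properties_fhom}\emph{(iii)} $\Rightarrow$ finiteness on a ball around $j^\circ/\rho^\circ$ $\Rightarrow$ evenness and convexity give a ball around $0$ $\Rightarrow$ homogeneity gives all of $\R^d$). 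The paper simply writes $\|j\| \in [0,\infty)$ without justifying finiteness; since a norm must be finite-valued, this step is actually needed, and you are right to include it.
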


\begin{proof}
Fix $\rho > 0$ and $j \in \R^d$.
The scaling assumptions imply that 
\begin{align}
\label{eq:f_hom_hom}	
	f_\hom(\lambda \rho, \lambda j) 
	= \lambda f_\hom(\rho, j)
\tand
	f_\hom(\rho, \lambda j) 
	= |\lambda|^p f_\hom(\rho, j).
\end{align}
Consequently,
\begin{align*}
	f_\hom(\rho, j) 
		= \rho f_\hom(1, j/\rho) 
		= \frac{f_\hom(1, j)}{\rho^{p-1}}.
\end{align*}

We claim that 
	$f_\hom(1, j) > 0$ whenever $j \neq 0$.
Indeed, it follows from \eqref{eq: growth hom}
that $f_\hom(1, j) > 0$
whenever $|j|$ is sufficiently large. 
By homogeneity \eqref{eq:f_hom_hom}, 
the same holds for every $j \neq 0$.
It also follows from \eqref{eq:f_hom_hom} that
	$f_\hom(1, 0) = 0$.

We can thus define
	$\|j\| := f_\hom(1,j)^{1/p} \in [0,\infty)$.
In view of the previous comments, 
we have 
	$\|0\| = 0$
and
	$\|j\| > 0$ for all $j \in \R^d \setminus \{0\}$.
The homogeneity \eqref{eq:f_hom_hom} implies that 
	$\| \lambda j\| = |\lambda|\, \|j \|$ 
for $j \in \R^d$ and $\lambda \in \R$.

It remains to show the triangle inequality
	$\| j_1 + j_2 \| \leq \|j_1\| + \|j_2\|$
for	$j_1, j_2 \in \R^d$.
Without loss of generality we assume that 
	$\|j_1\| + \|j_2\| > 0$.
For $\lambda \in (0,1)$,
the convexity of $f_\hom$ (see Lemma \ref{lemma:properties_fhom}) and the homogeneity \eqref{eq:f_hom_hom}
yield 
\begin{align*}
	f_\hom(1,j_1 + j_2) 
	\leq (1-\lambda) f_\hom 
			\Big(1, \frac{j_1}{1-\lambda} \Big) 
		+ \lambda f_\hom
			\Big(1, \frac{j_2}{\lambda} \Big) 
	= 	\frac{f_\hom(1, j_1)}{(1 - \lambda)^{p-1}} 
		+ \frac{f_\hom(1, j_2)}{\lambda^{p-1}}.
\end{align*}
Substitution of 
	$\lambda = \frac{\|j_2\|}{\|j_1\| + \|j_2\|}$ 
yields the triangle inequality.
\end{proof}

\subsection{Compactness results}

As we frequently need to compare measures with unequal mass in this paper, it is natural to work with the 
the \emph{Kantorovich--Rubinstein norm}. 
This metric is closely related to the transport distance $\bW_1$; see Appendix \ref{sec:KR}.

The following compactness result holds for solutions to the continuity equation with bounded action. 
As usual, we use the notation 
	$\bfmu(\ddd x, \ddd t) = \mu_t(\ddd x) \dd t$.

\begin{theorem}[Compactness under linear growth] \label{thm: compactness}
	Let $\bfm^\eps:\cI \to  \R_+^{\cX_\eps}$ be such that
	\begin{align*}
		\sup_{\eps > 0} \cA^\cI_\eps(\bfm^\eps) 
		< \infty
			\tand
		\sup_{\eps > 0} 
		\bfm^\eps(\cI \times \cX_\eps) 
		< \infty.	
	\end{align*} 
	Then there exists a curve 
		$(\mu_t)_{t \in \cI} \in \BVKR$ 
	such that, up to extracting a subsequence,
	\begin{enumerate}[(i)]
		\item 
			$\iota_\eps \bfm^\eps \to \bfmu$ weakly in $\cM_+(\cI \times \Td)$;
		\item 
			$\iota_\eps m_t^\eps \to \mu_t$ weakly in $\cM_+(\Td)$
			for almost every $t\in \cI$;
		\item $t \mapsto \mu_t(\T^d)$ is constant.
	\end{enumerate}
\end{theorem}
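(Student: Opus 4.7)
The plan is to extract a companion (almost-optimal) flux, pass the discrete continuity equation to the limit, apply the disintegration Lemma~\ref{lemma:disintegration}, and finally upgrade the resulting vague-in-spacetime convergence to pointwise-in-time weak convergence through a Helly-type selection adapted to $\BVKR$. For each $\eps$, Definition~\ref{def:A_eps} lets me pick $\bfJ^\eps$ with $(\bfm^\eps,\bfJ^\eps)\in\cCE_\eps^\cI$ and $\cA_\eps^\cI(\bfm^\eps,\bfJ^\eps)\le \cA_\eps^\cI(\bfm^\eps)+1$. Applying the linear-growth inequality~\eqref{eq: growth} cell-by-cell, summing over $\Z_\eps^d$, integrating in time, and then invoking Lemma~\ref{eq:norm-embedded-flux}, I obtain
\[
|\iota_\eps\bfJ^\eps|(\cI\times\Td)\le C\bigl(\cA_\eps^\cI(\bfm^\eps)+\bfm^\eps(\cI\times\cX_\eps)+1\bigr),
\]
which is uniformly bounded in $\eps$. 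Since $(\iota_\eps\bfm^\eps)(\cI\times\Td)=|\cI|\cdot m_t^\eps(\cX_\eps)$ is also uniformly bounded (Lemma~\ref{lem:mass-preservation}), Banach--Alaoglu yields a (not relabelled) subsequence with $\iota_\eps\bfm^\eps\weakstar\bfmu$ in $\cM_+(\cI\times\Td)$ and $\iota_\eps\bfJ^\eps\weakstar\bfnu$ in $\cM^d(\cI\times\Td)$. Lemma~\ref{lem:conteq-embed} gives $(\iota_\eps\bfm^\eps,\iota_\eps\bfJ^\eps)\in\bCE^\cI$, and by the linearity of~\eqref{eq:def_CE} the limit pair $(\bfmu,\bfnu)$ still belongs to $\bCE^\cI$.

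Applying Lemma~\ref{lemma:disintegration} to $(\bfmu,\bfnu)$ then produces a measurable curve $(\mu_t)_{t\in\cI}$ of constant finite total mass such that $\dd\bfmu(t,x)=\dd\mu_t(x)\dd t$ and $\|\bfmu\|_{\BVKR}\le|\bfnu|(\cI\times\Td)<\infty$; this directly yields (iii) and the $\BVKR$-regularity of $(\mu_t)_t$. For the pointwise convergence in (ii), I exploit a uniform $\BVKR$-estimate for the embedded curves themselves: testing $(\iota_\eps\bfm^\eps,\iota_\eps\bfJ^\eps)\in\bCE^\cI$ against smooth approximations of $\chi_{[s,t]}(\tau)\phi(x)$ with $\phi$ a $1$-Lipschitz spatial test function yields
\[
\|\iota_\eps m_t^\eps-\iota_\eps m_s^\eps\|_{\KR}\le|\iota_\eps\bfJ^\eps|([s,t]\times\Td)
\]
for all $s<t$ in $\cI$. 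Combined with the uniform bound on $\iota_\eps m_t^\eps(\Td)$ from Lemma~\ref{lem:mass-preservation} -- which places every embedded time-slice inside a $\KR$-precompact subset of $\cM_+(\Td)$, since $\Td$ is compact and weak convergence coincides with $\KR$-convergence on such bounded-mass sets -- a classical Helly selection applied to the real-valued BV functions $t\mapsto\int\psi\dd(\iota_\eps m_t^\eps)$ for $\psi$ in a countable dense subset of $C(\Td)$, together with a diagonal extraction, produces a further subsequence along which $\iota_\eps m_t^\eps\weakto\tilde\mu_t$ in $\cM_+(\Td)$ for a.e. $t\in\cI$.

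To identify $\tilde\mu_t=\mu_t$ almost everywhere, I test the vague convergence on $\cI\times\Td$ against tensor products $\phi(t)\psi(x)$ with $\phi\in C_c(\cI)$ and $\psi\in C(\Td)$: on the one hand the limit equals $\int_\cI\phi(t)\mu_t(\psi)\dd t$, while on the other, dominated convergence along the sub-subsequence (integrands are bounded uniformly by $\|\psi\|_\infty$ times the constant total mass) gives the same integral with $\mu_t$ replaced by $\tilde\mu_t$. Varying $\psi$ in a countable dense family forces $\mu_t=\tilde\mu_t$ a.e., proving (ii); then (i) upgrades vague to weak convergence on $\cI\times\Td$ because the total masses $\iota_\eps\bfm^\eps(\cI\times\Td)=|\cI|\cdot m_t^\eps(\cX_\eps)$ converge along the subsequence to $|\cI|\cdot\mu_t(\Td)=\bfmu(\cI\times\Td)$ by (ii) and Lemma~\ref{lem:mass-preservation}. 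The main obstacle is the Helly step, where the global-in-time control $|\iota_\eps\bfJ^\eps|(\cI\times\Td)<\infty$ must be paired with $\KR$-precompactness at each individual time, and the resulting pointwise limit then matched with the disintegration $\mu_t$ of $\bfmu$ obtained in the second step.
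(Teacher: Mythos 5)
Your proof is correct and follows the same overall strategy as the paper: choose near-optimal fluxes, embed, bound the total variation of the embedded fluxes via the linear-growth assumption and Lemma~\ref{eq:norm-embedded-flux}, extract weak limits, pass the continuity equation to the limit, disintegrate, and finally obtain pointwise-in-time convergence. The difference lies in the last step and in the treatment of boundary mass. The paper works in $\cM_+(\overline\cI\times\Td)$ so that Banach--Alaoglu directly gives weak$^*$ convergence, then shows the limit does not charge $(\overline\cI\setminus\cI)\times\Td$ (using that the time-marginals are constant multiples of Lebesgue), and then invokes an off-the-shelf $\BV$-compactness theorem for curves with values in a separable dual (citing Theorem~B.5.10 of Mielke--Roub\'i\v{c}ek) to get $\|\iota_\eps m_t^\eps-\mu_t\|_{\KR}\to 0$ a.e.\ in one stroke. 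You instead first disintegrate the limit $\bfmu$ via Lemma~\ref{lemma:disintegration}, then implement the $\BV$-compactness explicitly by a Helly selection for the scalar $\BV$ functions $t\mapsto\int\psi\,\dd(\iota_\eps m_t^\eps)$ over a countable family of test functions, using the modulus-of-continuity estimate $\|\iota_\eps m_t^\eps-\iota_\eps m_s^\eps\|_{\KR}\le|\iota_\eps\bfJ^\eps|([s,t]\times\Td)$, and then identify the Helly limit with the disintegration slices by Fubini. Finally you upgrade vague to weak convergence on $\cI\times\Td$ by observing that the constant time-marginals force mass convergence. Both routes are sound; yours is more self-contained and elementary (no external $\BV$-compactness theorem needed), while the paper's is shorter at the cost of an external citation. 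One small remark to make your Helly step watertight: you should take the countable dense family inside a set of functions with \emph{uniformly bounded} $C^1$-norm (e.g.\ $\{\psi/\max(1,\|\psi\|_{C^1})\}$ for $\psi$ in a countable dense subset of $C^1(\Td)$), because the $\KR$ modulus only controls the oscillation of $t\mapsto\int\psi\,\dd(\iota_\eps m_t^\eps)$ for $C^1$ (or Lipschitz) test functions, not for arbitrary continuous $\psi$; density in $C(\Td)$ combined with the uniform mass bound then still identifies weak$^*$ limits uniquely. As written this is implicit but worth stating.
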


The proof of this result is given in Section \ref{sec: compactness}.

Under a superlinear growth condition on the cost function $F$, the following stronger compactness result holds. 

\begin{assumption}[Superlinear growth]
	\label{ass:stronger}
	We say that $F$ is of \emph{superlinear growth}
	if 
	there exists a function 	
	$\theta:[0,\infty) \to [0,\infty)$ 
	with $\lim_{t\to \infty} \frac{\theta(t)}{t} = \infty$ 
	and a constant $C \in \R$ such that
	\begin{equation}
		\label{eq: superlinear}
		F(m,J) 
		\geq 
		(m_0 + 1)
		\theta\bigg( \frac{J_0}{m_0+1} \bigg)
		- C (m_0 + 1)
	\end{equation}
	for all $m\in  \R_+^\cX$ and all $J\in \R^\cE_a$, where
	\begin{equation}
		m_0 
		= \sum_{\substack{x \in \cX
				\\
				|x|_{\ell_\infty^d} \leq R
		}}
		m(x)
		\tand
		J_0 
		= \sum_{(x,y) \in \EQ} |J(x,y)|,
	\end{equation}
	with $R = \max\{R_0, R_1\}$ as in Assumption \ref{ass:F}.
\end{assumption}

\begin{remark}
	The superlinear growth condition \eqref{eq: superlinear} implies the linear growth condition \eqref{eq: growth}.
	To see this, suppose that $F$ has superlinear growth.
	Let $v_0 > 0$ be such that 
	$\theta(v) \geq v$ for $v\geq v_0$. 
	If $\frac{J_0}{m_0 + 1} \geq v_0$, we have
	\begin{equation}\label{eq: sll1}
		F(m,J) 
		\geq 
		(m_0 + 1)
		\theta\bigg( \frac{J_0}{m_0+1} \bigg)
		- C (m_0 + 1)
		\geq  J_0 - C (m_0 + 1).
	\end{equation}
	On the other hand, if
	$\frac{J_0}{m_0+1} < v_0$, 
	the nonnegativity of $\theta$ implies that 
	\begin{equation}\label{eq: sll2}
		F(m,J) 
		\geq 
		- C  (m_0 + 1)
		\geq 
		\frac{C}{v_0} J_0 
		- 2C (m_0 + 1).
	\end{equation}
	Combining \eqref{eq: sll1} and \eqref{eq: sll2}, we have
	\begin{equation*}
		F(m,J) \geq 
		\min\bigg\{ 1, \frac{C}{v_0} \bigg\} J_0 
		- 2C (m_0 + 1),
	\end{equation*}
	which is of the desired form \eqref{eq: growth}.
\end{remark}

\begin{example}
The edge-based costs
\[
	F(m,J) = \frac12\sum_{(x,y)\in \EQ}| J(x,y)|^p
\]
have superlinear growth if and only if $1<p<\infty$ (with $\theta(t) = c t^p$ and  $c = |\cE^Q|^{1-p}$).
Indeed, 
\[
	2F(m,J) 
	= 		\sum_{(x,y) \in \EQ} |J(x,y)|^p 
	\geq 	c J_0^p 
	\geq 	c \frac{J_0^p}{(m_0+1)^{p-1}} 
	= 		c (m_0+1) \theta\left(\frac{J_0}{m_0+1}\right).
\]
\end{example}

\begin{example}
	The functions \eqref{eq: Wp} arising in discretisation of $p$-Wasserstein distances have superlinear growth if and only if $p > 1$ (with $\theta(t) = t^p$).
	
	To see this, consider the function $G(\alpha,\beta,\gamma) := \frac12 \frac{|\gamma|^p}{\Lambda(\alpha,\beta)^{p-1}}$. 
	Since $G$ is convex, non increasing in $(\alpha,\beta)$, and positively one-homogeneous,
	we obtain 
	\begin{align*}
		F(m,J) 
			& =  \sum_{(x,y) \in \cE^Q} 
					G\big(q_{xy}m(x), q_{yx}m(y), J(x,y)\big)
			\\ & 
			\geq  
				G\left( 
					\sum_{(x,y)\in \cE^Q} q_{xy}m(x), 
					\sum_{(x,y)\in \cE^Q} q_{yx}m(y), 
					\sum_{(x,y)\in \cE^Q} |J(x,y)| 
				\right)
			\\ & 	
				\geq    c G(m_0,m_0,J_0) 
			\geq 	\frac{c}{2} \frac{J_0^p}{(m_0+1)^{p-1}}
			= 
		 	\frac{c}{2} 
				(m_0+1)
			  \theta\left(\frac{J_0}{m_0+1}\right),
	\end{align*}
	where $c>0$ depends on $R$, the maximum degree and the weights $q_{xy}$.

\end{example}

\begin{theorem}[Compactness under superlinear growth]
    \label{theorem: uniform compactness}
Suppose that Assumption \ref{ass:stronger} holds. 
Let $\bfm^\eps : \cI \to \R_+^{\cX_\eps}$ be such that
\begin{align*}
	\sup_{\eps > 0} \cA^\cI_\eps(\bfm^\eps) 
	< \infty
	\tand
	\sup_{\eps > 0} 
	\bfm^\eps(\cI \times \cX_\eps) 
	< \infty.	
\end{align*} 
Then there exists a curve 
$(\mu_t)_{t \in \cI} \in \WKR$ 
such that, up to extracting a subsequence,
\begin{enumerate}[(i)]
\item 
	$\iota_\eps \bfm^\eps \to \bfmu$ weakly in $\cM_+(\cI \times \Td)$;
\item 
$\| \iota_\eps m_t^\eps -  \mu_t\|_{\KR(\T^d)} \to 0$
uniformly for $t \in \cI$;
\item $t \mapsto \mu_t(\T^d)$ is constant.
\end{enumerate}	
\end{theorem}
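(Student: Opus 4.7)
The strategy is to bootstrap Theorem \ref{thm: compactness}, which applies because the superlinear condition in Assumption \ref{ass:stronger} implies the linear one. That result already supplies a subsequence with $\iota_\eps \bfm^\eps \to \bfmu$ vaguely in $\cM_+(\cI \times \Td)$, a disintegration $\dd\bfmu = \dd\mu_t \, \dd t$ with $(\mu_t)_t \in \BVKR$, pointwise a.e.\ weak convergence $\iota_\eps m_t^\eps \to \mu_t$, and constancy of $t \mapsto \mu_t(\Td)$. To obtain (i)--(iii) together with $(\mu_t)_t \in \WKR$, it suffices to establish \emph{uniform} (in $\eps$) equicontinuity in time of $t \mapsto \iota_\eps m_t^\eps$ in the $\KR$-norm.

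Pick $\bfJ^\eps$ with $(\bfm^\eps, \bfJ^\eps) \in \cCE_\eps^\cI$ and $\sup_\eps \cA_\eps^\cI(\bfm^\eps, \bfJ^\eps) < \infty$. By Lemma \ref{lem:conteq-embed} the embedded pair solves $\bCE^\cI$, and testing the continuity equation against functions $\phi$ with $\|\phi\|_\infty + \|\nabla\phi\|_\infty \leq 1$ yields, for $s \leq t$, the duality bound
\begin{align*}
	\| \iota_\eps m_t^\eps - \iota_\eps m_s^\eps \|_{\KR(\Td)}
	\leq \int_s^t |\iota_\eps J_r^\eps|(\Td) \,\dd r.
\end{align*}
Lemma \ref{eq:norm-embedded-flux} combined with the periodic reparameterisation $\sum_{(x,y) \in \cE_\eps} |J(x,y)| = \sum_{z \in \Z_\eps^d} \sum_{(x,y) \in \EQ} |\tau_\eps^z J(x,y)|$ shows that the integrand is controlled, up to a constant depending only on $R_0$ and $d$, by $g_\eps(r) := \sum_{z \in \Z_\eps^d} \eps^d J_0^{z,\eps}(r)$, where $J_0^{z,\eps}(r) := \eps^{-(d-1)} \sum_{(x,y) \in \EQ} |\tau_\eps^z J_r^\eps(x,y)|$. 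Thus the task reduces to proving equi-integrability of $\{g_\eps\}_\eps$ in $L^1(\cI)$.

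This is where Assumption \ref{ass:stronger} enters decisively. Applying \eqref{eq: superlinear} to each rescaled cube $(\tau_\eps^z m_r^\eps/\eps^d, \tau_\eps^z J_r^\eps/\eps^{d-1})$ and summing gives, with $m_0^{z,\eps}(r) := \eps^{-d} \sum_{|x|_{\ell_\infty^d} \leq R} \tau_\eps^z m_r^\eps(x)$,
\begin{align*}
	\cF_\eps(m_r^\eps, J_r^\eps)
	\geq \sum_{z \in \Z_\eps^d} \eps^d \Big[
		(m_0^{z,\eps}(r)+1)\, \theta\Big( \tfrac{J_0^{z,\eps}(r)}{m_0^{z,\eps}(r)+1} \Big)
		- C(m_0^{z,\eps}(r)+1)
	\Big].
\end{align*}
Since $\sum_z \eps^d m_0^{z,\eps}(r) \leq (2R+1)^d m_r^\eps(\cX_\eps)$ is uniformly bounded by some $M$, integrating in $r$ yields $\sup_\eps \int_\cI \sum_z a_z^\eps(r)\, \theta(b_z^\eps(r)/a_z^\eps(r))\, \dd r < \infty$ with $a_z^\eps := \eps^d(m_0^{z,\eps}+1)$, $b_z^\eps := \eps^d J_0^{z,\eps}$, and total weight $A^\eps(r) := \sum_z a_z^\eps(r) \in [1, A_{\max}]$ for some deterministic $A_{\max}$. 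After replacing $\theta$ by its convex envelope (which remains nondecreasing and superlinear), Jensen's inequality combined with monotonicity yields
\begin{align*}
	\theta\big( g_\eps(r)/A_{\max} \big)
	\leq A^\eps(r)\, \theta\big( g_\eps(r)/A^\eps(r) \big)
	\leq \sum_z a_z^\eps(r)\, \theta\big( b_z^\eps(r)/a_z^\eps(r) \big),
\end{align*}
so $\sup_\eps \int_\cI \theta(g_\eps/A_{\max})\, \dd r < \infty$, and the De la Vallée-Poussin criterion delivers the required equi-integrability.

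Equi-integrability combined with the opening duality bound produces uniform equicontinuity of $\{t \mapsto \iota_\eps m_t^\eps\}_\eps$ in $\KR$. Together with the uniform mass bound, Kantorovich--Rubinstein duality and Prokhorov's theorem on the compact space $\Td$ show that the family is valued in a relatively compact subset of $(\cM_+(\Td), \KR)$, so Arzelà--Ascoli yields uniform subsequential convergence in $\KR$ to a limit curve, which must coincide with $(\mu_t)_t$ by uniqueness of vague limits, establishing (i) and (ii); (iii) was already known. Finally, Dunford--Pettis furnishes a weak $L^1(\cI)$-limit $h$ of $r \mapsto |\iota_\eps J_r^\eps|(\Td)$, and passing to the limit in the duality bound gives $\|\mu_t - \mu_s\|_{\KR} \leq \int_s^t h\, \dd r$, whence $(\mu_t)_t \in \WKR$. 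The main obstacle is the equi-integrability step: the careful bookkeeping of the Jensen normalisation, the control of the overcounting from $R$-neighbourhoods, and the reduction of $\theta$ to its convex envelope (without losing superlinearity) are the technical crux.
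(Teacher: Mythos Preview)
Your proposal is correct and reaches the same conclusion, but the route differs from the paper's in two places worth noting.

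For equicontinuity, the paper does not convexify $\theta$ or invoke De la Vall\'ee--Poussin. Instead it fixes a velocity threshold $v_0 > 0$, splits $\Z_\eps^d$ into a low-velocity region $\cZ_- = \{z : \eps J_0^{z,\eps}/(m_0^{z,\eps}+\eps^d) \leq v_0\}$ and its complement $\cZ_+$, bounds the $\cZ_-$ contribution by $C v_0 (t-s)$ directly, and controls the $\cZ_+$ contribution by $(\text{action} + C)\sup_{v>v_0} v/\theta(v)$ using \eqref{eq: superlinear} pointwise. Optimising over $v_0$ (e.g.\ $v_0 = (t-s)^{-1/2}$) yields an explicit modulus of continuity. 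This avoids the technicalities you flag---the convex-envelope step and the Jensen normalisation---at the cost of a slightly more ad hoc argument. Your approach is more systematic and would generalise more readily; the paper's is more elementary and self-contained.

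For the $\WKR$ membership of the limit, the paper argues differently again: it first shows (via $f_\hom^\infty(0,j)=+\infty$ for $j\neq 0$, a consequence of superlinear growth) that any finite-action $(\bfmu,\bfnu)\in\bCE^\cI$ satisfies $\bfnu \ll \bfmu + \Leb^{d+1}$, whence $\bfnu$ disintegrates as $\dd t \otimes \nu_t$ and $\partial_t \mu_t = \nabla\cdot\nu_t \in \KR(\Td)$ for a.e.\ $t$. Your Dunford--Pettis argument, passing the bound $\|\mu_t-\mu_s\|_\KR \leq \int_s^t h$ through a weak $L^1$ limit, is a cleaner and more direct way to certify absolute continuity and sidesteps any appeal to properties of $f_\hom$.
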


This is proven in Section \ref{subsec: compactness}. 

Note that curve $t \mapsto \mu_t \in \WKR$ 
can be continuously extended to $\overline{\cI}$.
Therefore, it is meaningful to assign boundary values to these curves.

\subsection{Result with boundary conditions}

Under Assumption \ref{ass:stronger}, we are able to obtain the following result on the convergence of dynamical optimal transport problems. Fix $\cI=(a,b)\subset \R $ an open interval. Define for $m^a,m^b\in \R_+^{\cX_\eps}$ with $m^a(\cX_\eps) = m^b(\cX_\eps)$ the minimal action as
\begin{equation}
\cMA_\eps^\cI(m^a,m^b) := \inf\left\{ \cA_\eps^\cI(m)\,:\,m_a = m^a, m_b = m^b) \right\}.
\end{equation}

Similarly, define the minimal homogenised action for $\mu^a,\mu^b\in \cM_+(\Td)$ with $\mu^a(\Td) = \mu^b(\Td)$ as
\begin{equation}
\bMA_\hom^\cI(\mu^a,\mu^b) := \inf\left\{ \bA_\hom^\cI(\mu)\,:\,\mu_a = \mu^a, \mu_b = \mu^b)\right\}.
\end{equation}
Note that in general, both $\bMA_\hom^\cI$ and $\cMA_\eps^\cI$ may be infinite even if the two measures have equal mass. Here, the values $\mu_a$ and $\mu_b$ are well-defined under Assumption \ref{ass:stronger} by Theorem \ref{theorem: uniform compactness}. Under linear growth, $\mu_a$ and $\mu_b$ can still be defined using the trace theorem in $\BV$, but we cannot prove the following statement in that case (see also Remark \ref{rem:linear_minimal}). We prove this in Section \ref{subsec: compactness2}.

\begin{theorem}[$\Gamma$-convergence of the minimal actions]
		\label{theorem: uniform compactness 2}
Assume that Assumption \ref{ass:stronger} holds. Then the minimal actions $\cMA_\eps^\cI$ $\Gamma$-converge to $\bMA_{\hom}^\cI$ in the weak topology of $\cM_+(\Td) \times \cM_+(\Td)$. Precisely:
\begin{enumerate}[(i)]
	\item For any sequences $m_\eps^a$, $m_\eps^b \in \R_+^{\cX_\eps}$ such that $\iota_\eps m_\eps^i \to \mu^i$ weakly in $\cM_+(\Td)$ as $\eps \to 0$ for $i=a,b$ , we have
\begin{align}	\label{eq:liminf_boundary}
	\liminf_{\eps \to 0} \cMA_\eps^\cI(m^a_\eps, m^b_\eps)
		\geq \bMA_{\hom}^\cI(\mu^a,\mu^b).
\end{align}
	\item For any $(\mu^a,\mu^b) \in \cM_+(\Td) \times \cM_+(\Td)$, there exist two sequences $m_\eps^a, m_\eps^b \in \R_+^{\cX_\eps}$ such that $\iota_\eps m_\eps^i \to \mu^i$ weakly in $\cM_+(\Td)$ as $\eps \to 0$ for $i=a,b$ and 
\begin{align}	\label{eq:limsup_boundary}
	\limsup_{\eps \to 0}  \cMA_\eps^\cI(m^a_\eps, m^b_\eps) 
		\leq \bMA_{\hom}^\cI(\mu^a,\mu^b).
\end{align}
\end{enumerate}
\end{theorem}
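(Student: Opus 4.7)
The plan is to derive this theorem as a direct consequence of the $\Gamma$-convergence Theorem \ref{thm:main} combined with the uniform-in-time compactness Theorem \ref{theorem: uniform compactness}, which is available precisely because Assumption \ref{ass:stronger} is in force. The argument follows the standard strategy for convergence of minima; the only substantive difficulty is handling the prescribed boundary values, and this is exactly where the superlinear growth assumption enters in a non-negotiable way.

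\textbf{Liminf inequality (i).} I would assume $L := \liminf_{\eps \to 0} \cMA_\eps^\cI(m_\eps^a, m_\eps^b) < \infty$ (else trivial), pass to a subsequence realising the liminf, and for each $\eps$ select a near-optimal curve $\bfm^\eps \in \cCE_\eps^\cI$ with boundary values $m_\eps^a, m_\eps^b$ and $\cA_\eps^\cI(\bfm^\eps) \leq \cMA_\eps^\cI(m_\eps^a, m_\eps^b) + \eps$. Mass conservation (Lemma \ref{lem:mass-preservation}) combined with the weak convergence $\iota_\eps m_\eps^a \to \mu^a$ ensures $\sup_\eps \bfm^\eps(\cI \times \cX_\eps) < \infty$, so Theorem \ref{theorem: uniform compactness} applies: up to a further subsequence, $\iota_\eps \bfm^\eps \to \bfmu$ weakly in $\cM_+(\cI \times \Td)$ for some $(\mu_t)_{t \in \overline{\cI}} \in \WKR$, with $\|\iota_\eps m_t^\eps - \mu_t\|_{\KR} \to 0$ uniformly in $t$. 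Evaluating at $t = a,b$ identifies $\mu_a = \mu^a$ and $\mu_b = \mu^b$, so $\bfmu$ is a competitor for $\bMA_\hom^\cI(\mu^a, \mu^b)$. Theorem \ref{thm:main}(i) then yields $\bMA_\hom^\cI(\mu^a, \mu^b) \leq \bA_\hom^\cI(\bfmu) \leq L$.

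\textbf{Limsup inequality (ii).} If $\bMA_\hom^\cI(\mu^a, \mu^b) = \infty$ the claim is trivial for any sequences $m_\eps^i \in \R_+^{\cX_\eps}$ with $\iota_\eps m_\eps^i \to \mu^i$ weakly, produced by a standard discretisation on $\eps$-cubes with a final mass rebalancing to enforce $m_\eps^a(\cX_\eps) = m_\eps^b(\cX_\eps)$. Otherwise, fix $\delta > 0$ and choose a competitor $\bfmu$ for $\bMA_\hom^\cI(\mu^a, \mu^b)$ (with well-defined boundary traces $\mu^a, \mu^b$, which exist since superlinear growth of $F$ propagates to $f_\hom$ via the cell formula) satisfying $\bA_\hom^\cI(\bfmu) \leq \bMA_\hom^\cI(\mu^a, \mu^b) + \delta$. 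Applying Theorem \ref{thm:main}(ii) yields $\bfm^\eps$ with $\iota_\eps \bfm^\eps \to \bfmu$ weakly and $\limsup_\eps \cA_\eps^\cI(\bfm^\eps) \leq \bA_\hom^\cI(\bfmu) < \infty$. Because the action is uniformly bounded, Theorem \ref{theorem: uniform compactness} again provides uniform-in-time convergence $\|\iota_\eps m_t^\eps - \mu_t\|_{\KR} \to 0$; setting $m_\eps^a := m_a^\eps$, $m_\eps^b := m_b^\eps$ gives $\iota_\eps m_\eps^i \to \mu^i$ weakly and
\[
\limsup_{\eps \to 0} \cMA_\eps^\cI(m_\eps^a, m_\eps^b)
\leq \limsup_{\eps \to 0} \cA_\eps^\cI(\bfm^\eps)
\leq \bMA_\hom^\cI(\mu^a, \mu^b) + \delta.
\]
A standard diagonal extraction in $\delta \to 0$ closes the argument.

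\textbf{Main obstacle.} The crux lies entirely in the boundary-value matching. The recovery sequence furnished by Theorem \ref{thm:main}(ii) is controlled only as a measure on $\cI \times \Td$, which a priori says nothing about individual time slices, so the prescribed endpoints $\mu^a, \mu^b$ are not automatically attained. Assumption \ref{ass:stronger} is precisely what upgrades the mere BV-in-time a.e.\ convergence of Theorem \ref{thm: compactness} (under which traces would be ambiguous and the liminf step would collapse) to the uniform-in-time $\KR$-convergence of Theorem \ref{theorem: uniform compactness}, legitimising point evaluation at $t = a, b$. This is also exactly why the authors restrict this boundary-value theorem to the superlinear-growth regime rather than stating it under the weaker Assumption \ref{ass:F}.
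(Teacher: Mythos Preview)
Your proof is correct and follows essentially the same approach as the paper's own proof in Section~\ref{subsec: compactness2}. The only minor difference is that in the limsup step the paper invokes lower semicontinuity of $\bA_\hom$ (Lemma~\ref{lemma: action lsc}) to obtain an \emph{exact} minimiser $\bfmu$ with $\bA_\hom^\cI(\bfmu) = \bMA_\hom^\cI(\mu^a,\mu^b)$, thereby avoiding your $\delta$-approximation and diagonal extraction; both routes are valid.
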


\section{Proof of compactness and convergence of minimal actions}\label{sec: compactness}
This section is divided into three sub-parts: in the first one, we prove the general compactness result Theorem \ref{thm: compactness}, which is valid under the linear growth assumption \eqref{ass:F}.

In the second and third part, we assume the stronger superlinear growth condition \eqref{ass:stronger} and prove the improved compactness result Theorem \ref{theorem: uniform compactness} and the convergence results for the problems with boundary data, i.e. Theorem \ref{theorem: uniform compactness 2}.

\subsection{Compactness under linear growth}
The only assumption here is the linear growth condition \eqref{ass:F}.

\begin{proof}[Proof of Theorem \ref{thm: compactness}]
	For $\eps > 0$, let 
		$\bfm^\eps: \cI \to  \R_+^{\cX_\eps}$ 
	be a curve such that
	\begin{align}
		\label{eq:bounds-mass-action}
		\sup_{\eps > 0} \cA^\cI_\eps(\bfm^\eps) 
		< \infty
			\tand
		\sup_{\eps > 0} 
		\bfm^\eps(\cI \times \cX_\eps) 
		< \infty.	
	\end{align} 
	We can find a solution to the discrete continuity equation
		$(\bfm^\eps, \bfJ^\eps) \in \cCE_\eps^\cI$, 
	such that
	\begin{align*}
		\sup_{\eps > 0} 
			\cA_\eps^\cI(\bfm^\eps, \bfJ^\eps) 
		< \infty.
	\end{align*}
	Set
		$(\mu_t^\eps, \nu_t^\eps) 	
			:= 
		(\iota_\eps m_t^\eps, \iota_\eps J_t^\eps)$, 
	where $\iota_\eps$ is defined in \eqref{eq:embeddings}.
	Lemma \ref{lem:conteq-embed} implies that $(\bfmu^\eps, \bfnu^\eps) \in \bCE^\cI$ for every $\eps>0$.

	Using Lemma \ref{eq:norm-embedded-flux}, 
	the growth condition \eqref{eq: growth}, 
	and the bounds \eqref{eq:bounds-mass-action} on the masses and the action, 
	we infer that
	\begin{align}\label{eq: total variation}
		\sup_{\eps > 0 } 
			| \bfnu^\eps | 
			\big( \cI \times \Td \big) 
		\leq \frac{R_0 \sqrt{d}}{2} 
			\sup_{\eps > 0} 
				\eps 
				\int_\cI 
					\sum_{(x,y)\in \cE_\eps} 
						|J_t^\eps(x,y)| 
				\dd t 
		< \infty.
	\end{align}
	Up to extraction of a subsequence, the Banach--Alaoglu Theorem yields existence of a measure 
		$\bar \bfnu \in \cM^d(\overline\cI \times \Td)$ 
	such that
		$\bfnu^\eps \to \bar \bfnu$ weakly in $\cM^d(\overline\cI \times \Td)$. 
	It also follows that 
	$|\bar \bfnu|(\overline\cI \times \Td) 
	\leq
		\liminf_{\eps \to 0}
			|\bfnu^\eps|(\cI \times \Td) 
		 < \infty$;
	see, e.g., 
		\cite[Theorem 8.4.7]{Bogachev}.

	Furthermore, \eqref{eq:BV-bound} and 
	\eqref{eq: total variation} imply that
	the $\BV$-seminorms of $\bfmu^\eps$ are bounded:
	\begin{align}
		\label{eq:BV-bound}
		\sup_{\eps > 0} 
			\| \bfmu^\eps \|_{\BVKR}
		 \leq \sup_{\eps > 0} 
		 	|\bfnu^\eps|(\cI \times \Td) 
		< \infty,
	\end{align}
	In particular, 
		$\sup_{\eps > 0} 
		\bfmu^\eps (\cI \times \T^d) < \infty$.
	Thus, by another application of the Banach--Alaoglu Theorem,
	there exists a measure 
		$\bfmu \in \cM_+(\overline\cI \times \Td)$ 
	and a subsequence (not relabeled) 
	such that
		$\bfmu^\eps \to \bfmu$ weakly in $\cM_+(\overline\cI \times \Td)$.

	We claim that $\bfmu$ does not charge the boundary 
		$(\overline\cI \setminus \cI ) \times \Td$
	and that
		$\bfmu(\dd x, \dd t) = \mu_t(\ddd x) \dd t$
	for a curve $(\mu_t)_{t \in \cI}$	
	of constant total mass in time.		 
	To prove the claim, write $e_1(t,x) := t$, 
	and note that each curve 
		$t \mapsto \mu_t^\eps$ 
	is of constant mass. 
	Therefore, the time-marginals 
		$(e_1)_{\#} \bfmu^\eps \in \cM_+(\cI)$ 
	are constant multiples of the Lebesgue measure. 
	Since these measures
	are weakly-convergent 
	to the time-marginal $(e_1)_{\#} \bfmu$, 
	it follows that the latter is also a
	constant multiple of the Lebesgue measure,
	which implies the claim.
	
	By what we just proved, $\bfmu$ can be identified with 
	a measure on the open set 
		$\cM_+(\cI \times \Td)$.
	Let $\bfnu$ be the restriction of $\bar \bfnu$ to $\cI \times \T^d$.
	Since $\bfmu^\eps$ (resp. $\bfnu^\eps$) converges vaguely to $\bfmu$ (resp. $\bfnu$), 
	it follows that 
		$(\bfmu, \bfnu)$ 
	belongs to $\bCE^\cI$.
	
	In view of \eqref{eq:BV-bound}, we can apply the $\BV$-compactness theorem (see, e.g., \cite[Theorem B.5.10]{mielkeroubi}) 
	to obtain a further subsequence such that 	
		$\| \mu_t^\eps -  \mu_t\|_{\KR(\T^d)} \to 0$ 
	for almost every $t \in \cI$, and the limiting curve $\bfmu$ belongs to $\BVKR$. 
	Proposition \ref{prop:KR-weakstar} yields
		$\mu_t^\eps \to \mu_t$ weakly in $\cM_+(\Td)$
	for almost every $t \in \cI$.
\end{proof}

\subsection{Uniform compactness under superlinear growth}\label{subsec: compactness}
In the last two sections, we shall work with the stronger growth condition from Assumption \ref{ass:stronger}.

\begin{remark}[Property of $f_\hom$, superlinear case]
	\label{rem:prop_fhom_superlinear}
	Let us first observe that under Assumption \ref{ass:stronger}, one has superlinear growth of $f_\hom$:
	\begin{align*}
		f_\hom(\rho, j) \geq 
		\theta\Big( \frac{|j|}{\rho+1} \Big)(\rho+1)
		- C (\rho + 1)
		, \quad
		\forall \rho \geq 0, \; j \in \R^d,
	\end{align*}
	where we recall $\theta:[0,\infty) \to [0,\infty)$ is such that $\lim_{t \to \infty} \frac{\theta(t)}{t} = +\infty$.
	
	In addition for all $j\neq 0$ we have
	\begin{align}\label{eq: fhom0j}
		f_\hom^\infty(0,j) = \lim_{t\to \infty} \frac1t f_\hom(\rho_0, j_0 + tj) \geq \lim_{t\to \infty} \frac{\theta\left(\frac{|j_0 + t j|}{\rho_0+1} \right)(\rho_0+1)}{t} = \infty. 
	\end{align}
	
	In particular, if $\bA_\hom^\cI(\bfmu, \bfnu)<\infty$, then $\bfnu \ll \bfmu + \Leb^{d+1}$. Indeed, fix $\bfsigma \in \cM_+(\cI \times \Td)$ as in \eqref{eq:decomp_sigma} and suppose that $(\bfmu + \Leb^{d+1})(A) = 0$ for some $A \subset \cI \times \Td$. By positivity of the measures, this implies that $\bfmu(A) = \Leb^{d+1}(A) =0$, thus by construction 
	\begin{align*}
		\bfmu^\perp(A) = 0 	
			\tand
		\bfnu(A) = \bfnu^\perp(A).
	\end{align*}
	From the first condition and $\bfmu^\perp = \rho^\perp \bfsigma$, we deduce that $\rho^\perp(t,x) = 0 $ for $\bfsigma$-a.e. $(t,x) \in A$. From the assumption of finite energy and \eqref{eq: fhom0j}, writing $\bfnu^\perp = j^\perp \bfsigma$, we infer that $j^\perp(t,x) = 0$ for $\bfsigma$-a.e. $(t,x) \in A$ as well. It follows that $\bfnu(A) = \bfnu^\perp(A) = 0$, which proves the claim.
\end{remark}
We are ready to prove Theorem \ref{theorem: uniform compactness}.
\begin{proof}[Proof of Theorem \ref{theorem: uniform compactness}]
	Let 
		$\{\bfm^\eps\}_\eps$ 
	be a sequence of measures such that
	\begin{align}	
		\label{eq:uniform_bounds_strong_comp}
		M := 
			\sup_\eps 
				\bfm^\eps (\cI \times \cX_\eps) +1 
			< \infty
			\tand
		A := \sup_\eps 
				\cA_\eps^\cI(\bfm^\eps) 
			< \infty.		
	\end{align} 
	Thanks to Remark \eqref{rem:prop_fhom_superlinear}, we have that $\bfnu \ll \bfmu + \Leb^{d+1}$ for all solutions $(\bfmu,\bfnu)\in \bCE^\cI$ with $\bA_{\hom}^\cI(\bfmu)<\infty$. Applying Lemma \ref{lemma:disintegration} we can write $\bfmu = \dd t\otimes \mu_t$ and because $\Leb^{d+1} = \dd t\otimes \Leb^d $, we also have disintegration $\bfnu = \dd t\otimes \nu_t$ with $\nu_t \ll \mu_t + \Leb^d$ for almost every $t\in \cI$.
	
	Moreover, it follows from the definition of $\bCE^\cI$ that, for any test function $\phi\in \cC_c^1 (\cI; \cC^1(\Td))$ we have 
	\[
	\langle \bfmu, \partial_t \phi \rangle = -\langle \bfnu, \nabla \phi \rangle = -\int_\cI \Big\langle \frac{\dd \nu_t}{\dd ( \mu_t + \Leb^{d})} (\mu_t + \Leb^{d} ), \nabla \phi \Big\rangle \dd t.
	\]
	This shows that 
		$\dd t \otimes \mu_t \in \WKR$, 
	with weak derivative
	\begin{align*}
		\partial_t\mu_t = \nabla \cdot 
		\Big(
		\frac{\dd \nu_t}{\dd ( \mu_t + \Leb^{d})} (\mu_t + \Leb^{d} ) 
		\Big) 
		\in \KR(\Td)  \quad \text{for a.e.} \; t \in \cI.
	\end{align*}

	We are left with showing uniform convergence of $\iota_\eps m^\eps_t \to \mu_t$ in $\KR(\Td)$. 
	We claim that the curves
		$\{ t \mapsto \iota_\eps m_t^\eps  \}_\eps$
		are equicontinuous with respect to
		the Kantorovich--Rubinstein norm $\|\cdot \|_{\KR(\Td)}$.
	
	To show the claimed equicontinuity, take 
		$\phi\in \cC^1(\Td)$
	and $s, t \in \cI$ with	$s<t$. 
	Since 
		$(\iota_\eps m_t^\eps, \iota_\eps J_t^\eps) 
			\in \bCE^\cI
		$ 
	we obtain using 
		Lemma \ref{eq:norm-embedded-flux},
	\begin{align}
		\label{eq: uniform continuity}
		\begin{aligned}
			\bigg| 
				\int_{\T^d} 
					\phi \dd (\iota_\eps m^\eps_t)
			-
				\int_{\T^d} 
					\phi \dd (\iota_\eps m^\eps_s)
			\bigg|
			& =
				\bigg|
				\int_s^t 
					\int_{\T^d}
						\nabla \phi \cdot 
					\dd (\iota_\eps J^\eps_r)
				\dd r
				\bigg|
			\\ &\leq
			   \|\nabla \phi\|_{\cC(\Td)} \int_s^t |\iota_\eps J_r^\eps|(\Td) \dd r 	
			\\ &\leq
			\frac{R_0 \sqrt{d}}{2} \|\nabla \phi\|_{\cC(\Td)} \int_s^t \sum_{(x,y)\in \cE_\eps} \eps|J^\eps_r(x,y)|\dd r, 
		\end{aligned}
	\end{align}
	To estimate the latter integral, we consider
	for $z\in \Z_\eps^d$ the quantities
	\begin{align*}
		\sm^\eps_r(z) 
			:= 
		\sum_{\substack{
				x\in \cX_\eps 
				\\ 
				|x_\sz - z|_{\ell_\infty^d} \leq R}
			}
			m^\eps_r(x)
	\tand
		\sJ^\eps_r(z) 
			:= 
		\sum_{\substack{
				(x,y)\in \cE_\eps
				\\
				x_z = z}
			}
			|J^\eps_r(x,y)|.
	\end{align*}
	We fix a ``velocity threshold'' $v_0 > 0$,
	and split $\Z_\eps^d$ into the low velocity region
	$\cZ_- := \{ z \in \Z_\eps^d \ : \ \frac{\eps |\sJ^\eps_r(z)|}
	{\sm^\eps_r(z) + \eps^d} 
	\leq v_0 \}$
	and its complement $\cZ_+ := \Z_\eps^d \setminus \cZ_-$.
	Then:
	\begin{align}	
	\label{eq:estimate_TV_1}
		\sum_{z\in \cZ_-} 
			\eps \sJ^\eps_r(z) 
	\leq 
		v_0 \sum_{z \in \cZ_-} \big( \sm^\eps_r(z) + \eps^d \big)
	\leq 
		C_R \big( m^\eps_r(\cX_\eps) + 1 \big) v_0,
	\end{align}
	where $C_R := (2R + 1)^d$.
	For $z \in \cZ_+$
	we use the growth condition \eqref{eq: superlinear} 
	to estimate
	\begin{align*}
		\begin{aligned}
			\eps \sJ^\eps_r(z) 
			&
			\leq 
			\big( \sm^\eps_r(z)  + \eps^d \big)
			\theta\Big(
					\frac{\eps \sJ^\eps_r(z)}
						 {\sm^\eps_r(z) + \eps^d}
				\Big) 
				\sup_{v > v_0} \frac{v}{\theta(v)} 
			\\
			&\leq
			\eps^d 
				\bigg( 
				F\bigg(\frac{\tau_\eps^z m}{\eps^d}, 
					   \frac{\tau_\eps^zJ}{\eps^{d-1}}
				\bigg) 
				+ C \Big(\frac{\sm^\eps_r(z)}{\eps^d} 
						+ 1 
					\Big) 
				\bigg)
			\sup_{v > v_0} \frac{v}{\theta(v)}.
		\end{aligned}
	\end{align*}
	Since \eqref{eq: superlinear} implies non-negativity of the term in brackets, we obtain
	\begin{align}	
	\begin{aligned}
		\label{eq:estimate_TV_3}
			\sum_{z\in \cZ_+} 
				\eps \sJ^\eps_r(z) 
	&	\leq 
			\sum_{z\in \T^d} \eps^d 
		\bigg( 
		F\bigg(\frac{\tau_\eps^z m}{\eps^d}, 
			   \frac{\tau_\eps^z J}{\eps^{d-1}}
		\bigg) 
		+ C \Big(\frac{\sm^\eps_r(z)}{\eps^d} 
				+ 1 
			\Big) 
		\bigg)
	\sup_{v > v_0} \frac{v}{\theta(v)}
	\\& \leq 
				\cF_\eps(m^\eps_r, J^\eps_r) 
				+ C \big( m^\eps_r(\cX_\eps) + 1 \big)
			\sup_{v > v_0} \frac{v}{\theta(v)}.
	\end{aligned}
	\end{align}
	Integrating in time, 
	we 
	combine \eqref{eq:estimate_TV_1} and 
	\eqref{eq:estimate_TV_3} with 
	\eqref{eq:uniform_bounds_strong_comp}
	to obtain
	\begin{align}
		\label{eq:eps-J}
		\begin{aligned}
		&	\int_s^t 
				\sum_{(x,y)\in \cE_\eps} 
					\eps|J^\eps_r(x,y)|
			\dd r 
			 = 
			\int_s^t 
				\sum_{z \in \Z_\eps^d} 
					\eps \sJ^\eps_r(z)
			\dd r 
		\leq g(t-s), 
		\\& \text{where} \quad
			g(r) := 
			\inf_{v_0 > 0} 
			\bigg\{
			r C_R M v_0 
			+
			\Big( 
				A + C ( M + |\cI| )
			\Big) 	
				\sup_{v > v_0} \frac{v}{\theta(v)}	
			\bigg\}.
	\end{aligned}\end{align}
	Combining \eqref{eq: uniform continuity} and 
	\eqref{eq:eps-J}  
	we conclude that
	\begin{align*}
		\sup_{\eps > 0} 
			\|\iota_\eps m^\eps_t 
			- \iota_\eps m^\eps_s\|_{\KR(\Td)} 
		& \leq     
		\sup_{\eps >0} 
			\sup_{\|\phi\|_{\cC^1(\Td)}\leq 1}
			\bigg| 
				\int_{\T^d} 
					\phi \dd (\iota_\eps m^\eps_t)
			-
				\int_{\T^d} 
					\phi \dd (\iota_\eps m^\eps_s)
			\bigg|
		\\& \leq 		
		\frac{R_0 \sqrt{d}}{2} 
				g(t-s).
	\end{align*}
	To prove the claimed equicontinuity, 
	it suffices to show that $g(r) \to 0$ as $r \to 0$. 
	But this follows from the growth properties of $\theta$ 
	by picking, e.g., $v_0:= r^{-1/2}$.
	
	Of course the masses are uniformly bounded in $\eps$ and $t$. 
	The Arzela-Ascoli theorem implies that every subsequence has a  subsequence converging uniformly in  $\big(\cM_+(\T^d), \|\cdot\|_\KR\big)$.
\end{proof}

\subsection{The boundary value problems under superlinear growth}\label{subsec: compactness2}
The last part of this section is devoted to the proof of the convergence of the minimal actions, under the assumption of superlinear growth, i.e. Theorem \ref{theorem: uniform compactness 2}. The proof is a straightforward consequence of the stronger compactness result Theorem \ref{theorem: uniform compactness} (and the general convergence result Theorem \ref{thm:main}) proved in the previous section, which ensures the stability of the boundary conditions as well. We fix $\cI = (a,b)$.

\begin{proof}[Proof of Theorem \ref{theorem: uniform compactness 2}]
	We shall prove the upper and the lower bound.
	
	\smallskip
	\noindent
	\textit{Liminf inequality}. \
	Pick any $\iota_\eps m_a^\eps \to \mu^a$, $\iota_\eps m_b^\eps \to \mu^b$ weakly in $\cM_+(\Td)$, and let $(\bfm^\eps, \bfJ^\eps) \in \cCE_\eps^\cI$ with the same boundary data such that
	\begin{align*}
		\lim_{\eps \to 0} \cA_\eps^\cI(\mathbf{m}^\eps, \mathbf{J}^\eps) = \lim_{\eps \to 0} \cMA_\eps^\cI(m_a^\eps, m_b^\eps) < \infty.
	\end{align*}
	By Theorem \ref{theorem: uniform compactness}, 
	there exists a (non-relabeled) subsequence of 	
		$\bfm^\eps$ 
	such that 
		$\| \iota_\eps m_t^\eps - \mu_t \|_\KR \to 0$,
	uniformly for $t\in \overline\cI$. 
	In particular, 
		$\mu_a =  \mu^a$, $\mu_b = \mu^b$.  
	We can then apply the lower bound of Theorem \ref{thm:main}, and conclude
	\begin{align*}
		\bMA_{\hom}^\cI(\mu^a,\mu^b) \leq \bA_{\hom}^\cI(\bfmu) \leq \liminf_{\eps \to \infty} \cMA_\eps^\cI(m_a^\eps, m_b^\eps).
	\end{align*} 
	
	\smallskip
	\noindent
	\textit{Limsup inequality}. \
	Fix $\mu^a, \mu^b \in \cM_+(\Td)$ such that $\bMA_{\hom}^\cI(\mu^a,\mu^b)<\infty$.  
	By the definition of $\bMA_{\hom}^\cI$ and the lower semicontinuity of $\bA_\hom$ (Lemma \ref{lemma: action lsc}), there exists $\bfmu \in \cM_+(\cI \times \Td)$ with $\bA_\hom^\cI(\mathbf{\mu}) = \bMA_{\hom}^\cI(\mu^a,\mu^b)$ and $\mu_a = \mu^a, \mu_b= \mu^b$.
	
	We can then apply Theorem \ref{thm:main} and find a recovery sequence $(\mathbf{m}^\eps, \mathbf{J}^\eps) \in \cCE_\eps^\cI$ such that $\iota_\eps \bfm^\eps \to \bfmu$ weakly and
	\begin{align*}
		\limsup_{\eps \to 0} \cA_\eps^\cI(\mathbf{m}^\eps, \mathbf{J}^\eps) \leq \bA_\hom^\cI(\bfmu) = \bMA_{\hom}^\cI(\mu^a,\mu^b).
	\end{align*}
	
	By the improved compactness result Theorem \ref{theorem: uniform compactness}, $\iota_\eps m^\eps_t \to \mu_t$ in $\KR(\Td)$ for every $t\in \overline{\cI}$, in particular for $t=a,b$. This allows us to conclude
	\begin{align*}
		\limsup_{\eps \to 0} \cMA_\eps^\cI(m_a^\eps, m_b^\eps) \leq \bMA_{\hom}^\cI(\mu^a,\mu^b) 
		, \tand
		\iota_\eps m_i^\eps \to \mu^i \, \text{weakly}
	\end{align*}
	for $i=a,b$, which is sought recovery sequence for $\bMA_{\hom}^\cI(\mu^a,\mu^b)$.
\end{proof}

\begin{remark}
		\label{rem:linear_minimal}
	It is instructive to see that under the simple linear growth condition \eqref{ass:F}, the above written proof cannot be carried out. Indeed, by the lack of compactness in $W^{1,1}(\cI; \cM_+(\Td))$ (but rather only in $\BV$ by Theorem \ref{thm: compactness}), we are not able to ensure stability at the level of the initial data, i.e. in general, $\mu_a \neq \mu^a$ (and similarly for $t =b$).
\end{remark}

\section{Proof of the lower bound}\label{sec: lower}

In this section we present the proof of the lower bound in our main result, Theorem \ref{thm:main}.  
The proof relies on two key ingredients.
The first one is a partial regularisation result for discrete measures of bounded action, which is stated in Proposition \ref{prop:regularisation_discrete} and proved in Section \ref{sec:disc-reg} below.
The second ingredient is a lower bound of the energy under partial regularity conditions on the involved measures (Proposition \ref{prop:energy_reg}). 
The proof of the lower bound in Theorem \ref{thm:main}, which combines both ingredients, is given right before Section \ref{sec:disc-reg}.

First we state the regularisation result.
Recall the Kantorovich--Rubinstein norm $\|\cdot\|_{\KR}$. see Appendix \ref{sec:KR}.

\begin{proposition}[Discrete Regularisation]
	\label{prop:regularisation_discrete}
Fix $\eps < \frac{1}{2R_0}$ 
and let	
	$(\bfm, \bfJ) \in \cCE_\eps^\cI$ 
be a solution to the discrete continuity equation
satisfying
\begin{align*}
	M := m_0(\cX_\eps) < \infty 
		\tand
	A := \cA_\eps^\cI(\bfm, \bfJ) < \infty.
\end{align*}
Then, for any $\eta > 0$ 
there exists an interval $\cI^\eta \subset \cI:=(0,T)$ with $|\cI \setminus \cI^\eta| \leq \eta$ and a solution 
	$(\tilde \bfm, \tilde \bfJ) \in \cCE_\eps^{\cI^\eta}$ 
such that:
\begin{enumerate}[(i)]
\item the following approximation properties hold:
\begin{subequations}
	\label{eq:approx-bounds}
\begin{align}
	& \textrm{(measure approximation)} & 
	\| \iota_\eps (\tilde \bfm - \bfm) 
		  \|_{\KR(\overline{\cI^\eta} \times \Td)} 
		& 	\leq \eta,
	  \\
	  & \textrm{(action approximation)} &
	  \label{eq:reg-energy}
	   \cA_\eps^{\cI^\eta}(\tilde \bfm, \tilde \bfJ)
		 \leq 
	  \cA_\eps^\cI(\bfm, \bfJ)
	  & + \eta.
\end{align}
\end{subequations}
\item 
	the following regularity properties hold, uniformly for any $t\in \cI^\eta$ and any $z \in \T_\eps^d$:
\begin{subequations}
	\label{eq:reg-bounds}
\begin{align}
	\label{eq:reg-bound}
	& \textrm{(boundedness)} &
			\big\| 
				 \tilde m_t
			\big\|_{\ell^\infty(\cX_\eps)}
		+	
			\eps
			\big\| 
				 \tilde J_t
			\big\|_{\ell^\infty(\cE_\eps)}
		 & \leq C_B \eps^{d}, 
		  \\
	\label{eq:reg-time}
	& \textrm{(time-reg.)} &
			\big\| 
				\dive \tilde J_t
			\big\|_{\ell^\infty(\cX_\eps)} 
		 & \leq C_T \eps^{d}, 
\\
\label{eq:reg-space}
	& \textrm{(space-reg.)} &
				\big\|  \sigma_\eps^z \tilde m_t
					  - \tilde m_t 
				\big\|_{\ell^\infty(\cX_\eps)}
			+ \eps
				\big\|  \sigma_\eps^z  \tilde J_t		
		  			 - \tilde J_t 
	\big\|_{\ell^\infty(\cE_\eps)}
	& \leq C_S |z| \eps^{d+1},
	\\
	\label{eq:interior-domain}
	& \textrm{(domain reg.)} &	
	\bigg(\frac{\tau_\eps^z \tilde m_t}
		{\eps^d}
	,
	\frac{\tau_\eps^z \tilde J_t}
		   {\eps^{d-1}}
	\bigg)
	 & \in K. 
\end{align}
\end{subequations}
\noindent The constants $C_B, C_T, C_S < \infty$
	and the compact set $K \subseteq \Dom(F)^\circ$ 
	depend on $\eta$, $M$ and $A$, but not on $\eps$.
\end{enumerate}
\end{proposition}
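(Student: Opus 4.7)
My plan follows the three-fold regularisation sketched in Section~1.1: convolution in time, convolution in space, and a convex combination with the interior reference $(m^\circ, J^\circ)$ from Assumption~\ref{ass:F}\,(d). Each of the three operations is affine in $(\bfm, \bfJ)$, preserves the discrete continuity equation, increases the action controllably (by convexity/Jensen), and perturbs the measure in KR-norm by an amount that can be made arbitrarily small; fix three parameters $\tau, r, \delta > 0$, to be chosen at the end as functions of $\eta, M, A$. \emph{Step 1 (time).} Choose a mollifier $\eta_\tau \in C_c^\infty((-\tau,\tau))$ with $\int \eta_\tau = 1$ and $\|\eta_\tau'\|_\infty = O(\tau^{-2})$, let $\cI^\tau := \{ t \in \cI : \mathrm{dist}(t, \partial \cI) > \tau\}$, and define $(\bfm^{(1)}, \bfJ^{(1)}) := (\bfm, \bfJ) * \eta_\tau$ on $\cI^\tau$. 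Time convolution commutes with both $\partial_t$ and $\dive$, so CE is preserved, and Jensen gives no action increase. \emph{Step 2 (space).} Fix a smooth probability density $\phi \in C^\infty(\T^d)$ with $\mathrm{supp}\,\phi \subset B_r(0)$, $\|\phi\|_\infty = O(r^{-d})$, $\|\nabla \phi\|_\infty = O(r^{-d-1})$; let $\alpha_\eps(z) := \eps^d \phi(\eps z)$ (suitably normalised to a probability kernel on $\Z_\eps^d$) and set $(\bfm^{(2)}, \bfJ^{(2)})_t := \sum_{z \in \Z_\eps^d} \alpha_\eps(z) \, \sigma_\eps^z (\bfm^{(1)}, \bfJ^{(1)})_t$. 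Commutation of $\sigma_\eps^z$ with $\dive$ preserves CE, and convexity combined with shift-invariance of the outer sum in the definition of $\cF_\eps$ gives $\cF_\eps(\bfm^{(2)}_t, \bfJ^{(2)}_t) \leq \cF_\eps(\bfm^{(1)}_t, \bfJ^{(1)}_t)$. \emph{Step 3 (interior).} Set $(\tilde\bfm, \tilde\bfJ) := ((1-\delta)\bfm^{(2)} + \delta \eps^d m^\circ, (1-\delta)\bfJ^{(2)} + \delta \eps^{d-1} J^\circ)$; since $J^\circ$ is $\dive$-free, CE is preserved, and convexity bounds the action increase by $\delta |\cI| F(m^\circ, J^\circ)$.

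The four regularity properties are then checked as follows. \emph{Boundedness of $\tilde\bfm$} comes from mass conservation: $\|\bfm^{(2)}\|_\infty \leq \|\alpha_\eps\|_\infty M = O(\eps^d M/r^d)$. For $\tilde\bfJ$, the linear growth \eqref{eq: growth} together with the action bound yields $\int_\cI \eps\|\bfJ_s\|_{\ell^1} \dd s \leq C(A+M)$; time mollification converts this into a pointwise bound $\eps \|\bfJ^{(1)}_t\|_{\ell^1} = O(1/\tau)$, and spatial smoothing then promotes $\ell^1$ into $\ell^\infty$: $\|\bfJ^{(2)}_t\|_\infty \leq \|\alpha_\eps\|_\infty \|\bfJ^{(1)}_t\|_{\ell^1} = O(\eps^{d-1}/(r^d\tau))$. \emph{Space regularity} follows from $|\alpha_\eps(z-y) - \alpha_\eps(z)| = O(|y|\eps^{d+1}/r^{d+1})$ combined with the norms just obtained; Step 3 preserves it since the reference $(m^\circ, J^\circ)$ is invariant under every $\sigma_\eps^z$ (a consequence of $\Z^d$-periodicity together with $1/\eps \in \N$, so that $\Z_\eps^d$ lifts into $\Z^d$). \emph{Time regularity} $\|\dive \tilde\bfJ_t\|_\infty = \|\partial_t \tilde\bfm_t\|_\infty$ follows from $\partial_t \tilde\bfm = (1-\delta)\bfm^{(2)} * \eta_\tau'$, $\|\eta_\tau'\|_\infty = O(\tau^{-2})$, together with the boundedness of $\bfm^{(2)}$. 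Finally, \emph{domain regularity} is the essence of Step 3: the cell values $(\tau_\eps^z \bfm^{(2)}/\eps^d, \tau_\eps^z \bfJ^{(2)}/\eps^{d-1})$ lie in $\Dom(F)$ a.e.\ in $t$ (as convex combinations of values with finite $F$), and by a standard convex-analysis argument using $(m^\circ, J^\circ) + B_{r_0} \subseteq \Dom(F)$, the convex combination $(1-\delta)(\cdot) + \delta(m^\circ, J^\circ)$ lies at distance $\geq \delta r_0$ from $\partial \Dom(F)$; combined with the $\ell^\infty$ bounds from Step~2 this places the cell values in a compact $K \Subset \Dom(F)^\circ$ depending only on $\eta, M, A$.

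The approximation estimates \eqref{eq:approx-bounds} are obtained by tracking errors through the three steps: the KR error accumulates as $O(\tau(A+M)) + O(rM) + O(\delta M)$ (using Lemma~\ref{eq:norm-embedded-flux} and CE for Step~1, the kernel scale for Step~2, and mass bounds for Step~3), and the action excess is $O(\delta|\cI|F(m^\circ, J^\circ))$; choosing $\tau, r, \delta$ small enough in terms of $\eta, M, A, F(m^\circ, J^\circ)$, and $|\cI|$ makes all quantities $\leq \eta$. Setting $\cI^\eta := \cI^\tau$ gives $|\cI \setminus \cI^\eta| \leq 2\tau \leq \eta$. The \textbf{main technical obstacle} is the uniform pointwise-in-$t$ $\ell^\infty$ bound on $\bfJ$ under the merely linear growth \eqref{eq: growth}: the natural a priori estimate on $\bfJ$ is only time-integrated $\ell^1$, and both mollifications are needed in tandem --- the time one to convert the integrated bound into a pointwise $\ell^1$ bound, and the space one to promote $\ell^1$ into $\ell^\infty$. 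A secondary, bookkeeping subtlety is ensuring that all constants $C_B, C_T, C_S$ and the compact set $K$ depend only on $\eta, M, A$ (and the fixed data $F, m^\circ, J^\circ, |\cI|$) rather than on $\eps$; this forces the spatial kernel to live at a fixed geometric scale $r$ (independent of $\eps$), which in turn is why one must allow the constants to blow up as $\eta \to 0$.
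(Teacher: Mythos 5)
Your proposal follows the paper's proof essentially exactly: a three-fold regularisation (time, then space, then convex combination with the interior point $(m^\circ,J^\circ)$), in the same order and for the same reason — the time mollification converts the time-integrated $\ell^1$ bound on $J$ coming from \eqref{eq: growth} into a pointwise-in-$t$ $\ell^1$ bound, which spatial smoothing at a fixed geometric scale then promotes to $\ell^\infty$, and the convex combination places the cell values in a compact $K\Subset\Dom(F)^\circ$ via the same convexity argument (cf.\ Lemma~\ref{lemma:prop_domf}). The only substantive imprecision: the claim that ``Jensen gives no action increase'' for the time convolution is not quite right, because $\cF_\eps$ is only bounded below, not nonnegative; as in Lemma~\ref{lem:reg-time}\,(i), the boundary effect at the ends of $\cI$ produces an $O(\tau(M+1))$ error term in the action, which must be absorbed into the $\eta$-budget alongside the $O(\delta|\cI|F(m^\circ,J^\circ))$ term you do record. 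There is also a notational slip in the time-regularity estimate (after the spatial smoothing one should write $\partial_t\tilde\bfm=(1-\delta)\,S_r(\bfm * \eta'_\tau)$ rather than $(1-\delta)\bfm^{(2)}*\eta'_\tau$), but this does not affect the argument.
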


\begin{remark}
	The $\ell^\infty$-bounds in \eqref{eq:reg-bound} are explicitly stated for the sake of clarity, although they are implied by the compactness of the set $K$ in \eqref{eq:interior-domain}.
	
	Since $(\tilde \bfm, \tilde \bfJ)\in \cCE_\eps^{\cI^\eta}$, inequality \eqref{eq:reg-time} in effect bounds $\big\| 
\partial_t \tilde m_t
	\big\|_{\ell^\infty(\cX_\eps)} 
\leq C_T \eps^{d}$.
\end{remark}

In the next result, we start by showing how to construct $\Z^d$-periodic solutions to the static continuity equation by superposition of unit fluxes.  
Additionally, we can build these solutions with vanishing effective flux and ensure good $\ell^\infty$-bounds.

\begin{lemma}
	[Periodic solutions to the divergence equation]
	\label{lemma:bounds_divergence_eq 2}
	Let $g : \cX \to \R$ be a $\Z^d$-periodic function with
	$\sum_{x \in \XQ} g(x) = 0$. 
	There exists a $\Z^d$-periodic discrete vector field $J : \cE \to \R$ satisfying
	\begin{align*}
		\dive J = g, \quad
		\Eff(J) = 0, \tand
		\| J \|_{\ell_\infty(\EQ)} 
		\leq 
		\tfrac12 \| g \|_{\ell_1(\XQ)}.
	\end{align*}
	
\end{lemma}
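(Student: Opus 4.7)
The plan is to construct $J$ as a non-negative superposition of periodic unit fluxes $\tilde J_P$ (Definition \ref{def:vectoralongpath}) associated with simple paths $P$ in $\cX$ whose endpoints both lie in $\XQ$. By Lemma \ref{lem:J_Pq}(iii), any such path has $\Eff(\tilde J_P)=0$, so the effective-flux constraint will be satisfied automatically.

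First I would apply a Hahn-type decomposition to write $g = g_+ - g_-$ on $\XQ$ with disjoint supports. The zero-sum hypothesis yields $\|g_+\|_{\ell^1(\XQ)} = \|g_-\|_{\ell^1(\XQ)} = \tfrac12 \|g\|_{\ell^1(\XQ)} =: m$. I would then fix any coupling $\pi : \XQ \times \XQ \to \R_+$ with marginals $g_+$ and $g_-$, so that $\sum_{i,j} \pi(v_i, v_j) = m$.

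For each pair $(v_i, v_j)$ with $\pi(v_i,v_j)>0$ I would select a simple path $P_{ij}$ in $\cX$ from $(0,v_i)$ to $(0,v_j)$ whose edges lie in pairwise distinct $\Z^d$-orbits (equivalently, a trail in the quotient multigraph $\cG := \cX/\Z^d$) and set
\[
J := \sum_{i,j} \pi(v_i,v_j)\, \tilde J_{P_{ij}}.
\]
Lemma \ref{lem:J_Pq}(ii) together with the marginal identities then forces $\dive J = g_+ - g_- = g$, while Lemma \ref{lem:J_Pq}(iii) and the choice of endpoints in $\XQ$ give $\Eff(J)=0$ summand by summand. Finally, the trail property makes $\tilde J_{P_{ij}}(e) \in \{-1,0,1\}$ for every $e \in \EQ$, so
\[
\|J\|_{\ell^\infty(\EQ)} \leq \sum_{i,j} \pi(v_i,v_j) = m = \tfrac12 \|g\|_{\ell^1(\XQ)}.
\]

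The main obstacle is producing, for each active pair $(v_i,v_j)$, a quotient-trail from $v_i$ to $v_j$ whose lift in $\cX$ has total $\Z^d$-displacement $0$. Connectedness of $\cX$ supplies paths freely, but simultaneously achieving simplicity in the quotient \emph{and} zero label-sum is delicate. My plan is to take any shortest quotient-path $Q$ from $v_i$ to $v_j$ with label $z \in \Z^d$, and correct it by appending orbit-disjoint cycles whose labels sum to $-z$; such cycles exist because the cycle-label subgroup of $\cG$ equals $\Z^d$, again by connectedness of $\cX$. If an orbit-disjoint correction cannot be arranged, the transport mass $\pi(v_i,v_j)$ can instead be split equally between two paths whose labels $\pm z$ cancel in $\Eff$, which preserves the $\ell^\infty$ bound since each summand still individually satisfies $\|\tilde J_P\|_\infty \leq 1$.
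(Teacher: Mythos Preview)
Your construction coincides with the paper's: couple $g_+$ with $g_-$ and superpose periodic unit fluxes along paths joining the support points. The paper simply fixes, for each pair $v,w\in\V$, \emph{any} simple path $P^{vw}$ in $(\cX,\cE)$ from $(0,v)$ to $(0,w)$ and sets $J=\sum_{v,w}\Gamma_{vw}\,\tilde J_{P^{vw}}$; since both endpoints already lie in $\XQ$, Lemma~\ref{lem:J_Pq}(iii) gives $\Eff(\tilde J_{P^{vw}})=0$ immediately, so no separate ``zero label-sum'' condition has to be engineered.

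Where you go beyond the paper is in demanding that each path project to a trail in the quotient so that $|\tilde J_P|\le 1$ edgewise, which is exactly what the sharp constant $\tfrac12$ would require. Your caution is warranted, but the obstacle you isolate cannot be removed: the constant $\tfrac12$ actually fails on some admissible graphs. Take $d=1$, $\V=\{a,b\}$ and edge orbits generated by $(0,a)\sim(1,b)$ and $(0,a)\sim(2,b)$; this graph is connected, yet for $g(a)=1$, $g(b)=-1$ the system $J_1+J_2=1$, $J_1+2J_2=\Eff(J)=0$ forces $J_1=2$, $J_2=-1$, so $\|J\|_{\ell^\infty(\EQ)}=2>\tfrac12\|g\|_{\ell^1(\XQ)}=1$. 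Hence neither your proposed fixes nor the paper's one-line ``straightforward to verify'' can deliver $\tfrac12$ in general. What the paper's construction does give is $\|J\|_{\ell^\infty(\EQ)}\le C\|g\|_{\ell^1(\XQ)}$ with $C=\tfrac12\max_{v,w}\|\tilde J_{P^{vw}}\|_{\ell^\infty(\EQ)}$ depending only on $(\cX,\cE)$, and this graph-dependent bound is all that Proposition~\ref{prop:energy_reg} actually uses. Drop the quotient-trail manoeuvre, take simple paths in $\cX$ between points of $\XQ$ as the paper does, and accept the graph-dependent constant.
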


\begin{proof} 
	For any $v, w \in \V$, fix a simple path $P^{vw}$ in $(\cX, \cE)$ connecting $(0,v)$ and $(0,w)$.
	Let 
	$\tilde J_{vw} := \tilde J_{P^{vw}}$ 
	be the associated periodic unit flux defined in \eqref{eq:periodic-unit-flux}.
	Since $\sum_{v \in \V} g(0,v) = 0$, we can pick a coupling $\Gamma$ between the negative part and the positive part of $g$. 
	More precisely, we may pick a function $\Gamma : \V \times \V \to \R_+$ with 
	$\sum_{v,w \in \V} 
	\Gamma(v,w) 
	= \|g\|_{\ell_1(\XQ)}
	$
	such that 
	\begin{align*}
		\sum_{w \in \V} \Gamma_{vw} = g_-(0,v)	
		\quad 
		\text{for } v \in \V,
		\tand
		\sum_{v \in \V} \Gamma_{vw} = g_+(0,w)	
		\quad 
		\text{for } w \in \V.
	\end{align*}
	We then define
	\begin{align*}
		J := \sum_{v,w \in \V}
		\Gamma_{vw} \tilde J_{vw}.
	\end{align*}
	It is straightforward to verify using Lemma \ref{lem:J_Pq} that $J$ has the three desired properties.
\end{proof}

The following result states the desired relation between the functionals $\cF_\eps$ and $\bF_{\hom}$ under suitable regularity conditions for the measures involved.
These regularity conditions are consistent with the regularity properties obtained in Proposition \ref{prop:regularisation_discrete}.  

\begin{proposition}[Energy lower bound for regular measures]
\label{prop:energy_reg}	
	Let $C_B, C_T, C_S < \infty$ and let $K \subseteq \Dom(F)^\circ$ be a compact set.
	There exists a threshold $\eps_0 > 0$ 
	and a constant $C < \infty$ such that the following implication holds
	for any $\eps < \eps_0$:
	if 
	$m \in \Meps$ 
	and $J \in \Mdeps$
	satisfy the regularity properties \eqref{eq:reg-bound}-\eqref{eq:interior-domain}
	then we have the energy bound
	\begin{align*}
		\bF_\hom(\iota_\eps m, \iota_\eps J)
		\leq
			\cF_\eps(m, J)
			+ C \eps.
	\end{align*}
\end{proposition}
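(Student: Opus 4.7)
The plan is to exploit the definition of $f_\hom$ pointwise on each $\eps$-cube: for each $u \in Q_\eps^{\bar z} \subseteq \Td$ I construct, in the spirit of the liminf sketch from the introduction, an explicit representative of $(\rho(u), j(u))$ whose cost is close to the local contribution of $(\tau_\eps^{\bar z} m, \tau_\eps^{\bar z} J)$ to $\cF_\eps(m,J)$. Since $\iota_\eps m$ and $\iota_\eps J$ are both absolutely continuous with respect to $\Leb^d$, with densities $\rho(u)$ and $j(u)$ given by \eqref{eq:density_embedded_flux}, one has
\[
    \bF_\hom(\iota_\eps m, \iota_\eps J) = \int_{\Td} f_\hom\bigl(\rho(u), j(u)\bigr)\dd u
\]
with no singular part, so it will suffice to bound $f_\hom(\rho(u), j(u))$ pointwise by $F(\eps^{-d}\tau_\eps^{\bar z} m, \eps^{-(d-1)}\tau_\eps^{\bar z} J) + C\eps$ uniformly in $u \in Q_\eps^{\bar z}$ and $\bar z \in \Z_\eps^d$, and then sum.

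To construct the representative, for each $u \in Q_\eps^{\bar z}$ I define the $\Z^d$-periodic pair $(\hat m_u, \tilde J_u)$ on $(\cX, \cE)$ by periodic extension from
\[
    \hat m_u(0, v) := m\bigl(T_\eps^{\bar z}(0, v)\bigr) \ \text{ for } (0,v) \in \XQ, \quad
    \tilde J_u(x, y) := J_u\bigl(T_\eps^{\bar z}(x), T_\eps^{\bar z}(y)\bigr) \ \text{ for } (x,y) \in \EQ,
\]
where $J_u$ is the convex combination of shifts of $J$ from Lemma \ref{lemma:density_embedded_flux}. A direct computation using \eqref{eq:density_embedded_flux} yields $\eps^{-d}\hat m_u \in \Rep(\rho(u))$ and $\Eff(\eps^{-(d-1)}\tilde J_u) = j(u)$, but $\tilde J_u$ is generally not divergence-free. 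The key estimate is
\[
    \bigl\|\dive \tilde J_u\bigr\|_{\ell^\infty(\XQ)} \leq C\eps^d,
\]
obtained by observing $\dive \tilde J_u(x) = \dive J_u\bigl(T_\eps^{\bar z}(x)\bigr)$ on $\XQ$, combining the time-regularity \eqref{eq:reg-time} (which bounds $\|\dive J\|_{\ell^\infty} \leq C_T \eps^d$) with the space-regularity \eqref{eq:reg-space} (which controls the mismatch $\dive J_u - \dive J$ produced by the $u$-dependent weights $\lambda_u^{\eps, z'}$, supported on shifts of size at most $R_0 + 1$). Anti-symmetry and $\Z^d$-periodicity of $\tilde J_u$ force $\sum_{x \in \XQ}\dive \tilde J_u(x) = 0$ by cancellation of fluxes between adjacent fundamental cells, so Lemma \ref{lemma:bounds_divergence_eq 2} produces a corrector $\bar J_u$ with $\dive \bar J_u = -\dive \tilde J_u$, $\Eff(\bar J_u) = 0$, and $\|\bar J_u\|_{\ell^\infty(\EQ)} \leq \tfrac12\|\dive \tilde J_u\|_{\ell^1(\XQ)} \leq C\eps^d$. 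Setting $\hat J_u := \tilde J_u + \bar J_u$ then gives $(\eps^{-d}\hat m_u, \eps^{-(d-1)}\hat J_u) \in \Rep(\rho(u), j(u))$, so by the cell formula $f_\hom(\rho(u), j(u)) \leq F(\eps^{-d}\hat m_u, \eps^{-(d-1)}\hat J_u)$.

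It remains to compare $F(\eps^{-d}\hat m_u, \eps^{-(d-1)}\hat J_u)$ with $F(\eps^{-d}\tau_\eps^{\bar z} m, \eps^{-(d-1)}\tau_\eps^{\bar z} J)$. By locality of $F$ only the values on the radius-$R$ ball around the origin matter. The space-regularity \eqref{eq:reg-space} yields $|\hat m_u(x) - \tau_\eps^{\bar z} m(x)| \leq C\eps^{d+1}$ and $|\tilde J_u(x,y) - \tau_\eps^{\bar z} J(x,y)| \leq C\eps^{d}$ uniformly for $|x_\sz|_\infty \leq R$, while $\|\bar J_u\|_\infty \leq C\eps^d$ by the previous step, so after scaling the competitor lies within $C\eps$ in $\ell^\infty$ of $(\eps^{-d}\tau_\eps^{\bar z} m, \eps^{-(d-1)}\tau_\eps^{\bar z} J)$. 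By \eqref{eq:interior-domain} the latter lies in a fixed compact $K \Subset \Dom(F)^\circ$, and convexity of $F$ combined with finiteness on $\Dom(F)^\circ$ implies Lipschitz continuity on a slight enlargement $\tilde K \Subset \Dom(F)^\circ$ of $K$. For $\eps$ below a fixed threshold the perturbed competitors remain in $\tilde K$, whence
\[
    f_\hom(\rho(u), j(u)) \leq F\Bigl(\tfrac{\hat m_u}{\eps^d}, \tfrac{\hat J_u}{\eps^{d-1}}\Bigr) \leq F\Bigl(\tfrac{\tau_\eps^{\bar z} m}{\eps^d}, \tfrac{\tau_\eps^{\bar z} J}{\eps^{d-1}}\Bigr) + C\eps
\]
uniformly in $u \in Q_\eps^{\bar z}$ and $\bar z \in \Z_\eps^d$. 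Integrating over each cube of volume $\eps^d$ and summing over $\bar z$ gives the claim. The principal obstacle is obtaining a \emph{uniform} Lipschitz constant for $F$ on a neighborhood absorbing all perturbations simultaneously; this is precisely where the compactness and interior-domain features of \eqref{eq:interior-domain}—and the $\eps$-independent character of the constants in \eqref{eq:reg-bounds}—are essential.
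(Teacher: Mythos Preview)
Your proposal is correct and follows essentially the same approach as the paper's proof: construct a periodic representative $(\hat m_u,\hat J_u)$ cube-by-cube by periodically extending the local data and adding the corrector from Lemma~\ref{lemma:bounds_divergence_eq 2}, compare it to $(\tau_\eps^{\bar z} m,\tau_\eps^{\bar z} J)$ via the space- and time-regularity bounds \eqref{eq:reg-space}--\eqref{eq:reg-time}, and conclude using the Lipschitz continuity of $F$ on a slight enlargement of $K$ inside $\Dom(F)^\circ$. The organisation, key estimates, and use of \eqref{eq:interior-domain} to secure a uniform Lipschitz constant all match the paper's argument.
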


\begin{proof}	
Recall from \eqref{eq:density_embedded_flux} that 
	$\iota_\eps m = \rho \Leb^d$ and
	$\iota_\eps J = j \Leb^d$,
where, for $\bar z \in \Z_\eps^d$ 
and $u \in Q_\eps^{\bar z}$,
\begin{align*}
	\rho(u)
	:= \eps^{-d} 
		\sum_{\substack{x \in \cX_\eps\\
		x_\sz = \bar z}} 
			m(x)
\tand
	j(u)
	:=  
	\frac{1}{2\eps^{d-1}}
		\sum_{\substack{(x,y) \in \cE_\eps\\
			  x_\sz = \bar z}} 
			J_u(x,y) 
			\big( 
				y_\sz - x_\sz 
			\big),
\end{align*} 
where $J_u(x, y)$ is a convex combination of $\big\{
		J\big( T_\eps^z x, T_\eps^z y \big)
\big\}_{z \in \Z_\eps^d}$, 
i.e., 	
\begin{align*}
	J_u(x, y) 
	& = \sum_{z \in \Z_\eps^d}
		\lambda_u^{\eps,z}(x,y)
			J\big( T_\eps^z x, T_\eps^z y \big),
\end{align*}
where $\lambda_u^{\eps, {\bar z}}(x,y) \geq 0$,
	$\sum_{z \in \Z_\eps^d} 
		\lambda_u^{\eps,z}(x,y) = 1$,
	and $\lambda_u^{\eps, z}(x,y) = 0$ whenever
	$|z| > R_0$.

\smallskip
\emph{Step 1. Construction of a representative.} \ 
Fix $\bar z \in \Z_\eps^d$ 
and $u \in Q_\eps^{\bar z}$. 
Our first goal is to construct a representative
\begin{align*}
	\bigg(
		\frac{\hat m_u}{\eps^d}, 
		\frac{\hat J_u}{\eps^{d-1}}
	\bigg)
		\in
	\Rep\big( 
		\rho(u), j(u)
	\big).
\end{align*}
For this purpose we define candidates
	$\hat m_u \in \R_+^{\cX}$
and	
	$\tilde J_u \in \R_a^{\cE}$
as follows. 
We take the values of 
	$m$ and $J_u$ 
in the $\eps$-cube at $\bar z$, 
and insert these values at every cube in $(\cX, \cE)$, so that the result is $\Z^d$-periodic.
In formulae:
\begin{align*}
	\hat m_u(z,v) 
		&:= 
		 m(\eps \bar z, v)
		&& \text{for } (z, v) \in \cX
\\
	\tilde J_u\Big( (z,v), (z',v') \Big)
		&:= 
		J_u\Big( (\eps \bar z,v), (\eps (\bar z+ z' - z),v') \Big)
		&& \text{for } \Big( (z,v), (z',v') \Big) \in \cE.	
\end{align*}

see Figure \ref{fig:cand_lb}.

\begin{figure}[h]
	\begin{subfigure}{.5\textwidth}
		\centering
		\includegraphics[scale=1.5]{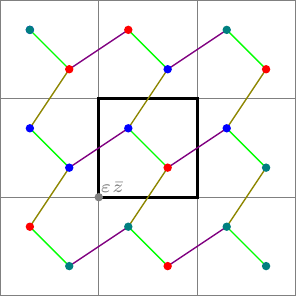}
	\end{subfigure}%
	\begin{subfigure}{.5\textwidth}
		\centering
		\includegraphics[scale=1.5]{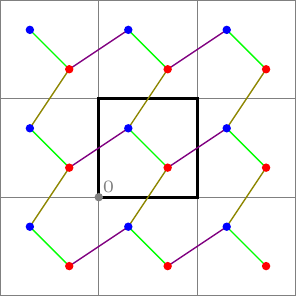}
	\end{subfigure}%
	\caption{On the left, using different colors for different values, the measures $m$ and $J_u$. On the right, the corresponding $\hat m_u$ and $\tilde J_u$, for $u \in Q_\eps^{\bar z}$.}
	\label{fig:cand_lb}
\end{figure}

We emphasise that the right-hand side 
does not depend on $z$, 
hence $m_u$ and $\tilde J_u$
are $\Z^d$-periodic.
Our construction also ensures that
\begin{align*}
	\eps^{-d}
	\sum_{x \in \XQ} 
		\hat m_u(x)
	= \rho(u),
\end{align*}
hence $\eps^{-d} \hat m_u \in \Rep\big( \rho(u) \big)$.
However, the vector field
$\eps^{-(d-1)}\tilde J_u$ does (in general) not belong to $\Rep\big(j(u)\big)$: 
indeed, while $\tilde J_u$ has the desired effective flux 
	(i.e., $\Eff\big(\eps^{-(d-1)}\tilde  J_u\big) 
		= j(u)$),
$\tilde J_u$ is not (in general) divergence-free.

To remedy this issue, we introduce a \emph{corrector field} 
$\bar J_u$, i.e., an anti-symmetric and $\Z^d$-periodic function $ \bar J_u : \cE \to \R$ satisfying
\begin{align}
\label{eq:corrector-prop}
	 \dive \bar J_u 
	 	= -\dive \tilde J_u, \quad
	 \Eff(\bar J_u ) 
	 	= 0, \tand
	 \big\| \bar J_u \big\|_{\ell^\infty(\EQ)} 
		 \leq 
	\tfrac12 \big\| \dive \tilde J_u \big\|_{\ell^1(\XQ)}.
\end{align}
The existence of such a vector field 
is guaranteed 
by Lemma \ref{lemma:bounds_divergence_eq 2}.
It immediately follows that 
	$\hat J_u
	:= \tilde J_u 
	+  \bar J_u$ 
satisfies	
$\dive \hat J_u = 0$ 
and 
$\Eff\big(\eps^{-(d-1)}\hat J_u\big) = j(u)$, 
thus
\begin{align*}
	\frac{\hat J_u}{\eps^{d-1}}
	:= \frac{\tilde J_u 
	+  \bar J_u}
	{{\eps^{d-1}}}
	\in \Rep\big(j_u \big).
\end{align*}

\smallskip
\emph{Step 2. Density comparison.} \ 
We will now use the regularity assumptions \eqref{eq:reg-bound}-\eqref{eq:interior-domain} to show that the representative 
	$(\hat m_u, \hat J_u)$ 
is not too different from the shifted density
	$(\tau_{\bar z} m,
	\tau_{\bar z} J)$.
Indeed, for $x = (z, v) \in \cX$ 
with $|z| \leq R_0$ 
we obtain using \eqref{eq:reg-space},
\begin{align}
\label{eq:m-compare}
	| \tau_\eps^{\bar z} m(x)
		- \hat m_u(x) |
	= \big| m\big(\eps(\bar z + z), v \big)
	 	 - m\big(\eps \bar z, v \big)
	  \big|
	\leq C_S \eps^{d+1} |z|.
\end{align}

Let us now turn to the momentum field.
For 
	$(x, y) = \big( (z,v), (z',v') \big) \in \cE$ 
with 
	$|z|, |z'| \leq R_1$, 
we have, using \eqref{eq:reg-space},
\begin{align*}
	& 	\big| 
			\tau_\eps^{\bar z} J(x,y)
			- \tilde J_u(x,y) 
		\big|
	\\& = \Big| J\Big( 
			\big(\eps (\bar z + z),v\big),
			\big(\eps (\bar z + z'),v'\big) \Big)
	 	 - J_u\Big( 
			\big(\eps \bar z,v\big), 
			\big(\eps (\bar z + z' - z),v'\big) 
			\Big)
	  \Big|
		\\& = \bigg| 
		\sum_{\tilde z \in \Z_\eps^d} 
		\lambda_u^{\eps,\tilde z}(x,y)
		\bigg\{ 
			J\Big( 
				\big(\eps (\bar z + z)    , v  \big),
				\big(\eps (\bar z + z'), v' \big) 
			\Big)
			\\&  \qquad\qquad\qquad\qquad
		- 	J\Big( 
				\big( \eps (\bar z + \tilde z    ), v  \big), 
				\big( \eps (\bar z + \tilde z + z' - z), v' \big) 
			\Big)
		\bigg\}
  \bigg|
	\\& \leq C_S \eps^{d} |z - \tilde z|
		\leq R_0 C_S \eps^{d} .
\end{align*}
Moreover, using \eqref{eq:corrector-prop}, \eqref{eq:reg-space}, and \eqref{eq:reg-time}, we obtain
\begin{align*}
|\bar J_u(x,y) |	
	&	\leq  \tfrac12 
		\| \dive 
			\tilde J_u 
		\|_{\ell^1(\XQ)}
	\leq C_T 
		\Big(
			\| \dive J \|_{\ell^\infty(\cE_\eps)}	 + \eps^d	
		\Big)
	 \leq C \eps^d,
\end{align*}
for some $C < \infty$ not depending on $\eps$.
Combining these bounds we obtain
\begin{align}
\label{eq:J-compare}	
	| \tau_\eps^{\bar z} J(x,y)
	- \hat J_u(x,y) |
& \leq | \tau_\eps^{\bar z} J(x,y)
	- \tilde J_u(x,y) |
+ |\bar J_u(x,y) |
	\leq C \eps^d.
\end{align}

\smallskip
\emph{Step 3. Energy comparison.} \ 
Since
$\displaystyle 
\bigg(\frac{\tau_\eps^{\bar z} m}
{\eps^d}
,
\frac{\tau_\eps^{\bar z} J}
   {\eps^{d-1}}
\bigg)
\in K$
by assumption, it follows from \eqref{eq:m-compare} and \eqref{eq:J-compare} that 
$\displaystyle 
\bigg(\frac{\hat m_u}
{\eps^d}
,
\frac{\hat J_u}
   {\eps^{d-1}}
\bigg)
\in K'$
for $\eps > 0$ sufficiently small. 
Here $K$ is a compact set, possibly slightly larger than $K$, contained in $\Dom(\cF)^\circ$.

Since $F$ is convex, it is Lipschitz continuous on compact subsets in the interior of its domain. In particular, it is Lipschitz continuous on $K'$.
Therefore, there exists a constant $C_L < \infty$ depending on $\cF$ and $K'$ such that
\begin{align*}
	\cF\bigg(\frac{\tau_\eps^{\bar z} m}
	{\eps^d}
	,
	\frac{\tau_\eps^{\bar z} J}
	   {\eps^{d-1}}
	\bigg)
	& \geq  
	\cF\bigg(\frac{\hat m_u}
	{\eps^d}
	,
	\frac{\hat J_u}
	   {\eps^{d-1}}
	\bigg)
	- C_L 
		\bigg(
			\Big\| \frac{\tau_\eps^{\bar z} m 
			- \hat m_u} 
			{\eps^d} \Big\|_{\ell^\infty_{R_0}(\cX)}
		+ 	\Big\| \frac{\tau_\eps^{\bar z} J 
				- \hat J_u 		}
				{\eps^{d-1}}\Big\|_{\ell^\infty_{R_0}(\cE)} 
		\bigg)
		\\ & \geq 
		\cF\bigg(\frac{\hat m_u}
		{\eps^d}
		,
		\frac{\hat J_u}
		   {\eps^{d-1}}
		\bigg)
		- C \eps
		\\ & \geq 
		f_\hom\big( \rho(u), j(u) \big)
		- C \eps,
\end{align*}
with $C < \infty$ depending on $C_L$, $C_S$, $C_T$, and $R_1$, but not on $\eps$.
Here, the subscript $R_0$ in $\ell^\infty_{R_0}(\cE)$ and $\ell^\infty_{R_0}(\cX)$ indicates that only elements with $|x_\sz| \leq R_1$ are considered.

Integration over $Q_\eps^{\bar z}$ followed by summation over $\bar z \in \Z_\eps^d$ yields
\begin{align*}
	\cF_\eps(m, J)
	& = \eps^d
		\sum_{\bar z \in \Z_\eps^d}
		\cF\bigg(\frac{\tau_\eps^{\bar z} m}
		{\eps^d}
		,
		\frac{\tau_\eps^{\bar z} J}
		   {\eps^{d-1}}
		\bigg)	
		\geq 
		\sum_{\bar z \in \Z_\eps^d}
			\int_{Q_\eps^{\bar z}}
				\Big(
					f_\hom\big( \rho(u), j(u) \big)
						- C \eps
				\Big)	
			\dd u
		\\& = 	
			\int_{\T^d}
					f_\hom\big( \rho(u), j(u) \big)	
			\dd u
			- C \eps
		= \bF_\hom(\iota_\eps m, \iota_\eps J) 
			- C \eps,
\end{align*}
which is the desired result.
\end{proof}

We are now ready to give the proof of the lower bound in our main result, Theorem \ref{thm:main}.

\begin{proof}[Proof of Theorem \ref{thm:main} (lower bound)]
Let $\bfmu \in \cM_+\big(\cI \times \T^d\big)$ and let 
	$(m_t^\eps)_{t \in \cI} 
		\subseteq 
	\R_+^{\cX_\eps}$
be such that the induced measures 
	$\bfm^\eps \in  \cM_+\big(\cI \times \cX_\eps \big)$
defined by
	$\ddd \bfm^\eps(t,x) = \ddd m^\eps_t(x) \dd t$
satisfy 	
	$\iota_\eps \bfm^\eps \to \bfmu$ vaguely in $\cM_+(\cI \times \Td)$
as $\eps \to 0$.
Observe that
\begin{align*}
	M := \sup_{\eps > 0}
		\bfm^\eps
		\big(
			\cI \times \cX_\eps
		\big)
	< \infty.
\end{align*}
Without loss of generality, we may assume that 
\begin{align*}
	A := 
		\sup_{\eps > 0}
			\cA_\eps(\bfm^\eps)
		 <
		\infty.
\end{align*}

\smallskip
\emph{Step 1 (Regularisation)}:
Fix $\eta > 0$. 
Let $(J_t^\eps)_{t \in \cI} \subseteq \Mdeps$ be an approximately optimal discrete vector field, i.e., 
\begin{align}\label{eq:almost-opt}
	(\bfm^\eps, \bfJ^\eps) \in\cCE_\eps^\cI
		\tand
	\cA_\eps(\bfm^\eps, \bfJ^\eps)
		\leq
	\cA_\eps(\bfm^\eps)  
		+ \eta.
\end{align}
Using Proposition \ref{prop:regularisation_discrete} we take an interval $\cI^\eta \subset \cI:=(0,T)$, $|\cI\setminus \cI^\eta|\leq \eta$ and an approximating pair $(\tilde \bfm{}^\eps, \tilde \bfJ{}^\eps) \in\cCE_\eps^{I_\eta}$ satisfying
\begin{align}\label{eq:almost-opt2}
	\| \iota_\eps (\tilde \bfm{}^\eps - \bfm^\eps) 
	  \|_{\KR(\overline{\cI^\eta} \times \Td)} \leq \eta 
	  \tand
	  \cA_\eps^{\cI^\eta}(\tilde \bfm{}^\eps, \tilde \bfJ{}^\eps)
	  \leq 
	  \cA_\eps(\bfm^\eps, \bfJ^\eps)
  + \eta,
\end{align}
together with the regularity properties \eqref{eq:reg-bounds}
for some constants $C_B, C_T, C_S < \infty$
and a compact set $K \subseteq \Dom(F)^\circ$ 
depending on $\eta$, but not on $\eps$.
By virtue of these regularity properties, we may apply Proposition \ref{prop:energy_reg} to $(\tilde \bfm{}^\eps, \tilde \bfJ{}^\eps)$. 
This yields
\begin{align}
\label{eq:reg-energ}
	\bA_\hom^{\cI^\eta}(\iota_\eps \tilde \bfm{}^\eps, 
			 \iota_\eps \tilde \bfJ{}^\eps)
	= 
	\int_{\cI^\eta}
		\bF_\hom(\iota_\eps \tilde m_t^\eps, 
			     \iota_\eps \tilde J_t^\eps)
	\dd t 
	\leq
	\int_{\cI^\eta}
		\cF_\eps(\tilde m_t^\eps, \tilde J_t^\eps)
	\dd t 
		+ C \eps,
\end{align}
with $C < \infty$ depending on $\eta$, but not on $\eps$.

\smallskip
\emph{Step 2 (Limit passage $\eps \to 0$)}:
It follows by definition of the Kantorovich--Rubinstein norm that 
\begin{align*}
	\sup_\eps 
		\iota_\eps \tilde\bfm^\eps
		\big( 
			\overline{\cI^\eta} \times \T^d
		\big)
	&\leq 
	\sup_\eps 
		\bigg(
				\iota_\eps \bfm^\eps
			\big( 
				\cI \times \T^d
			\big)
			+
			\| \iota_\eps 
					(\tilde \bfm{}^\eps
					 - 	\bfm^\eps) 
			\|_{\KR(\overline{\cI^\eta} \times \Td)}
		\bigg) \\
	&\leq 
	M + \eta.
\end{align*}
It follows from the growth condition \eqref{eq: growth} and \eqref{eq:almost-opt2} that
\begin{equation}
	\begin{aligned}		
	\label{eq:nu-bound}
	\sup_\eps 
		\big|\iota_\eps \tilde \bfJ{}^\eps\big|
		\big( 
			\overline{\cI^\eta} \times \T^d
		\big)
& \lesssim  \sup_\eps  
		\int_{\cI^\eta}
			\eps 	
			\|
				\tilde J_t^\eps 
			\|_{\ell^1(\cE_\eps)}
		\dd t
\\&	\lesssim 
	\sup_\eps  
	\int_{\cI^\eta}
		\bigg( 
			1 
			+ \| \tilde m_t^\eps \|_{\ell^1(\cX_\eps)} 
			+ \cF_\eps(\tilde m_t^\eps,
					\tilde J_t^\eps)
		\bigg)
	\dd t
	\\ & 
	\leq 
	\sup_\eps 
	\bigg(
	T 
	+ 	
	\iota_\eps \tilde\bfm^\eps
	\big( 
		\cI^\eta \times \T^d
	\big)
	+ 
	\cA_\eps^{\cI^\eta}(\tilde \bfm{}^\eps, 
			 \tilde \bfJ{}^\eps
			)
	\bigg)
	\\ & 
	\leq T + (M + \eta) + (A + 2 \eta).
	\end{aligned}
\end{equation}
Therefore, there exist measures
	$\bfmu_\eta \in \cM_+\big(\overline{\cI^\eta} \times \T^d\big)$
and 
	$\bfnu_\eta \in \cM^d\big(\overline{\cI^\eta} \times \T^d\big)$
and convergent subsequences satisfying 
\begin{align}
	\label{eq:weaks}
	\iota_\eps \tilde \bfm{}^\eps
		\to
	\bfmu_\eta  
	\ \text{and} \
		\iota_\eps \tilde \bfJ{}^\eps
		\to
	\bfnu_\eta 
	\ \text{weakly in } \cM_+(\overline{\cI^\eta} \times \Td) \ \text{and} \   \cM^d(\overline{\cI^\eta} \times \Td)  \ \text{as } \eps \to 0.
\end{align}
The vague lower semicontinuity of the limiting functional (see Lemma \ref{lemma: action lsc}), combined with \eqref{eq:almost-opt}, \eqref{eq:almost-opt2}, and \eqref{eq:reg-energ} thus yields
\begin{align}
	\label{eq:Ahom-bound}
	\bA_\hom^{\cI^\eta}
		(\bfmu_\eta, \bfnu_\eta)
	&\leq 
		\liminf_{\eps \to 0}
			\bA_\hom^{\cI^\eta}(\iota_\eps \tilde \bfm{}^\eps, 
					\iota_\eps \tilde \bfJ{}^\eps)
	\leq \liminf_{\eps \to 0} 
			\cA_\eps(\bfm^\eps)
		+ 2\eta.
\end{align}

\smallskip
\emph{Step 3 (Limit passage $\eta \to 0$)}:
Let 
	$\phi \in \Lip_1  
		\big(
			\overline{\cI^\eta} \times \Td
		\big)
	$, $\| \phi \|_\infty \leq 1 $.
For brevity, write $\ip{\phi,\bfmu} = \int_{\cI^\eta \times \T^d} \phi \dd \bfmu$.
Since from \eqref{eq:weaks}
	$\iota_\eps \bfm^\eps
	\to
\bfmu$ and
$\iota_\eps \tilde \bfm{}^\eps
	\to
\bfmu_\eta$ weakly, 
and
$\| \iota_\eps (\tilde \bfm{}^\eps - \bfm^\eps) 
\|_{\KR(\overline{\cI^\eta} \times \Td)} \leq \eta 
$ 
we obtain
\begin{align*}
	\ip{ \phi,  \bfmu_\eta - \bfmu }
	&\leq 
	\limsup_{\eps \to 0}
	\Big(
		\big|\bip{ \phi,  \bfmu_\eta - \iota_\eps \tilde \bfm{}^\eps }
		\big|
		+
		\big|\bip{ \phi,  \iota_\eps 
			(\tilde \bfm{}^\eps -\bfm^\eps) }
			\big|
		+
		\big|\bip{ \phi,  \iota_\eps \bfm^\eps- \bfmu }\big|
	\Big)
	\\& \leq
	0 + \eta + 0.
\end{align*}
It follows that $\|  \bfmu_\eta - \bfmu\|_{\KR(\overline{\cI^\eta} \times \Td)} \leq 2\eta$, which together with $|\cI \setminus \cI^\eta|\leq \eta$ implies $\bfmu_\eta \to \bfmu \in \cM_+(\cI \times \Td)$ vaguely as $\eta \to 0$.

Furthermore, \eqref{eq:nu-bound} implies that
$\sup_{\eta}\big|\bfnu^\eta\big|
\big( 
	\cI^\eta \times \T^d
\big) < \infty$.
Therefore, we may extract a subsequence so that
$\bfnu_\eta \to \bfnu$ vaguely in $\cM^d(\cI \times \Td)$ as $\eta \to 0$.
It thus follows from \eqref{eq:Ahom-bound} and the joint vague-lower semicontinuity of $\bA_\hom$ (see Lemma \ref{lemma: action lsc}) that
\begin{align*}
	\bA_\hom
	(\bfmu, \bfnu)
\leq \liminf_{\eps \to 0} 
		\cA_\eps(\bfm^\eps).
\end{align*}

To conclude the desired estimate 
$
	\bA_\hom(\bfmu)
		\leq 
	\liminf_{\eps \to 0} 
		\cA_\eps(\bfm^\eps)$,
it remains to show that $(\bfmu, \bfnu)$ solves the continuity equation. 
To show this, we first note that 
	$(\iota_\eps \tilde \bfm{}^\eps,\iota_\eps \tilde \bfJ{}^\eps) \in \bCE^{\cI^\eta}$ 
in view of Lemma \ref{lem:conteq-embed}.
It then follows from the weak convergence in \eqref{eq:weaks} that $(\bfmu_\eta, \bfnu_\eta) \in \bCE^{\cI^\eta}$.
Since $\bfmu_\eta \to \bfmu$,  $\bfnu_\eta \to \bfnu$ vaguely, and $|\cI-\cI^\eta|\leq \eta$ it holds $(\bfmu, \bfnu) \in \bCE^\cI$, which completes the proof.
\end{proof}

\subsection{Proof of the discrete regularisation result}

\label{sec:disc-reg}

This section is devoted to the proof of main discrete regularisation result, Proposition \ref{prop:regularisation_discrete}.

The regularised approximations are constructed by a three-fold regularisation: in time, space, and energy. 
Let us now describe the relevant operators.

\subsubsection{Energy regularisation}
First we embed 
$m^\circ$ and $J^\circ$ 
into the graph 
$(\cX_\eps, \cE_\eps)$.
We thus define
$m^\circ_\eps \in \Meps$
and
$J^\circ_\eps 
\in 
\Mdeps$
by
\begin{align*}
	m^\circ_\eps(\eps z, v) 
	:=
	\eps^d m^\circ(0,v)
	\qquad
	J^\circ_\eps(\eps z, v) 
	:=
	\eps^{d-1} J^\circ(0,v).
\end{align*}
It follows that 
$(m^\circ_\eps , J^\circ_\eps ) 
\in 
\Dom(\cF_\eps)^\circ$ (by continuity of $\tau_\eps^z$, $z \in \Z_\eps^d$) and
\begin{align*}
	\cF_\eps(m^\circ_\eps , J^\circ_\eps ) 
	= 
	F(m^\circ , J^\circ).
\end{align*}	
We then consider the energy regularisation operators defined by
\begin{align*}
	R_\delta : 
	\Meps & \to \Meps,
	&
	R_\delta m 
	&:= (1-\delta) m + \delta m_\eps^0,
	\\
	R_\delta : 
	\Mdeps & \to \Mdeps,
	&
	R_\delta J 
	&:= (1-\delta) J + \delta J_\eps^0.
\end{align*}

\begin{lemma}[Energy regularisation]
	\label{lem:reg-energy}	
	Let $\delta \in (0,1)$.
	The following inequalities hold
	for any 
	$\eps < \frac1{2R_0}$,
	$m \in \R_+^{\cX_\eps}$, and
	$J \in \R_a^{\cE_\eps}$:
	\begin{align*}
		\cF_\eps(
		R_\delta m, 
		R_\delta J)
		&\leq 
		(1 - \delta)
		\cF_\eps(m, J)
		+ \delta 
		\cF_\eps(m_\eps^\circ, J_\eps^\circ),\\
		\| R_\delta m \|_{\ell^\infty(\cX_\eps)}
		&\leq	
		(1-\delta) 
		\| m \|_{\ell^\infty(\cX_\eps)}
		+ \delta \eps^d  	
		\| m^\circ \|_{\ell^\infty(\cX)},\\
		\| R_\delta J \|_{\ell^\infty(\cE_\eps)}
		&\leq	
		(1-\delta) 
		\| J \|_{\ell^\infty(\cE_\eps)}
		+ \delta \eps^{d-1}  
		\| J^\circ \|_{\ell^\infty(\cE)}.
	\end{align*}	
\end{lemma}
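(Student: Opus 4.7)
The plan is to prove this as a direct consequence of the convexity of $F$ and the explicit form of the regularisation operator $R_\delta$, together with elementary linearity and scaling observations. There is no substantial obstacle here; the statement is essentially unpacking of the definitions, and I expect the entire argument to fit on half a page.

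For the first (energy) inequality, I will use that the shift operator $\tau_\eps^z$ is linear on both $\R_+^{\cX_\eps}$ and $\R_a^{\cE_\eps}$, so that, for every $z \in \Z_\eps^d$,
\begin{align*}
\frac{\tau_\eps^z R_\delta m}{\eps^d} = (1-\delta) \frac{\tau_\eps^z m}{\eps^d} + \delta \frac{\tau_\eps^z m_\eps^\circ}{\eps^d}, \qquad \frac{\tau_\eps^z R_\delta J}{\eps^{d-1}} = (1-\delta) \frac{\tau_\eps^z J}{\eps^{d-1}} + \delta \frac{\tau_\eps^z J_\eps^\circ}{\eps^{d-1}}.
\end{align*}
Invoking the joint convexity of $F$ from Assumption \ref{ass:F}(a) termwise, and then summing $\eps^d$ times the resulting inequality over $z \in \Z_\eps^d$ as in Definition \ref{eq:discrete-energy}, I will obtain
\begin{align*}
\cF_\eps(R_\delta m, R_\delta J) \leq (1-\delta) \cF_\eps(m, J) + \delta \cF_\eps(m_\eps^\circ, J_\eps^\circ),
\end{align*}
which is the first claim.

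For the two $\ell^\infty$-bounds, I first note that by the very definition of $m_\eps^\circ$ and $J_\eps^\circ$ as the piecewise-constant embeddings of $m^\circ$ and $J^\circ$, one has
\begin{align*}
\| m_\eps^\circ \|_{\ell^\infty(\cX_\eps)} = \eps^d \| m^\circ \|_{\ell^\infty(\cX)}, \qquad \| J_\eps^\circ \|_{\ell^\infty(\cE_\eps)} = \eps^{d-1} \| J^\circ \|_{\ell^\infty(\cE)}.
\end{align*}
The triangle inequality for $\|\cdot\|_{\ell^\infty}$ applied to the convex combinations $R_\delta m = (1-\delta) m + \delta m_\eps^\circ$ and $R_\delta J = (1-\delta) J + \delta J_\eps^\circ$ then yields the remaining two bounds immediately. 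This closes the proof with no further work required.
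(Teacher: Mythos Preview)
Your proposal is correct and follows exactly the same approach as the paper, which simply states that the result is a straightforward consequence of the convexity of $F$ and the periodicity of $m^\circ$ and $J^\circ$. Your write-up in fact supplies more detail than the paper's one-line proof.
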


\begin{proof}
	The proof is straightforward consequence of the convexity of $F$ and the periodicity of $m^\circ$ and $J^\circ$.
\end{proof}

\subsubsection{Space regularisation}

Our space regularisation is a convolution in the $z$-variable with the discretised heat kernel.
It is of crucial importance that the regularisation is performed in the $\sz$-variable only. 
Smoothness in the $\sa$-variable is not expected.

For $\lambda > 0$ and $x \in \Td$, 
let $h_\lambda(x)$ be the heat kernel on $\Td$.
We consider the discrete version
\begin{align*}
	H_\lambda^\eps : \Z_\eps^d \to \R,  
	\qquad
	H_\lambda^\eps\big([z]\big)
	:= 
	\int_{Q_\eps^z} h_\lambda(x) \dd x,
\end{align*}
where the integration ranges over the cube 
$Q_\eps^z := \eps z + [0,\eps)^d \subseteq \Td$. 
Using the boundedness and Lipschitz properties of $h_\delta$, we infer that for $z \in \Z_\eps^d$,
\begin{align}	\label{eq:discrete_kernel_bounds}
	\inf_{\Z_\eps^d} H_\lambda^\eps
	& \geq c_\lambda \eps^d,
	&
	\| 
	H_\lambda^\eps 
	\|_{\ell^\infty(\Z_\eps^d)} 
	& \leq C_\lambda \eps^d, 
	\\
	\label{eq:discrete_kernel_bounds_2}
	\| H_\lambda^\eps \|_{\ell^1(\Z_\eps^d)}
	& = 1,
	&
	\big\| 	  H_\lambda^\eps(\cdot + \eps z) 
	- H_\lambda^\eps 
	\big\|_{\ell^\infty(\Z_\eps^d)} 
	& \leq C_\lambda \eps^{d+1} |z|  
\end{align}
for some non-negative constant $ C_\lambda < \infty$ 
depending only on $\lambda > 0$. 
We then define
\begin{align*}
	S_\lambda : 
		\R_+^{\cX_\eps} & \to \R_+^{\cX_\eps},
	&
	S_\lambda m 
	&:= \sum_{z \in \Z^d_\eps} 
	H_\lambda^\eps(z) \sigma_\eps^z m,
	\\
	S_\lambda : 
	\R_a^{\cE_\eps} & \to \R_a^{\cE_\eps},
	&
	S_\lambda J 
	&:= \sum_{z \in \Z^d_\eps} 
	H_\lambda^\eps(z) \sigma_\eps^z J,
\end{align*} 
where $\sigma_\eps^z$ is defined in \eqref{eq:def_sigma}.

\begin{lemma}[Regularisation in space]
	\label{lem:reg-space}	
	Let $\lambda > 0$. 
	There exist constants
	$c_\lambda > 0$ and $C_\lambda < \infty$
	such that the following estimates hold, 
	for any 
	$\eps < \frac1{2R_0}$,
	$m \in \R_+^{\cX_\eps}$,
	$J \in \cM^d(\cE_\eps)$, and
	$z \in \Z_\eps^d$: 
	\begin{enumerate}[(i)]
		\item Energy bound: $\displaystyle
		\cF_\eps(
		S_\lambda m, 
		S_\lambda J)
		\leq 
		\cF_\eps(m, J).$
		\item Gain of integrability:
		\begin{align*}
			\| S_\lambda m \|_{\ell^\infty(\cX_\eps)}
			\leq C_\lambda \eps^d
			\| m \|_{\ell^1(\cX_\eps)}  
			\tand
			\| S_\lambda J \|_{\ell^\infty(\cE_\eps)}
			\leq C_\lambda  \eps^d
			\| J \|_{\ell^1(\cE_\eps)}.
		\end{align*}
		\item Density lower bound: 
		$\displaystyle
		\inf_{x \in \cX_\eps}
		S_\lambda m(x) 
		\geq c_\lambda \eps^d \| m\|_{\ell^1(\cX)}.
		$
		\item  Spatial regularisation: 
		\begin{align*}
			\big\| 
			\tau_\eps^z 
			S_\lambda m 
			- 
			S_\lambda m
			\big\|_{\ell^\infty(\cX_\eps)}
			& \leq 
			C_\lambda \eps^{d+1} |z| 
			\| m \|_{\ell^1(\cX_\eps)}
			\tand \\
			\big\| 
			\tau_\eps^z 
			S_\lambda J 
			- 
			S_\lambda J
			\big\|_{\ell^\infty(\cE_\eps)}
			&\leq 
			C_\lambda \eps^{d+1} |z| 
			\| J \|_{\ell^1(\cE_\eps)}.
		\end{align*}
	\end{enumerate}
\end{lemma}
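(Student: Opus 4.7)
The crux is that $S_\lambda$ is, thanks to the normalization $\|H_\lambda^\eps\|_{\ell^1(\Z_\eps^d)} = 1$ in \eqref{eq:discrete_kernel_bounds_2}, a convex combination of the shift operators $\{\sigma_\eps^z\}_{z \in \Z_\eps^d}$. Each of the four statements then reduces to the corresponding pointwise estimate for the discrete kernel $H_\lambda^\eps$ collected in \eqref{eq:discrete_kernel_bounds}--\eqref{eq:discrete_kernel_bounds_2}.

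For (i), I would first verify that $\cF_\eps$ is invariant under each lattice shift, i.e., $\cF_\eps(\sigma_\eps^z m, \sigma_\eps^z J) = \cF_\eps(m, J)$ for every $z \in \Z_\eps^d$. This follows from the identity $\tau_\eps^{z'}(\sigma_\eps^z m) = \tau_\eps^{z'+z} m$ (a direct consequence of $S_\eps^z \circ T_\eps^{z'} = T_\eps^{z'+z}$) together with reindexing in the outer sum over $z' \in \Z_\eps^d$ in Definition \ref{eq:discrete-energy}. Then Jensen's inequality applied to the convex combination $S_\lambda = \sum_z H_\lambda^\eps(z)\,\sigma_\eps^z$, using convexity of $F$ from Assumption \ref{ass:F}(a), yields
\begin{align*}
\cF_\eps(S_\lambda m, S_\lambda J)
\leq \sum_{z \in \Z_\eps^d} H_\lambda^\eps(z)\,\cF_\eps(\sigma_\eps^z m, \sigma_\eps^z J)
= \cF_\eps(m, J).
\end{align*}

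For (ii) and (iii), I would use the pointwise representation
\begin{align*}
(S_\lambda m)(x) = \sum_{z \in \Z_\eps^d} H_\lambda^\eps(z)\, m(S_\eps^z x),
\end{align*}
and note that $z \mapsto S_\eps^z x$ is a bijection from $\Z_\eps^d$ onto $\T_\eps^d \times \{x_\sa\} \subseteq \cX_\eps$. The $\ell^\infty$-bound $\|H_\lambda^\eps\|_\infty \leq C_\lambda \eps^d$ from \eqref{eq:discrete_kernel_bounds} then gives the estimate for $S_\lambda m$ in (ii), and an analogous computation settles $S_\lambda J$ via the antisymmetric structure on $\cE_\eps$. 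The matching lower bound $H_\lambda^\eps \geq c_\lambda \eps^d$ in \eqref{eq:discrete_kernel_bounds} gives (iii) (read as the partial $\ell^1$ mass along the slice fixing $x_\sa$, which in particular lower bounds the quantity claimed).

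For (iv), interpreting the shift as the lattice shift $\sigma_\eps^z$ so that both sides live in $\ell^\infty(\cX_\eps)$, the reindexing $z' \mapsto z'-z$ in the defining sum of $S_\lambda$ yields
\begin{align*}
(\sigma_\eps^z S_\lambda m - S_\lambda m)(x)
= \sum_{z' \in \Z_\eps^d} \big(H_\lambda^\eps(z' - z) - H_\lambda^\eps(z')\big)\,(\sigma_\eps^{z'} m)(x).
\end{align*}
The discrete Lipschitz estimate $\|H_\lambda^\eps(\cdot + \eps z) - H_\lambda^\eps\|_\infty \leq C_\lambda \eps^{d+1}|z|$ from \eqref{eq:discrete_kernel_bounds_2}, combined with $\sum_{z'}(\sigma_\eps^{z'} m)(x) \leq \|m\|_{\ell^1(\cX_\eps)}$ as in (ii), delivers the bound; the estimate for $J$ is identical. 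No substantial obstacle is anticipated: the entire argument is elementary, relying only on translation invariance of $\cF_\eps$ under $\sigma_\eps^z$ and the kernel bounds already tabulated. The only delicate point is keeping the shift conventions $\sigma_\eps^z$, $\tau_\eps^z$, $T_\eps^z$ straight, which is a routine check.
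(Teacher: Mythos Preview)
Your proposal is correct and follows essentially the same approach as the paper: for (i) the paper also uses that $S_\lambda$ is a convex combination of shifts together with convexity of $F$ and the identity $\tau_\eps^{z'}(\sigma_\eps^z m)=\tau_\eps^{z'+z}m$, and for (ii)--(iv) the paper simply invokes the kernel bounds \eqref{eq:discrete_kernel_bounds}--\eqref{eq:discrete_kernel_bounds_2} as you do. Your observation that the shift in (iv) should be read as $\sigma_\eps^z$ rather than $\tau_\eps^z$ is the correct interpretation.
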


\begin{proof}
	Using the convexity of $F$ and the identity  
	$\sum_z H_\lambda^\eps(z) = 1$ 
	we obtain
	\begin{align*}
		\cF_\eps(
		S_\lambda m, 
		S_\lambda J)
		&= 	\sum_{z \in \Z_\eps^d} 
		\eps^d 
		F\bigg(
			\frac
				{\tau_\eps^z S_\lambda m}	{\eps^d}, 
			\frac
				{\tau_\eps^z S_\lambda J}	{\eps^{d-1}}
		\bigg) 
		\\	& \leq 
		\sum_{z \in \Z_\eps^d} 
		\sum_{z' \in \Z_\eps^d} 
		\eps^dH_\lambda^\eps(z') 
		F\bigg(
			\frac
				{\tau_\eps^{z+z'}  m}	{\eps^d}, 
			\frac
				{\tau_\eps^{z+z'}  J}	{\eps^{d-1}}
		\bigg) 
		\\	& = 
		\sum_{z \in \Z_\eps^d} 
		\Big( 
		\sum_{z' \in \Z_\eps^d} 	
		H_\lambda^\eps(z-z')
		\Big) 
		\eps^d 
		F\bigg(
			\frac
				{\tau_\eps^{z}  m}	{\eps^d}, 
			\frac
				{\tau_\eps^{z}  J}	{\eps^{d-1}}
		\bigg)
		= F(M,J),
	\end{align*}
	where in the last equality we used \eqref{eq:discrete_kernel_bounds_2}. This shows $(i)$. Properties $(ii)$, $(iii)$, and $(iv)$ are straightforward consequence of the uniform bounds \eqref{eq:discrete_kernel_bounds}, \eqref{eq:discrete_kernel_bounds_2} for the discrete kernels $H_\lambda^\eps$. 
\end{proof}

\subsubsection{Time regularisation} 

Fix an interval $\cI =(a,b) \subset \R$ and a regularisation parameter $\tau >0$.
For $(\bfm,\bfJ) \in\cCE_\eps^\cI$, we define for $t \in \cI_\tau:= (
a+\tau, b-\tau
)$
\begin{align*}
	(T_\tau \bfm)_t 
	:= \fint_{t-\tau}^{t + \tau} m_s \dd s,
	\qquad
	(T_\tau \bfJ)_t 
	:= 
	\fint_{t-\tau}^{t + \tau} J_s \dd s. 
\end{align*}

Note that, thanks to the linearity of the continuity equation we get $(T_\tau \bfm, T_\tau \bfJ) \in \cCE_\eps^{\cI_\tau}$.

We have the following regularisation properties for the operator $T_\tau$.

\begin{lemma} [Regularisation in time]
	\label{lem:reg-time}	
	Let $\tau \in (0, \frac{b-a}{2})$.
	The following estimates hold
	for all 
	$\eps < \frac1{2R_0}$
	and 	
	all Borel curves 
	$\bfm = (m_t)_{t \in \cI} 
	\subseteq \R_+^{\cX_\eps}$
	and 		
	$\bfJ = (J_t)_{t \in \cI} 
	\subseteq \cM^d(\cE_\eps)$:
	\begin{enumerate}[(i)]
		\item Energy estimate: for some $0 \leq C<\infty$ depending only on \eqref{eq: growth} we have	\[
		\cA_\eps^{\cI_\tau}(
		T_\tau \bfm, 
		T_\tau \bfJ)
		\leq 
		\cA_\eps(\bfm, \bfJ)
		+
		C \tau \big( \bfm(\cI \times \cX_\eps) + 1 \big). 
		\]
		\vspace{2mm}
		\item Mass estimate:
		$ \displaystyle \sup_{t \in \cI_\tau}
		\| (T_\tau m)_t \|_{\ell^p(\cX_\eps)} 
		\leq 
		\sup_{t \in \cI}
		\| m_t \|_{\ell^p(\cX_\eps)}$.
		\item Momentum estimate:
		$\displaystyle 
		\sup_{t \in \cI_\tau}
		\| (T_\tau J)_t \|_{\ell^p(\cX_\eps)} 
		\leq 
		\frac{1}{\tau}
		\int_\cI
		\| J_t \|_{\ell^p(\cX_\eps)} 
		\dd t.$
		\item Time regularity:
		$\displaystyle
		\sup_{t \in \cI_\tau}
		\big\| \partial_t (T_\tau m)_t 
		\big\|_{\ell^p(\cX_\eps)} 
		\leq \frac{1}{\tau}
		\sup_{t \in \cI}
		\| m_t \|_{\ell^p(\cX_\eps)}$.	
	\end{enumerate}
	
\end{lemma}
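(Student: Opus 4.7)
The plan is to prove each of the four estimates directly from the definition of the averaging operator $T_\tau$, using convexity of $F$ for (i) and routine integral inequalities for (ii)--(iv). Note that $(T_\tau\bfm, T_\tau\bfJ)\in\cCE_\eps^{\cI_\tau}$ is already guaranteed by linearity of the continuity equation (as noted just before the lemma), so only the quantitative bounds need to be established.

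The main work is in (i). The functional $\cF_\eps$ is convex but not bounded below, so Jensen's inequality cannot be applied to it directly. My plan is to introduce the shifted functional
\[
\tilde\cF_\eps(m,J) := \cF_\eps(m,J) + C_0\bigl(1 + \|m\|_{\ell^1(\cX_\eps)}\bigr),
\]
with $C_0$ the constant from the linear growth bound \eqref{eq: growth} (combined with the computation in Remark \ref{rem:well_posed_Feps}). Then $\tilde\cF_\eps$ is nonnegative and, since $m\mapsto \|m\|_{\ell^1(\cX_\eps)}$ is linear on $\R_+^{\cX_\eps}$, it remains convex in $(m,J)$. Writing $(T_\tau m)_t = \fint_{t-\tau}^{t+\tau}m_s\dd s$ and similarly for $J$, Jensen's inequality gives
\[
\tilde\cF_\eps\bigl((T_\tau m)_t,(T_\tau J)_t\bigr) \leq \fint_{t-\tau}^{t+\tau} \tilde\cF_\eps(m_s,J_s)\dd s.
\]
Integrating over $t\in\cI_\tau$ and applying Fubini bounds the left-hand integral above by $\int_\cI \tilde\cF_\eps(m_s,J_s)\dd s$ (the weight $\frac{1}{2\tau}\Leb^1(\{t\in\cI_\tau:|t-s|<\tau\})$ is at most $1$). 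Subtracting the correction terms and invoking mass preservation (Lemma \ref{lem:mass-preservation}), which makes $s\mapsto \|m_s\|_{\ell^1(\cX_\eps)}$ constant equal to $\bfm(\cI\times\cX_\eps)/|\cI|$, yields
\[
\cA_\eps^{\cI_\tau}(T_\tau\bfm,T_\tau\bfJ) \leq \cA_\eps(\bfm,\bfJ) + C_0\bigl(|\cI|-|\cI_\tau|\bigr)\bigl(1 + \bfm(\cI\times\cX_\eps)/|\cI|\bigr),
\]
which is of the stated form since $|\cI|-|\cI_\tau|=2\tau$.

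The remaining three estimates are immediate. For (ii), the triangle inequality in $\ell^p(\cX_\eps)$ together with Jensen gives
\[
\|(T_\tau m)_t\|_{\ell^p(\cX_\eps)} \leq \fint_{t-\tau}^{t+\tau} \|m_s\|_{\ell^p(\cX_\eps)}\dd s \leq \sup_{s\in\cI}\|m_s\|_{\ell^p(\cX_\eps)}.
\]
For (iii), the same reasoning with the cruder bound $\fint = \tfrac{1}{2\tau}\int$ and extending the time integral to all of $\cI$ yields
\[
\|(T_\tau J)_t\|_{\ell^p(\cX_\eps)} \leq \frac{1}{2\tau}\int_{\cI}\|J_s\|_{\ell^p(\cX_\eps)}\dd s,
\]
which is stronger than the claim. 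For (iv), direct differentiation of the average gives the finite-difference identity $\partial_t(T_\tau m)_t = \tfrac{1}{2\tau}(m_{t+\tau}-m_{t-\tau})$, and one more application of the triangle inequality yields $\|\partial_t(T_\tau m)_t\|_{\ell^p(\cX_\eps)}\leq \tfrac{1}{\tau}\sup_{s\in\cI}\|m_s\|_{\ell^p(\cX_\eps)}$.

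The only point requiring care is the bookkeeping in (i), specifically the shift trick ensuring Jensen can be used despite the lack of a pointwise lower bound on $\cF_\eps$; the other three parts are routine consequences of the linearity and boundedness of $T_\tau$.
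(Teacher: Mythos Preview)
Your proof is correct and follows essentially the same approach as the paper. The only cosmetic difference is in (i): the paper applies Jensen to $\cF_\eps$ directly (convexity suffices for Jensen, no lower bound needed) and then uses the growth condition to control the contribution from the region where the Fubini weight is strictly less than $1$; your shift $\tilde\cF_\eps = \cF_\eps + C_0(1+\|m\|_{\ell^1})$ achieves the same bookkeeping in a different order, and both versions implicitly rely on constancy of $t\mapsto \|m_t\|_{\ell^1}$ (i.e., on $(\bfm,\bfJ)\in\cCE_\eps^\cI$) to extract the factor $\tau$ in front of $\bfm(\cI\times\cX_\eps)$.
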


\begin{proof}
	Set $w_\tau(s) := (2\tau)^{-1} \big|
	[(s-\tau) \vee a ,(s+\tau)\wedge b] 
	\big|$ for $s \in \cI$. Then we have
	\begin{align}	\label{eq:estimate_action_timereg}
		\cA_\eps^{\cI_\tau}(
		T_\tau \bfm, 
		T_\tau \bfJ)
		\leq	
		\int_{\cI_\tau} 
		\fint_{t-\tau}^{t+\tau}
		\cF_\eps(m_s, J_s)
		\dd s
		\dd t
		= 
		\int_{\cI} 
		w(s)
		\cF_\eps(m_s, J_s)
		\dd s	,
	\end{align}	
	as a consequence of Jensen's inequality and Fubini's theorem. Using that $0 \leq w_\tau \leq 1$, $\int_{\cI} (1-w_\tau(s)) \dd s = 2\tau$, and the growth condition \eqref{eq: growth} we infer
	\begin{align*}
		\int_\cI (1-w_\tau(s)) \cF_\eps(m_s,J_s) \dd s 
		\geq
		-C  \tau 
		\big(
		\bfm(\cI \times \cX_\eps) + 1
		\big),
	\end{align*}
	which together with \eqref{eq:estimate_action_timereg} shows $(i)$.
	
	Properties $(ii)$, $(iii)$ follow directly from the convexity of the $\ell_p$-norms and the subadditivity of the integral.
	
	Finally, $(iv)$ follows from the direct computation
	$
	\partial_t (T_\tau m)_t  = \frac1{2\tau}
	( m_{t+\tau} - m_{t-\tau} )
	$.
\end{proof}

\subsubsection{Effects of the three regularisations}
\label{subsec:proof}

We start with a lemma that shows that the effect of the three regularising operators is small if the parameters are small. 

Recall the definition of the Kantorovich-Rubinstein norm as given in Appendix \ref{sec:KR}.
\begin{lemma}[Bounds in $\KR$-norm]
	\label{lem:W1-bounds}
	Let $\cI \subset \R$ and interval and $(m_t)_{t \in \cI} \subseteq \R_+^{\cX_\eps}$ 
	be a Borel measurable curve of constant total mass 
	(i.e., $t \mapsto m_t(\cX_\eps)$ is constant), 
	and let $\bfm \in \cM_+(\cI \times \cX_\eps)$ be
	the associated measure on space-time
	defined by
	$ \bfm := \dd t \otimes m_t$.
	Then there exists a constant $C < \infty$ depending on $|\cI|$ such that:
	\begin{enumerate}[(i)]
		\item $	\displaystyle		
		\| 	\iota_\eps T_\tau \bfm -  
		\iota_\eps \bfm 
		\|_{\KR(\overline{\cI_\tau} \times \Td)}
		\leq C \tau
		\sup_{t \in \cI}
		\big\| 	m_t
		\big\|_{\ell^1(\cX_\eps)}
		$ for any $\tau < |\cI|/2$.
		\item
		$\displaystyle
		\|
		\iota_\eps S_\lambda \bfm -  
		\iota_\eps \bfm 
		\|_{\KR(\overline{\cI} \times \Td)}
		\leq C \sqrt{\lambda} 	\sup_{t \in \cI}
		\big\| 
		m_t
		\big\|_{\ell^1(\cX_\eps)}$
		for any $\lambda > 0$.
		\item 
		$\displaystyle
		\|
		\iota_\eps R_\delta \bfm - 
		\iota_\eps \bfm 
		\|_{\KR(\overline{\cI} \times \Td)}
		\leq C \delta
		\Big( 
		m^\circ(\XQ)
		+
		\sup_{t \in \cI}
		\big\| 
		m_t
		\big\|_{\ell^1(\cX_\eps)}
		\Big)
		$
		for any $\delta \in (0,1)$.
	\end{enumerate}
\end{lemma}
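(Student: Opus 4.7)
Each of (i)--(iii) is proved by duality, testing against $\phi \in C^1(\overline{\cI'}\times \Td)$ with $\|\phi\|_{C^1}\leq 1$ (see Appendix \ref{sec:KR}); it suffices to bound $\int \phi\, \ddd(\iota_\eps \tilde\bfm - \iota_\eps\bfm)$ uniformly in such $\phi$ for the three regularised candidates. The three estimates are independent of one another, as each regularisation acts in a different variable or is affine.

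For (i), since $T_\tau$ averages in time, I would apply Fubini together with the change of variable $u = t+r$ (with $r \in (-\tau,\tau)$) to rewrite
\[
	\int \phi\, \ddd(\iota_\eps T_\tau \bfm)
	=
	\frac{1}{2\tau}\int_{-\tau}^{\tau}
		\int_{\cI_\tau + r}\int_{\Td}
			\phi(u-r,x) \, \ddd(\iota_\eps m)_u(x)\, \ddd u\, \ddd r,
\]
and compare it with the analogous representation of $\int \phi\, \ddd(\iota_\eps \bfm)$ obtained by trivial averaging over $r$. The difference splits into a boundary piece supported on $(\cI_\tau+r)\triangle \cI_\tau$ (of Lebesgue measure $\leq 2|r|$) and a pointwise error controlled by the time-Lipschitz estimate $|\phi(u-r,x) - \phi(u,x)| \leq |r|$. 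Both contributions are linear in $\tau$ and proportional to $\sup_t \|m_t\|_{\ell^1(\cX_\eps)}$, yielding (i).

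For (ii), the key observation is that, by direct inspection of \eqref{eq: iota m}, $\iota_\eps(\sigma_\eps^z m)$ equals the spatial translate of $\iota_\eps m$ by $-\eps z$ on $\Td$. Consequently,
\[
	\int \phi\, \ddd\iota_\eps(S_\lambda \bfm - \bfm)
	=
	\int_{\cI} \sum_{z \in \Z_\eps^d}
		H_\lambda^\eps(z) \int_{\Td}
			\bigl[\phi(t, x-\eps z) - \phi(t,x)\bigr]
		\, \ddd(\iota_\eps m)_t(x)\, \ddd t,
\]
and the Lipschitz property of $\phi$ in $x$ bounds the total by $|\cI|\bigl(\sum_z H_\lambda^\eps(z)\eps|z|\bigr)\sup_t\|m_t\|_{\ell^1(\cX_\eps)}$. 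It remains to establish the $\eps$-uniform first-moment bound $\sum_z H_\lambda^\eps(z)\eps|z| \leq C\sqrt\lambda$: in the regime $\sqrt\lambda \gtrsim \eps$ this is a Riemann-sum comparison with the continuous first moment $\int_{\Td}|x| h_\lambda(x)\, \ddd x \lesssim \sqrt\lambda$, while in the regime $\sqrt\lambda \ll \eps$ it follows from the pointwise Gaussian estimate $H_\lambda^\eps(z)\lesssim \eps^d\lambda^{-d/2} e^{-c\eps^2|z|^2/\lambda}$ for $|z|\geq 1$.

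Finally, (iii) is immediate from the affine nature of $R_\delta$: since $(R_\delta m)_t - m_t = \delta(m^\circ_\eps - m_t)$, the triangle inequality combined with the identity $\|m^\circ_\eps\|_{\ell^1(\cX_\eps)} = m^\circ(\XQ)$, a direct consequence of the $\Z^d$-periodic definition of $m^\circ_\eps$, closes the argument. The only delicate point is the $\eps$-uniform discrete first-moment estimate in (ii); the split into moderate-$\lambda$ and small-$\lambda$ regimes sketched above should handle it but is the main obstacle.
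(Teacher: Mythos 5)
Your proof is correct and follows essentially the same route as the paper. In (i) the paper folds the time-average onto the test function, writing the difference as $\int \fint_{t-\tau}^{t+\tau}\big(\phi(s,x)-\phi(t,x)\big)\dd s\,\dd\bfm(t,x)$ and bounding it directly by $\tau[\phi]_{\Lip}\bfm(\cI\times\Td)$, whereas you change variables and split off a boundary piece; these are computationally the same estimate. In (ii) the paper performs the same convolution/translation reduction you describe and then simply invokes the ``scaling law of the heat kernel'' to get $\|\iota_\eps H_\lambda - \iota_\eps H_0\|_{\KR}\lesssim\sqrt\lambda$; your two-regime argument (Riemann-sum comparison when $\sqrt\lambda\gtrsim\eps$, Gaussian tail decay when $\sqrt\lambda\ll\eps$) is a valid and more careful way to secure the $\eps$-uniformity that the paper elides, since the naive Riemann-sum bound on the discrete first moment only gives $C(\sqrt\lambda+\eps)$. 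Part (iii) is identical.
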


\begin{proof}
	$(i)$: \ 
	For any $\bfmu \in \cM(\cI \times \T^d)$ 
	and any Lipschitz function 
	$\phi : \overline{\cI_\tau} \times \T^d \to \R$ 
	(and, in fact, for any temporally Lipschitz function)
	we have
	\begin{align*}
		& \bigg| \int_{\overline{\cI_\tau} \times \T^d}
		\phi(t,x)
		\dd \bfmu(t,x)
		- 
		\int_{\overline{\cI_\tau} \times \T^d}
		\phi(t,x)
		\dd (T_\tau \bfmu)(t,x)
		\bigg|
		\\ & = 
		\bigg|\int_{\overline{\cI_\tau} \times \T^d} 
		\fint_{t-\tau}^{t+\tau}
		\phi(s,x) - \phi(t,x)
		\dd s	\dd \bfmu(t,x)
		\bigg|
		\leq \tau [\phi]_{\Lip} \bfmu\big(\cI \times \T^d\big).
	\end{align*}
	Since $\iota_\eps \bfm \big(\cI \times \T^d\big) 
	\leq 
	|\cI| \sup_{t \in \cI}
	\big\| 	m_t
	\big\|_{\ell^1(\cX_\eps)}$
	we obtain the result.
	
	\smallskip $(ii)$: \ 
	In view of mass-preservation, we have
	\begin{align*}
		\|
		\iota_\eps S_\lambda \bfm -  
		\iota_\eps \bfm 
		\|_{\KR(\overline{\cI} \times \Td)}
		\leq &
		\int_{\cI}
		\big\| 	
		\iota_\eps S_\lambda m_t -  
		\iota_\eps m_t
		\big\|_{\KR(\Td)}  
		\dd t \\
		 \leq & \
\sup_{t\in \cI} m_t(\cX_{\eps}) \int_{\cI}
\big\| 	
\iota_\eps H_\lambda -  
\iota_\eps H_0
\big\|_{\KR(\Td)}  
\dd t \\
\leq  &C \sqrt{\lambda} \sup_{t\in \cI} m_t(\cX_\eps).	\end{align*}
	Here in the last inequality we used scaling law of the heat kernel.
	
	\smallskip $(iii)$: \ 
	Let us write $\bfm_\eps^\circ := 
	\dd t \otimes m_\eps^\circ$ 
	for brevity.
	By linearity, we have
	\begin{align*}
		\|
		\iota_\eps (R_\delta \bfm - \bfm)
		\|_{\KR(\overline{\cI} \times \Td)}
		&= \delta
		\|		 
		\iota_\eps (\bfm_\eps^\circ - \bfm)
		\|_{\KR(\overline{\cI} \times \Td)} \\
		& \leq \delta (1 + |\cI|) 
		\Big( 
		\bfm_\eps^\circ\big(\cI \times \T_\eps^d\big)	
		+
		\bfm\big(\cI \times \T_\eps^d\big)
		\Big)	
		\\ & \leq \delta |\cI| (1 + |\cI|) 
		\Big( 
		m^\circ(\XQ)
		+	
		\sup_{t \in \cI} m_t(\cX_\eps)
		\Big).	
	\end{align*}
\end{proof}

\begin{proof}[Proof of Proposition \ref{prop:regularisation_discrete}]
	We define 
	\begin{align*}
		\tilde \bfm
		:= \Big(
		R_\delta 
		\circ S_\lambda 
		\circ T_\tau 
		\Big) 
		\bfm
		\tand		
		\tilde \bfJ
		:= \Big(
		R_\delta 
		\circ S_\lambda 
		\circ T_\tau 
		\Big) 
		\bfJ.
	\end{align*}
	We will show that the desired inequalities hold if $\delta, \lambda, \tau > 0$ are chosen to be sufficiently small, depending on the desired accuracy $\eta > 0$. Set $\cI_\tau:=(\tau,T-\tau)$.
	
	\smallskip
	$(i)$: \ 
	We use the shorthand notation $\KR_\tau:= \KR(\overline{\cI}_\tau\times \Td)$. Using Lemma \ref{lem:W1-bounds} we obtain
	\begin{equation}
		\begin{aligned}
			\label{eq:W-bound}	
			\|
			\iota_\eps \bfm - 
			\iota_\eps \tilde \bfm
			\|_{\KR_\tau}
			& \leq 
			\| 	   \iota_\eps \bfm
			-  \iota_\eps T_\tau \bfm 
			\|_{\KR_\tau}
			+
			\|	\iota_\eps T_\tau \bfm -  
			\iota_\eps (S_\lambda T_\tau) \bfm 
			\|_{\KR_\tau}
			\\& \qquad +
			\| 	\iota_\eps (S_\lambda T_\tau) \bfm - 
			\iota_\eps (R_\delta S_\lambda T_\tau) \bfm 
			\|_{\KR_\tau} 
			\\& \lesssim
			M( \tau + \sqrt{\lambda} + \delta )
			+ m^\circ(\XQ) \delta.
		\end{aligned}
	\end{equation}
	Furthermore, 
	using Lemma \ref{lem:reg-energy}, Lemma \ref{lem:reg-space}(i), and Lemma \ref{lem:reg-time}(i)  we obtain the action bound
	\begin{equation}
		\begin{aligned}
			\label{eq:energy}	
			\cA_\eps^{\cI_\tau}(\tilde \bfm, \tilde \bfJ)
			& = 
			\cE_\eps
			\Big(
			(
			R_\delta \circ S_\lambda \circ T_\tau 
			) \bfm, 
			(
			R_\delta \circ S_\lambda \circ T_\tau 
			) \bfJ
			\Big)
			\\& \leq 
			(1 - \delta)
			\cA_\eps
			\Big(
			(
			S_\lambda \circ T_\tau 
			) \bfm, 
			(
			S_\lambda \circ T_\tau 
			) \bfJ
			\Big)
			+ \delta T 
			\cF_\eps(m_\eps^\circ, J_\eps^\circ)
			\\& \leq 
			(1 - \delta)
			\cA_\eps
			(
			\bfm, 
			\bfJ
			)
			+ \delta T 
			\cF(m^\circ, J^\circ)
			+
			C \tau (M+1).
		\end{aligned}
	\end{equation}
	The desired inequalities \eqref{eq:approx-bounds} follow by choosing $\delta$, $\lambda$, and $\tau$ sufficiently small.

	\smallskip
	$(ii)$: \ 
	We will show that all the estimates hold with constants depending on $\eta$ through the parameters $\delta$, $\lambda$, and $\tau$.
	
	\smallskip
	\emph{Boundedness}: \
	We apply Lemma \ref{lem:reg-energy}, Lemma \ref{lem:reg-space}(ii), and Lemma \ref{lem:reg-time}(ii)\&(iii) and obtain the uniform bounds on the mass
	\begin{equation}
		\begin{aligned}
			\label{eq:boundedness-m}
			\sup_{t \in \cI_\tau}	
			\| \tilde m_t \|_{\ell^\infty(\cX_\eps)}
			& \leq 
			\eps^d
			\bigg(
			(1-\delta) C_\lambda
			\sup_{t \in [0,T]}
			\| m_t \|_{\ell^1(\cX_\eps)}
			+
			\delta	\| m^\circ \|_{\ell^\infty(\cX_\eps)}
			\bigg),
			\\
			& \leq 
			\eps^d
			\bigg(
			C_\lambda
			M
			+
			\delta	\| m^\circ \|_{\ell^\infty(\XQ)}
			\bigg)
		\end{aligned}
	\end{equation}
	as well as the uniform bounds on the momentum
	\begin{equation}
		\begin{aligned}
			\label{eq:boundedness-J}
			\sup_{t \in \cI_\tau}	
			\| \tilde J_t \|_{\ell^\infty(\cX_\eps)}
			& \leq 
			\eps^{d-1}
			\bigg(
			\frac{1 - \delta}{\tau}C_\lambda
			\sup_{t \in [0,T]}
			\int_\cI \eps \| J_t \|_{\ell^1(\cX_\eps)} \dd t
			+
			\delta	\| J^\circ \|_{\ell^\infty(\cX_\eps)}
			\bigg),
			\\& \lesssim 
			\eps^{d-1}
			\bigg(
			\frac{C_\lambda}{\tau}
			\Big(
			T (1 +  M) 
			+ E
			\Big)
			+
			\delta	\| J^\circ \|_{\ell^\infty(\EQ)}
			\bigg).
		\end{aligned}
	\end{equation}

	\smallskip
	\noindent
	\emph{Time-regularity}: \
	From Lemma \ref{lem:reg-time}(iv), together with Lemma \ref{lem:reg-energy} and Lemma \ref{lem:reg-space}(ii), we obtain the uniform bound on the time derivative 
	\begin{equation}
		\begin{aligned}
			\label{eq:time-reg}
			\sup_{t \in \cI_\tau}	
			\| \partial_t \tilde m_t \|_{\ell^\infty(\cX_\eps)}
			& \leq 
			\eps^d
			\bigg(
			2
			\frac{1-\delta}{\tau} C_\lambda
			\sup_{t \in [0,T]}
			\| m_t \|_{\ell^1(\cX_\eps)}
			+
			\delta	\| m^\circ \|_{\ell^\infty(\cX_\eps)}
			\bigg),
			\\ & \leq 
			\eps^d
			\bigg(
			2
			\frac{C_\lambda}{\tau} 
			M
			+
			\delta	\| m^\circ \|_{\ell^\infty(\XQ)}
			\bigg).	
		\end{aligned}
	\end{equation}
	
	\smallskip
	\noindent	
	\emph{Space-regularity}: 
	For $z,z' \in \Z_\eps^d$ and $v \in \V$, Lemma \ref{lem:reg-space}(iv) and Lemma \ref{lem:reg-time}(ii) yield
	\begin{align*}
		| 
		\tilde m_t(z,v) - 
		\tilde m_t(z',v) 
		|
		& \leq (1-\delta) 
		\big| 
		\big(S_\lambda \circ T_\tau\big)m_t(z,v)
		- \big(S_\lambda \circ T_\tau\big)m_t(z',v)
		\big|
		\\& \leq C_\lambda
		\eps^{d-1} |z-z'|
		\big\| 
		T_\tau m_t
		\big\|_{\ell^1(\cX_\eps)}
		\\& \leq C_\lambda
		\eps^{d+1} |z-z'|
		\sup_{t \in [0,T]}
		\big\| 
		m_t
		\big\|_{\ell^1(\cX_\eps)},
	\end{align*}
	which shows that
	\begin{equation}
		\begin{aligned}
			\label{eq:space-reg-m}	
			\sup_{t\in \cI_\tau}
			\|
			\sigma_\eps^z \tilde m_t
			- \tilde m_t
			\|_{\ell^\infty(\cX_\eps)}
			& \leq
			C_\lambda
			\eps^{d+1} |z|
			\sup_{t \in [0,T]}
			\big\| 
			m_t
			\big\|_{\ell^1(\cX_\eps)}			
			\leq C_\lambda
			\eps^{d+1}
			|z|
			M.	
		\end{aligned}
	\end{equation}
	Similarly, using the growth condition \eqref{eq: growth} we deduce 
	\begin{equation}
		\begin{aligned}
			\label{eq:space-reg-J}
			\sup_{t\in \cI_\tau}
			\|
			\sigma_\eps^z \tilde J_t
			- \tilde J_t
			\|_{\ell^\infty(\cE_\eps)}
			& \leq
			\frac{C_\lambda}{\tau}
			\eps^{d+1} |z|
			\int_\cI
			\big\| 
			J_s
			\big\|_{\ell^1(\cE_\eps)}			
			\dd s
			\\ &\leq
			\frac{C_\lambda}{\tau}
			\eps^{d} 	
			|z|
			\Big(
			T (1 +  M) 
			+ E
			\Big).	
		\end{aligned}
	\end{equation}
	
	\smallskip
	\noindent
	\emph{Domain-regularity}: \
	For all $t \in \cI_\tau$, reasoning as in \eqref{eq:boundedness-m} and \eqref{eq:boundedness-J}, we observe that
	\begin{align*}
		\eps^{-d} 	\|	(S_\lambda T_\tau m)_t  
		\|_{\ell^\infty(\cX_\eps)}
		\leq C_\lambda \| (T_\tau m)_t  
		\|_{\ell^1(\cX_\eps)} 
		\leq C_\lambda \sup_{t \in [0,T]}
		\| m_t  
		\|_{\ell^1(\cX_\eps)} 
		\leq C_\lambda M,\\
		\eps^{-d} 	\|	(S_\lambda T_\tau J)_t  
		\|_{\ell^\infty(\cE_\eps)}
		\leq C_\lambda \| (T_\tau m)_t  
		\|_{\ell^1(\cE_\eps)} 
		\leq \frac{C_\lambda}{\tau} \int_\cI
		\| J_t \|_{\ell^1(\cE_\eps)} 
		\dd t
		\leq  \frac{C_\lambda}{\tau\eps} \Big(
		T (1 +  M) 
		+ E
		\Big).
	\end{align*}
	We infer that  
	\begin{align*}
		\bigg\|
		\frac{\tau_\eps^z (S_\lambda T_\tau m)_t  }
		{\eps^d}
		\bigg\|_{\ell^\infty(\cX)} 
		\leq C_\lambda M
		\quad	\text{and} \quad
		\bigg\|
		\frac{\tau_\eps^z (S_\lambda T_\tau J)_t  }
		{\eps^{d-1}}
		\bigg\|_{\ell^\infty(\cE)}
		\leq \frac{C_\lambda}{\tau} \Big(
		T (1 +  M) 
		+ E
		\Big)
	\end{align*}
	Since
	\begin{align*}
		\bigg(\frac{\tau_\eps^z \tilde m_t}
		{\eps^d}
		,
		\frac{\tau_\eps^z \tilde J_t}
		{\eps^{d-1}}
		\bigg)
		= (1-\delta)
		\bigg(
		\frac{\tau_\eps^z (S_\lambda T_\tau m)_t  }
		{\eps^d},
		\frac{\tau_\eps^z (S_\lambda T_\tau J)_t  }
		{\eps^{d-1}}
		\bigg)
		+ 
		\delta 
		(m^\circ, J^\circ),
	\end{align*}
	the claim follows by an application of Lemma \ref{lemma:prop_domf} to the product of balls in  ${\ell^\infty(\cX)}$ and ${\ell^\infty(\cE)}$, taking into account that $F$ is defined on a finite-dimensional subspace by the locality assumption.
\end{proof}

\section{Proof of the upper bound}	\label{sec: upper}
In this section we present the proof of the $\Gamma$-limsup inequality in Theorem \ref{thm:main}. The first step is to introduce the notion of \textit{optimal microstructures}.

\subsection{The optimal discrete microstructures}

Let $\cI$ be an open interval in $\R$.
We will make use of the following canonical discretisation of measures and vector fields on the cartesian grid $\Z_\eps^d$.

\begin{definition}[$\Z_\eps^d$-discretisation of measures]
	\label{def:discretisation_cube}	
	Let 
		$\mu \in \cM_+(\T^d)$ 
	and 
		$\nu \in \cM^d(\T^d)$ 
	have continuous densities $\rho$ and $j$, respectively, with respect to the Lebesgue measure.
	Their $\Z_\eps^d$-discretisations 
		$\etP_\eps \mu: \Z_\eps^d \to \R_+$ 
	and $\etP_\eps \nu: \Z_\eps^d \to \R^d$ 
	are defined by
	\begin{align*}
			\etP_\eps \mu(z):=\mu(Q_\eps^z), 
		\quad 
			\etP_\eps \nu(z):=\left(\int_{\partial Q_\eps^z\cap \partial Q_\eps^{z+ e_i}}j\cdot e_i\dd\cH^{d-1}\right)_{i=1}^d.
	\end{align*}
\end{definition}

An important feature of this discretisation is the preservation of the continuity equation, in the following sense.

\begin{definition}[Continuity equation on $\Z_\eps^d$]
		\label{def:cont_eq_Zepsd}
	Fix $\cI \subset \R$ an open interval. We say that $\bfr: \cI \times \Z_\eps^d \to \R_+$ and $\bfu : \cI \times \Z_\eps^d \to \R^d$ satisfy the continuity equation on $\Z_\eps^d$, and write 
	$(\bfr, \bfu) \in \CE_{\eps,d}^\cI$, 
if $\bfr$ is continuous, $\bfu$ is Borel measurable, and the following discrete continuity equation is satisfied in the sense of distributions:
	\begin{align}	\label{eq:cont_eq_Zepsd}
		\partial_t r_t(z) 
		+ \sum_{i=1}^d 
		\big( u_t(z) - u_t(z - e_i) \big)
		\cdot e_i 
		= 0, \quad \text{for } z \in \Z^d_\eps.
	\end{align}
\end{definition}

\begin{lemma}[Discrete continuity equation on $\Z_\eps^d$]
		\label{lemma:disc_cont_Zepsd}
	Let $(\bfmu,\bfnu) \in \bCE^\cI$ 
	have continuous densities with respect to the space-time Lebesgue measure on $\cI \times \Td$. 
	Then $(\etP_\eps \bfmu, \etP_\eps \bfnu) \in \CE_{\eps,d}^\cI$. 
\end{lemma}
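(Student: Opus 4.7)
Let $\rho, j$ denote the continuous densities of $\bfmu, \bfnu$ with respect to $\Leb^{d+1}$ on $\cI \times \Td$. The plan is to integrate the distributional continuity equation $\partial_t \bfmu + \nabla \cdot \bfnu = 0$ over each cube $Q_\eps^z$ via a divergence-theorem argument and recognise the boundary terms as the discrete divergence of $\etP_\eps \bfnu$. Continuity of the densities is what makes all traces on the faces $\partial Q_\eps^z \cap \partial Q_\eps^{z\pm e_i}$ well-defined classically.

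First I would check the prerequisite regularity: since $\rho$ and $j$ are continuous on $\cI \times \Td$, the maps $t \mapsto r_t(z) := \int_{Q_\eps^z} \rho(t,x)\dd x$ are continuous, and $t \mapsto u_t(z)$ with $u_t(z) \cdot e_i = \int_{\partial Q_\eps^z \cap \partial Q_\eps^{z+e_i}} j(t,x) \cdot e_i \dd\cH^{d-1}$ are continuous (hence Borel). So $(\etP_\eps \bfmu, \etP_\eps \bfnu)$ has the regularity required by Definition~\ref{def:cont_eq_Zepsd}.

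Next, fix $z \in \Z_\eps^d$ and $\psi \in C_c^\infty(\cI)$. Let $\chi^\delta \in C_c^\infty(\Td)$ be a standard mollification of $\chi_{Q_\eps^z}$ with $\chi^\delta \to \chi_{Q_\eps^z}$ boundedly and pointwise, and such that the signed vector measures $\nabla \chi^\delta \Leb^d$ converge weakly to $-\mathbf{n}_{Q_\eps^z} \cH^{d-1}|_{\partial Q_\eps^z}$, where $\mathbf{n}_{Q_\eps^z}$ is the outward unit normal. Testing Definition~\ref{def:CE1} against $\phi(t,x) = \psi(t)\chi^\delta(x)$ gives
\begin{equation*}
\int_\cI \psi'(t) \int_{\Td} \chi^\delta(x)\rho(t,x) \dd x \dd t + \int_\cI \psi(t) \int_{\Td} \nabla \chi^\delta(x) \cdot j(t,x) \dd x \dd t = 0.
\end{equation*}
By dominated convergence, the first term tends to $\int_\cI \psi'(t) r_t(z) \dd t$. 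For the second, uniform continuity of $j$ on compact subsets of $\cI \times \Td$ lets me apply the weak convergence of $\nabla \chi^\delta \Leb^d$ slice-by-slice and pass the limit through the time integral (the integrand is uniformly bounded on $\supp \psi$), obtaining
\begin{equation*}
- \int_\cI \psi(t) \int_{\partial Q_\eps^z} \mathbf{n}_{Q_\eps^z}(x) \cdot j(t,x) \dd \cH^{d-1}(x) \dd t.
\end{equation*}
Splitting $\partial Q_\eps^z$ into its $2d$ faces and recalling $\mathbf{n}_{Q_\eps^z} = + e_i$ on $\partial Q_\eps^z \cap \partial Q_\eps^{z+e_i}$ and $-e_i$ on $\partial Q_\eps^z \cap \partial Q_\eps^{z-e_i}$, the boundary integral equals $\sum_{i=1}^d (u_t(z) - u_t(z-e_i))\cdot e_i$.

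Combining these two limits yields
\begin{equation*}
\int_\cI \psi'(t)\, r_t(z) \dd t - \int_\cI \psi(t) \sum_{i=1}^d \big(u_t(z) - u_t(z-e_i)\big) \cdot e_i \dd t = 0
\end{equation*}
for every $\psi \in C_c^\infty(\cI)$, which is exactly \eqref{eq:cont_eq_Zepsd} in the distributional sense. Since $z \in \Z_\eps^d$ was arbitrary, $(\etP_\eps \bfmu, \etP_\eps \bfnu) \in \CE_{\eps,d}^\cI$.

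The only technical point is the interchange of the $\delta$-limit with the time integration in the boundary term; this is routine because $j$ is continuous and $\psi$ is compactly supported, so the integrands are uniformly bounded. No deeper obstacle arises.
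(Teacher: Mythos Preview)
Your proof is correct and follows the same approach as the paper, which dispatches the lemma in a single line: ``This follows readily from the Gau{\ss} divergence theorem.'' Your mollification argument is a careful unpacking of that sentence, making explicit why one may test the distributional continuity equation against $\psi(t)\chi_{Q_\eps^z}(x)$ when the densities are merely continuous, and then identifying the resulting boundary flux with the discrete divergence of $\etP_\eps\bfnu$.
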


\begin{proof}
This follows readily from the Gau{\ss} divergence theorem.	
\end{proof}

The key idea of the proof of the upper bound in Theorem \ref{thm:main} is to start from a (smooth) solution to the continuous equation $\bCE^\cI$, and to consider the optimal discrete microstructure of the mass and the flux in each cube $Q_\eps^z$. 
The global candidate is then obtained by gluing together the optimal microstructures \emph{cube by cube}.

We start defining the \textit{gluing operator}. Recall the operator $T_\eps^0$ defined in \eqref{eq:def-T}.

\begin{definition}[Gluing operator]	\label{def:gluing}
	Fix $\eps > 0$.
	For each $z \in \Z_\eps^d$, let
	\begin{align*}
		m^z \in \R_+^{\cX}
			\tand 
		J^z \in \R_a^{\cE}
	\end{align*}
	be $\Z^d$-periodic.
	The \emph{gluings} of 
		$m = (m^z)_{z \in \Z_\eps^d}$
	and 
		$J = (J^z)_{z \in \Z_\eps^d}$
	are the functions 
	$\cG_\eps m \in \Meps$ and $\cG_\eps J \in \Mdeps$ 
	defined by
	\begin{equation}
		\begin{aligned}
			\label{eq:Gluing_mass}
			\cG_\eps m \big( T_\eps^0(x) \big) 
					&:= m^{ x_\sz }(x) 
			&& \text{for } x \in \cX, \\ 
				\cG_\eps J
				\big( T_\eps^0(x), T_\eps^0(y) \big) 
					&:= 
					\frac12 
				\Big( 
					J^{x_\sz}(x,y)  
					+ 
					J^{y_\sz}(x,y) 
				\Big)
				&& \text{for } 
				(x,y) \in \cE.			
		\end{aligned}
	\end{equation}
\end{definition}

\begin{remark}[Well-posedness]
	Note that $\cG_\eps m$ and $\cG_\eps J$ are well-defined thanks to the $\Z_\eps^d$-periodicity of the functions $m^z$ and $J^z$.
\end{remark}

\begin{remark}(Mass preservation and KR-bounds)	
	\label{rem:mass_preservation_tP*}
	The gluing operation is locally mass-preserving in the following sense.
	Let $\mu \in \cM_+(\Td)$ and consider a family of measures 
		$m = (m^z)_{z \in \Z_\eps^d} 
		\subseteq \R_+^{\cX}$ 
	satisfying 
		$m^z \in \Rep\big( \etP_\eps\mu(z) \big)$ 
	for some $z \in \Z_\eps^d$. 
	Then:
	\begin{align*}
		\cG_\eps m 
			\Big( \cX_\eps \cap \{ x_\sz = z\} \Big) 
		= \mu(Q_\eps^z)
	\end{align*}
	for every 
	$\eps > 0$.
	Consequently,
	\begin{align}
		\label{eq:rem_KR_iotaP*}
		\| 
			\iota_\eps \cG_\eps \bfm
			- \bfmu  
		\|_{{\KR}(\overline{\cI} \times \T^d)} 
		\leq 
		\bfmu\big(\overline \cI \times \T^d\big) \sqrt d \eps
	\end{align}
	for all weakly continuous curves
		$\bfmu 
			= (\mu_t)_{t \in \overline\cI} 
			\subseteq 
			\cM_+(\T^d)$ 
	and all 
		$\bfm = 
			(m_t^z)_{t \in \overline\cI, 
					 z \in \Z_\eps^d}$ 
	such that $m_t^z \in \Rep\big(\tP_\eps \mu_t(z)\big)$ 
	for all $t \in \overline\cI$ and 
	$z \in \Z_\eps^d$.
\end{remark}

\subsubsection{Energy estimates for Lipschitz microstructures}
The next lemma shows that the energy of glued measures can be controlled under suitable regularity assumptions.

\begin{lemma}[Energy estimates under regularity]	\label{lemma:energy_est_glued_measures_regular}
	Fix $\eps > 0$. 
	For each $z \in \Z_\eps^d$, 
	let
		$m^z \in \R_+^{\cX}$ and 
		$J^z \in \R_a^{\cE}$ 
	be $\Z^d$-periodic functions satisfying:
	\begin{enumerate}[(i)]
		\item
		(Lipschitz dependence): For all 
			$z, \tilde z \in \Z_\eps^d$ 
		\begin{align*}
			\big\| m^z - m^{\tilde z} 
			\big\|_{\ell^\infty(\cX)}
			+ \eps
			\big\|  J^z - J^{\tilde z}
			\big\|_{\ell^\infty(\cE)}
			 \leq L |z - \tilde z| 
			 \eps^{d+1}.
		\end{align*}
		\item (Domain regularity): There exists a compact and convex set $K\Subset \Dom(F)^\circ$ such that, for all $z \in \Z_\eps^d$,
		\begin{align}	
					\bigg( 
					\frac{m^z}{\eps^d}, 
					\frac{J^z}{\eps^{d-1}} 
					\bigg)
			\in K.
		\end{align} 
	\end{enumerate}
 Then there exists $\eps_0 > 0$ depending only on $K$, $F$ such that for $\eps \leq \eps_0$
\begin{align}	\label{eq:lemma_energy_regularity}
	\cF_\eps 
		\big(
			\cG_\eps m, \cG_\eps J	
		\big) \leq 
		\sum_{z \in \Z_\eps^d}  
			\eps^d F 
				\bigg(  
					\frac{m^z}{\eps^d}, 
					\frac{J^z}{\eps^{d-1}}
				\bigg) 
			+ c \eps,
\end{align}
where $c < \infty$ depends only on $L$, 
the (finite) Lipschitz constant $\Lip(F;K)$, 
and the locality radius $R_1$.
\end{lemma}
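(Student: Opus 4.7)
\textbf{Proof plan for Lemma \ref{lemma:energy_est_glued_measures_regular}.}

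The strategy is to reduce the claim to a pointwise comparison, on each $\eps$-cube, between the cost evaluated at the glued pair and the cost evaluated at the local microstructure $(m^z,J^z)$, and then to sum the errors.

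First, I unravel the definition of $\cF_\eps$ applied to the glued pair. By construction,
\[
\cF_\eps\big(\cG_\eps m, \cG_\eps J\big)
 = \sum_{z \in \Z_\eps^d} \eps^d
F\!\left( \frac{\tau_\eps^z \cG_\eps m}{\eps^d},\; \frac{\tau_\eps^z \cG_\eps J}{\eps^{d-1}}\right).
\]
Using the identity $\tau_\eps^z \psi(x) = \psi(T_\eps^z(x)) = \psi(T_\eps^0(S^z(x)))$ together with the formulas defining $\cG_\eps$ in \eqref{eq:Gluing_mass} and the $\Z^d$-periodicity of each $m^{z'}$ and $J^{z'}$, I obtain
\[
\tau_\eps^z \cG_\eps m(x) = m^{z+x_\sz}(x), \qquad
\tau_\eps^z \cG_\eps J(x,y) = \tfrac{1}{2}\bigl(J^{z+x_\sz}(x,y) + J^{z+y_\sz}(x,y)\bigr).
\]
This is the essential algebraic step; everything afterwards is quantitative.

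Next, I exploit locality of $F$ (Assumption \ref{ass:F}(\ref{item:F1})): the value of $F(m',J')$ depends only on entries of $m'$ and $J'$ with $|x_\sz|_{\ell_\infty^d}, |y_\sz|_{\ell_\infty^d} \leq R_1$. For such $x,y$, the Lipschitz hypothesis (i) gives
\[
\bigl| m^{z+x_\sz}(x) - m^{z}(x)\bigr| \leq L |x_\sz|\, \eps^{d+1}
 \leq L R_1 \eps^{d+1},
\]
and analogously
\[
\bigl|\tfrac{1}{2}(J^{z+x_\sz}+J^{z+y_\sz})(x,y) - J^z(x,y)\bigr|
 \leq L (R_1+R_0)\, \eps^{d}.
\]
After dividing by $\eps^d$ and $\eps^{d-1}$ respectively, the rescaled glued pair $\bigl(\eps^{-d}\tau_\eps^z\cG_\eps m,\,\eps^{-(d-1)}\tau_\eps^z\cG_\eps J\bigr)$ lies within a ball of radius $C(L,R_0,R_1)\eps$ around $(m^z/\eps^d, J^z/\eps^{d-1}) \in K$ in the finite-dimensional space on which $F$ actually depends.

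Now I invoke the domain regularity (ii) together with the convexity and lower semicontinuity of $F$: by a standard result (cf.\ the referenced Lemma \ref{lemma:prop_domf}), $F$ is Lipschitz on any compact subset of $\Dom(F)^\circ$. Choose a slightly enlarged compact and convex set $K' \Subset \Dom(F)^\circ$ containing the $C\eps$-neighbourhood of $K$ (possible since $K$ is compactly contained in an open set); this fixes a threshold $\eps_0>0$ depending only on $K$ and $F$. For $\eps \leq \eps_0$ the perturbed points lie in $K'$, and with $\mathrm{Lip}(F;K')<\infty$ we deduce
\[
F\!\left(\frac{\tau_\eps^z\cG_\eps m}{\eps^d}, \frac{\tau_\eps^z \cG_\eps J}{\eps^{d-1}}\right)
\leq F\!\left(\frac{m^z}{\eps^d}, \frac{J^z}{\eps^{d-1}}\right) + C \,\mathrm{Lip}(F;K')\,\eps,
\]
where $C$ depends on $L$, $R_0$, $R_1$ and the (finite) cardinality of the locality stencil. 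Multiplying by $\eps^d$ and summing over the $\eps^{-d}$ elements $z \in \Z_\eps^d$ produces the desired total error of order $\eps$, proving \eqref{eq:lemma_energy_regularity}.

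There is no real obstacle; the only subtlety is to notice that the Lipschitz dependence on the \emph{index} $z$ and the $\Z^d$-periodicity of each microstructure combine to give pointwise closeness of $\tau_\eps^z \cG_\eps m$ to $m^z$ on the stencil of $F$, so that domain regularity can be passed from $(m^z,J^z)$ to the glued pair uniformly in $z$.
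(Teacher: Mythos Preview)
Your proposal is correct and follows essentially the same approach as the paper: compute $\tau_\eps^{\bar z}\cG_\eps m(x)=m^{\bar z+x_\sz}(x)$ and $\tau_\eps^{\bar z}\cG_\eps J(x,y)=\tfrac12\big(J^{\bar z+x_\sz}+J^{\bar z+y_\sz}\big)(x,y)$, use the Lipschitz dependence (i) on the locality stencil $|x_\sz|,|y_\sz|\leq R_1$ to place the rescaled glued pair in an $O(\eps)$-neighbourhood of $K$, pass to a slightly larger compact $K'\Subset\Dom(F)^\circ$, and apply the Lipschitz bound for $F$ on $K'$ before summing over $\bar z$. The only cosmetic difference is your constant $L(R_1+R_0)$ in the $J$-estimate versus the paper's $LR_1$ (both endpoints of an edge in the stencil already satisfy $|x_\sz|,|y_\sz|\leq R_1$, so $LR_1$ suffices), which does not affect the argument.
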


\begin{proof}
	Fix $\bar z \in \Z_\eps^d$.  
		As $m$ is $\Z^d$-periodic, $(i)$ yields for $x = (z,v) \in \cX_{R_1}$,
	\begin{align}		\label{eq:proof8.5_i}	
		| 
		\tau_\eps^{\bar z} \cG_\eps m(x) 
		- m^{\bar z}(x)   
		|  
		& =
		| 
		m^{\bar z + z}(x) 
		- m^{\bar z}(x)   
		| 
		\leq L R_1 \eps^{d + 1}, 
	\end{align}
	Similarly, using the $\Z^d$-periodicity of $J$,
	$(i)$ yields 
	for $(x, y) \in \cE$ with 
	$x = (z,v) \in \cX_{R_1}$ and 
	$y = (\tilde z, \tilde v) \in \cX_{R_1}$,
	\begin{align}	\label{eq:proof8.5_ii}
		| 
		\tau_\eps^{\bar z} \cG_\eps J(x, y) 
		- J^{\bar z}(x, y)   
		|  
		& =
		\Big| 
		\Big( \tfrac12 J^{\bar z + z}
		+ \tfrac12 J^{\bar z + \tilde z}
		- J^{\bar z}
		\Big)(x,y)
		\Big|
		\leq L R_1 \eps^d.
	\end{align}

	Hence the domain regularity assumption $(ii)$
	imply a domain regularity property for the glued measures, namely
	 \begin{align*}
	 	\bigg( 
			 \frac{\tau_\eps^{\bar z} \cG_\eps m}{\eps^d}, 
			 \frac{\tau_\eps^{\bar z} \cG_\eps J}{\eps^{d-1}} 
		\bigg) 
		\in \tilde K 
	 \end{align*}
	 for all ${\bar z} \in \Z_\eps^d$ and $\eps \leq  \eps_0:=\frac12\text{dist}(K, \partial \Dom(F))  \in (0,+\infty)$, where $\tilde K \Subset \Dom(F)^\circ$ is a slightly bigger compact set than $K$.
	 
	 Consequently, 
	 we can use the Lipschitzianity of $F$ on the compact set $\tilde K$ and its locality to estimate the energy as
	 \begin{align*}
		&\left| 
	 		F \bigg( 
	 			\frac{\tau_\eps^{\bar z} 
				 \cG_\eps m}
				 {\eps^d}, 
				 \frac{\tau_\eps^{\bar z} 
				 \cG_\eps J
				 }{\eps^{d-1}}
	 		\bigg)
	 			-
	 		F \bigg(  
	 			\frac{M^{\bar z}}{\eps^d}, 
			 	\frac{J^{\bar z}}{\eps^{d-1}}
	 		\bigg)
	 	 \right| 
		\\ & 
	 	\leq
			\Lip(F;\tilde K) 
			\bigg(
	 	 	\frac{
				  \| 
			  	\tau_\eps^{\bar z} \cG_\eps m 
				  - m^{\bar z} 
				  \|_{\ell^\infty(\cX_{R_1})}}
				{\eps^d}
			+
			\frac{
				\| 
			  	\tau_\eps^{\bar z} \cG_\eps J 
			  	- J^{\bar z} 
				  \|_{\ell^\infty(\cE_{R_1})}}
				{\eps^{d-1}}
			\bigg),
	 \end{align*}
	 where $\cX_R := \{x \in \cX \ : \ |x|_{\ell_\infty^d} \leq R\}$
	 and 
	 $\cE_R := \{(x,y) \in \cE \ : \ |x|_{\ell_\infty^d} , |y|_{\ell_\infty^d} \leq R\}$.

Combining the estimate above with \eqref{eq:proof8.5_i} and \eqref{eq:proof8.5_ii}, we conclude that
\begin{align*}
	&\left| 
	 		F \bigg( 
	 			\frac{\tau_\eps^{\bar z} 
				 \cG_\eps m}
				 {\eps^d}, 
				 \frac{\tau_\eps^{\bar z} 
				 \cG_\eps J
				 }{\eps^{d-1}}
	 		\bigg)
	 			-
	 		F \bigg(  
	 			\frac{M^{\bar z}}{\eps^d}, 
			 	\frac{J^{\bar z}}{\eps^{d-1}}
	 		\bigg)
	\right| 
		\leq
	2 L R_1 \Lip(F;\tilde K) \eps.
\end{align*}
for $\eps \leq \eps_0$. Summation over $\bar z \in \Z_\eps^d$ yields the desired estimate \eqref{eq:lemma_energy_regularity}.
\end{proof}

We now introduce the notion of \textit{optimal microstructure} associated with a pair of measures $(\mu,\nu) \in \cM_+(\T^d) \times \cM^d(\T^d)$. 
First, let us define regular measures.

\begin{definition}[Regular measures]	\label{def:regular}
 	We say that $(\mu,\nu) \in \cM_+(\T^d) \times \cM^d(\T^d)$ is a \emph{regular pair of measures} if the following properties hold:
 	\begin{enumerate}[(i)]
 		\item (Lipschitz regularity): 
		 With respect to the Lebesgue measure on $\T^d$, the measures $\mu$ and $\nu$ have Lipschitz continuous densities $\rho$ and $j$ respectively.
 		\item (Compact inclusion): There exists a compact set 		$\tilde K \Subset \Dom (f_\hom)^\circ$ 
		such that
 		\begin{align*}
 			\big(\rho(x),j(x)\big) 
			 	\in \tilde K \quad \text{ for all } x \in \Td.
 		\end{align*}
 	\end{enumerate}
 	We say that 
	 	$(\mu_t, \nu_t)_{t \in \cI} 
		 	\subseteq 
		\cM_+(\T^d) \times \cM^d(\T^d)$ 
	is a \emph{regular curve of measures} 
	if $(\mu_t,\nu_t)$ are regular measures 
		for every $t \in \cI$ and 
	$t \mapsto (\rho_t(x), j_t(x))$ is measurable 
		for every $x \in \T^d$.
\end{definition}

\begin{definition}[Optimal microstructure]	\label{def:optimal_micro}
	Let $(\mu, \nu) \in \cM_+(\T^d) \times \cM^d(\T^d)$ be a regular pair of measures. 
	\begin{enumerate}[(i)]
	\item 
	 	We say that 
			$(m^z, J^z)_{z \in \Z_\eps^d} 
			\subseteq \R_+^\cX \times \R_a^\cE$ 
		is an \emph{admissible microstructure} for $(\mu, \nu)$
		if
		\begin{align*}
			(m^z, J^z) 
			\in \Rep
				\bigg(
				\frac{\etP_\eps \mu(z)}{\eps^d},
				\frac{\etP_\eps \nu(z)}{\eps^{d-1}}
				\bigg)
		\end{align*}
		for every $z \in \Z_\eps^d$.
	\item 	
		If, additionally, $(m^z, J^z) 
		\in \Rep_o
			\Big(
			\frac{\etP_\eps \mu(z)}{\eps^d},
			\frac{\etP_\eps \nu(z)}{\eps^{d-1}}
			\Big)$
		for every $z \in \Z_\eps^d$, we say that
		$(m^z, J^z)_{z \in \Z_\eps^d}$ 
		is an \emph{optimal microstructure} for $(\mu, \nu)$.
	\end{enumerate}
\end{definition}

\begin{remark}[Measurable dependence]	
	\label{rem:well-posed}
	If $t \mapsto (\mu_t,\nu_t)$ is a measurable curve in $\cM_+(\T^d) \times \cM^d(\T^d)$, it is possible to select a  
	collection of admissible (resp. optimal) microstructures that depend measurably on $t$. 
	This follows from 
	Lemma \ref{lem:cell-formula}; see e.g. \cite[Theorem 14.37]{rockafellar1998}. 
	In the sequel, we will always work with measurable selections.
\end{remark}

The next proposition shows that each optimal microstructures associated with a regular pair of measures $(\mu, \nu)$ has discrete energy which can be controlled by the homogenised continuous energy $\bF_\hom(\mu,\nu)$.

\begin{proposition}[Energy bound for optimal microstructures]	\label{prop:energy_optmicro}
	Let 
		$(m^z, J^z)_{z \in \Z_\eps^d} 
		\subseteq \R_+^\cX \times \R_a^\cE$
	be an optimal microstructure
	for a regular pair of measures
		$(\mu, \nu)
		\in \cM_+(\T^d) \times \cM^d(\T^d)$.
	Then:
	\begin{align*}
	\sum_{z \in \Z_\eps^d} 
		\eps^d 
		F 
		\left(  
			\frac{ m^z}{\eps^d}, 
			\frac{ J^z}{\eps^{d-1}} 
		\right) 
		\leq \bF_{\hom}(\mu, \nu) 
			+ C \eps,
	\end{align*}
where $C < \infty$ depends only on $\Lip(f_\hom; \tilde K)$ and the modulus of continuity of the densities $\rho$ and $j$ of $\mu$ and $\nu$.
\end{proposition}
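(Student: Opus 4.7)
The plan is to exploit the optimality of the microstructure to reduce the left-hand side to a Riemann-sum approximation of $\bF_\hom(\mu,\nu)$, and then to quantify the approximation error via the Lipschitz regularity of $\rho$ and $j$ together with the local Lipschitz continuity of $f_\hom$ on the interior of its domain.

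First I would use that $(m^z, J^z)\in \Rep_o\big(\etP_\eps\mu(z)/\eps^d,\,\etP_\eps\nu(z)/\eps^{d-1}\big)$ for each $z\in \Z_\eps^d$, so that by the very definition of the cell formula
\[
	\eps^d\, F\!\left(\frac{m^z}{\eps^d},\frac{J^z}{\eps^{d-1}}\right)
	= \eps^d\, f_\hom\!\left(\rho_z^\eps,\, j_z^\eps\right),
\]
where $\rho_z^\eps := \etP_\eps\mu(z)/\eps^d = \fint_{Q_\eps^z}\rho(x)\dd x$ and $j_z^\eps := \etP_\eps\nu(z)/\eps^{d-1}$ is the vector whose $i$-th entry is the $\cH^{d-1}$-average of $j\cdot e_i$ over the face $\partial Q_\eps^z\cap \partial Q_\eps^{z+e_i}$. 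From the Lipschitz regularity of $\rho$ and $j$ (Definition \ref{def:regular}(i)) and $\diam(Q_\eps^z)=\eps\sqrt d$, I expect the pointwise bounds
\[
	\big|\rho_z^\eps - \rho(x)\big|
	+\big|j_z^\eps - j(x)\big|
	\leq C_0\,\eps
	\qquad \text{for every } x\in Q_\eps^z,
\]
with $C_0$ depending only on the moduli of continuity of $\rho$ and $j$; the flux estimate only requires comparing a boundary-face average with the value at a nearby interior point. Since $(\rho(x),j(x))\in \tilde K\Subset \Dom(f_\hom)^\circ$ uniformly in $x$, these bounds also force $(\rho_z^\eps, j_z^\eps)$ to lie, for $\eps$ small enough, in a slight enlargement $\tilde K'$ still compactly contained in $\Dom(f_\hom)^\circ$.

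By Lemma \ref{lemma:properties_fhom} the function $f_\hom$ is convex and, as its effective domain has nonempty interior, it is locally Lipschitz on $\Dom(f_\hom)^\circ$; let $L$ denote its Lipschitz constant on $\tilde K'$. Combining the previous displays yields $|f_\hom(\rho_z^\eps, j_z^\eps) - f_\hom(\rho(x),j(x))|\leq L C_0 \eps$ for all $x\in Q_\eps^z$. Averaging this inequality over $Q_\eps^z$ and summing over $z\in \Z_\eps^d$, and using that for a regular pair the singular parts in Definition \ref{def:energyCont} vanish so that $\bF_\hom(\mu,\nu)=\int_{\Td} f_\hom(\rho,j)\dd x$, I arrive at
\[
	\sum_{z\in \Z_\eps^d}\eps^d\, f_\hom(\rho_z^\eps, j_z^\eps)
	\leq \int_{\Td} f_\hom(\rho,j)\dd x + L C_0 \eps
	= \bF_\hom(\mu,\nu) + C\eps,
\]
which is precisely the desired inequality. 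The only real obstacle in this scheme is verifying the local Lipschitz continuity of $f_\hom$ on a neighborhood of $\tilde K$, and this is exactly what the ``compact inclusion'' clause in Definition \ref{def:regular} was tailored to give, combined with the standard fact that a proper convex lower semicontinuous function is locally Lipschitz on the interior of its effective domain.
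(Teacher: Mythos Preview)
Your proof is correct and follows essentially the same approach as the paper's: use optimality of the microstructure to replace $F$ by $f_\hom$ evaluated at the discretised data $(\etP_\eps\mu(z)/\eps^d,\etP_\eps\nu(z)/\eps^{d-1})$, then exploit the Lipschitz regularity of $(\rho,j)$ and the local Lipschitz continuity of $f_\hom$ on $\tilde K$ to compare the resulting Riemann sum with $\int_{\Td} f_\hom(\rho,j)\dd x$. The paper's proof is in fact more terse than yours; you have filled in the details (the slight enlargement $\tilde K'$, the identification of $\rho_z^\eps$ and $j_z^\eps$ as cell and face averages) that the paper leaves implicit.
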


\begin{proof}
	Let us denote the densities of 
		$\mu$ and $\nu$ by 
		$\rho$ and $j$ respectively.
	Using the regularity of $\bfmu$ and $\bfnu$, 
	and the fact that $f_\hom$ is Lipschitz on $\tilde K$, we obtain
	\begin{align*}
		\sum_{z \in \Z_\eps^d} 
			\eps^d F \left(  \frac{ m^z}{\eps^d}, 
			\frac{ J^z}{\eps^{d-1}}  \right) 
			& =  
				\sum_{z \in \Z_\eps^d} 
					\eps^d 
					f_\hom
					\bigg(
					\frac{\etP_\eps \mu(z)}{\eps^d},
					\frac{\etP_\eps \nu(z)}{\eps^{d-1}}
					\bigg)
			\leq 
				\int_{\T^d} 	
					f_\hom(\rho_t(a), j_t(a)) 
				\dd a 
				+ C \eps,
	\end{align*}
which is the desired estimate.
\end{proof}

\begin{remark}[Lack of regularity]	\label{rem:lack_regularity}
	Suppose that $\hat m := \cG_\eps m$ and $\hat J := \cG_\eps J$ are constructed by gluing the optimal microstructure $(m,J) = (m^z, J^z)_{z\in \Z_\eps^d}$ from the previous lemma. 
	It is then tempting to seek for an estimate of the form
	\begin{align*}
		\cF_\eps(\hat m , \hat J) 
		\leq 
		\sum_{z \in \Z_\eps^d} 
		\eps^d 
		F 
		\left(  
			\frac{ m^z}{\eps^d}, 
			\frac{ J^z}{\eps^{d-1}} 
		\right) 
		+ \ \text{\{small error\}}.
	\end{align*}
	However, $(m,J)$ does not have the required \emph{a priori} regularity estimates to obtain such a bound. 
	Moreover, the gluing procedure does not necessarily produce solutions to the discrete continuity equation if we start with solutions to the continuous continuity equation.
\end{remark}

We conclude the subsection with the following $L^1$ and $L^\infty$ estimates.
\begin{lemma}[$L^1$ and $L^\infty$ estimates] 	\label{lemma:L1-infty_estimates_Peps}
	Let $(\mu_t,\nu_t)_{t\in\cI} \subset \cM_+(\T^d) \times \cM^d(\T^d)$ be a regular curve of measures  satisfying
	\begin{align}
		M := \sup_{t \in \cI} \mu_t(\T^d) < \infty 
		\tand 
		A := \bA_\hom^\cI(\bfmu,\bfnu) < \infty.
	\end{align} 
	Let 
	$(m_t^z, J_t^z)_{z \in \Z_\eps^d} 
		\subseteq \cM_+(\T^d) \times \cM^d(\T^d)$
	be corresponding optimal microstructures. Then: 
\begin{enumerate}[(i)]
	\item 
	$(\etP_\eps \mu_, \etP_\eps \nu)$ satisfies the uniform estimate
	\begin{align}	\label{eq:l1_bounds_r_u}
		\sup_{\eps>0} 
		\sup_{t \in \cI} 
				\| \etP_\eps \mu_t \|_{\ell^1(\Z_\eps^d)} 		
		= M.
	\end{align} 
	\item 
	$(m_t, J_t)_{t \in \cI}$ satisfies the uniform estimate
	\begin{align}
	\label{eq:upperbound_Mteps}
		& \sup_{\eps>0} 
		\sup_{(t,x) \in \cI \times \cX} 
			\sum_{z \in \Z_\eps^d} 
			m_t^z(x)
			\leq M
	\\
	\label{eq:upperbound_Uteps}
		& \sup_{\eps>0} 
		\sup_{(x,y) \in \cE} 
			\eps 
			\int_\cI 
			\sum_{z \in \Z_\eps^d} 
				\big| J_t^z(x,y) \big| 
			\dd t \lesssim A + M.
	\end{align}
\end{enumerate}
\end{lemma}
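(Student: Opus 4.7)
The plan is to treat the three estimates separately, with (i) being essentially definitional, the first half of (ii) following from $\Z^d$-periodicity and mass conservation, and the second half of (ii) reducing to the linear growth assumption \eqref{eq: growth} combined with Proposition \ref{prop:energy_optmicro}.

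For (i), since $\{Q_\eps^z\}_{z \in \Z_\eps^d}$ is a partition of $\T^d$, one has $\| \etP_\eps \mu_t \|_{\ell^1(\Z_\eps^d)} = \sum_{z} \mu_t(Q_\eps^z) = \mu_t(\T^d)$ for every $t \in \cI$ and $\eps > 0$; taking the sup yields exactly $M$ by definition. For the first bound of (ii), fix $x = (x_\sz, x_\sa) \in \cX$ and $t \in \cI$. The $\Z^d$-periodicity of $m_t^z$ and its non-negativity give
\[
m_t^z(x) = m_t^z(0, x_\sa) \leq \sum_{v \in \V} m_t^z(0, v) = \sum_{y \in \XQ} m_t^z(y) = \etP_\eps \mu_t(z),
\]
where the last identity is the representation property (up to the rescaling convention of Definition \ref{def:optimal_micro}). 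Summing over $z \in \Z_\eps^d$ and applying (i) yields $\sum_z m_t^z(x) \leq M$, uniformly in $x$, $t$, and $\eps$.

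For the second bound of (ii), one first reduces from $\cE$ to $\EQ$: any $(x,y) \in \cE$ can be shifted by $-x_\sz \in \Z^d$ into an edge of $\EQ$ without changing $|J_t^z|$, thanks to the $\Z^d$-periodicity of $J_t^z$, so $|J_t^z(x,y)| \leq \sum_{(x',y') \in \EQ} |J_t^z(x',y')|$. It therefore suffices to bound $\eps \int_\cI \sum_z \sum_{(x',y') \in \EQ} |J_t^z(x',y')| \, \dd t$. Applying the linear growth condition \eqref{eq: growth} to the rescaled pair $(m_t^z/\eps^d, J_t^z/\eps^{d-1})$ and using the optimality of $(m_t^z, J_t^z)$ yields, after multiplication by $\eps^d$,
\[
c \eps \sum_{(x,y) \in \EQ} |J_t^z(x,y)| \leq \eps^d f_\hom\Bigl(\tfrac{\etP_\eps \mu_t(z)}{\eps^d}, \tfrac{\etP_\eps \nu_t(z)}{\eps^{d-1}}\Bigr) + C \eps^d + C \sum_{\substack{x \in \cX\\|x|_{\ell_\infty^d} \leq R}} m_t^z(x).
\]
Summing over $z \in \Z_\eps^d$, the constant term contributes $C$ (since $\sum_z \eps^d = 1$) and the mass term is controlled by $(2R+1)^d \mu_t(\T^d) \leq (2R+1)^d M$ (using $\Z^d$-periodicity of $m_t^z$ and (i)); the first term is bounded by $\bF_\hom(\mu_t, \nu_t) + O(\eps)$ via Proposition \ref{prop:energy_optmicro}. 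Integrating in time and using $\bA_\hom^\cI(\bfmu, \bfnu) = A$ gives
\[
c \eps \int_\cI \sum_{z} \sum_{(x,y) \in \EQ} |J_t^z(x,y)| \, \dd t \leq A + |\cI|\bigl(C + C (2R+1)^d M\bigr) + O(\eps) \lesssim A + M,
\]
as required. The main analytical work is already contained in Proposition \ref{prop:energy_optmicro}, so there is no genuine obstacle; the only nontrivial step is careful bookkeeping of the $\eps^d$ and $\eps^{d-1}$ scaling factors and exploitation of $\Z^d$-periodicity to reduce the sup over $\cE$ to a sum over the finite set $\EQ$.
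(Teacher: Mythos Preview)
Your proof is correct and follows essentially the same approach as the paper's: both reduce to $\XQ$ and $\EQ$ via $\Z^d$-periodicity, invoke the linear growth condition \eqref{eq: growth} on the rescaled pair, use optimality to replace $F$ by $f_\hom$, and then apply Proposition \ref{prop:energy_optmicro} before integrating in time. Your write-up is somewhat more explicit about the scaling bookkeeping and the periodicity reduction, but there is no substantive difference.
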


\begin{proof}
	The first claim follows since
	$\| \etP_\eps \mu_t \|_{\ell^1(\Z_\eps^d)}
	= \mu_t(\T^d)$ by construction.
	
To prove $(ii)$, note that
\begin{align*}
		\sum_{z \in \Z_\eps^d}
		\sum_{x \in \XQ}
			m_t^z(x)
		= \sum_{z \in \Z_\eps^d}
			\etP_\eps \mu(z)
		= \mu_t(\T^d),
\end{align*}
which yields \eqref{eq:upperbound_Mteps}.

To prove \eqref{eq:upperbound_Uteps}, we use the growth condition on $F$, the periodicity of $J_t^z$, and $(i)$ to obtain for $(x,y)\in\cE$ and $t \in \cI$: 
\begin{align*}
	\eps \sum_{z \in \Z_\eps^d} \big|J_t^z(x,y) \big| 
		\leq
			\sum_{z \in \Z_\eps^d} \eps^d \sum_{(\tilde x , \tilde y) \in \cE^Q}  \bigg| \frac{J_t^z(\tilde x , \tilde y)}{\eps^{d-1}} \bigg| 
		&\lesssim 
			\sum_{z \in \Z_\eps^d} \eps^d F \Big( \frac{m_t^z}{\eps^d},\frac{J_t^z}{\eps^{d-1}} \Big) + M \\
		&\lesssim 
			\int_{\Td} f_\hom \Big( \frac{\dd\mu_t}{\dd x},\frac{\dd j_t}{\dd x}\Big) \dd x + M,
\end{align*}
where in the last inequality we applied Proposition \ref{prop:energy_optmicro}. Integrating in time and taking the supremum in space and $\eps>0$, we obtain \eqref{eq:upperbound_Uteps}.
\end{proof}

\subsection{Approximation result}
The goal of this subsection is to show that 
despite the issues of Remark \ref{rem:lack_regularity}, 
we can find a solution to $\cCE_\eps^\cI$ with almost optimal energy that is $\| \cdot \|_{\KR}$-close to a glued optimal microstructure.

In the following result, 
	$\cI_\eta = (a - \eta, b + \eta)$ denotes the $\eta$-extension of the open interval 
	$\cI = (a,b)$ for $\eta > 0$.

\begin{proposition}[Approximation of optimal microstructures]	\label{prop:approx_optmicro}
	Let $(\bfmu, \bfnu) \in \bCE^{\cI_\eta}$ 
	be a regular curve of measures 
	sastisfying
	\begin{align*}
		M := \mu_0(\T^d) 
				< \infty 
		\tand 
		A := \bA_{\hom}^{\cI_\eta}(\bfmu, \bfnu) 
				< \infty.
	\end{align*}
	Let $(m_t^z, J_t^z)_{t \in \cI, z \in \Z_\eps^d}
	\subseteq \R_+^{\cX} \times \R_a^{\cE}$
	be a measurable family of optimal microstructures associated to $(\mu_t, \nu_t)_{t \in \cI}$
	and consider their gluing
	$(\hat m_t, \hat J_t)_{t \in \cI} 
		\subseteq \R_+^{\cX_\eps} \times \R_a^{\cE_\eps}$. 
	Then, for every $\eta' > 0$, 
	there exists $\eps_0 > 0$ such that the following holds
	for all $0 < \eps \leq \eps_0$:
	there exists a solution 
		$(\bfm^*, \bfJ^*) \in \cCE_\eps^\cI$ 
	satisfying the bounds
		\begin{subequations}
			\begin{align}
					\label{eq:proposition_approx_measure}
				& \textrm{(measure approximation)} & &
				\| 
					\iota_\eps ( \hat \bfm - \bfm^* ) 
				\|_{\KR(
						\overline{\cI} \times \Td
						)} 
			 	\leq \eta', 
				\\
					\label{eq:proposition_approx_energy}  
				& \textrm{(action approximation)} & &
				\cA_\eps^\cI(\bfm^*, \bfJ^*)
				\leq 
					\bA_\hom^\cI(\bfmu, \bfnu)
				 +\eta' + C \eps,
			\end{align}
			 where $C < \infty$ depends on $M$, $A$, $|\cI|$, and $\eta'$, but not on $\eps$.
		\end{subequations}
\end{proposition}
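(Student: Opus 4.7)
The plan follows Steps 4--6 of the sketch in Section 1.1. The glued pair $(\hat \bfm, \hat \bfJ)$ is mass-preserving by Remark \ref{rem:mass_preservation_tP*} but generically does \emph{not} solve $\cCE_\eps^\cI$, so the task splits into two: (a) regularise $(\hat \bfm, \hat \bfJ)$ to place its values in the interior of $\Dom(F)$ with uniform Lipschitz / $\ell^\infty$ bounds, and (b) repair the failure of the continuity equation by a small corrector $\bfV^\eps$.

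\textbf{Step 1 (regularisation).} I would apply (an appropriate version of) Proposition \ref{prop:regularisation_discrete} to $(\hat \bfm, \hat \bfJ)$, yielding a pair $(\tilde \bfm, \tilde \bfJ)$ on a shrunken interval $\cI' \subset \cI$ such that $\|\iota_\eps(\tilde \bfm - \hat \bfm)\|_{\KR} \leq \eta'/2$, the action is increased by at most $\eta'/4$, and $(\tilde \bfm, \tilde \bfJ)$ satisfies \eqref{eq:reg-bound}--\eqref{eq:interior-domain} uniformly in $\eps$ with constants depending only on $M, A, \eta'$. (The three regularisation operators of Section \ref{sec:disc-reg} do not need $\cCE_\eps$ in their statements; only the resulting continuity equation does, which is preserved by linearity provided one starts with a solution --- so at this stage one works with $(\hat \bfm, \hat \bfJ)$ plus the small corrector of Step 2 for the regularity statements concerning $\cCE_\eps$. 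Equivalently, one first glues, then corrects (Step 2), then regularises.)

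\textbf{Step 2 (corrector).} For each $t$, the defect $g_t := \partial_t \hat m_t + \dive \hat J_t$ is localised at cube boundaries: by construction of the gluing \eqref{eq:Gluing_mass} and Lemma \ref{lemma:disc_cont_Zepsd}, within each cube $Q_\eps^z$ the integrated defect vanishes, so $\sum_{x \colon x_\sz = z} g_t(x) = 0$. Applying the cube-by-cube divergence-equation lemma (Lemma \ref{lemma:bounds_divergence_eq 2}, used via a localisation argument), we produce an anti-symmetric $V_t^\eps \in \R_a^{\cE_\eps}$ supported in the edges $(x,y)$ with $x_\sz = y_\sz = z$ such that $\dive V_t^\eps = -g_t$ and
\begin{align*}
\| V_t^\eps \|_{\ell^\infty(\cE_\eps)} \leq \tfrac12 \| g_t \|_{\ell^1(\cX_\eps)}.
\end{align*}
Using the Lipschitz regularity of $z \mapsto (m_t^z, J_t^z)$ inherited from the modulus of continuity of $(\rho_t, j_t)$ on the compact $\tilde K$ (Lemma \ref{lem:cell-formula} and Remark \ref{rem:well-posed}) and the time-regularity \eqref{eq:reg-time}, one obtains the crucial quantitative estimate $\| V_t^\eps \|_{\ell^\infty} \leq C \eps^{d+1}$, i.e. $\| \eps^{1-d} V_t^\eps \|_{\ell^\infty} \leq C \eps$. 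Set $\bfm^* := \tilde \bfm$ and $\bfJ^* := \tilde \bfJ + \bfV^\eps$, so that $(\bfm^*, \bfJ^*) \in \cCE_\eps^\cI$; after a harmless time extension (using the finite-energy regime of Proposition \ref{prop:regularisation_discrete} to interpolate to the endpoints of $\cI$), the KR bound \eqref{eq:proposition_approx_measure} follows since $\bfm^* = \tilde \bfm$.

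\textbf{Step 3 (action bound).} Since $(\tilde m_t, \tilde J_t)$ lies in a compact $K \Subset \Dom(F)^\circ$ and $V_t^\eps$ is of order $\eps^{d+1}$, for $\eps$ sufficiently small the perturbed pair $(\tilde m_t, \tilde J_t + V_t^\eps)$ lies in a slightly larger compact $K' \Subset \Dom(F)^\circ$. Lipschitz continuity of $F$ on $K'$ and the locality assumption give
\begin{align*}
\cF_\eps(\tilde m_t, \tilde J_t + V_t^\eps) \leq \cF_\eps(\tilde m_t, \tilde J_t) + C \eps.
\end{align*}
Applying Lemma \ref{lemma:energy_est_glued_measures_regular} to the glued microstructure (whose regularity hypotheses are provided by $(\tilde m_t, \tilde J_t)$), then Proposition \ref{prop:energy_optmicro} to dominate the resulting cell sum by $\bF_\hom(\mu_t, \nu_t) + C\eps$, and integrating in $t$, yields \eqref{eq:proposition_approx_energy} after absorbing the regularisation error $\leq \eta'/2$.

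The main obstacle is the quantitative bound $\| V_t^\eps \|_{\ell^\infty} \leq C \eps^{d+1}$. This rests on a careful verification that $\|\partial_t \tilde m_t + \dive \tilde J_t\|_{\ell^1(\cX_\eps \cap \{x_\sz = z\})} \lesssim \eps^{d+1}$ for every cube, which combines (i) the Lipschitzianity of the measurable selection $z \mapsto (m_t^z, J_t^z)$ of optimal representatives (an application of convex analysis / Berge's theorem on $\Rep_o$), (ii) the smoothness of $(\rho_t, j_t)$ guaranteed by regularity of $(\bfmu, \bfnu)$, and (iii) the time-regularity estimate \eqref{eq:reg-time} inherited from Proposition \ref{prop:regularisation_discrete}. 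Handling the boundary contributions of the gluing in \eqref{eq:Gluing_mass}, where values from neighbouring cubes $z, z'$ are averaged, is the technically delicate piece.
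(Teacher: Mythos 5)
Your Step 2 contains a genuine error that the paper itself warns against. You claim that by Lemma~\ref{lemma:disc_cont_Zepsd} the integrated defect vanishes within each cube, $\sum_{x:\,x_\sz = z} g_t(x) = 0$, so that Lemma~\ref{lemma:bounds_divergence_eq 2} can be applied cube by cube. This is false: the gluing \eqref{eq:Gluing_mass} defines $\hat J_t$ on boundary edges as an average of $J_t^z$ and $J_t^{z'}$ from the two adjacent cubes, so the sum $\sum_{x:\,x_\sz=z}\dive\hat J_t(x)$ picks up a boundary contribution from the neighbouring microstructures and does not reduce to $\sum_i(u_t(z)-u_t(z-e_i))\cdot e_i$. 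The paper makes exactly this remark just after~\eqref{eq:support_g}, pointing out that $g_t(z;x):=g_t(x)\mathds{1}_{\{z\}}(x_\sz)$ is \emph{not} of zero mass. The actual localisation $g_t(z;\cdot)$ is the non-trivial construction of Lemma~\ref{lemma:localisation}, built from the path-signs $\sigma_i^{z;z',z''}$ of Definition~\ref{defi: signs}; establishing that $\sum_{x}g_t(z;x)=0$ uses the divergence-free property of each $J_t^z$, the effective-flux constraint $\Eff(J_t^z)=u_t(z)$, and the identity \eqref{eq:path-consequence} together. Without this lemma the corrector step does not close; moreover a corrector supported only on intra-cube edges cannot match a non-zero-mass source within that cube, so your construction is internally inconsistent.

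Your Step 1 also has a gap. You cannot apply Proposition~\ref{prop:regularisation_discrete} to $(\hat\bfm,\hat\bfJ)$ since that proposition takes a solution of $\cCE_\eps^\cI$ as input, and the alternative ordering "glue, then correct, then regularise" is circular: the raw gluing of a merely measurable optimal selection has no a priori smallness of defect, so the corrector cannot precede regularisation. The paper instead applies the operators $R_\delta\circ S_\lambda\circ T_\tau$ directly and uses the crucial structural fact — absent from your proposal — that they commute with the gluing $\cG_\eps$, so $\bfm^*=\cG_\eps\bar\bfm$, $\bar\bfJ^*=\cG_\eps\bar\bfJ$ for regularised admissible microstructures $(\bar m_t^z,\bar J_t^z)\in\Rep(r_t^*(z),u_t^*(z))$ with $(\bfr^*,\bfu^*)\in\CE_{\eps,d}^\cI$; this supplies both the Lipschitz regularity \eqref{eq:proof_approximation_regularity} (which, contrary to your appeal to Remark~\ref{rem:well-posed}, is not automatic from measurable selection) and the exact hypothesis of Lemma~\ref{lemma:localisation}. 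Two smaller points: the relevant divergence-equation lemma is the non-periodic Lemma~\ref{lemma:bounds_divergence_eq}, not Lemma~\ref{lemma:bounds_divergence_eq 2}; and your exponent is off by one — the correct corrector bound is $\|V_t\|_{\ell^\infty}\leq C\eps^d$, giving $\eps^{1-d}\|V_t\|_{\ell^\infty}\leq C\eps$, whereas $\|V_t\|_{\ell^\infty}\leq C\eps^{d+1}$ would give $\eps^{1-d}\|V_t\|_{\ell^\infty}\leq C\eps^2$.
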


\begin{remark}
It is also true that
\begin{align*}
	\cA_\eps^\cI(\bfm^*, \bfJ^*)
	\leq 
	\cA_\eps^\cI(\hat\bfm, \hat\bfJ)
	 + \eta' + C \eps,
\end{align*}
but this information is not ``useful'', 
as we do not expect to be able to control 
$\cA_\eps^\cI(\hat\bfm, \hat\bfJ)$ in terms of
$\bA_\hom^\cI(\bfmu, \bfnu)$;
see also Remark \ref{rem:lack_regularity}.
\end{remark}

The proof consists of four steps: the first one is to consider optimal microstructures associated with $(\bfmu,\bfnu)$ on every scale $\eps>0$ and glue them together to obtain a discrete curves $(\bfm^*, \bfJ^*)$ (we omit the $\eps$-dependence for simplicity). The second step is the space-time regularisation of such measures in the same spirit as done in the proof of Proposition \ref{prop:regularisation_discrete}. 
Subsequently, we aim at finding suitable correctors in order to obtain a solution to the continuity equation and thus a discrete competitor (in the definition of $\cA_\eps$). Finally, the energy estimates conclude the proof of Proposition \ref{prop:approx_optmicro}.

Let us first discuss the third step, i.e. how to find small correctors for $(\bfm^*, \bfJ^*)$ in order to obtain discrete solutions to $\cCE_\eps^\cI$ which are close to the first ones. Suppose for a moment that $(\bfm^*, \bfJ^*)$ are "regular", as in the outcome of Proposition \ref{prop:regularisation_discrete}. Then the idea is to consider how far they are from solving the continuity equation, i.e. to study the error in the continuity equation
\begin{align*}
	g_t(x) := \partial_t m_t^*(x) + \dive J_t^*(x), \quad x \in \cX_\eps,
\end{align*}
and find suitable (small) correctors $\bm {\tilde J}$ to $\bfJ^*$ in such a way that  $(\bfm^* , \bfJ^* + \tilde{\bfJ}) \in \cCE_\eps^\cI$.

This is based on the next result, which is obtained on the same spirit of Lemma \ref{lemma:bounds_divergence_eq 2} in a non-periodic setting. In this case, we are able to ensure good $\ell^\infty$-bounds and support properties.

\begin{lemma}[Bounds for the divergence equation]
	\label{lemma:bounds_divergence_eq}
	Let $g : \cX_\eps \to \R$ with 
	$\sum_{x \in \cX_\eps} g(x) = 0$. 
	There exists a vector field 
	$J : \cE_\eps \to \R$ 
	such that 
	\begin{gather}
		\dive J = g 
		\tand
		\label{eq:linf_l1_lemmaconteq_eps}
		\| J \|_{\ell^\infty(\cE_\eps)} 	
		\leq 
		\tfrac12 \| g \|_{\ell^1(\cX_\eps)}.
	\end{gather}
	Moreover, $ \supp V \subseteq \conv \supp g+ B_{C\eps}$ with $C$ depending only on $\cX$.
\end{lemma}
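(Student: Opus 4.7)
The plan is to adapt the proof of Lemma \ref{lemma:bounds_divergence_eq 2} to the non-periodic rescaled setting, with additional care to enforce the support condition. First I would decompose $g = g_+ - g_-$ into its positive and negative parts; the balance condition $\sum_x g(x) = 0$ forces $\|g_+\|_{\ell^1} = \|g_-\|_{\ell^1} = \tfrac12 \|g\|_{\ell^1(\cX_\eps)}$. Next I would pick any coupling $\Gamma \in \R_+^{\cX_\eps \times \cX_\eps}$ of $g_+$ and $g_-$, that is, a nonnegative function with marginals $\sum_y \Gamma(x,y) = g_+(x)$ and $\sum_x \Gamma(x,y) = g_-(y)$.

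For each pair $(x,y)$ with $\Gamma(x,y) > 0$ (so $x \in \supp g_+$ and $y \in \supp g_-$, both contained in $\supp g$), I would select a \emph{localised} simple directed path $P^{xy}$ from $x$ to $y$ in $(\cX_\eps, \cE_\eps)$, and then set
\begin{align*}
	J := \sum_{x,y \in \cX_\eps} \Gamma(x,y) J_{P^{xy}},
\end{align*}
with $J_{P^{xy}}$ the associated unit flux from Definition \ref{def:vectoralongpath}. By Lemma \ref{lem:J_Pq}(i) we then have $\dive J = \sum_{x,y} \Gamma(x,y) (\one_x - \one_y) = g_+ - g_- = g$. The $\ell^\infty$ bound follows since each $P^{xy}$ is simple (hence each undirected edge is traversed at most once, giving $|J_{P^{xy}}(e)| \leq 1$), so
\begin{align*}
	\| J \|_{\ell^\infty(\cE_\eps)} \leq \sum_{x,y} \Gamma(x,y) = \|g_+\|_{\ell^1} = \tfrac12 \|g\|_{\ell^1(\cX_\eps)}.
\end{align*}

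The main obstacle is the support condition $\supp J \subseteq \conv \supp g + B_{C\eps}$, which amounts to choosing, for each pair $(x,y)$ above, a path $P^{xy}$ whose edges all lie in the $C\eps$-neighbourhood of the Euclidean segment $[x,y] \subseteq \conv \supp g$. Here I would use the $\Z^d$-periodicity, local finiteness, and connectedness of $(\cX, \cE)$ from Assumption \ref{ass:XE}: by a standard quasi-isometry argument, there exists a constant $D$ depending only on the graph $(\cX,\cE)$ (and not on $\eps$) such that any two vertices $x, y \in \cX$ can be joined by a simple path in $\cX$ whose vertices lie within Euclidean distance $D$ from the straight segment $[x,y]$ — indeed, after fixing, for each coordinate direction $e_i$, a bounded ``translation path'' inside one period cell joining some $v \in \V$ to its $\Z^d$-translate, one can concatenate such paths to travel along a discretisation of $[x,y]$ while staying in a fixed tube around it. Rescaling by $\eps$ produces a corresponding path in $(\cX_\eps, \cE_\eps)$ contained in a $C\eps$-tube around $[x,y]$ with $C = D + R_0$, which yields the claimed support inclusion after taking the union over all pairs and noting that $[x,y] \subseteq \conv\supp g$ for each such pair.
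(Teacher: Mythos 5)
Your construction matches the paper's: decompose $g = g_+ - g_-$, pick a coupling $\Gamma$ between $g_+$ and $g_-$, and superpose unit fluxes $J_{P^{xy}}$ along simple paths with weights $\Gamma(x,y)$; the divergence identity and the $\ell^\infty$ bound follow exactly as you say. You are actually more careful than the paper on two points: you keep $\Gamma$ unnormalized so that the scaling in $\dive J = g$ and in $\|J\|_\infty \le \tfrac12\|g\|_{\ell^1}$ comes out right without a compensating factor (the paper couples the \emph{probability} measures $g_\pm/N$ but never reinstates the factor $N$), and you explicitly construct the paths to lie in a fixed tube around the segment $[x,y]$ via the periodic quasi-isometry of $(\cX,\cE)$ with $\R^d$, which is what actually delivers the asserted support inclusion $\supp J \subseteq \conv\supp g + B_{C\eps}$ — the paper's proof just says ``take an arbitrary path'' and does not verify this clause. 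One tiny remark you might add: the concatenated translation paths may revisit vertices, but passing to a simple subpath only shrinks the support and keeps the unit-flux bound, so the argument is unaffected.
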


\begin{proof}
	Let $g_+$ be the positive part of $g$, and let $g_-$ be the negative part. 
	By assumption, these functions have the same $\ell^1$-norm 		$N := \|g_-\|_{\ell^1(\cX_\eps)}
	= \|g_+\|_{\ell^1(\cX_\eps)}
	$.
	Let $\Gamma$ be an arbitrary coupling between the discrete probability measures $g_-/N$ and $g_+/N$.
	
	For any $x, y \in \supp g$: take an arbitrary path 
	$P_{xy}$
	connecting these two points. 
	Let $J_xy$ be the unit flux field constructed in Definition \ref{def:vectoralongpath}.
	Then the vector field
	$J := \sum_{x, y} \Gamma(x,y) J_{xy}$
	has the desired properties.
\end{proof}

\begin{remark}[Measurability]
	\label{rem:measurability}
	It is clear from the previous proof that one can choose the vector field $J:\cE_\eps \to \R$ in such a way that the function $g \mapsto J$ is a measurable map.
\end{remark}

The plan is to apply Lemma \ref{lemma:bounds_divergence_eq} to a suitable localisation of $g_t$, in each cube $Q_\eps^z$, for every $z \in \Z_\eps^d$. Precisely, the goal is to find $g_t(z; \cdot)$ for every $z\in \Z_\eps^d$ such that
\begin{gather}	\label{eq:localisation}
	\sum_{z\in\Z^d_\eps}g_t(z;x)=g_t(x), \quad 	\sum_{x\in\cX_\eps} g_t(z; x) =0, 
\end{gather}
which is small on the right scale, meaning
\begin{align}	\label{eq:support_g}
	\supp g_t(z;\cdot) \subset B_\infty(z,R \eps), \quad  \|g_t(z;\cdot)\|_{\infty}\leq C\eps^d \,  .
\end{align}

\begin{remark}
	Note that 
		$\sum_{x \in \cX_\eps} g_t(x) = 0$ 
	for all $t\in\cI$, 
	since $\bfm^*$ has constant mass in time and $\bfJ^*$ is skew-symmetric. 
	However, an application of Lemma \ref{lemma:bounds_divergence_eq} without localisation would not ensure a uniform bound on the corrector field, as we are not able to control the $\ell^1$-norm of $g_t$ \emph{a priori}.
\end{remark}

\begin{remark}
	A seemingly natural attempt would be to define 	
		$g_t(z;x):= g_t(x) \mathds{1}_{\{ z \}}(x_\sz)$. 
	However, this choice is not of zero-mass, due to the flow of mass across the boundary of the cubes.
\end{remark}

Recall that we use the notation
	$(\bfr, \bfu) \in \CE_{\eps,d}^\cI$
to denote solutions to the continuity equation on $\Z_\eps^d$ in the sense of Definition \ref{def:cont_eq_Zepsd}.
We shall later apply Lemma \ref{lemma:localisation} to the pair $(\bfr, \bfu) = (\tP_\eps \bfmu, \tP_\eps \bfnu) \in \CE_{\eps,d}^\cI$, thanks to Lemma \ref{lemma:disc_cont_Zepsd}.

The notion of \textit{shortest path} in the next definition refers to the $\ell_1$-distance on $\Z_\eps^d$.

\begin{definition}\label{defi: signs}
For all $z',z''\in \Z_\eps^d$, we choose simultaneously a shortest path $p(z',z'') := (z_0,\ldots,z_N)$ of nearest neighbors in $\Z_\eps^d$ connecting $z_0 = z'$ to $z_N = z''$ such that $p(z' + \tilde z, z'' + \tilde z) =  p(z',z'') + \tilde z$ for all $\tilde z\in \Z_\eps^d$. Then define for $z,z',z''\in \Z_\eps^d$ and $i=1,\ldots,d$ the signs $\sigma_i^{z;z',z''}\in \{-1,0,1\}$ as
\begin{align*}
\sigma_i^{z;z',z''} :=
\begin{cases}
-1 & \text{if } (z_{k-1},z_k) = (z,z-e_i)\text{ for some $k$ within }p(z',z''),\\
1  & \text{if }  (z_{k-1},z_k) = (z-e_i,z)\text{ for some $k$ within }p(z',z''),\\
0  &  \text{otherwise.}
\end{cases}
\end{align*}
\end{definition}

Note that since the paths $p(z',z'')$ are simple, each pair of nearest neighbours appears at most once in any order, so that $\sigma_i^{z; z', z''}$ is well-defined.

It follows readily from Definition \ref{defi: signs} that
\begin{align}
	\label{eq:path-consequence}
	\sum_{z\in \Z_\eps^d} \sigma_i^{z; z', z''} 
	= (z'' - z') \cdot e_i
\end{align}
for all $z', z''\in \Z_\eps^d$ and $i = 1,\ldots,d$.

\begin{remark}
A canonical choice of the paths $p(z',z'')$ is to interpolate first between $z'_1\in  \Z_\eps^1$ and $z''_1\in \Z_\eps^1$ one step at a time, then between $z'_2$ and $z''_2$, and so on. The precise choice of path is irrelevant to our analysis as long as paths are short and satisfy $p(z' + \tilde z, z'' + \tilde z) =  p(z',z'') + \tilde z$. Since the paths are invariant under translations, so are the signs, i.e. 
\begin{equation}\label{eq: sigma shift}
\sigma_i^{z;z' + \tilde z, z'' + \tilde z} = \sigma_i^{z-\tilde z; z', z''}
\end{equation}
for all $z,\tilde z, z',z''\in \Z_\eps^d$, which is used in the prof of Lemma \ref{lemma:localisation} below.
\end{remark}

Lemma \ref{lemma:localisation} shows that if we start from a solution to the continuity equation $(\bfmu,\bfnu) \in \bCE^\cI$ and consider an admissible microstructure $(\bfm, \bfJ)=(m_t^z, J_t^z)_{t \in \cI, z \in \Z_\eps^d}$ associated to $(\tP_\eps \bfmu, \tP_\eps \bfnu)$, then it is possible to localise the error in the continuity equation arising from the gluing $(\cG_\eps \bfM, \cG_\eps \bfU)$ as in \eqref{eq:localisation}. 

\begin{lemma}[Localisation of the error to $\cCE_\eps^\cI$]
		\label{lemma:localisation}
	Let $(\bfr, \bfu) \in \CE_{\eps,d}^\cI$ 
	and suppose that
		$m_t := (m_t^z)_{z \in \Z_\eps^d} \subseteq \R_+^\cX$
	and 
		$J_t := (J_t^z)_{z \in \Z_\eps^d} \subseteq \R_a^\cE$
	satisfy
	\begin{align*}
		( m_t^z , J_t^z )
		\in \Rep 
		\big( r_t(z), u_t(z) \big)
	\end{align*}
	for every $t \in \cI$ and $z \in \Z_\eps^d$. 
	Consider their gluings 
		$\hat m_t := \cG_\eps m_t$ 
	and
		$\hat J_t := \cG_\eps J_t$ 
	and define, 
	for $z \in \Z_\eps^d$ and $x \in \cX_\eps$,
	\begin{align}	
	\label{eq:error_CE}
		g_t(x) & := \partial_t \hat m_t(x) 
					+ \dive \hat J_t(x), 
	\\
	\label{eq:def_g_error}
		g_t(z; x) & := \partial_t \hat m_t(x) 
			\one_{\{ z \}}(x_\sz) 
			+ \frac12 \sum_{y\sim x}
				\sum_{i=1}^d \sigma_i^{z; x_\sz, y_\sz}
				\Big(
					\tilde J_t(z; x, y) 
						-
					\tilde J_t(z - e_i; x, y)
				\Big),
	\end{align}
where 
	$\tilde J_t(z; \cdot) : \cE_\eps \to \R$ 
is the $\T_\eps^d$-periodic map satisfying 
	$\tilde J_t\big(z; T_\eps^0(x'), T_\eps^0(y')\big) 
		= J_t^z(x', y')$ 
for all $(x', y') \in \cE$.
Then the following statements hold for every $t \in \cI$:
\begin{enumerate}[(i)]
	\item $g_t(z;x)$ is a localisation 
	of the error $g_t(x)$ of $(\hat m, \hat J)$ from solving $\cCE_\eps^\cI$, i.e., 
	\begin{align*}
		\sum_{z\in\Z^d_\eps} g_t(z;x) = g_t(x) 
		\quad \text{for all } x \in \cX_\eps.
	\end{align*}
	\item Each localised error $g_t(z;\cdot)$ has zero mass, i.e.,
	\begin{align*}
		\sum_{x \in \cX_\eps} g_t(z; x) = 0
		\quad \text{for all } z \in \Z_\eps^d.
	\end{align*}
\end{enumerate}

\end{lemma}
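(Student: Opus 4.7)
The plan is to verify (i) and (ii) by direct computation, exploiting three structural ingredients: the telescoping encoded in the signs $\sigma_i^{z;z',z''}$, the divergence-freeness of each $J_t^w$ built into Definition \ref{def: representation}, and the continuity equation on $\Z_\eps^d$ provided by $(\bfr, \bfu) \in \CE_{\eps,d}^\cI$.

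For (i), fix $x \in \cX_\eps$. The piece $\sum_z \partial_t \hat m_t(x) \one_{\{z\}}(x_\sz)$ trivially collapses to $\partial_t \hat m_t(x)$. For the flux piece, fix $y \sim x$ and consider the path $p(x_\sz, y_\sz) = (z_0, \ldots, z_N)$ from Definition \ref{defi: signs}. A case analysis on the direction of each step $z_{k-1} \to z_k = z_{k-1} \pm e_{i_k}$ shows that each step contributes to $\sum_z \sum_i \sigma_i^{z;x_\sz,y_\sz}\big(\tilde J_t(z;x,y) - \tilde J_t(z-e_i;x,y)\big)$ exactly the increment $\tilde J_t(z_k;x,y) - \tilde J_t(z_{k-1};x,y)$; telescoping then yields $\tilde J_t(y_\sz;x,y) - \tilde J_t(x_\sz;x,y)$. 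Summing $\tfrac12 \sum_{y \sim x}$ and adding and subtracting $\tfrac12 \sum_{y \sim x} \tilde J_t(x_\sz; x, y)$ gives
\begin{equation*}
\frac12 \sum_{y \sim x}\bigl(\tilde J_t(x_\sz;x,y) + \tilde J_t(y_\sz;x,y)\bigr) - \sum_{y \sim x}\tilde J_t(x_\sz;x,y) = \dive \hat J_t(x) - 0,
\end{equation*}
where the vanishing of the last sum comes from the divergence-freeness of $J_t^{x_\sz}$ (in $\Rep(r_t(x_\sz), u_t(x_\sz))$), which transfers to $\tilde J_t(x_\sz; \cdot)$ on $\cE_\eps$ under the standing assumption $\eps R_0 < \tfrac12$ that prevents edges from wrapping. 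Together this gives $\sum_z g_t(z;x) = \partial_t \hat m_t(x) + \dive \hat J_t(x) = g_t(x)$.

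For (ii), fix $z \in \Z_\eps^d$. The mass piece sums to $\partial_t \bigl(\sum_{x: x_\sz = z} \hat m_t(x)\bigr) = \partial_t \bigl(\sum_{x' \in \cX^Q} m_t^z(x')\bigr) = \partial_t r_t(z)$ by the gluing definition \eqref{eq:Gluing_mass} and $m_t^z \in \Rep(r_t(z))$. For the flux piece, exchange summation orders and, for $w \in \{z, z - e_i\}$, evaluate
\begin{equation*}
\Sigma_i^w := \sum_{(x,y)\in \cE_\eps} \sigma_i^{z; x_\sz, y_\sz}\, \tilde J_t(w;x,y).
\end{equation*}
Partitioning by $(x_\sz, y_\sz) = (z', z'+\Delta)$ and using (a) the $\Z_\eps^d$-translation invariance of $\tilde J_t(w;\cdot)$ coming from the $\Z^d$-periodicity of $J_t^w$, (b) the translation covariance \eqref{eq: sigma shift} of the signs, and (c) the identity \eqref{eq:path-consequence} $\sum_z \sigma_i^{z;0,\Delta} = \Delta \cdot e_i$, the inner sum over $z'$ collapses to $\Delta \cdot e_i$. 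The remaining sum over $\Delta$ reproduces $\sum_{(x',y') \in \cE^Q} J_t^w(x',y')(y'_\sz - x'_\sz)\cdot e_i = 2\Eff(J_t^w)\cdot e_i = 2 u_t(w)\cdot e_i$. Hence
\begin{equation*}
\tfrac12 \sum_i \bigl(\Sigma_i^z - \Sigma_i^{z-e_i}\bigr) = \sum_{i=1}^d \bigl(u_t(z) - u_t(z-e_i)\bigr)\cdot e_i = -\partial_t r_t(z),
\end{equation*}
by the continuity equation on $\Z_\eps^d$ from Definition \ref{def:cont_eq_Zepsd}. The two contributions cancel, establishing (ii).

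The main obstacle is the identity $\Sigma_i^w = 2 u_t(w) \cdot e_i$ underlying (ii): it requires combining three distinct symmetry/structural facts (periodicity of the microstructure, translation covariance of the paths, and the cell-formula definition of the effective flux) in a single step. Once this is secured, (i) reduces to the telescoping analysis of the path $p(x_\sz, y_\sz)$ combined with divergence-freeness, and the remainder of the proof is bookkeeping.
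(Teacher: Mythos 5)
Your proof is correct and follows essentially the same approach as the paper's: part (i) via telescoping along the path $p(x_\sz,y_\sz)$, adding/subtracting $\tfrac12\sum_{y\sim x}\tilde J_t(x_\sz;x,y)$, and invoking divergence-freeness of $J_t^{x_\sz}$; part (ii) via the translation covariance \eqref{eq: sigma shift}, periodicity of the microstructure, the identity \eqref{eq:path-consequence}, $J_t^w\in\Rep(u_t(w))$, and the continuity equation on $\Z_\eps^d$. The only difference is cosmetic: you split the flux contribution in (ii) into the two pieces $\Sigma_i^z$ and $\Sigma_i^{z-e_i}$, evaluate each to $2u_t(w)\cdot e_i$, and then subtract, whereas the paper handles the combined difference in one computation.
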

\begin{proof}
$(i)$: \
 For $(x,y) \in \cE_\eps$, 
 consider the path $p(x_\sz, y_\sz) = (z_0, \ldots, z_N)$ constructed in Definition \ref{defi: signs}. 
 For all $t \in \cI$ we have
\begin{align*}
	& 
	\sum_{z \in \Z^d_\eps} 
	\sum_{i=1}^d  
		\sigma_i^{z; x_\sz, y_\sz}
		\Big(
			\tilde J_t(z;x,y) 
			- 
			\tilde J_t(z- e_i;x,y)
		\Big) \\
	& =  \sum_{k=1}^{N} 	
		\Big( 
			\tilde J_t(z_k; x,y) - \tilde J_t(z_{k-1}; x,y) 
		\Big)
	= \tilde J_t(y_\sz; x,y) - \tilde J_t(x_\sz; x,y) .
\end{align*}
Summation over all neighbours of $x\in \cX_\eps$ yields, 
for all $t\in \cI$,
\begin{align*}
	 \sum_{z\in\Z^d_\eps}g_t(z;x)
	 & =	\partial_t m_t(x) + \frac12 \sum_{y\sim x} 
			\sum_{z \in \Z^d_\eps} 
			\sum_{i = 1}^d 
				\sigma_i^{z; x_\sz, y_\sz}	
					\Big(
						\tilde J_t(z; x, y) - 
						\tilde J_t(z-e_i; x, y)
					\Big)
		\\
		& = \partial_t m_t(x) 
			+ \frac12 \sum_{y\sim x}
				\Big(
					\tilde J_t(y_\sz;x, y) 
				- 	\tilde J_t(x_\sz;x, y)
				\Big)
		\\
		& = \partial_t m_t(x) 
			+ \frac12 \sum_{y\sim x}
			 \Big( 
				 \tilde J_t(y_\sz; x, y) 
			+ 	 \tilde J_t(x_\sz; x, y)
			 \Big) 
		= g_t(x),
\end{align*}
where we used the $\Z^d$-periodicity of $(\cX, \cE)$ and the vanishing divergence of $J_t^{x_\sz}$.

\medskip
\noindent
$(ii)$: \
Fix $z \in \Z_\eps^d$ and $t \in \cI$. 
Using the periodicity of $\tilde J_t(z; \cdot)$, 
the identity \eqref{eq: sigma shift}, 
the group structure of $\Z_\eps^d$, 
the relation between $\tilde J$ and $J$, 
the fact that $J_t^z \in \Rep\big( u_t(z) \big)$, 
and the identity \eqref{eq:path-consequence}, 
we obtain
\begin{align*}
	& \sum_{(x,y) \in \cE_\eps} 
		\sum_{i=1}^d \sigma_i^{z;x_\sz,y_\sz} \left(\tilde J_t(z; x,y)  - \tilde J_t(z-e_i; x,y) \right)\\
			& = \sum_{\substack{ (x,y) \in \cE_\eps 
						\\ x_\sz = z}} 
					\sum_{\tilde z\in \Z_\eps^d} 
						\sum_{i=1}^d \sigma_i^{z;x_\sz + \tilde z,y_\sz + \tilde z} 
							\left(
								\tilde J_t(z; x,y)  - \tilde J_t(z-e_i; x,y) 
							\right) \\
			& = \sum_{\substack{(x,y)\in \cE_\eps\\ x_\sz = z}} 
					\sum_{\tilde z\in \Z_\eps^d} 
						\sum_{i=1}^d \sigma_i^{z - \tilde z;x_\sz,y_\sz} 		
							\left(
								\tilde J_t(z; x,y)  - \tilde J_t(z-e_i; x,y) 
							\right) \\
			& = \sum_{\substack{(x,y) \in \cE_\eps \\ x_\sz = z}}
					\sum_{i=1}^d \left(\tilde J_t(z; x,y)  - \tilde J_t(z-e_i; x,y) \right) 
						\left(
							\sum_{\tilde z\in \Z_\eps^d} \sigma_i^{\tilde z;x_\sz,y_\sz}
						\right) \\
			& =  \sum_{(x',y') \in \cE^Q} 
					\sum_{i=1}^d
						\Big(
							J_t^z(x',y') - 
							J_t^{z-e_i}(x',y')
						\Big) (y'_\sz-x'_\sz) \cdot e_i \\
			& = 2 \sum_{i=1}^d \big(u_t(z) - u_t(z-e_i)\big) \cdot e_i.
\end{align*}
By definition of $g_t(z;\cdot)$ we obtain
\begin{align*}
	\sum_{x\in\cX_\eps} g_t(z;x) 
		& =  
			\sum_{\substack{x\in \cX_\eps \\ x _\sz = z}} 
				\partial_t m_t(x) + 
				\frac12 \sum_{i=1}^d
					\sum_{(x,y)\in \cE_\eps} 
			\sigma_i^{z;x_\sz,y_\sz} 
				\Big(\tilde J_t(z; x,y) 
				- \tilde J_t(z-e_i;x,y)
			\Big)
		\\
		& = \partial_t r_t(z) 
			+ \sum_{i=1}^d \big(u_t(z)-u_t(z- e_i)\big)\cdot e_i 
		=  0, 
\end{align*}
where we used that $m_t^z \in \Rep\big(r_t(z)\big)$ and eventually that $(\bfr, \bfu) \in \CE_{\eps,d}^\cI$.
\end{proof}

Now we are ready to prove Proposition \ref{prop:approx_optmicro}.
\begin{proof}[Proof of Proposition \ref{prop:approx_optmicro}]
	The proof consists of four steps. For simplicity: $\cI:=\cI_\eta$.
	
\medskip
\noindent
\textit{Step 1: Regularisation}. \
Recall the operators $R_\delta$, $S_\lambda$, and $T_\tau$ as defined in Section \ref{sec:disc-reg}. We define
\begin{align*}
	\bfm^*
		:= \Big(
				R_\delta 
				\circ S_\lambda 
				\circ T_\tau 
			\Big) 
		\hat \bfm 
	\tand		
		\bar \bfJ^*
		:= \Big(
				R_\delta 
				\circ S_\lambda 
				\circ T_\tau 
			\Big) 
	\hat{\bfJ},
\end{align*}
where $\delta, \lambda>0$, $0<\tau <\eta$ will be chosen sufficiently small, depending on the desired accuracy $\eta' >0$. Due to special linear structure of the gluing operator $\cG_\eps$, it is clear that
\begin{align*}
	\bfm^* = \cG_\eps \overline \bfm \tand \quad \bar  \bfJ^* = \cG_\eps \bar \bfJ, 
\end{align*}
for some $\big( \overline \bfm, \bar \bfJ \big) = (\overline m_t^z, \bar J_t^z)_{t \in \cI, z \in \Z_\eps^d}$. More precisely, they correspond to the regularised version of the measures $(m_t^z, J_t^z)_{t \in \cI, z \in \Z_\eps^d}$ with respect to the graph structure of $\Z_\eps^d$. In particular, an application\footnote{To be precise, this is an application of these lemmas to the case of $V:= \{v\}$, thus $\cX_\eps \simeq \Z_\eps^d$.} of Lemma \ref{lemma:L1-infty_estimates_Peps}, Lemma \ref{lem:reg-space}, and Lemma \ref{lem:reg-time} yields
\begin{align}	\label{eq:proof_approximation_regularity}
\begin{gathered}
	\sup_{t \in \cI}
		\big\|  \overline m_t^{\cdot + z}
	- \overline m_t
		\big\|_{\ell^\infty(\Z_\eps^d \times \cX)}
	+ \eps
		\big\|  \bar J_t^{\cdot + z}		
	- \bar J_t
		\big\|_{\ell^\infty(\Z_\eps^d \times \cE)}
	\leq C |z| \eps^{d+1}, \\
	\sup_{t \in \cI}
		\big\|  
			\partial_t \overline m_t
		\big\|_{\ell^\infty(\Z_\eps^d \times \cX)}
	\leq  C \eps^d,
\end{gathered}
\end{align}
for any $z \in \Z_\eps^d$, as well as the domain regularity
\begin{align}	\label{eq:proof_approx_compact_microstr}
	\bigg\{ \bigg( \frac{\overline m_t^z}{\eps^d}, \frac{\bar J_t^z}{\eps^{d-1}} \bigg) \suchthat z \in \Z_\eps^d,  \, t \in \cI  \bigg\} \subset K \Subset    (\Dom F)^\circ,
\end{align}
for a constant $C$ and a compact set $K$ depending only on $M$, $A$, $\delta$, $\lambda$, and $\tau$. We can then apply Lemma \ref{lemma:energy_est_glued_measures_regular} and deduce that for every $t \in \cI$, $\eps \leq \eps_0$ (depending on $K$ and $F$),
\begin{align}	\label{eq:proof_approx_energy_regularisation}
	\cF_\eps 
		\big(
			m_t^*, \bar J_t^*	
		\big) \leq 
			\sum_{z \in \Z_\eps^d}  
		\eps^d F \bigg(  
			\frac{\overline m_t^z}{\eps^d} , \frac{\bar J_t^z}{\eps^{d-1}}
		\bigg) + c \eps,
\end{align}
for a $c \in \R^+$ depending on the same set of parameters (via $C$ and $\Lip(F;K)$) and $R_1$.

\medskip
\noindent
\textit{Step 2: Construction of a solution to $\cCE_\eps^\cI$}. \
From now on, the constants $C$ appearing in the estimates might change line by line, but it always depends on the same set of parameters as the constant $C$ in Step 1, and possibly on the size of the time interval $|\cI|$.

The next step is to find a quantitative small corrector $\bfV$ in such a way that $(\bfm^*, \bar \bfJ^* + \bfV) \in \cCE_\eps^\cI$. To do so, we observe that by construction we have for every $t \in \cI$
\begin{align*}
	\Big(
		\overline m_t^z , \bar J_t^z
	\Big)
		\in  \Rep 
	\Big(
		r_t^*(z) , u^*_t(z)
	\Big),
\end{align*}
where $(\bfr^*, \bfu^*) \in \CE_{\eps,d}^\cI$ (by the linearity of equation \eqref{eq:cont_eq_Zepsd}). Consider the corresponding error functions, for $(x,y) \in \cE_\eps$, $t \in \cI$, $z \in \Z_\eps^d$ given by \eqref{eq:error_CE} and \eqref{eq:def_g_error},
\begin{align*}	
	g_t(x) &:= \partial_t m_t^*(x) + \dive \bar J_t^*(x), 		\\
	g_t(z;x)&:=\partial_t m_t^*(x) \mathds{1}_{\{ x_\sz = z \}}(x)  + \frac12 \sum_{y\sim x}\sum_{i=1}^d \sigma_i^{z;x_\sz,y_\sz}(\tilde J(z;x,y)-\tilde J(z-e_i;x,y)),
\end{align*}
where $\tilde J(z;\cdot):\cE_\eps \to \R$ is the $\T_\eps^d$-periodic map satisfying $\tilde J(z;T_\eps^0(x'),T_\eps^0(y')) = \bar J_t^z(x',y')$, for any $(x',y')\in \cE$. Thanks to Lemma \ref{lemma:localisation}, we know that
\begin{align*}
	\sum_{x\in\cX_\eps} g_t(z; x) = 0, 
		\quad
	\sum_{z' \in \Z_\eps^d} g_t(z';x) = g_t(x), \quad \forall x \in \cX_\eps, \, z \in \Z_\eps^d.
\end{align*}
Moreover, from the regularity estimates  \eqref{eq:proof_approximation_regularity} and the local finiteness of the graph $(\cX,\cE)$, we infer for every $z \in \Z_\eps^d$
\begin{align}	
	\left\|   g_t(z;\cdot)   \right\|_{\ell^\infty(\cX_\eps)} \leq C \eps^d, \quad \supp g_t(z; \cdot) \subset \{ x \in \cX_\eps \suchthat \| x_\sz - z \|_{\ell^\infty(\Z_\eps^d)}  \leq C' \},
\end{align}
where $C'$ only depends on $(\cX,\cE)$. Hence, as an application of Lemma \ref{lemma:bounds_divergence_eq}, we deduce the existence of corrector vector fields $V_t \in \R_a^{\Z_\eps^d \times \cE_\eps}$ such that 
\begin{align}	\label{eq:proof_apporx_corrector}
\begin{gathered}
	\dive V_t(z;\cdot) = g_t(z;\cdot)
		\ , 
			\quad  
		\supp V_t(z;\cdot)  \subset \{ (x,y) \in \cE_\eps \suchthat \| x_\sz - z \|_{\ell^\infty(\Z_\eps^d)}  \leq \tilde C' \}, \\
	\| V_t(z;\cdot) \|_{\ell^\infty(\cE_\eps)} 	
		\leq 
	\tfrac12 \| g_t(z;\cdot) \|_{\ell^1(\cX_\eps)} \leq C \eps^d,
\end{gathered}
\end{align}
for every $t \in \cI$, $z \in \Z_\eps^d$. The existence of a measurable (in $t \in \cI$ and $z \in \Z_\eps^d$) map $V_t(z;\cdot)$ follows from the measurability of $g_t(z;\cdot)$ and Remark \ref{rem:measurability}.

We then define $\bfV: \cI \to \R_a^{\cE_\eps}$ and $\bfJ^*: \cI \to \R_a^{\cE_\eps}$ as
\begin{align*}
	\bfV := \sum_{z \in \Z_\eps^d}\bfV(z;\cdot), \quad \bfJ^*:= \bar \bfJ^* + \bfV, 
\end{align*}
and obtain a solution to the discrete continuity equation $(\bfm^*, \bfJ^*) \in \cCE_\eps^\cI$.

\medskip
\noindent
\textit{Step 3: Energy estimates}. \
The locality property $\eqref{eq:proof_apporx_corrector}$ of $V_t(z;\cdot)$ and local finiteness of the graph $(\cX,\cE)$ allow us to deduce the same uniform estimates on the global corrector as well. Indeed for every $t\in \cI$, $x \in \cX_\eps$ we have
\begin{align*}
	V_t(x,y) := \sum_{z \in B_\infty(x_\sz ; \tilde C') } V(z;x,y), 
		\quad 
	B_\infty(x_\sz ; \tilde C') := 
	\left\{
		z \in \Z_\eps^d \suchthat \| z - x_\sz \|_{\ell^\infty(\Z_\eps^d)}  \leq \tilde C'
	\right\},
\end{align*}
and hence from the estimate \eqref{eq:proof_apporx_corrector} we also deduce $\| \bfV \|_{\ell^\infty(\cI \times \cE_\eps)}  \leq C \eps^d$.

Since \eqref{eq:proof_approx_compact_microstr} implies that 
$\displaystyle 
\bigg(\frac{\tau_\eps^z m_t^*}
{\eps^d}
,
\frac{\tau_\eps^z \bar J_t^*}
{\eps^{d-1}}
\bigg)
\in K$
, it then follows that 
$\displaystyle 
\bigg(\frac{\tau_\eps^z m_t^*}
{\eps^d}
,
\frac{\tau_\eps^z J_t^*}
{\eps^{d-1}}
\bigg)
\in K'$
for $0<\eps \leq \eps_0$ sufficiently small, where $\eps_0$ depends on $K$ and $C$.
Here $K'$ is a compact set, possibly slightly larger than $K$, contained in $\Dom(F)^\circ$. 

Therefore, we can estimate the energy 
\begin{align*}
	\sup_{t \in \cI} \sup_{z \in \Z_\eps^d}
	\left| 
		F 
		\bigg(\frac{\tau_\eps^z m_t^*}
			{\eps^d}
				,
			\frac{\tau_\eps^z \bar J_t^*}
			{\eps^{d-1}}
		\bigg)
	-
		F 
			\bigg(\frac{\tau_\eps^z m_t^*}
				{\eps^d}
				,		
				\frac{\tau_\eps^z J_t^*}
				{\eps^{d-1}}
			\bigg)
	\right| 
		\leq
	\Lip(F;K') \frac{1}{\eps^{d-1}} \| \bfV \|_{\ell^\infty(\cI \times \cE_\eps)} 
		\leq
	C \eps,
\end{align*}
and hence
$
	\cA_\eps^\cI 
		\big(
			\bfm^*, \bfJ^*	
		\big) 
	\leq 
	\cA_\eps ^\cI
		\big(
			\bfm^*, \bar \bfJ^*
		\big) 
	+
		C  \eps
$. Together with \eqref{eq:proof_approx_energy_regularisation}, this yields
\begin{align*}	
	\cA_\eps^\cI
		\big(
			\bfm^*, \bfJ^*	
		\big) 
	\leq 
		\int_\cI
		\sum_{z \in \Z_\eps^d}  
			\eps^d F \bigg(  
					\frac{\overline m_t^z}{\eps^d} , \frac{\bar J_t^z}{\eps^{d-1}}
				\bigg) \dd t + C  \eps.
\end{align*}
Finally, to control the action of the regularised microstructures $(\bar \bfm, \bar \bfJ)$, we take advantage (as in \eqref{eq:energy}) of Lemma \ref{lem:reg-energy}, Lemma \ref{lem:reg-space} $(i)$, and Lemma \ref{lem:reg-time} $(i)$ to obtain\footnote{As before, it's an application of these lemmas on $\Z_\eps^d$ (corresponding to $\V = \{v\}$).} 
\begin{align*}
	\int_\cI
		\sum_{z \in \Z_\eps^d}  
			\eps^d F 
		\bigg(  
			\frac{\overline m_t^z}{\eps^d} , \frac{\bar J_t^z}{\eps^{d-1}}
		\bigg) \dd t 
	&\leq
	\int_\cI 
	\fint_{t- \tau}^{t+ \tau} 
		\sum_{z \in \Z_\eps^d} 
			\eps^d F 
		\left(  
			\frac{m_s^z}{\eps^d} , \frac{ J_s^z}{\eps^{d-1}}
		\right) \dd s \dd t 
	+ \delta |\cI| F(m^\circ, J^\circ) \\
	&\leq 
	\int_\cI 
	\fint_{t- \tau}^{t+ \tau} 
		\bF_\hom(\mu_s,\nu_s) \dd s \dd t 
	+ \delta |\cI| F(m^\circ, J^\circ) + c' \eps \\
	&\leq 
	\int_\cI 
		\bF_\hom(\mu_t,\nu_t) \dd t 
	+ \delta |\cI| F(m^\circ, J^\circ) + c' (\eps + \tau),
\end{align*}
for a $c' < \infty$, where at last we used Proposition \ref{prop:energy_optmicro} and that $f_\hom$ is Lipschitz on $\tilde K$. 

For every given $\eta'>0$, the action bound \eqref{eq:proposition_approx_energy} then follows choosing $\tau,\delta>0$ small enough.

\medskip
\noindent
\textit{Step 4: Measures comparison.} \
We have seen in \eqref{eq:W-bound} that Lemma \ref{lem:W1-bounds} implies
\begin{align*}
	\|
		\iota_\eps \bfm^* - 
		\iota_\eps \hat \bfm
	\|_{\KR([0,T] \times \Td)}
	 \lesssim
	M( \tau + \sqrt{\lambda} + \delta )
	+ m^\circ(\XQ) \delta,
\end{align*}
where we also used that mass preservation of the gluing operator, see Remark \ref{rem:mass_preservation_tP*}. For every $\eta'>0$, the distance bound \eqref{eq:proposition_approx_measure} can be then obtained choosing $\tau$, $\lambda$, $\delta$ sufficiently small.
\end{proof}

\subsection{Proof of the upper bound}	\label{sec:upperbound}

This subsection is devoted to the proof of the limsup inequality in Theorem \ref{thm:main}. 
First we formulate the existence of a recovery sequence in the smooth case.
\begin{proposition}[Existence of a recovery sequence, smooth case]	\label{prop:limsup_smooth}
	 Fix $\cI = (a,b)$, $a<b$, $\eta >0$, and set $\cI_\eta:= (a-\eta, b+\eta)$. Let $(\bfmu, \bfnu) \in \bCE^{\cI_\eta}$ be a solution to the continuity equation with smooth densities $(\rho_t, j_t)_{t\in \cI_\eta}$ and such that 
	\begin{align}	\label{eq:assumptions_smooth_upperb}
		\bA_\hom^{\cI_\eta}(\bfmu, \bfnu) < \infty
		\tand
		 \Big\{
			\big(\rho_t(x),j_t(x)\big) 
				\suchthat (t,x) \in \cI_\eta \times \Td 
		\Big\} 
			\Subset  \Dom(f_{\hom})^\circ.
	\end{align}
	Then there exists a sequence of curves $(m_t^\eps)_{t \in \overline{\cI}} \subseteq \Meps$ such that $\iota_\eps \bfm^\eps \to \bfmu|_{\overline{\cI}\times \Td}$ weakly in $\cM_+(\overline{\cI}\times \Td)$ as $\eps \to 0$ and 
	\begin{align}	\label{eq:upper_bound_smooth}
		\limsup_{\eps \to 0} 
			\cA_\eps^\cI (\bfm^\eps) 
		\leq 	
			\bA_\hom^{\cI_\eta}(\bfmu, \bfnu)
				 + C \eta |\cI| \big(\mu_0(\Td) + 1\big), 
	\end{align} 
	for some $C < \infty$. 
\end{proposition}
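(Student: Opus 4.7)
The argument is a short assembly of Propositions \ref{prop:energy_optmicro} and \ref{prop:approx_optmicro}, realising the six-step strategy sketched in the introduction. All the serious work has already been carried out upstream; here we merely feed a measurable family of optimal microstructures into the approximation machinery.

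First, the smoothness of $(\rho_t, j_t)$ together with \eqref{eq:assumptions_smooth_upperb} makes $(\mu_t, \nu_t)_{t \in \cI_\eta}$ a regular curve of measures in the sense of Definition \ref{def:regular}, with densities trapped in a common compact subset $\tilde K \Subset \Dom(f_{\hom})^\circ$. By the measurable selection argument of Remark \ref{rem:well-posed}, for each $\eps > 0$ one picks a measurable family of optimal microstructures $(m_t^z, J_t^z)_{t \in \cI_\eta,\, z \in \Z_\eps^d}$ for $(\mu_t, \nu_t)$ and forms their gluing $\hat\bfm^\eps := \cG_\eps \bfm$, $\hat\bfJ^\eps := \cG_\eps \bfJ$ on $\cI_\eta$. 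Mass preservation of $\cG_\eps$ (Remark \ref{rem:mass_preservation_tP*} together with estimate \eqref{eq:rem_KR_iotaP*}) gives the $O(\eps)$ bound $\| \iota_\eps \hat\bfm^\eps - \bfmu \|_{\KR(\overline{\cI_\eta} \times \Td)} \lesssim \eps$.

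Next, for each $\eta' > 0$ we apply Proposition \ref{prop:approx_optmicro} to $(\hat\bfm^\eps, \hat\bfJ^\eps)$. For all $\eps$ small enough this yields $(\bfm^\eps, \bfJ^\eps) \in \cCE_\eps^\cI$ satisfying the KR-approximation \eqref{eq:proposition_approx_measure} and the action bound
\begin{equation*}
	\cA_\eps^\cI(\bfm^\eps, \bfJ^\eps) \leq \bA_\hom^\cI(\bfmu, \bfnu) + \eta' + C_{\eta'} \eps.
\end{equation*}
Combining this KR-bound with the one from the previous paragraph via the triangle inequality gives $\|\iota_\eps \bfm^\eps - \bfmu|_{\overline\cI \times \Td}\|_{\KR(\overline\cI \times \Td)} \leq \eta' + o_\eps(1)$, so that Proposition \ref{prop:KR-weakstar} yields weak convergence $\iota_\eps \bfm^\eps \to \bfmu|_{\overline\cI \times \Td}$ along a diagonal subsequence as $\eta' \to 0$ and $\eps \to 0$. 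Along the same sequence, the trivial inequalities $\cA_\eps^\cI(\bfm^\eps) \leq \cA_\eps^\cI(\bfm^\eps, \bfJ^\eps)$ and $\bA_\hom^\cI(\bfmu, \bfnu) \leq \bA_\hom^{\cI_\eta}(\bfmu, \bfnu)$ deliver
\begin{equation*}
	\limsup_{\eps \to 0} \cA_\eps^\cI(\bfm^\eps) \leq \bA_\hom^{\cI_\eta}(\bfmu, \bfnu),
\end{equation*}
which is in fact sharper than \eqref{eq:upper_bound_smooth}; the additional slack $C \eta |\cI| (\mu_0(\Td) + 1)$ stated in the proposition is available for free through the linear growth bound \eqref{eq:growth_f} applied on $\cI_\eta \setminus \cI$ and the fact that $\bfmu$ has constant mass $\mu_0(\Td)$ in time (cf.\ Lemma \ref{lemma:disintegration}).

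The genuine difficulty is not located in this proposition but in its two inputs. Proposition \ref{prop:energy_optmicro} is a Riemann-sum argument exploiting the Lipschitz regularity of $f_\hom$ on $\tilde K$, and is essentially immediate. Proposition \ref{prop:approx_optmicro}, on the other hand, is the technical core of Section \ref{sec: upper}: it requires the three-fold regularisation (energy, space, time) developed in Section \ref{sec:disc-reg}, the cube-by-cube localisation of the continuity-equation error from Lemma \ref{lemma:localisation}, and the $\ell^\infty$-controlled correctors constructed via Lemma \ref{lemma:bounds_divergence_eq}. Once all of that is granted, the only subtle points that remain for the present proposition are the measurable selection of cube-wise optimisers in Step 1 and the necessity of working on the enlarged interval $\cI_\eta$ so that the time regularisation $T_\tau$ hidden inside Proposition \ref{prop:approx_optmicro} is well defined.
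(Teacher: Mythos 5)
Your proof follows the paper's argument essentially step by step: select a measurable family of optimal microstructures via Remark \ref{rem:well-posed}, glue via $\cG_\eps$, invoke Proposition \ref{prop:approx_optmicro} to obtain a genuine solution of $\cCE_\eps^\cI$ with a quantified action bound, and close with a diagonal argument in $(\eta',\eps)$ using the KR-estimates \eqref{eq:rem_KR_iotaP*} and \eqref{eq:proposition_approx_measure} together with Proposition \ref{prop:KR-weakstar}. That is exactly the paper's route, so no complaints on structure.

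There is, however, one substantive error in your last paragraph. You describe
\begin{align*}
	\bA_\hom^\cI(\bfmu, \bfnu) \leq \bA_\hom^{\cI_\eta}(\bfmu, \bfnu)
\end{align*}
as a ``trivial inequality'' and claim the resulting bound $\limsup_{\eps \to 0}\cA_\eps^\cI(\bfm^\eps) \leq \bA_\hom^{\cI_\eta}(\bfmu,\bfnu)$ is ``sharper than \eqref{eq:upper_bound_smooth}''. Neither claim is correct. Since $\cI \subset \cI_\eta$, the action over the larger interval differs from the one over $\cI$ by the integral of $f_\hom(\rho_t,j_t)$ over $(\cI_\eta\setminus\cI)\times\Td$, and under Assumption \ref{ass:F} the density $f_\hom$ need not be nonnegative — it is only bounded below by $-C(\rho+1)$ via \eqref{eq: growth hom}. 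Consequently $\bA_\hom^\cI$ can exceed $\bA_\hom^{\cI_\eta}$, the ``sharper'' bound is false in general, and the slack term $C\eta\,|\cI|\,(\mu_0(\Td)+1)$ in \eqref{eq:upper_bound_smooth} is not an available luxury but a necessity. Your parenthetical remark — applying the linear growth bound on $\cI_\eta\setminus\cI$ together with constant mass — is exactly the correct argument (and matches the paper's ``where at last we used the growth condition \eqref{eq:growth_f}''), so you should absorb it into the main chain of inequalities rather than present it as an optional add-on while asserting that the intermediate step is trivial. One cosmetic note: Proposition \ref{prop:energy_optmicro} is consumed inside the proof of Proposition \ref{prop:approx_optmicro} rather than invoked directly here, so listing it as a separate input to this proposition is slightly misleading.
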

\begin{proof}
	We write $\KR_\cI:= \KR(\overline{\cI} \times \Td)$. Let $(\bfmu,\bfnu)\in \bCE^{\cI_\eta}$ be smooth curves of measures satisfying the assumptions  \eqref{eq:assumptions_smooth_upperb}. Let  $(\hat \bfm , \hat \bfJ)$ be the gluing of a measurable family of optimal microstructure associated with $(\bfmu, \bfnu)$, for every $\eps>0$. For every $\eta'>0$, Proposition \ref{prop:approx_optmicro} yields the existence of $(\bfm^{\eta'}, \bfJ^{\eta'}) \in \cCE_\eps^\cI$,  a constant $C_{\eta'}$, and $\eps_0=\eps_0(\eta')$ depending on $\eta'$ such that 
	\begin{align*}
		\| \iota_\eps (\bfm^{\eta'} - \hat \bfm ) 
		\|_{\KR_\cI} 
			\leq \eta',
			\quad
		\cA_\eps(\bfm^{\eta'}, \bfJ^{\eta'})
			\leq 
		\bA_{\hom}(\bfmu, \bfnu)  + \eta' + \eps C_{\eta'}, 
	\end{align*}
	for every $\eps \leq \eps_0$.
	
	Using Remark \eqref{rem:mass_preservation_tP*}, in particular \eqref{eq:rem_KR_iotaP*}, and that $(\bfm^{\eta'}, \bfJ^{\eta'}) \in \cCE_\eps^\cI$, we infer
	\begin{align*}
		\| \iota_\eps (\bfm^{\eta'}) - \bfmu  
		\|_{\KR_\cI} 
			\leq \eta' + \bfmu(\overline{\cI} \times \T^d) \eps^d,
			\quad
		\cA_\eps(\bfm^{\eta'})
			\leq 
		\bA_{\hom}(\bfmu, \bfnu)  + \eta' + \eps C_{\eta'}.
	\end{align*}
	for every $\eps \leq \eps_0$. Therefore, we can find a diagunal sequence $\eta'=\eta'(\eps) \to 0$ as $\eps \to 0$ such that, if we set $\bfm^\eps:= \bfm^{\eta'(\eps)}$, we obtain
	\begin{gather*}
		\lim_{\eps \to 0} 
			\| \iota_\eps (\bfm^\eps) - \bfmu  
			\|_{\KR_\cI}  = 0, 
		\\
		\limsup_{\eps \to 0} \cA_\eps^\cI(\bfm^\eps) 
			\leq 
		\bA_{\hom}^\cI(\bfmu, \bfnu) 
			\leq 
		\bA_{\hom}^{\cI_\eta}(\bfmu, \bfnu) + C \eta |\cI|  (\mu_0(\T^d) +1)
			,
	\end{gather*}
	where at last we used the growth condition \eqref{eq:growth_f}.		
\end{proof}

In order to apply Proposition \ref{prop:limsup_smooth} for the existence of the recovery sequence in Theorem \ref{thm:main} we prove that the set of solutions to the continuity equation \eqref{eq:def_CE} with smooth densities are dense-in-energy for $\bA_\hom^\cI$.

\begin{definition}[Affine change of variable in time]
	Fix $\cI = (a,b)$. For any $\eta >0$, we consider the unique bijective increasing affine map $S^\eta:\cI \to (a-2\eta, b + 2\eta)$. For every interval $\tilde \cI \subseteq \cI$ and every vector-valued measure $\bfxi \in \cM^n(\tilde \cI \times \T^d)$, $n \in \N$,  we define the changed-variable measure
	\begin{gather}	\label{eq:def_mueta}
		\begin{gathered}
			\emph S^\eta[\bfxi] \in \cM^n(S^\eta(\tilde \cI) \times \Td), 
			\quad 
			\emph S^\eta[\bfxi]:= \frac{|\cI|+4\eta}{|\cI|}\big( S^\eta, \id \big)_{\#} \bfxi. 
		\end{gathered}
	\end{gather} 
\end{definition}

\begin{remark}[Properties of $\text S^\eta$]
	The scaling factor of $S^\eta[\bfxi]$ is chosen so that if $\bfxi \ll \Leb^{d+1}$, then $\text S^\eta[\bfxi] \ll \Leb^{d+1}$ and we have for $(t,x) \in S^\eta(\tilde \cI) \times \Td$ the equality of densities
	\begin{align}	\label{eq:densities_mu-j_eta}
		\frac{ \dd \text S^\eta[\bfxi] }{ \dd \Leb^{d+1} }(t,x) 
		= \frac{ \dd \bfxi }{ \dd \Leb^{d+1} }((S^\eta)^{-1}(t),x).
	\end{align} 
	Moreover, if $(\bfmu,\bfnu) \in \bCE^\cI$ then $ \big( \frac{|\cI|+4\eta}{|\cI|} \text S^\eta[\bfmu], \text S^\eta[\bfnu] \big) \in \bCE^{S^\eta(\cI)}$. 
\end{remark}

We are ready to state and prove the last result of this section. 

\begin{proposition}[Smooth approximation of finite action solutions to $\bCE^\cI$]
	\label{prop:density}
	Fix $\cI:= (a,b)$ and fix $(\bfmu,\bfnu)$ $\in \bCE^\cI$ with $\bA_\hom(\bfmu,\bfnu)<\infty$. Then there exists a sequence $\{ \eta_k\}_k \subset \R^+$ such that $\eta_k \to 0$ as $k \to \infty$ and measures $(\bfmu^k,\bfnu^k) \in \bCE^{\cI_k}$ for $\cI_k := (a-\eta_k, b+ \eta_k)$ so that as $k \to \infty$
	\begin{gather}	
		\label{eq:lemma_convergence_mukjk}
		(\bfmu^k,\bfnu^k) 
		\to
		 (\bfmu, \bfnu) 
			\;
		\text{weakly in } 
			\cM_+\big(\cI\times \Td \big) \times \cM^d \big(\cI\times \Td\big), \\
		\label{eq:lemma_smooth_densities}
			\frac{\dd \bfmu^k}{\dd \Leb^{d+1}} \in \cC_b^\infty\big(\cI_k\times \T^d\big), 
				\quad 
			\frac{\dd \bfnu^k}{\dd \Leb^{d+1}} 
				\in
			\cC_b^\infty\big(\cI_k\times \T^d;\R^d\big),
	\end{gather}
	and such that the following action bound holds true:
	\begin{align}	\label{eq:dense_in_energy_smooth}
		\limsup_{k \to\infty} 
		\bA_\hom^{\cI_k}(\bfmu^k,\bfnu^k)
		\leq 
		\bA_\hom^\cI(\bfmu,\bfnu).
	\end{align}
	Moreover, for any given $k\in \N$ we have the inclusion
	\begin{align}	\label{eq:compact_lemma_smooth}
		\Big\{ 
		\Big( 
		\frac{\dd\bfmu^k}{\dd \Leb^{d+1}}(t,x),\frac{\dd \bfnu^k}{\dd \Leb^{d+1}}(t,x) 
		\Big) 
		: (t,x)\in \cI_k\times \T^d 
		\Big\} 
		\Subset
		(\Dom f_\hom)^\circ.
	\end{align}
\end{proposition}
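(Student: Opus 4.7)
The plan is to construct the recovery sequence by a three-step regularisation paralleling the discrete strategy of Proposition \ref{prop:regularisation_discrete}: a time dilation via the operator $\text S^\eta$ introduced just before the statement, a convex combination with the reference $(\bfmu^\circ,\bfnu^\circ) := (\rho^\circ \Leb^{d+1}, j^\circ \Leb^{d+1})$ (with $(\rho^\circ, j^\circ) \in \Dom(f_\hom)^\circ$ provided by Lemma \ref{lemma:properties_fhom}(iii)), and a smooth space-time mollification. Concretely, I would set
\[
	\bfmu^k := (1-\lambda_k) \bigl[ (c_{\eta_k} \text S^{\eta_k}[\bfmu]) * \chi_{\delta_k} \bigr] + \lambda_k \bfmu^\circ,
	\qquad
	\bfnu^k := (1-\lambda_k) \bigl[ \text S^{\eta_k}[\bfnu] * \chi_{\delta_k} \bigr] + \lambda_k \bfnu^\circ,
\]
restricted to $\cI_k := (a-\eta_k, b+\eta_k)$, where $c_{\eta_k} = 1 + 4\eta_k/|\cI|$ and $\chi_{\delta_k} \in \cC_c^\infty(\R \times \T^d)$ is a mollifier of support-radius $\delta_k < \eta_k/2$. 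All three operations preserve $\bCE$ (the time-dilation by the CE-preserving rescaling formula recalled by the paper, the convex combination by linearity, and the mollification by commutativity with derivatives), so $(\bfmu^k, \bfnu^k) \in \bCE^{\cI_k}$; the parameters $\eta_k, \lambda_k, \delta_k \to 0$ are then chosen by a diagonal procedure.

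The easy properties follow directly. Weak convergence $(\bfmu^k, \bfnu^k) \to (\bfmu, \bfnu)$ on $\cI \times \T^d$ follows from the approximate-identity properties of $\text S^{\eta_k}$ and $\chi_{\delta_k}$ combined with $\lambda_k \to 0$. Smoothness \eqref{eq:lemma_smooth_densities} is immediate from convolution with the $\cC_c^\infty$ mollifier. For the compact inclusion \eqref{eq:compact_lemma_smooth}, linearity of convolution yields the pointwise decomposition $(\rho^k, j^k)(t,x) = (1-\lambda_k)(\bar\rho_k, \bar j_k)(t,x) + \lambda_k (\rho^\circ, j^\circ)$, where $(\bar\rho_k, \bar j_k)$ denotes the density of the mollified $S^{\eta_k}$-dilation. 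A Jensen argument applied to the convex one-homogeneous perspective function of $f_\hom$ shows that $(\bar\rho_k, \bar j_k)(t,x) \in \Dom(f_\hom)$ for every $(t,x) \in \cI_k \times \T^d$, since the convolution of the $L^1$ integrand $f_\hom(\rho,j)$ with $\chi_{\delta_k}$ is continuous and bounded. Since $(\rho^\circ, j^\circ)$ is the centre of an open ball $B_{r_0} \subset \Dom(f_\hom)$, each $(\rho^k(t,x), j^k(t,x))$ lies at distance at least $\lambda_k r_0$ from $\partial\Dom(f_\hom)$, and together with pointwise boundedness of the densities this yields a compact subset of $\Dom(f_\hom)^\circ$.

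The main obstacle is the upper action bound \eqref{eq:dense_in_energy_smooth}. The idea is to chain three estimates. Convexity of $\bA_\hom$ (Lemma \ref{lemma: action lsc}) gives
\[
	\bA_\hom^{\cI_k}(\bfmu^k, \bfnu^k) \leq (1-\lambda_k)\,\bA_\hom^{\cI_k}\!\bigl((c_{\eta_k} \text S^{\eta_k}[\bfmu]) * \chi_{\delta_k}, \text S^{\eta_k}[\bfnu] * \chi_{\delta_k}\bigr) + \lambda_k |\cI_k|\, f_\hom(\rho^\circ, j^\circ),
\]
whose second summand vanishes. A Jensen inequality for the perspective function of $f_\hom$ shows that mollification does not increase $\bA_\hom$ (the $\delta_k$-buffer in $\cI_k \subset S^{\eta_k}(\cI) \ominus B_{\delta_k}$ kills all boundary contributions), reducing the first summand to $\bA_\hom^{S^{\eta_k}(\cI)}(c_{\eta_k} \text S^{\eta_k}[\bfmu], \text S^{\eta_k}[\bfnu])$. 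A change-of-variable computation then gives
\[
	\bA_\hom^{S^{\eta_k}(\cI)}\bigl(c_{\eta_k} \text S^{\eta_k}[\bfmu], \text S^{\eta_k}[\bfnu]\bigr) = c_{\eta_k} \int_{\cI \times \T^d} f_\hom(c_{\eta_k} \rho, j)\,ds\,dx + \int f_\hom^\infty(c_{\eta_k}^2 \rho^\perp, c_{\eta_k} j^\perp)\,d\bfsigma.
\]
The hard part will be passing $c_{\eta_k} \to 1$ in the resulting expression: although $f_\hom$ is continuous on $\Dom(f_\hom)^\circ$, pointwise values of the density may lie on $\partial \Dom(f_\hom)$, where the scaling $c_{\eta_k}\cdot$ could exit the domain and make $f_\hom$ blow up. The plan to close this is to couple the convex-combination parameter $\lambda_k$ to the scaling parameter $\eta_k$ so that Jensen applied to the decomposition $(c_{\eta_k}\rho^\lambda, j^\lambda) = (1-\lambda_k)(c_{\eta_k}\rho, j) + \lambda_k (c_{\eta_k}\rho^\circ, j^\circ)$, combined with continuity of $f_\hom$ on a compact neighbourhood of the interior point $(\rho^\circ, j^\circ)$, supplies a uniform dominating function allowing dominated convergence; a parallel sublinearity bound on $f_\hom^\infty$ handles the recession integral. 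Ensuring the compatibility of the diagonal rates $\lambda_k \to 0$ slowly enough relative to $\eta_k$ is the most technical element of the proof.
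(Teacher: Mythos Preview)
Your construction is essentially the paper's: the same three ingredients (time dilation $S^{\eta}$, space--time mollification, convex combination with the interior reference $(\rho^\circ,j^\circ)$), and all your structural claims about $\bCE$, smoothness, and weak convergence are fine. The gap is in the action estimate \eqref{eq:dense_in_energy_smooth}, and it is exactly the place you flag as ``the hard part''.

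Your chain of inequalities proceeds: (1) convexity of $\bA_\hom$ to split off the $\lambda_k$-piece; (2) Jensen for the mollifier; (3) change of variables for $S^{\eta_k}$. After step (2) you are evaluating $\bA_\hom$ on the \emph{unmollified} pair $(c_{\eta_k} S^{\eta_k}[\bfmu],\, S^{\eta_k}[\bfnu])$, whose density is $(c_{\eta_k}\rho,\, j)$ with $\rho$ merely $L^1$ and possibly touching $\partial\Dom(f_\hom)$. You then need $\int f_\hom(c_{\eta_k}\rho,j)\to \int f_\hom(\rho,j)$, and your proposed fix --- ``Jensen applied to $(c_{\eta_k}\rho^\lambda,j^\lambda)=(1-\lambda_k)(c_{\eta_k}\rho,j)+\lambda_k(c_{\eta_k}\rho^\circ,j^\circ)$'' --- cannot work: that inequality goes the wrong way (it bounds $f_\hom$ of the mixture by a combination involving $f_\hom(c_{\eta_k}\rho,j)$, not vice versa), and in any case you have already spent the $\lambda_k$-mixing in step (1). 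There is no monotonicity of $f_\hom$ in $\rho$ available, so dominated convergence as you envisage does not close.

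The paper avoids this trap by \emph{not} splitting off the convex-combination piece at the level of $\bA_\hom$. Instead it works pointwise with the smooth bounded density $(\rho^k,j^k)$ and uses an algebraic rewriting to absorb the factor $c_{\eta_k}$ into the interior term: writing $\bar\rho'_k$ for the bounded density of the mollified (undilated-in-mass) $S^{\eta_k}[\bfmu]$, one has
\[
(\rho^k,j^k)=(1-\lambda_k)(\bar\rho'_k,\bar j_k)+\lambda_k\Big(\rho^\circ+\tfrac{(1-\lambda_k)(c_{\eta_k}-1)}{\lambda_k}\,\bar\rho'_k,\ j^\circ\Big),
\]
and since $\|\bar\rho'_k\|_\infty\le C_k<\infty$ (this is where mollification is essential), choosing $\lambda_k$ large relative to $(c_{\eta_k}-1)C_k$ forces the second term into a fixed ball $B^\circ\subset\Dom(f_\hom)^\circ$. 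Convexity of $f_\hom$ applied \emph{pointwise} then gives
\[
f_\hom(\rho^k,j^k)\le (1-\lambda_k)\,f_\hom(\bar\rho'_k,\bar j_k)+\lambda_k\sup_{B^\circ}f_\hom,
\]
and only now does one integrate, apply Jensen for the mollifier to $(\bar\rho'_k,\bar j_k)$ (with no extraneous $c_{\eta_k}$), and change variables to obtain $c_{\eta_k}\bA_\hom^\cI(\bfmu,\bfnu)$. In short: the convex combination with $(\rho^\circ,j^\circ)$ must be used to neutralise $c_{\eta_k}$ \emph{before} you strip the mollifier, not after; your ordering discards precisely the boundedness you need.
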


\begin{proof}
	Without loss of generality we can assume $f_\hom \geq 0$, if not we simply consider $g(\rho,j) = f_\hom(\rho,j) + C\rho + C$ for $C \in \R_+$ as in Lemma \ref{lemma: action lsc}.
	 For simplicity, we also assume $\cI:= (0,T)$, the extension to a generic interval is straightforward.
	
	Fix $(\bfmu,\bfnu)$ $\in \bCE^T$ with $\bA_\hom(\bfmu, \bfnu)<\infty$.
	
	\medskip
	\noindent 
	\textit{Step 1: regularisation}. \ 
	The first step is to regularise in time and space. 
	To do so, we consider two sequences of smooth mollifiers $\phi_1^k: \R \to \R_+$, $\phi_2^k: \Td \to \R$ for $k \in \N$ of integral $1$, where $\supp \phi_1^k = [-\alpha_k, \alpha_k]$, $\supp \phi_2^k =B_{\frac1k}(0)\subset \Td$ with $\alpha_k \to 0$ as $k \to \infty$ to be  suitably chosen. We then set $\phi^k:\R \times \Td \to \R_+$ as $\phi^k(t,x) := \phi_1^k(t)\phi_2^k(x)$.
	
	We define space-time regular solutions to the continuity equation as 

	\begin{align*}
		(\tilde \bfmu^k, \tilde \bfnu^k ) 
		&:= \phi^k * (\bfmu, \bfnu) 
		\in \bCE^{(\alpha_k,T-\alpha_k)}
		, \\
		(\hat\bfmu^k, \hat \bfnu^k )	
		&:=\Big(
		\frac{T+4\eta_k}{T}\text S^{\eta_k}[\tilde \bfmu^k], \text S^{\eta_k}[\tilde  \bfnu^k] 
		\Big) \in \bCE^{\cI_k},
	\end{align*}
	where $\cI_k:= S^{\eta_k}\big((\alpha_k, T-\alpha_k)\big)$. Note that the mollified measures are defined only  We choose $\alpha_k := \frac{T\eta_k}{T+4\eta_k}$, so that $\cI_k = (-\eta_k,T+\eta_k)$.
	
	Finally, for $(\rho^\circ,j^\circ)$ as given in \eqref{eq:def_r0j0}, we define
	\begin{gather}
		\begin{gathered}
			(\bfmu^k, \bfnu^k):= (1-\delta_k)  (\hat \bfmu^k, \hat \bfnu^k) + \delta_k (\rho^\circ, j^\circ) \Leb^{d+1} 
			\in \bCE^{\cI_k}	,
		\end{gathered}
	\end{gather}
	for some suitable choice of $\eta_k, \delta_k \to 0$.

	\medskip
	\noindent
	\textit{Step 2: Properties of the regularised measures.} \ 
	First of all, we observe that $(\bfmu^k,\bfnu^k) \ll \Leb^{d+1}$ with smooth densities for every $k \in \N$, so that \eqref{eq:lemma_smooth_densities} is satisfied. Secondly, the convergence \eqref{eq:lemma_convergence_mukjk} easily follows
	by the properties of the mollifiers and the fact that $S^\eta \to \id$ uniformly in $(0,T)$ as $\eta \to 0$. 
	
	Moreover, we note that for $t>0$, using that $\mu_t(\Td)$ is constant on $(0,T)$ one gets
	\begin{align}	\label{eq:proof_uniform_tildemuj}
		\begin{aligned}
			\sup_{t \in (\alpha_k, T-\alpha_k)} \Big\| \frac{ \dd \tilde \mu_t^k }{ \dd x } \Big\|_\infty 
			&\leq \| \phi_2^k \|_\infty\bfmu \big((0,T)\times \Td \big) =: C_k < +\infty, \\
			\Big\| \frac{ \dd \tilde \bfnu^k }{ \dd \Leb^{d+1} } \Big\|_\infty 
			&\leq \| \phi^k \|_\infty  |\bfnu| \big((0,T)\times \Td \big)	< \infty	,
		\end{aligned}
	\end{align}
	and thanks to \eqref{eq:densities_mu-j_eta} an analogous uniform estimate holds true for $(\hat \bfmu^k, \hat \bfnu^k)$ too. We can then apply Lemma \ref{lemma:prop_domf} and find convex compact sets $K_k \subset (\Dom f_\hom )^\circ$ such that $ \displaystyle
	\Big\{\Big( 
	\frac{\dd  \bfmu^k}{\dd \Leb^{d+1}}(\cdot), \frac{\dd  \bfnu^k}{\dd \Leb^{d+1}}(\cdot) 
	\Big)\Big\} \subset K_k$, so that \eqref{eq:compact_lemma_smooth} follows. 
	
	\smallskip
	Additionally, pick $\theta>0$ such that $B^\circ:=B((\rho^\circ, j^\circ), \theta) \subset (\Dom f_\hom)^\circ$. From \eqref{eq:densities_mu-j_eta}, if one sets $S_k:= S^{\eta_k}$, we see that
	\begin{align}	\label{eq:stability_convexcomb}
		\Big( \frac{\dd \bfmu^k}{\dd \Leb^{d+1}},\frac{\dd \bfnu^k}{\dd \Leb^{d+1}}\Big) (t,x)=(1-\delta_k) \Big( \frac{\dd \tilde \bfmu^{k}}{\dd \Leb^{d+1}},\frac{\dd \tilde \bfnu^{k}}{\dd \Leb^{d+1}}\Big) (S_k^{-1}(t),x)
		+\delta_k (\tilde \rho_t^k(x), j^\circ)
	\end{align}
	for $t \in \cI_k$ and $x \in \Td$, where the functions $\tilde \rho ^k$ are given by  
	\begin{align*}
		\tilde \rho_t^k(x) := \rho^\circ +\frac{1-\delta_k}{\delta_k} 2\eta_k \frac{\dd \tilde \bfmu^{k}}{\dd \Leb^{d+1}}(S_k^{-1}(t),x).
	\end{align*}
	We choose $\delta_k$ such that $\theta \delta_k > 2\eta_k C_k$ and from \eqref{eq:proof_uniform_tildemuj} we get that
	\begin{align}	\label{eq:rhotilde_in_B0}
		(\tilde \rho_t^k(x),j^\circ) \in B^\circ 
		, \quad
		\forall t \in \cI_k, \; x \in \Td, \; k \in \N.
	\end{align}
	
	For example we can pick $\eta_k:= (4kC_k)^{-1}$ and $\theta\delta_k = k^{-1}$, both going to zero when $k \to +\infty$.

	\medskip
	\noindent
	\textit{Step 3: action estimation}. \
	As the next step we show that
	\begin{align}	\label{eq:contractivity_convolution_phik}
		\bA_\hom^{(\alpha_k,T-\alpha_k)} \big( \tilde \bfmu^k, \tilde \bfnu^k \big) \leq \bA_\hom^T (\bfmu, \bfnu ), \quad \forall  k \in \N.
	\end{align}
	
	One can prove \eqref{eq:contractivity_convolution_phik} using e.g. the fact \cite{buttazzo1991} that for every interval $\cI$ the action $\bA_\hom^\cI$ is the relaxation of the functional
	\begin{align*}
		(\bfmu, \bfnu) \mapsto 
		\begin{cases} \displaystyle
			\int_{\cI\times \T^d}f_\hom\left( \frac{\dd \bfmu}{\dd \Leb^{d+1}}, \frac{\dd \bfnu}{\dd \Leb^{d+1}} \right) \dd \Leb^{d+1}, &\text{ if }( \bfmu, \bfnu) \ll \dd \Leb^{d+1}, \\
			+\infty, &\text{otherwise}, 
		\end{cases}
	\end{align*}
	for which \eqref{eq:contractivity_convolution_phik} follows from the convexity and nonnegativity of $f_\hom$ and the properties of the mollifiers $\phi^k$. 
	
	We shall then estimate the action of $(\bfmu^k, \bfnu^k)$. 
	From \eqref{eq:stability_convexcomb} and \eqref{eq:rhotilde_in_B0}, using the convexity of $f_\hom$ and the definition of the map $S^\eta$, we obtain 
	\begin{align*}
		\bA_\hom^{\cI_k}(\bfmu^k,\bfnu^k) &- (1+2\eta_k) \delta_k \sup_{B^\circ} f_\hom \\
		&\leq (1-\delta_k)  \int\limits_{\cI_k \times \Td} f_\hom \Big(\frac{\dd \tilde \bfmu^k}{\dd \Leb^{d+1}}(S_k^{-1}(t),x), \frac{\dd \tilde \bfnu^k}{\dd \Leb^{d+1}}(S_k^{-1}(t),x)  \Big)  \dd \Leb^{d+1}  \\
		&\leq  (1-\delta_k)(1+4\eta_k) \bA_\hom^{(\alpha_k, T- \alpha_k)}(\tilde \bfmu^k, \tilde \bfnu^k)  
		\leq (1-\delta_k) (1+4\eta_k) \bA_\hom^T(\bfmu, \bfnu), 
	\end{align*}
	where in the last inequality we used \eqref{eq:contractivity_convolution_phik}. Taking the limsup in $k \to \infty$
	\begin{align}
		\limsup_{k \to +\infty} \bA_\hom^{\cI_k}(\bfmu^k,\bfnu^k) \leq \bA_\hom^T (\bfmu, \bfnu)
	\end{align}
	which concludes the proof of \eqref{eq:dense_in_energy_smooth}. 
\end{proof}

Now we are ready to prove the limsup inequality \eqref{eq:limsup} in Theorem \ref{thm:main}.

\begin{proof}[Proof of Theorem \ref{thm:main} (upper bound)]
Fix $\bfmu \in \cM_+\big(\cI \times \Td\big)$. 
By definition of $\bA_\hom^\cI(\bfmu)$, 
it suffices to prove that 
for every $\bfnu \in \cM^d(\cI \times \Td)$ such that 
	$(\bfmu,\bfnu) \in \bCE^T$ and $\bA_\hom^\cI(\bfmu,\bfnu) < +\infty$, 
we can find $\bfm^\eps: \overline{\cI} \to \R_+^{\cX_\eps}$ 	
such that 
	$\iota_\eps \bfm^\eps \to \bfmu$ 
	weakly in $\cM_+(\cI \times \Td)$
	and $\limsup_\eps \cA_\eps^\cI(\bfm^\eps) 
		\leq \bA_\hom^\cI(\bfmu,\bfnu)$. 

For any such $(\bfmu, \bfnu)$, we apply Proposition \ref{prop:density} and find a smooth sequence $(\bfmu^k,\bfnu^k)_k \in \bCE^{\cI(k)}$ where $\cI(k) = (-\eta_k, T+ \eta_k)$, where $\eta_k \to 0$ and such that \eqref{eq:dense_in_energy_smooth} and \eqref{eq:compact_lemma_smooth} hold with $(\bfmu^k, \bfnu^k) \to (\bfmu,\bfnu)$ weakly in $\cM_+(\cI \times \Td) \times \cM^d(\cI \times \Td)$ as $ k \to +\infty$. In particular
\begin{align}	\label{eq:proof_bound_muk_masses}
	\sup_{k \in \N}
	\sup_{t \in \cI} \mu_t^k(\Td) = \sup_{k \in \N} \mu_0^k(\Td) <\infty.
\end{align}

Hence we can apply Proposition \ref{prop:limsup_smooth} and find 
	$\bfm^{\eps,k} \in \cM_+(\overline{\cI} \times \Td)$ 
such that 
	$\iota_\eps \bfm^{\eps,k} \to \bfmu^k$
weakly in $\cM_+(\overline{\cI} \times \Td)$ and for each $k \in \N$,
\begin{align}	\label{eq:proof_diagunal_smooth_recovery}
		\limsup_{\eps \to 0} 
			\cA_\eps^\cI (\bfm^{\eps,k}) 
			\leq \bA_\hom^{\cI(k)}(\bfmu^k, \bfnu^k) 
			+ C \eta_k |\cI|\big(\mu_0^k(\Td) + 1\big).
\end{align}
We conclude by extracting a subsequence $\bfm^\eps:= \bfm^{\eps,k(\eps)}$ such that $\iota_\eps \bfm^\eps \to \bfmu$ weakly in $\cM_+(\cI \times \Td)$ as $\eps \to 0$ and from \eqref{eq:proof_bound_muk_masses}, \eqref{eq:proof_diagunal_smooth_recovery}, \eqref{eq:dense_in_energy_smooth} we have
\begin{align*}
	\limsup_{\eps \to 0} \cA_\eps^\cI (\bfm^\eps) \leq \bA_\hom^\cI(\bfmu,\bfnu), 
\end{align*}
which concludes the proof.
\end{proof}

\section{Analysis of the cell problem}\label{sec:examples}

In the final section of this work, 
we discuss some properties of the limit functional $\bA_\hom$  
and analyse examples where explicit computations can be performed. 
For $\rho \in \R_+$ and $j \in \R^d$, 
recall that
\begin{equation*}
	\begin{aligned}	
		f_{\hom}(\rho,j) 
		& := 
		\inf\big\{ 
		F(m,J) 
		\ : \ 
		(m,J) \in \Rep(\rho,j) 
		\big\}, 
	\end{aligned}
\end{equation*}
where $\Rep(\rho,j)$ denotes the set of representatives of $(\rho,j)$, i.e., all $\Z^d$-periodic functions 
$m \in \R_+^\cX$ and $J \in \R_a^\cE$ 
satisfying
\begin{align*}
	\sum_{x \in \cX^Q} m(x) = \rho 
	, 
	\quad 
	\Eff(J) = \frac12 \sum_{(x,y) \in \cE^Q} J(x,y) (y_\sz - x_\sz) = j
	, 
	\quad 
	\text{ and } \quad
	\dive J \equiv 0.
\end{align*}

\subsection{Invariance under rescaling}
We start with an invariance property of the cell-problem. Fix a $\Z^d$-periodic graph $(\cX, \cE)$ as defined in Assumption \ref{ass:XE}. 
For fixed $\eps > 0$ with $\eps \in \frac{1}{\N}$, 
we consider the rescaled $\Z^d$-periodic graph 
	$(\tilde \cX, \tilde \cE)$ 
obtained by zooming out by a factor $\frac1\eps$, so that each unit cube contains $(\frac1\eps)^d$ copies of $\XQ$.
Slightly abusing notation, we will identify the corresponding set $\tilde \V$ with the points in $\T_\eps^d$.

Let $\tilde F$ be the analogue of $F$ on $(\tilde \cX, \tilde \cE)$, and let $\tilde f_\hom$ be the corresponding limit density.
In view of our convergence result, 
the cell-formula must be \textit{invariant} under rescaling, 
namely $f_\hom = \tilde f_\hom$. 
We will verify this identity using a direct argument that crucially uses the convexity of $F$.

One inequality follows from the natural inclusion of representatives
\begin{align}	\label{eq:inclusions}
	\Rep(\rho) \hookrightarrow \eps^d \widetilde \Rep(\rho), 
	\quad
	\Rep(j) \hookrightarrow \eps^{d-1} \widetilde \Rep(j),
\end{align}
which is obtained as 
	$\tilde m:=\eps^d (\tau_\eps^0)^{-1}(m)$ 
and 
	$\tilde J := \eps^{d-1} (\tau_\eps^0)^{-1} (J)$ 
for every $(m,J) \in \Rep(\rho,j)$. Here we note that the inverse of $\tau_\eps^0$ is well-defined on $\Z^d$-periodic maps.
In particular we have
\begin{align*}
	\sum_{x \in {\tilde X}^Q} \tilde m(x) 
	= \sum_{x \in \XQ} m(x) = \rho 	
	, \quad 
	\Eff(\tilde J) = \Eff(J)
	, \tand
	\tilde F(\tilde m, \tilde J) = F(m,J),
\end{align*}
which implies that 
	$f_\hom \geq \tilde f_\hom$.

The opposite inequality is where the convexity of $F$ comes into play. 
Pick $(\tilde m, \tilde J) \in \widetilde \Rep(\rho,j)$. 
A first attempt to define a couple in $\Rep(\rho,j)$ would be to consider the inverse map of what we did in  \eqref{eq:inclusions},
but the resulting maps would not be $\Z^d$-periodic (but only $\frac1{\eps}\Z^d$-periodic). 
What we can do is to consider a convex combination of such values. 
Precisely, we define 
\begin{align*}
	m(x) := \eps^d \sum_{z \in \Z_\eps^d} \frac{\tau_\eps^z \tilde m(x)}{\eps^d}
	\tand 
	J(x,y) := \eps^d \sum_{z \in \Z_\eps^d}
	\frac{\tau_\eps^z \tilde J(x,y)}{\eps^{d-1}} 
\end{align*}
for all $(x,y) \in \XQ$.
The linearity of the constraints implies that $(m,J) \in \Rep(\rho,j)$. Moreover, using the convexity of $F$ we obtain
\begin{align*}
	F(m,J)	
	= 
	F
	\bigg( \eps^d
	\sum_{z \in \Z_\eps^d}
	\bigg(
	\frac{\tau_\eps^z \tilde m}
	{\eps^d}
	,
	\frac{\tau_\eps^z \tilde J}
	{\eps^{d-1}}
	\bigg)
	\bigg)
	\leq
	\sum_{z \in \Z_\eps^d}
	\eps^d
	F\bigg(
	\frac{\tau_\eps^z m}
	{\eps^d}
	,
	\frac{\tau_\eps^z J}
	{\eps^{d-1}}
	\bigg)
	= \tilde F(\tilde m, \tilde J),
\end{align*}
which in particular proves that $f_\hom \leq \tilde f_\hom$.

\subsection{The simplest case: $V= \{v\}$ and nearest-neighbor interaction.}
	\label{sec:NNI}
The easiest example we can consider is the one where the set $V$ consists of only one element $v \in V$. 
In other words, we focus on the case when $\cX \simeq \Z^d$ and thus $\cX_\eps \simeq \T_\eps^d$. 
We then consider the graph structure defined via the \textit{nearest-neighbor interaction}, 
meaning that $\cE$ consists of the elements of 
	$(x,y) \in \Z^d \times \Z^d$ 
such that $|x-y|_\infty = 1$.

In this setting, $\XQ \simeq V$ consists of only one element
and 
$\EQ  \simeq 
\left\{  
(v, v \pm e_i) \, : \, i=1, \dots, d
\right\}
$ has cardinality $2 d$. 
In particular, for every $\rho \in \R_+$ and $j \in \R^d$, the set $\Rep(\rho, j)$ consists of only one element $(\underline m, \underline J)$ given by
\begin{align*}
	\underline m (x) = \rho
	, \quad 
	\underline J(v, v \pm e_i) = \pm j_i
	, \quad 
	\text{ for all } (x,y) \in \cE
	\text{ \ and \ }
	i = 1, \dots, d.
\end{align*} 
Consequently, the homogenised energy density is given by  		$f_\hom(\rho,j) = F(\underline m, \underline J)$. 

In the special case where $F$ is edge-based (see Remark \ref{rem:examples}) with edge-energies 
	$\{ F_{\pm i} \}$ for $i = 1, \ldots, d$, 
we have 
\begin{align*}
	F(m, J ) 
		& = \sum_{i = 1}^d	
				F_i\big( m(0), m(e_i), J(0, e_i) \big)
			+	F_{-i}\big( m(0), m(- e_i), J(0, - e_i) \big),
\quad \text{and}			
	\\
	f_\hom(\rho,j) 
	& = \sum_{i = 1}^d	
	F_i\big( \rho, \rho, j_i \big)
+	F_{-i}\big( \rho, \rho, -j_i \big)
		\text{ for all } \quad 
		\rho \in \R_+ , \; j \in \R^d.
\end{align*}
The even more special case of the discretised $p$-Wasserstein distance corresponds to $F_i(\rho_1, \rho_2, j) = \frac{|j|^p}{2\Lambda(\rho_1,\rho_2)^{p-1}}$, where the mean $\Lambda$ is a mean as in  \eqref{eq: Wp}.
We then obtain
\begin{align*}
	f_\hom(\rho,j) = \frac{|j|_p^p}{\rho^{p-1}},
\end{align*}
for $\rho \in \R_+$ and $j \in \R^d$,
which corresponds to the $p$-Wasserstein distance induced by the $\ell_p$-distance $| \cdot |_p$ on the underlying space $\T^d$.
The case $p = 2$ corresponds to the framework studied in \cite{GiMa13}.

As we will discuss in Section \ref{subsec:fv}, this result can also be cast in the more general framework of isotropic finite-volume partitions of $\T^d$.

\subsection{Embedded graphs}
In this section, we shall use an equivalent \textit{geometric} definition of the effective flux. 
We can indeed formulate an interesting expression for $f_\hom$ in the case where $(\cX,\cE)$ is an embedded $\Z^d$-periodic graph in $\T^d$, in the sense of Remark \ref{rem:embedded_graph}. 
We thus choose $\V$ to be a subset of $[0,1)^d$ and use the identification $(z, v) \equiv z + v$, so that $\cX$ can be identified with a $\Z^d$-periodic subset of $\R^d$.

Let us define 
\begin{align*}
	\Effg(J) 
	:=
	\frac12
	\sum_{(x,y) \in \EQ} 
	J(x,y) 
	\big( 
	y - x
	\big).
\end{align*}
Note that we simply replaced $y_\sz - x_\sz \in \Z$ by $y - x \in \R^d$ in the definition of $\Eff(J)$. 
Remarkably, the following result shows that $\Eff(J) = \Effg(J)$ for any periodic and divergence-free vector field $J$. In particular, $\Effg(J)$ does \emph{not} depend on the choice of the embedding into $\T^d$. 
As a consequence, one can equivalently define $\Rep(j)$, and hence the homogenised energy density $f_\hom(\rho, j)$, in terms of $\Effg(J)$ instead of $\Eff(J)$.  

\begin{proposition}
	\label{prop:embedded-rep}	
	For every periodic and divergence-free vector field $J \in \R_a^\cE$ we have $\Eff(J) = \Effg(J)$.
\end{proposition}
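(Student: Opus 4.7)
Using the embedding $(z,v)\equiv z+v$, identify each $v\in \V\subset [0,1)^d$ with its image in $\R^d$, so that every $x\in\cX$ decomposes as $x=x_\sz+x_\sa$ with $x_\sz\in\Z^d$ and $x_\sa\in\V$. Then $y-x=(y_\sz-x_\sz)+(y_\sa-x_\sa)$, and comparing the two definitions of effective flux reduces the claim to proving
\[
\Delta(J):=\sum_{(x,y)\in \EQ} J(x,y)\bigl(y_\sa-x_\sa\bigr)=0.
\]
The plan is to split $\Delta(J)$ into the two contributions $\sum J(x,y)\,x_\sa$ and $\sum J(x,y)\,y_\sa$, and to show that each one vanishes by recognising it as a weighted sum of discrete divergences, which are zero by assumption.

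First I would handle the $x_\sa$-part. Since $(x,y)\in\EQ$ forces $x_\sz=0$, grouping the edges by their initial vertex $x\in\XQ$ gives
\[
\sum_{(x,y)\in \EQ} J(x,y)\,x_\sa=\sum_{x\in \XQ} x_\sa\sum_{y:\,(x,y)\in\cE} J(x,y)=\sum_{x\in \XQ} x_\sa\,\dive J(x)=0,
\]
which is immediate from the divergence-free hypothesis on $J$.

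The slightly more subtle step is the $y_\sa$-part, where $y$ is not required to lie in $\XQ$; this is the place where $\Z^d$-periodicity of $(\cX,\cE,J)$ enters. Fix $w\in\V$ and consider all edges $(x,y)\in\EQ$ with $y_\sa=w$. Each such edge has the form $x=(0,v)$, $y=(z,w)$ for some $v\in\V$, $z\in\Z^d$. Applying the shift operator $S^{-z}$ produces the edge $\bigl((-z,v),(0,w)\bigr)\in\cE$, and periodicity of $J$ gives $J(x,y)=J(S^{-z}x,S^{-z}y)$. The map $(x,y)\mapsto (S^{-z}x,S^{-z}y)$ is a bijection between $\{(x,y)\in\EQ:y_\sa=w\}$ and the set of all edges of $\cE$ whose endpoint is exactly $(0,w)\in\XQ$. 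Therefore
\[
\sum_{\substack{(x,y)\in \EQ\\ y_\sa=w}} J(x,y)=\sum_{x':\,(x',(0,w))\in\cE} J\bigl(x',(0,w)\bigr)=-\dive J(0,w)=0,
\]
using antisymmetry of $J$ in the last step. Summing over $w\in\V$ yields $\sum_{(x,y)\in \EQ} J(x,y)\,y_\sa=0$, so $\Delta(J)=0$ and $\Eff(J)=\Effg(J)$.

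The conceptual obstacle is just making sure the reindexing in the $y_\sa$-sum is a bijection; once this is in place, the divergence-free condition closes the argument on each fibre $\V\ni w$ separately. No analytic estimates are needed, and no structural hypothesis beyond periodicity, antisymmetry, and $\dive J\equiv 0$ is used.
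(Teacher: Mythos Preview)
Your proof is correct and takes a genuinely different route from the paper. The paper argues by a continuous deformation: fixing a node $x_0\in\XQ$ and moving it along a line $t\mapsto x_0+tv$, one computes $\frac{\ddd}{\ddd t}\Effg(t,J)=-\dive J(x_0)\,v=0$, so $\Effg$ is independent of where each node is embedded; letting all nodes of $\V$ collapse to a single point recovers $\Eff(J)$. Your approach is a direct algebraic reindexing: you split $\Effg(J)-\Eff(J)$ into the $x_\sa$- and $y_\sa$-contributions and kill each by recognising a divergence at a fixed node of $\XQ$, using periodicity to reduce the $y_\sa$-sum to this situation. Both arguments use exactly the same hypotheses (periodicity, antisymmetry, $\dive J\equiv0$). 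Your version is shorter and avoids the limiting ``collapse'' step, which in the paper is only sketched; the paper's argument, on the other hand, yields the slightly stronger intermediate statement that $\Effg$ is invariant under \emph{any} choice of embedding $\V\hookrightarrow[0,1)^d$, not just the comparison with $\Eff$.
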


\begin{proof}
	Fix $x_0 \in \cX^Q$. For a positive (small enough) parameter $t > 0$ and a vector $v \in \R^d$, consider the modified embedded $\Z^d$-periodic graph $(\cX(t),\cE(t))$ in $\Td$ obtained from $\cX$ by \textit{shifting} the nodes $x_0 + \Z^d$ by $t v \in \Td$, i.e., we consider the shifted node $x_0(t) := x_0 + t v$ instead of $x_0$ (and with it, the associated edges). 
	Fix a divergence-free and $\Z^d$-periodic discrete vector field 
		$J \in \R_a^\cE \simeq \R_a^{\cE(t)}$  
	and consider, for $t > 0$, the corresponding effective flux
	\begin{align*}
		\Effg(t,J) 
		:=
		\frac12
		\sum_{(x,y) \in \EQ(t)} 
		J(x,y) 
		\big( 
		y - x
		\big).
	\end{align*}
	We claim that 
	$\frac{\ddd}{\ddd t} \Effg(t,J) 
	= 0$. 
	Indeed, by construction we have
	\begin{align*}
		2\frac{\ddd}{\ddd t} \Effg(t,J)	
		&= 
		- \sum_{y \sim x_0} J(x_0,y) v 
		+
		\sum_{z \in \Z^d} 
		\sum_{\substack{x \in \XQ \\ x \sim x_0 + z}}
		J(x, x_0 + z) v
		\\
		&\stackrel{J \text{ per.}}{=} - \dive J(x_0) v + 
		\sum_{z \in \Z^d}
		\sum_{\substack{x \in \XQ\\ x - z \sim x_0}}
		J(x - z, x_0) v 
		\\
		&= - \dive J(x_0) v + \sum_{x' \sim x_0} J(x',x_0) v 
		\\&= - \dive J(x_0) v + \sum_{x' \sim x_0} J(x_0,x') v 
		= -2 \dive J(x_0) v.
	\end{align*}
	Since $J$ is divergence-free, this proves the claim. 
	In particular, $t \mapsto \Effg(t,J)$ is constant, hence the value of $\Effg$ does not depend on the location of the embedded points.  This also implies the sought equality $\Eff(J) = \Effg(J)$, since $\Eff(J)$ corresponds to the limiting case where all the elements of $V$ ``collapse'' into a single point of $[0,1)^d$. 
\end{proof}

\subsection{Periodic finite-volume partitions}
\label{subsec:fv}

The next class of examples are the graph structures associated with $\Z^d$-periodic \textit{finite-volume partitions} (FVPs) $\cT$ of $\R^d$. 
We refer to \cite{Eymard-Gallouet-Herbin:2000} for a general treatment.

\begin{figure}[h!]
	\includegraphics[scale=0.35]{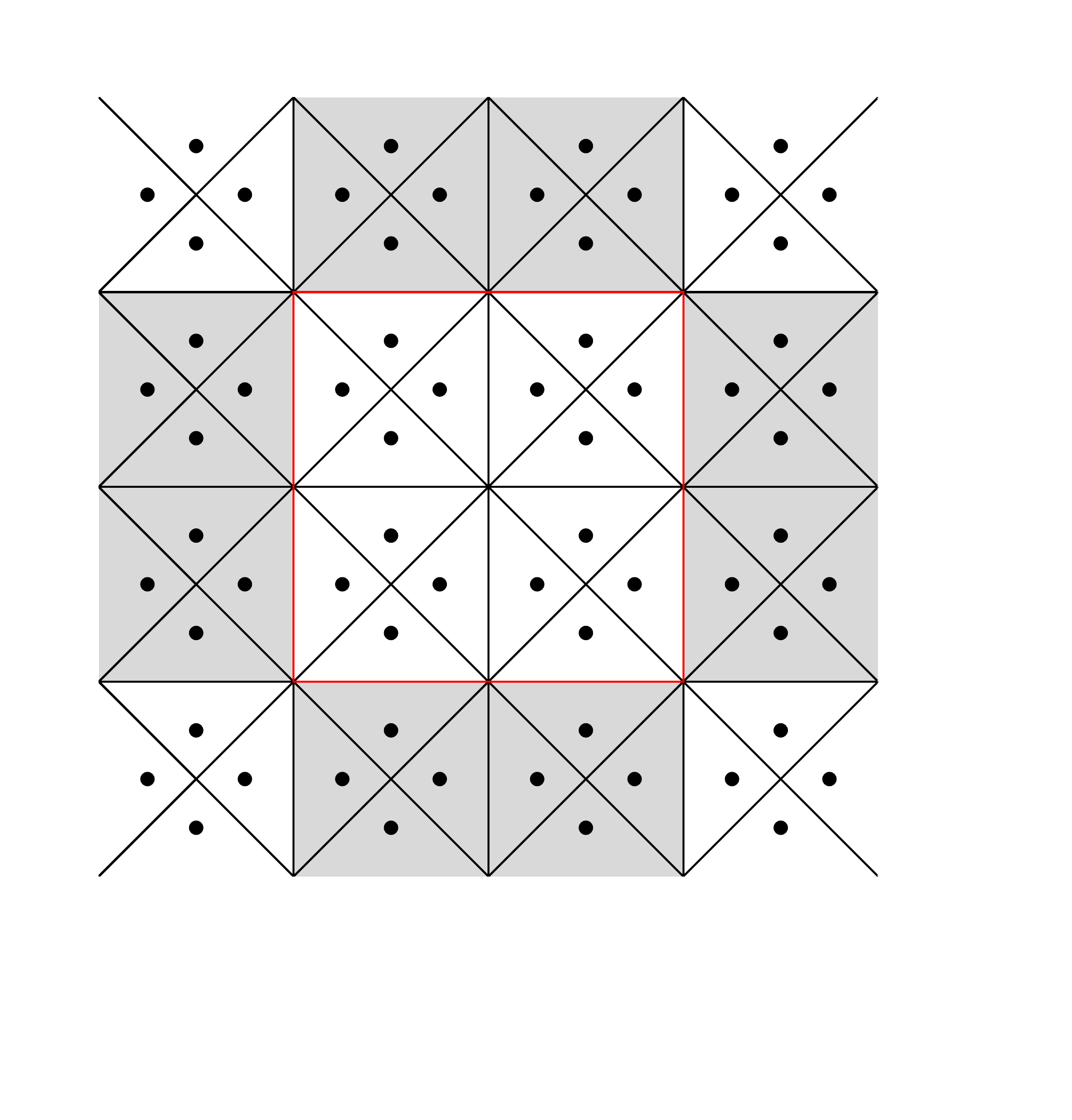}
	\vspace{-1cm}
	\caption{A $\Z^2$-periodic finite-volume partition of $\R^2$. 
	The unit cube $[0,1]^2 \subseteq \R^2$ is shown in red.}
	\label{fig:FVperiodic}
\end{figure}

\begin{definition}[$\Z^d$-periodic finite-volume partition]
	Consider a countable, locally finite, $\Z^d$-periodic family of points $\cX \subseteq \R^d$ together with a family of nonempty open  bounded convex polytopes $K_x \subseteq \R^d$ for $x \in \cX$, such that $K_{x+z} = K_x + z$ for all $x \in \cX$ and $z \in \Z^d$. 
	We call 
	\begin{align*}
		\cT:= \Big\{ (x, K_x) \suchthat x \in \cX \Big\}
	\end{align*}
	a \emph{$\Z^d$-periodic finite-volume partition} of $\R^d$ if
	\begin{enumerate}
		\item $\bigcup_{x\in \cX} \overline{K_x} = \R^d$;
		\item $K_x \cap K_y = \emptyset$ whenever $x\neq y \in \cX$;
		\item $y-x \perp \partial K_x \cap \partial K_y$ whenever  $\Haus^{d-1}(\partial K_x \cap  \partial K_y) > 0$.
	\end{enumerate}
	We define a graph structure on $\cX$ by declaring those pairs $(x,y)\in \cX \times \cX$ with $\Haus^{d-1}(\partial K_x \cap  \partial K_y) > 0$ to be nearest neighbors.
\end{definition}

It is not difficult to see that the graph $(\cX, \cE)$ is connected, $\Z^d$-periodic, and locally finite, even if $x \notin K_x$.
Throughout this section we use the following  notation for $x, y \in \cX$:
\begin{align*}
	|K_x|  & := \Leb^d(K_x), &
	d_{xy} & := |y-x|, \\
	s_{xy} & := \Haus^{d-1}(\partial K_x \cap \partial K_y), &
	n_{xy} & := \frac{y-x}{d_{xy}} \in \cS^{d-1}.
\end{align*}

In the finite-volume framework, we are interested in transport distances with a nonlinear mobility. 
These distances were introduced in \cite{Dolbeault-Nazaret-Savare:2009} as natural generalisations of the 2-Wasserstein metric.
We thus fix a concave upper-semicontinuous function 
	$\fm : \R_+ \times \R^d \to \R_+$
and consider the energy density functional
\begin{align}
	\label{eq:f-formula}
	f(\rho, j) := 
	\begin{cases}
		\frac{|j|_2^2}{\fm(\rho)} 
			& \text{ if } \fm(\rho) > 0,\\
		+\infty 
			& \text{ if }\fm(\rho) = 0 \text{ and } j \neq 0,\\
		0 
			& \text{ if } \fm(\rho) = 0 \text{ and } j = 0.
	\end{cases}
\end{align}

To discretise this energy density, we fix for every edge $(x,y) \in \cE$ 
an \emph{admissible version} of $\fm:\R_+\to \R_+$,
i.e., 
a nonnegative concave upper-semicontinuous function
$\fm_{xy} : \R_+ \times \R_+ \to \R_+$ 
satisfying
	$\fm_{xy}(\rho,\rho) = \fm(\rho)$ for all $\rho\in \R_+$ and $(x,y)\in \cE$.
We always assume that 
	$\fm_{xy}(\rho_1,\rho_2) 
	= \fm_{yx}(\rho_2,\rho_1)$
for all $\rho_1, \rho_2 \in \R_+$.  
It is easy to check that $F$ satisfies the superlinear growth condition \ref{eq: superlinear}.
Furthermore, concavity of $\fm_{xy}$ implies convexity of $F$.\footnote{Concavity of $\fm_{xy}$ is not necessary for convexity of $F$. If $\fm_{xy}$ is not concave, a local version of the supergradient can be substituted into \eqref{eq: isotropy}. For readability we restrict ourselves to the concave case.}
We then consider the edge-based cost defined by
\begin{align}	\label{eq:energies_FV}
	F(m,J) := \frac12 \sum_{(x,y) \in \EQ}  
	\frac{d_{xy}}{s_{xy}}
	\frac{J(x,y)^2}{\fm_{xy} \Big( 
		\frac{m(x)}{|K_x|}, \frac{m(y)}{|K_y|} \Big)}, 
\end{align}
Consistent with \eqref{eq:f-formula}, we use the convention that
\begin{align}
	\label{eq:division-by-zero}
	\frac{J(x,y)^2}{\fm_{xy} 
		\Big( 
		\frac{m(x)}{|K_x|}, \frac{m(y)}{|K_y|} \Big) }	
		= 
		\begin{cases}
			+ \infty & 
			\text{ if } 
			\fm_{xy} \Big( 
				\frac{m(x)}{|K_x|}, 
				\frac{m(y)}{|K_y|} \Big)
				= 0 
			\text{ and } 
			J(x,y) \neq 0, \\
			0 & 
				\text{ if } 
				\fm_{xy} \Big( 
					\frac{m(x)}{|K_x|}, 
					\frac{m(y)}{|K_y|} \Big)
					= 0 
				\text{ and } 
				J(x,y) = 0.
		\end{cases}
\end{align}
It is now natural to ask whether the discrete action functionals associated to $F$ converge to the continuous action funtional associated to $f$: is it true that $f_\hom = f$? 

In the linear case where $\fm(\rho) = \rho$, which corresponds to the $2$-Wasserstein metric, this question has been extensively studied in \cite{GlKoMa18} for a large class of (not necessarily periodic) meshes.
The main result in \cite{GlKoMa18} asserts that the limit of the discrete transport distances $\cW_\eps$ (in the Gromov-Hausdorff sense) as $\eps \to 0$ coincides with the $2$-Wasserstein distance $\bW_2$ on $\cP(\Td)$ if an asymptotic local isotropy condition is satisfied. 
Moreover, it is shown that this convergence fails to hold if the isotropy condition fails to hold (in a sufficiently strong sense).

For periodic finite-volume partitions we show here  that these results are direct consequences of Theorem \ref{thm:main}.
In particular, the following result contains a necessary and condition on a periodic finite-volume partition that ensures that $f_\hom = f$.

\begin{proposition}\label{prop: finite volume}
	Consider a $\Z^d$-periodic finite-volume partition of $\R^d$, 
	and let $F$ and $f$ be as in 
	\eqref{eq:f-formula}
	and
	\eqref{eq:energies_FV} respectively.
	The following assertions hold:
	\begin{enumerate}[(i)]
		\item $f_\hom(\rho,j) \leq f(\rho,j)$ for all $\rho \in \R_+$ and $j\in \R^d$. 
		\item Suppose that for every $\rho\in \R_+$ and $j\in \R^d$ there is a family of vectors 
		$(p^{xy})_{(x,y) \in \cE} \subseteq \R^2$ such that
		\begin{align}
		& p^{xy} = (p^{xy}_1,p^{xy}_2)
			\in 
		\partial^+ \fm_{xy}(\rho,\rho)
		&& \text{for all }
		(x,y) \in \cE, \text{ and }\\
				\label{eq: isotropy}
				& \frac{1}{|K_x|}
				\sum_{y \sim x}
				(p^{xy}_1 + p^{yx}_2) 
				d_{xy} s_{xy} 
				(n_{xy} \cdot j)^2
				&& \text{is independent of }
				x \in \cX.
		\end{align} 
		Then: $f_\hom = f$.

		\item Suppose that all $\fm_{xy}$ are differentiable in a neigbourhood of the diagonal in $(0,\infty)^2$. Then $f_\hom = f$ if and only if
		\begin{equation}\label{eq: isometry}
			\sum_{y \sim x} 
				\frac{\partial_1\fm_{xy}(\rho,\rho)}
					 	{\fm'(\rho) } 
				d_{xy} s_{xy} 
				n_{xy} \otimes n_{xy} 
			= 
			|K_x| \id \quad 
			\text{ for all } 
			x \in \cX \text{ and } 
			\rho > 0.
		\end{equation}
	\end{enumerate}
\end{proposition}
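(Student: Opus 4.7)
The plan is to unify all three parts around a single ``constant-density'' competitor for the cell formula, namely $m(x):=\rho|K_x|$, $J(x,y):=s_{xy}(n_{xy}\cdot j)$, combined with the geometric consistency identity
\[
M \;:=\; \tfrac12\sum_{(x,y)\in\EQ} s_{xy}d_{xy}\, n_{xy}\otimes n_{xy} \;=\; \id,
\]
which expresses the zeroth-order consistency of a $\Z^d$-periodic FVP discretization. This identity follows from the perpendicularity axiom of FVPs together with periodicity: $J(x,y)=s_{xy}(n_{xy}\cdot a)$ is the exact face-flux of the constant field $\hat a\equiv a$ on $\T^d$, it is divergence-free by Gauss in each cell, and the aggregate flux balance on the torus forces $\Eff(J)=Ma=a$ for every $a$. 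Granting this, part (i) is immediate: the competitor lies in $\Rep(\rho,j)$ (mass by tiling, $\dive J=0$ by Gauss, $\Eff(J)=Mj=j$ by $M=\id$) and achieves $F(m,J)=j^T M j/\fm(\rho)=|j|^2/\fm(\rho)=f(\rho,j)$, with convention \eqref{eq:division-by-zero} handling degenerate cases where $\fm(\rho)=0$.

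For (ii), I derive the lower bound $f_\hom\ge f$ via weighted Cauchy--Schwarz: for any $(m,J)\in\Rep(\rho,j)$ and any test vector $q\in\R^d$,
\[
4(j\cdot q)^2 = \Bigl(\sum_{(x,y)\in\EQ} J(x,y)\,d_{xy}(n_{xy}\cdot q)\Bigr)^2 \;\leq\; 2F(m,J)\cdot T(m,q),
\]
where $T(m,q):=\sum \alpha_{xy}\,\fm_{xy}\bigl(m(x)/|K_x|,m(y)/|K_y|\bigr)$ with $\alpha_{xy}:=d_{xy}s_{xy}(n_{xy}\cdot q)^2$. The supergradient inequality $\fm_{xy}(\alpha,\beta)\le \fm(\rho)+p^{xy}_1(\alpha-\rho)+p^{xy}_2(\beta-\rho)$ bounds $T(m,q)$ by the constant piece $\fm(\rho)\sum\alpha_{xy} = 2\fm(\rho)|q|^2$ (via $M=\id$) plus a linear correction. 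A $\Z^d$-periodic reindexing $(x,y)\mapsto (y-z,x-z)$, with $z\in\Z^d$ chosen so that $y-z\in\XQ$, shows $\sum\alpha_{xy}p^{xy}_2 m(y)/|K_y|=\sum\alpha_{xy}p^{yx}_2 m(x)/|K_x|$ (and a corresponding identity without the $m$ factor), so the linear correction may be rewritten as
\[
\sum_{x\in\XQ}\Bigl(\tfrac{m(x)}{|K_x|}-\rho\Bigr)\sum_{y\sim x}(p^{xy}_1+p^{yx}_2)\,\alpha_{xy}.
\]
The isotropy hypothesis makes the inner sum equal to $|K_x|C(q)$, independent of $x$, and the outer sum telescopes against $\sum_x(m(x)-\rho|K_x|)=0$. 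Taking $q=j$ yields $F(m,J)\ge|j|^2/\fm(\rho)$.

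Part (iii) is the differentiability refinement of (ii). For the ``if'' direction, differentiability makes the supergradient unique, $p^{xy}_i=\partial_i\fm_{xy}(\rho,\rho)$, and the edge-symmetry $\partial_2\fm_{yx}(\rho,\rho)=\partial_1\fm_{xy}(\rho,\rho)$ together with $\partial_1\fm_{xy}+\partial_2\fm_{xy}=\fm'(\rho)$ on the diagonal convert \eqref{eq: isometry}, tested against any $q$, into the isotropy hypothesis of (ii) with constant $C(q)=2\fm'(\rho)|q|^2$---a value in fact forced by summing the isotropy identity over $x\in\XQ$ and using $M=\id$. For ``only if'', I will examine first-order optimality of the constant-density competitor inside the cell formula. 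The $\tilde J$-variation vanishes automatically, because $\partial_{J(x,y)}F\propto (y-x)\cdot j$ and $\Eff(\tilde J)=0$; stationarity in $m$ subject to $\sum\tilde m=0$ then forces $\partial_{m(x)}F$ to be constant in $x\in\XQ$, which after substituting the competitor explicitly is precisely the quadratic form \eqref{eq: isometry} tested against $q=j$. If \eqref{eq: isometry} fails at some $x_0,\rho_0$, then for a suitable direction $j_0$ this constancy fails, and a first-order perturbation of $m$ strictly lowers the energy, giving $f_\hom(\rho_0,j_0)<f(\rho_0,j_0)$.

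The principal obstacle is the cross-term cancellation in (ii): the bookkeeping between $\EQ$, which contains only edges with a distinguished endpoint in $\XQ$, and the full periodic edge set must be handled carefully during the reindexing, and the supergradient symmetry, periodicity, and isotropy hypothesis must be combined in precisely the right order for the correction to telescope to zero against the mass constraint. A secondary but essential technical step is the consistency identity $M=\id$, which, while natural, depends essentially on the perpendicularity axiom of FVPs and on $\Z^d$-periodicity, and is indispensable in every part of the argument.
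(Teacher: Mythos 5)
Your structure is sound, and parts (i) and (iii) take the same route as the paper: the constant-density competitor $(m^\star,J^\star)$ for the upper bound $f_\hom\le f$, and a first-order stationarity analysis of this competitor inside the cell formula for the equivalence in (iii). Part (ii), however, follows a genuinely different route. The paper constructs an explicit element $(p_m,p_J)$ of the subgradient $\partial^- F(m^\star,J^\star)$ that annihilates the admissible variations $\Rep(0)\times\Rep(0)$, and then has to handle the nondifferentiability of $F$ in $m$ by taking a $\liminf$ of one-sided difference quotients. You instead prove the lower bound $F(m,J)\ge |j|^2/\fm(\rho)$ directly for every competitor $(m,J)\in\Rep(\rho,j)$, via weighted Cauchy--Schwarz, the concavity supergradient inequality for $\fm_{xy}$, the periodic reindexing $(x,y)\mapsto(y-z,x-z)$, and the isotropy hypothesis telescoping against $\sum_{x\in\cX^Q}\big(m(x)-\rho|K_x|\big)=0$. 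Both approaches are valid; yours avoids the differentiability issue entirely and makes the role of the isotropy hypothesis transparent, at the price of the bookkeeping on $\cE^Q$ that you correctly identify as the main obstacle and that does go through as you describe.

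One caveat: the consistency identity $\tfrac12\sum_{(x,y)\in\cE^Q} d_{xy}s_{xy}\,n_{xy}\otimes n_{xy}=\id$, which you rightly flag as indispensable, is not actually proved by your sketch. The step ``aggregate flux balance on the torus forces $\Eff(J)=Ma=a$'' is circular: asserting that the exact face-fluxes of a constant field $a$ have effective flux equal to $a$ \emph{is} the statement $M=\id$, so nothing has been established. A genuine proof (Lemma \ref{lemma: fv identity}) picks a generic direction $v$, shows that the parallelepipeds $C_{xy}$ spanned by each common face and the direction $v$ tile $\R^d$ up to a Lebesgue-null set, computes $\Leb^d(C_{xy})=d_{xy}s_{xy}(n_{xy}\cdot v/|v|)^2$, sums over the unit cell, and polarises in $v$.
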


Before we prove Proposition \ref{prop: finite volume}, we first show an elementary identity for finite-volume partitions; see also \cite[Lemma 5.4]{GlKoMa18} for a similar result in a non-periodic setting.

\begin{lemma}\label{lemma: fv identity}
	Let $\cT$ be a $\Z^d$-periodic finite-volume partition of $\R^d$. Then
	\begin{equation}\label{eq: fv identity}
		\frac12 \sum_{(x,y)\in \EQ} d_{xy} s_{xy} n_{xy} \otimes n_{xy} =  \id.
	\end{equation}
\end{lemma}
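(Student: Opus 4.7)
The plan is to combine the divergence theorem on each cell $K_x$ with the $\Z^d$-periodicity of the partition to produce two identities whose sum exactly yields \eqref{eq: fv identity}.

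First, for each $x \in \XQ$ I would apply the divergence theorem to the vector fields $z \mapsto (z-x)_k e_j$ on the convex polytope $K_x$ to obtain the classical tensorial identity
\[
|K_x| \id
= \int_{\partial K_x}
	n_{\mathrm{out}}^{K_x}(z) \otimes (z-x)
	\dd \Haus^{d-1}(z).
\]
Using the FVP orthogonality condition (and $x\in K_x$), the outer unit normal on the face $\partial K_x \cap \partial K_y$ agrees with $n_{xy}$, so the boundary integral decomposes as $\sum_{y \sim x} s_{xy} n_{xy} \otimes (c_{xy} - x)$, where $c_{xy} := s_{xy}^{-1} \int_{\partial K_x \cap \partial K_y} z \dd \Haus^{d-1}(z)$ denotes the centroid of the shared face. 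Summing over $x \in \XQ$ and using that $\{K_x\}_{x \in \XQ}$ projects, modulo null sets, onto a fundamental domain of the $\Z^d$-action on $\R^d$, so that $\sum_{x \in \XQ} |K_x| = 1$, I would obtain the first identity
\[
\id = \sum_{(x,y) \in \EQ} s_{xy}\, n_{xy} \otimes (c_{xy} - x).
\]

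The next key step is to exploit the $\Z^d$-periodicity via the involution $\Phi : \EQ \to \EQ$ defined by $\Phi(x,y) := (y - y_\sz,\, x - y_\sz)$. Since $y - y_\sz \in \XQ$ and both the vertex set and edges are $\Z^d$-periodic, $\Phi$ is well-defined, and a direct check shows $\Phi \circ \Phi = \mathrm{id}$, so $\Phi$ is a bijection of $\EQ$. Using the cell-periodicity $K_{x + \tilde z} = K_x + \tilde z$ for $\tilde z \in \Z^d$, one verifies the transformation rules $n_{\Phi(x,y)} = -n_{xy}$, $s_{\Phi(x,y)} = s_{xy}$, $d_{\Phi(x,y)} = d_{xy}$, and $c_{\Phi(x,y)} - (y - y_\sz) = c_{xy} - y$. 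Substituting into the first identity and reindexing via $\Phi$, I would obtain the companion identity
\[
\id = -\sum_{(x,y) \in \EQ} s_{xy}\, n_{xy} \otimes (c_{xy} - y).
\]

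Adding the first and companion identities, the centroid terms cancel and I am left with
\[
2\,\id
= \sum_{(x,y) \in \EQ} s_{xy}\, n_{xy} \otimes (y - x)
= \sum_{(x,y) \in \EQ} d_{xy}\, s_{xy}\, n_{xy} \otimes n_{xy},
\]
which is exactly \eqref{eq: fv identity}. The main step requiring care is the verification that $\Phi$ is a well-defined involution on $\EQ$ with the claimed transformation rules for $n$, $s$, $d$, and $c$; once this bookkeeping is in place, the rest is immediate. The essential mechanism is that the ``non-diagonal'' contribution $n_{xy} \otimes (c_{xy} - x)$, which in general is not even symmetric, is antisymmetrised to zero by the involution $\Phi$, leaving only the manifestly symmetric rank-one terms $n_{xy} \otimes n_{xy}$ that the statement is about.
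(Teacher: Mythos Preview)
Your proof is correct and takes a genuinely different route from the paper's. The paper fixes a generic direction $v\in\R^d$ and builds, over each face $\partial K_x\cap\partial K_y$, a parallelepiped $C_{xy}$ of height $d_{xy}\,|n_{xy}\cdot v|/|v|$ in the $v$-direction; it then argues that the $C_{xy}$ form a $\Z^d$-periodic partition of $\R^d$ up to null sets, so their volumes over a fundamental domain sum to $1$, giving $\tfrac{v}{|v|}\cdot\big(\tfrac12\sum d_{xy}s_{xy}\,n_{xy}\otimes n_{xy}\big)\tfrac{v}{|v|}=1$ for almost every $v$, and concludes by polarization. Your argument replaces this geometric tiling by an algebraic mechanism: the divergence theorem on each cell produces the centroid terms $n_{xy}\otimes(c_{xy}-x)$, and the edge-reversing involution $\Phi(x,y)=(y-y_\sz,\,x-y_\sz)$ on $\EQ$ antisymmetrises them away. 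This avoids constructing and verifying an auxiliary partition and yields the full tensor identity directly, without polarization; the paper's approach, in return, is more visual and makes the link to volume explicit.

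One small caveat: your parenthetical ``(and $x\in K_x$)'' is not part of the paper's definition, which explicitly allows $x\notin K_x$. What you actually need is the \emph{direction} property that the outward unit normal to $K_x$ along the shared face equals $+n_{xy}$ rather than $-n_{xy}$. The paper's own proof also uses this implicitly (in the claim that crossing from $K_x$ to $K_y$ in direction $v$ forces $(y-x)\cdot v>0$), so it is not a defect specific to your argument; but it would be cleaner to name this as the hypothesis you are invoking rather than $x\in K_x$.
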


\begin{proof}
	For $v\in \R^d\setminus \{0\}$
	and $(x,y)\in \cE$, consider the open bounded convex polytope
	\[
	C_{xy} := 
		\big\{
			 z \in (\partial K_x \cap \partial K_y) + \R v\,:\,z \cdot v \in 
			 \big(\conv(x\cdot v, y\cdot v)\big)^\circ 
		\big\}.
	\]
	Note that $C_{xy} = C_{yx}$. We claim that the family $\{C_{xy}\,:\,(x,y)\in \cE\}$ forms a partition of $\R^d$ up to a set of Lebesgue measure zero.
	To see this, fix a point $z \in \R^d$ and consider the function $X: \R \to \cX$ defined by 
		$X(t) = x$ if $z + t v \in K_x$.
	If $v$ is not orthogonal to any of the finitely many $n_{xy}$, 
	then $X(t)$ is well-defined 
	up to a countable set 
		$N \subset \R$. 
	By Fubini's theorem, it follows that 
		$\Leb^d\big(
				\R^d \setminus 
				\bigcup_{(x,y)\in \cE} C_{xy}
				\big) 
				= 0$.
	
	If $t \in N$ and $X(t^-) = x$, $X(t^+) = y$, then $(y-x)\cdot v 	
		= 
		d_{xy} n_{xy} \cdot v > 0$. 
	This shows that 
		$t\mapsto v\cdot X(t)$ 
	is nondecreasing and that $z$ is in at most one parallelepiped.

	On the other hand, we have 
	\[
	\Leb^d(C_{xy}) = d_{xy} s_{xy} \left(n_{xy} \cdot \frac{v}{|v|}\right)^2.
	\] 
	Then we have
	\begin{align*}
		1 
		& = 
		\frac12 \sum_{x\in \cX} \sum_{y\sim x} \Leb^d(C_{xy} \cap [0,1)^d)
		=  
		\frac12 \sum_{x\in \XQ} \sum_{y\sim x} \Leb^d(C_{xy})
		\\ & =  
		\frac12\sum_{(x,y)\in \EQ} 
			d_{xy} s_{xy} 
			\bigg(n_{xy} \cdot \frac{v}{|v|}\bigg)^2
		= \frac{v}{|v|} \cdot \bigg(\frac12 \sum_{(x,y)\in \EQ}  d_{xy} s_{xy} n_{xy} \otimes n_{xy}\bigg) \frac{v}{|v|}.
	\end{align*}
	Since this identity holds for almost every $v \in \R^d$, \eqref{eq: fv identity} holds by polarization.
\end{proof}

\begin{proof}[Proof of Proposition \ref{prop: finite volume}]
	\emph{(i)}: \ We construct a competitor 	
		$(m^\star,J^\star)$ 
	to the cell problem \eqref{eq: fhom} 
	for $\rho \in \R_+$ and $j \in \R^d$.
	Define 
	\begin{align}
	\label{eq:candidates}	
		 m^\star(x) := |K_x|\rho
	\tand 
		J^\star(x,y) := s_{xy} (j \cdot n_{xy}).
	\end{align} 
	We claim that 
		$(m^\star, J^\star)\in \Rep(\rho,j)$.
	Indeed, the periodicity of $\cT$ yields
	\[
	\sum_{x\in \XQ} m^\star(x) 
		= \rho \sum_{x\in \XQ} |K_x| 
		= \rho \Leb^d([0,1)^d)
		= \rho, 
	\]
	which shows that $m^\star \in \Rep(\rho)$.
	To show that $J^\star \in \Rep(j)$, 
	we use the divergence theorem
	to obtain, for $x \in \cX$,
	\[
	\dive J^\star(x) 
		= \sum_{y\sim x} J^\star(x,y) 
		= \sum_{y\sim x} s_{xy} (j \cdot n_{xy})
		= \int_{\partial K_x} 
				j \cdot n 
			\dd \Haus^{d-1} =  0.
	\]
	Moreover, using Proposition \ref{prop:embedded-rep} and Lemma \ref{lemma: fv identity}
	we find 
	\begin{align*}
		\Eff(J^\star) 
		& = \frac12 \sum_{(x,y)\in \EQ}  
			J^\star(x,y) (y_\sz - x_\sz) 
		= \frac12 \sum_{(x,y)\in \EQ}  
			J^\star(x,y) (y - x)
		\\
		& = \frac12 \sum_{(x,y)\in \EQ} 	
			s_{xy} (j \cdot n_{xy}) (y-x)
		= \frac12 \sum_{(x,y)\in \EQ} 
			d_{xy} s_{xy} 
			(n_{xy} \otimes n_{xy}) j
		= j,
	\end{align*}
	which proves that $J^\star \in \Rep(j)$. 
	Therefore, using that $\fm_{xy}$ is an admissible version of $\fm$, another application of Lemma \ref{lemma: fv identity} yields (taking \eqref{eq:division-by-zero} into account),
	\begin{align*}
		f_\hom(\rho,j) 
			 & \leq F(m^\star, J^\star) 
			 = \frac12 \sum_{(x,y)\in \EQ} 
			 	\frac{d_{xy}}{s_{xy}} 
				 \frac{J^\star(x,y)^2}{\fm_{xy}(\rho,\rho)}
		\\ & =   \frac{1}{\fm(\rho)}j \cdot 
			\left(\frac12 \sum_{(x,y)\in \EQ} 
			 d_{xy} s_{xy} 
			 n_{xy} \otimes n_{xy} \right) j 
		= \frac{|j|^2}{\fm(\rho)}
		= f(\rho, j),
	\end{align*} 
	which proves \emph{(i)}.
	
	\medskip
	\emph{(iii)}: \ 
	Suppose first that condition \eqref{eq: isometry} holds.
	We will show that $(m^\star, J^\star)$ is a critical point of $F$.
	Take $(\tilde m, \tilde J) \in \Rep(0,0)$ and define, 
	for $\eps > 0$ sufficiently small,
	\begin{align*}
		m_\eps := m^\star + \eps \tilde m 	
			\tand
		J_\eps := J^\star + \eps \tilde J.
	\end{align*}
	Then: 
	\begin{align*}
		\partial_\eps\big|_{\eps = 0}
			F(m^\star,J_\eps)
		& = 	
		\frac12 
		\partial_\eps\big|_{\eps = 0}
		\sum_{(x,y)\in \EQ} 
			 	\frac{d_{xy}}{s_{xy}} 
				 \frac{J^\star(x,y)^2}
				 {\fm_{xy}(\rho,\rho)}
		= 	
		\frac{1}{\fm(\rho)}
		\sum_{(x,y)\in \EQ} 
				\frac{d_{xy}}{s_{xy}} 
				  J^\star(x,y)
				  		\tilde J(x,y)
	\\&	= 	
		\frac{1}{\fm(\rho)}
		\sum_{(x,y)\in \EQ} 
				  j \cdot (y-x)
				  		\tilde J(x,y)
		= 	\frac{1}{\fm(\rho)}
			j \cdot \Eff(\tilde J) 
		= 0.					  				  
	\end{align*}
	Furthermore, using the symmetry $\fm_{xy}(a,b) = \fm_{yx}(b,a) = $ for $a, b \geq 0$, we obtain
	\begin{equation}
	\begin{aligned}
		\label{eq:m-derivative}
	\partial_\eps\big|_{\eps = 0}
		F(m_\eps, J^\star)
	& = 	
	- \frac12 
	\sum_{(x,y)\in \EQ} 
			 \frac{d_{xy}}{s_{xy}} 
			 \frac{J^\star(x,y)^2}
			 {\fm(\rho)^2}
		\bigg(
			\frac{\tilde m(x)}{|K_x|}
				\partial_1 \fm_{xy}(\rho, \rho)
			+
			\frac{\tilde m(y)}{|K_y|}
				\partial_2 \fm_{xy}(\rho, \rho)
		\bigg)
	\\ & = 	
	-
	\sum_{(x,y)\in \EQ} 
			\frac{d_{xy}}{s_{xy}} 
			\frac{J^\star(x,y)^2}
			{\fm(\rho)^2}
			\frac{\tilde m(x)}{|K_x|}
				\partial_1 \fm_{xy}(\rho, \rho)
	\\& =				
		- \frac{\fm'(\rho)}{\fm^2(\rho)} |j|^2
		\sum_{x \in \XQ}
		b_x(\rho, j) 
			\frac{ \tilde m(x) }{ |K_x| }
			,			
		\end{aligned}	
	\end{equation}
	where we write
	$b_{x}(\rho, j) := 
	\sum_{(x,y \in \EQ} 
		\frac{\partial_1 \fm_{xy}(\rho, \rho)}
			 {\fm'(\rho)} 
			d_{xy} s_{xy} \frac{(n_{xy} \cdot j)^2}{|j|^2}$, 
	so that the condition \eqref{eq: isometry} reads as 
		$b_x(\rho, j) = |K_x|$ 
	for all $\rho > 0$, $j \in \R^d$, and $x \in \XQ$. 
	Hence, if this condition holds, 
	we obtain, since $\tilde m(x) \in \Rep(0)$, 
	\begin{align*}
		\partial_\eps\big|_{\eps = 0}
		F(m_\eps, J^\star)
		 = - \frac{\fm'(\rho) }{\fm^2(\rho)} |j|^2 \sum_{x \in \XQ} \tilde m(x) = 0.
	\end{align*}
	Adding the identities above, we conclude that
	$
		\frac{\ddd}{\ddd \eps}\big|_{\eps = 0}
		F(m_\eps, J_\eps)
		= 0
	$
	whenever \eqref{eq: isometry} holds.
	Therefore, $(m^\star, J^\star)$ is a critical point of $F$ in $\Rep(\rho, j)$.
	By convexity of $F$, it is a minimiser. 
	Consequently, using Lemma \ref{lemma: fv identity}, we obtain
	\begin{align*}
		f_\hom(\rho, j)
		= F(m^\star, J^\star)
		= \frac{1}{2 \fm(\rho)} 
			\sum_{(x,y)\in \EQ} 
			d_{xy}{s_{xy}}
			(j \cdot n_{xy})^2
		= \frac{|j|^2}{\fm(\rho)}
		= f(\rho, j),
	\end{align*}
	which is the desired identity.
	\smallskip
	
	To prove the converse, 
	we assume that 
		\eqref{eq: isometry}
	does not hold, i.e., 
	we have
		$b_{\bar x}(\rho, j) \neq |K_{\bar x}|$
	for some 
		$\rho > 0$, $j \in \R^d$, and $\bar x \in \cX$.	 
	On the other hand, we claim that 
	\begin{align*}
		\sum_{x \in \XQ} b_x(\rho, j) = 1.
	\end{align*}
	To see this, observe first that, 
	by definition of admissibility of $\fm_{xy}$ 
	and 
	the symmetry assumption 
	$\fm_{xy}(a, b) = \fm_{yx}(b, a)$,
	we have	
	\begin{align*}
		\fm'(\rho)
		= \partial_\eps\big|_{\eps = 0}
			\fm(\rho + \eps) 
		= \partial_\eps\big|_{\eps = 0}
			\fm_{xy}(\rho + \eps,\rho + \eps) 
		& = \partial_1 \fm_{xy}(\rho, \rho) 
			+ \partial_2 \fm_{xy}(\rho, \rho)
		\\& = \partial_1 \fm_{xy}(\rho, \rho) 
			+ \partial_1 \fm_{yx}(\rho, \rho).
	\end{align*}
	Using this identity, the periodicity of $m$ and $J$, 
	and the identity \eqref{eq: fv identity} we obtain
	\begin{align*}
		\sum_{x \in \XQ} b_x(\rho, j)
		& = \sum_{(x,y \in \EQ} 
		\frac{\partial_1 \fm_{xy}(\rho, \rho)}
			 {\fm'(\rho)} 
			d_{xy} s_{xy} \frac{(n_{xy} \cdot j)^2}{|j|^2}
		\\& = \frac12 \sum_{(x,y \in \EQ} 
			\frac{\partial_1 \fm_{xy}(\rho, \rho)
				+ \partial_1 \fm_{yx}(\rho, \rho)}
				 {\fm'(\rho)} 
				d_{xy} s_{xy} \frac{(n_{xy} \cdot j)^2}{|j|^2}
		\\& = \frac12 \sum_{(x,y \in \EQ} 
					d_{xy} s_{xy} \frac{(n_{xy} \cdot j)^2}{|j|^2}
		= 1,
	\end{align*}	
	which proves the claim.

	We thus infer that $b_x(\rho, j) / |K_x|$ is non-constant in $x$. (If it were, the identity $\sum_x |K_x| = 1 = \sum_x b_x(\rho, j)$ would imply that $b_x(\rho, j) = |K_x|$ for all $x$. But we assume that this doesn't hold for $x = \bar x$.) 
	Consequently, there exists a $\Z^d$-periodic function $\tilde m : \cX \to \R$ with $\sum_{x \in \XQ} \tilde m(x) = 0$ such that
	\begin{align*}
		\sum_{x \in \XQ}
			b_x(\rho, j) \frac{\tilde m(x)}{|K_x|} \neq 0.
	\end{align*}
	As before, we consider $(m^\star, J^\star) \in \Rep(\rho, j)$ defined by \eqref{eq:candidates}.
	In view of \eqref{eq:m-derivative}, we infer that 
		$(m^\star, J^\star)$
	is \emph{not} a critical point of $F$ in $\Rep(\rho, j)$.
	As $(m^\star, J^\star)$ is a relatively interior point of $\Rep(\rho, j)$, it cannot be a minimiser, hence
		$f_\hom(\rho,j) 
			< F(m^\star, J^\star) 
			= f(\rho, j)$.
	
	\medskip
	\emph{(ii)}: \ 
	We construct an element of the subgradient $(p_m,p_J)\in \partial^- F(m,J)$ with $\langle p_m, dm \rangle = \langle p_J, dJ \rangle = 0$ for all $dm\in \Rep(0)$, $dJ\in \Rep(0)$.
	
	We set
	\[
	p_m(x) := \sum_{y\sim x}\frac{J(x,y)^2}{|K_x|\fm^2(\rho)}  \frac{d_{xy}}{s_{xy}}(p^{xy}_1 + p^{yx}_2)
	\]
	and check by a simple calculation involving the chosen supergradients $p^{xy}$ that $F(m+dm,J) - F(m,J) \geq \langle p_m, dm \rangle$ for all $dm\in \R^\cX$ periodic. The isotropy condition \eqref{eq: isotropy} implies that $p_m$ is independent of $x$ and thus $\langle p_m, dm \rangle = 0$ for all $dm\in \Rep(0)$.
	
	Since $F$ is differentiable in $J$, we have to choose $p_J := \partial_J F(m,J)$. By the same calculation as in $(3)$ we see that $\langle p_J, dJ \rangle  = 0$ for all $dJ\in \Rep(0)$.
	
	To see that $(m,J)$ is indeed a local (and thus global) minimiser of $F$ in $\Rep(\rho,j)$, we introduce a parameter $\eps >0$ and show that
	\begin{equation}\label{eq: directional derivative}
		\liminf_{\eps \searrow 0} \frac1\eps \left( F(m+\eps dm, J+\eps dJ) - F(m,J) \right) \geq 0
	\end{equation}
	for all $dm\in \Rep(0)$ and $dJ\in \Rep(0)$.
	
	To see this, we expand the difference
	\begin{align*}
		&\frac1\eps \left( F(m+\eps dm, J+\eps dJ) - F(m,J) \right)\\
		= & \frac1\eps \left( F(m+\eps dm, J+\eps dJ) - F(m + \eps dm,J) \right)  + \underbrace{\frac1\eps \left(F(m + \eps dm, J) - F(m,J)  \right)}_{\geq 0}\\
		\geq & \langle \partial_J F(m,J) + o(1), dJ \rangle \to_{\eps \to 0} 0,
	\end{align*}
	where we used that $(m,J) \mapsto \partial_J F(m,J)$ is continuous. Because $F$ is convex, \eqref{eq: directional derivative} implies that $(m,J)$ is a minimiser of $F$ in $\Rep(\rho,j)$.

\end{proof}

\begin{remark}
	Given a concave mobility $\fm:\R_+ \to \R_+$, a popular admissible version is to take $\fm_{xy}(a,b) := \fm(\lambda_{xy} a + (1-\lambda_{xy}) b)$, with weights $\lambda_{xy}\in [0,1]$. If $\fm$ is differentiable, this means that $\partial_1 \fm_{xy}(\rho,\rho) = \lambda_{xy} \fm'(\rho)$. As a result, for certain finite-volume partitions we have to choose the weights $\lambda_{xy}$ to satisfy \eqref{eq: isometry}.
	
	Of particular importance is the $\bW_2$ case $\fm(\rho) = \rho$, which was treated in \cite{GlKoMa18} and \cite{gladbach2020}. Here an admissible version $\fm_{xy}$ is called an admissible mean. For differentiable $\fm_{xy}$, condition \eqref{eq: isometry} reduces to
	\[
	\sum_{y\sim x} \partial_1 \fm(\rho,\rho) d_{xy}s_{xy} n_{xy} \otimes n_{xy} = |K_x| \id.
	\]
	We note that condition \eqref{eq: isometry} cannot be satisfied for a large class of finite-volume partitions, although the square partition fulfills it with $\partial_1 \fm(\rho,\rho) = 1/2$.
	The condition also holds for some other partitions that are not $\Z^d$-periodic, such as the equilateral triangular and hexagonal partitions; see \cite{GlKoMa18}.

	If we allow ourselves to use nonsmooth admissible versions of $\fm$, it makes sense to use \ $\fm_{xy}(a,b) := \fm(\min(a,b))$, as this choice guarantees the largest possible supergradient $\partial^+ \fm_{xy} = \partial^+ \fm \{(\lambda, 1-\lambda)\,:\,\lambda\in [0,1] \}$ along the diagonal, making it more likely that $f_\hom(\rho,j) = \frac{|j|_2^2}{\fm(\rho)}$.
	
\end{remark}

\begin{example}
	Let us consider the triangulation given in Figure \ref{fig:FVperiodic}, where each unit square consists of four triangles: north, south, west, and east.
	We now show that \eqref{eq: isometry} cannot be satisfied here, but \eqref{eq: isotropy} is satisfied for the particular nonsmooth choice $\fm_{xy}(\rho_1,\rho_2)=\min(\rho_1,\rho_2)$.
	
	For the smooth case we assume that $\fm_{xy}(\rho,\rho)=\rho$ and define $\lambda_{xy}=\partial_1\fm_{xy}$ and $\lambda_{yx}=\partial_2\fm_{xy}$. Note that by the chain rule $\lambda_{xy}+\lambda_{yx}=1$. Let 
	\begin{align*}
		A_x:=\sum_{y\sim x} \lambda_{xy} d_{xy}s_{xy}n_{xy}\otimes n_{xy}.
	\end{align*}
	For $x_N$ in the north triangle and $x_S$ in the south triangle we obtain that 
	\begin{align*}
		e_2\cdot (A_{N}+A_{S})e_2=\frac12+\frac18(\lambda_{SE}+\lambda_{NE}+\lambda_{SW}+\lambda_{NW})
	\end{align*}
	since $d_{NW}s_{NW}=\frac14$, $d_{NS}s_{NS}=\frac12$, $n_{NS}=e_2$ and $n_{NE}=(\frac1{\sqrt{2}},-\frac1{\sqrt{2}})^T$
	Similarly we obtain for $x_W$ in the west and $x_E$ in the east triangle
	\begin{align*}
		e_1\cdot (A_{W}+A_{E})e_1=\frac12+\frac18(\lambda_{ES}+\lambda_{EN}+\lambda_{WS}+\lambda_{WN}). 
	\end{align*}
	Inserting the last two equalities into \eqref{eq: isometry} we find that $e_2\cdot A_xe_2 = e_1 \cdot A_x e_1 = \frac14$ for all $x\in \{S,E,N,W\}$, i.e. that
	\begin{align*}
		\lambda_{SE}+\lambda_{NE}+\lambda_{SW}+\lambda_{NW}=\lambda_{ES}+\lambda_{EN}+\lambda_{WS}+\lambda_{WN}=0.
	\end{align*}
	But this is a contradiction to $\lambda_{xy}+\lambda_{yx}=1$. In particular there exists no $\fm_{xy}$ satisfying \eqref{eq: isometry}.
	
	For the nonsmooth case note that the supergradient for $\fm_{xy}(\rho_1,\rho_2)=\min(\rho_1,\rho_2)$ is given by 
	\begin{align*}
		\partial^+\fm_{xy}(\rho,\rho)=\{(\lambda,1-\lambda) : \lambda\in[0,1]\}.
	\end{align*}
	For $\rho\in\mathbb R_+$ and $j\in\mathbb R^d$ we set 
	\begin{align*}
		p^{NS}=p^{SN}=p^{EW}=p^{WE}=&\left(\frac12,\frac12\right)\in\partial^+\fm_{xy}(\rho,\rho)\\
		p^{NE}=p^{NW}=p^{SE}=p^{SW}=&\left(\frac{j_1^2}{|j|_2^2},\frac{j_2^2}{|j|^2_2}\right)\in\partial^+\fm_{xy}(\rho,\rho)\\
		p^{EN}=p^{WN}=p^{ES}=p^{WS}=&\left(\frac{j_2^2}{|j|_2^2},\frac{j_1^2}{|j|^2_2}\right)\in\partial^+\fm_{xy}(\rho,\rho).
	\end{align*}
	We need to show that $a_{x,j}:=\frac1{|K_x|}\sum_{y\sim x}(p_1^{xy}+p_2^{yx})d_{xy}s_{xy}(n_{xy}\cdot j)^2$ is independent of $x$. For $x$ in the north or the south triangle we find
	\begin{align*}
		a_{S,j}= a_{N,j}=&4\left(\frac12 j_2^2+\frac28 \frac{2j_1^2}{|j|_2^2}\left(\frac{(j_1-j_2)^2}{2}+\frac{(j_1+j_2)^2}{2}\right)\right)\\
		=&4\left(\frac12 j_2^2+\frac12 \frac{j_1^2}{|j|_2^2}|j|_2^2\right)=2|j|_2^2.
	\end{align*}
	Similarly for x in the west or east triangle we obtain
	\begin{align*}
		a_{E,j}=a_{W,j} =&4\left(\frac12 j_1^2+\frac28 \frac{2j_2^2}{|j|_2^2}\left(\frac{(j_2-j_1)^2}{2}+\frac{(j_1+j_2)^2}{2}\right)\right)\\
		=&4\left(\frac12 j_1^2+\frac12 \frac{j_2^2}{|j|_2^2}|j|_2^2\right)=2|j|_2^2.
	\end{align*}
	Consequently, this is independent of $x$ and \eqref{eq: isotropy} holds.
\end{example}

\appendix

\section{The Kantorovich--Rubinstein metric on signed measures}
\label{sec:KR}

We collect some facts on the Kantorovich--Rubinstein metric that are used in the paper. 
We refer to \cite[Section 8.10(viii)]{Bogachev} for more details.

Let $(X,d)$ be a metric space.
Let $\cM(X)$ denote the space of finite signed Borel measures on $X$.
For $\mu \in \cM(X)$, let 
	$\mu^+, \mu^- \in \cM_+(X)$ 
	be the positive and negative parts, respectively.
Let $|\mu| = \mu^+ + \mu^-$ be its variation, and 
	$\|\mu\|_\TV := |\mu|(X)$ be its total variation.

\begin{definition}[Weak and vague convergence]
	Let $\mu, \mu_n \in \cM(X)$ for $n = 1, 2, \ldots$.
	\begin{enumerate}[$(i)$]
		\item 
		We say that	$\mu_n \to \mu$ weakly in $\cM(X)$ if
		$
			\int_{X} \psi \dd \mu_n 
		\to 
			\int_{X} \psi \dd \mu 
		$ 
		for every $\psi \in \cC_{\rm b}(X)$.
		\item 
		We say that $\mu_n \to \mu$ vaguely in $\cM(X)$ if
		$
			\int_{X} \psi \dd \mu_n 
		\to 
			\int_{X} \psi \dd \mu 
		$ 
		for every $\psi \in \cC_{\rm c}(X)$.
	\end{enumerate}
\end{definition}

If $(X,d)$ is compact, $\cM(X)$ is a Banach space endowed with the norm $\|\mu\|_\TV$.
By the Riesz–Markov theorem, it is the dual space of the Banach space $\cC(X)$ of all continuous functions $\psi : X \to \R$ endowed with the supremum norm 
	$\|\psi\|_\infty = \sup_{x \in X} |\psi(x)|$.

For $\psi : X \to \R$ let 
	$\Lip(\psi) := \sup_{x \neq y}
		\frac{|\psi(x) - \psi(y)|}{d(x,y)}
	$
be its Lipschitz constant. 

\begin{definition}
	Let $(X,d)$ be a compact metric space. 
	The \emph{Kantorovich--Rubinstein norm} on $\cM(X)$ is defined by 
	\begin{align}
		\label{eq:KR}
		\| \mu \|_{\KR(X)}
			:= 
		\sup\bigg\{ \int_X 
					\psi \dd \mu
				\ : \
				\psi \in \cC(X), \ 
				\| \psi \|_\infty \leq 1, \
				\Lip(\psi) \leq 1
			\bigg\}.
	\end{align} 
\end{definition}	

In non-trivial situations (i.e., when $X$ contains an infinite convergent sequence), the norms $\|\cdot\|_\KR$ and $\| \cdot \|_\TV$ are not equivalent. 
Thus, by the open mapping theorem, $(\cM(X), \|\cdot \|_\KR)$ is not a complete space.

	A closely related norm on $\cM(X)$ that is often considered is
	\begin{align*}
		\| \mu \|_{\tKR(X)}
			:= 
		 |\mu(X)|
		 +
		\sup\bigg\{ \int_{X} 
					\psi \dd \mu
				\ : \
				\psi \in \cC(X), \ 
				\psi(x_0) = 0, \
				\Lip(\psi) \leq 1
			\bigg\},
	\end{align*} 
	for some fixed $x_0 \in X$;
	see \cite[Section 8.10(viii)]{Bogachev}. 
	The next result shows that these norms are equivalent.
	
	\begin{proposition}
		Let $(X,d)$ be a compact metric space. 
		For $\mu \in \cM(X)$ we have
		\begin{align*}
			\| \mu \|_{\KR(X)}
			\leq
			\| \mu \|_{\tKR(X)}
			\leq
			c_X \| \mu \|_{\KR(X)},
		\end{align*}
		where $c_X < \infty$ depends only on $\diam(X)$.
	\end{proposition}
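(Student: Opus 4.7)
The plan is to establish the two inequalities separately by direct comparison of the two suprema, exploiting only the definitions and an elementary estimate of sup-norms of Lipschitz functions via the diameter of $X$. No compactness argument beyond the boundedness of $\diam(X)$ will be required; continuity of test functions plays no role in the estimates.

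For the first inequality $\|\mu\|_{\KR(X)} \leq \|\mu\|_{\tKR(X)}$, I would take an arbitrary admissible test function $\psi\in \cC(X)$ for the $\KR$-norm, i.e.\ $\|\psi\|_\infty\leq 1$ and $\Lip(\psi)\leq 1$, and decompose it as $\psi = (\psi - \psi(x_0)) + \psi(x_0)$. The shifted function $\phi := \psi - \psi(x_0)$ vanishes at $x_0$ and satisfies $\Lip(\phi)\leq 1$, hence
\[
\int_X \psi \dd \mu = \int_X \phi \dd \mu + \psi(x_0)\, \mu(X) \leq \|\mu\|_{\tKR(X)},
\]
using $|\psi(x_0)|\leq 1$ and the definition of $\|\mu\|_{\tKR}$. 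Taking the supremum over $\psi$ yields the claim.

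For the second inequality, observe that the constant function $\psi\equiv \sgn(\mu(X))$ is admissible for $\|\cdot\|_{\KR(X)}$, so that $|\mu(X)|\leq \|\mu\|_{\KR(X)}$. Next, given any admissible $\phi$ for the second summand in $\|\mu\|_{\tKR(X)}$, namely $\phi(x_0)=0$ and $\Lip(\phi)\leq 1$, the bound $|\phi(x)|=|\phi(x)-\phi(x_0)|\leq d(x,x_0)\leq \diam(X)$ gives $\|\phi\|_\infty \leq \diam(X)$. Setting $D := \max\{1,\diam(X)\}$, the rescaled function $\phi/D$ is admissible for $\|\cdot\|_{\KR(X)}$, so that
\[
\int_X \phi \dd \mu \leq D\, \|\mu\|_{\KR(X)}.
\]
Combining the two estimates yields $\|\mu\|_{\tKR(X)} \leq (1+D)\|\mu\|_{\KR(X)}$, so we may take $c_X = 1+\max\{1,\diam(X)\}$.

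Both steps are straightforward, and I do not foresee any real obstacle; the only mild subtlety is that one must allow for $\diam(X)<1$, which is why the constant $D=\max\{1,\diam(X)\}$ appears rather than $\diam(X)$ itself. The argument uses nothing beyond the triangle inequality, linearity of the integral, and the fact that constants are $1$-Lipschitz.
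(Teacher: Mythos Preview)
Your proof is correct and follows essentially the same approach as the paper's: both inequalities are obtained by the same shift-and-rescale argument, with the identical constant $c_X = 1 + \max\{1,\diam(X)\}$.
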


	\begin{proof}
		We start with the first inequality.
		Let $\psi \in \cC(X)$ with 
			$\| \psi \|_\infty \leq 1$ and
			$\Lip(\psi) \leq 1$. 
		Define $\phi := \psi - \psi(x_0)$, so that
		$\phi(x_0) = 0$ and $\Lip(\phi) = \Lip(\psi) \leq 1$.
		Then
		\begin{align*}
			\int \psi \dd \mu
			= \int \psi(x_0) + \phi \dd \mu
			= \psi(x_0) \mu(X) + \int \phi \dd \mu
			\leq |\mu(X)| + \int \phi \dd \mu
			\leq \| \mu \|_{\tKR}.
		\end{align*}
		Taking the supremum over $\psi$ yields the desired bound.

		\smallskip

		Let us now prove the second inequality.
		Set $\Delta := 1 \vee \diam(X)$.
		Take $\psi \in \cC(X)$ with 
			$\psi(x_0) = 0$ and $\Lip(\psi) \leq 1$.
		Then 
			$|\psi(x)| 
			= |\psi(x) - \psi(x_0)| \leq d(x,x_0) \leq \diam(X) \leq \Delta$ for all $x \in X$,
		so that
			$\| \frac{\psi}{\Delta}\|_\infty \leq 1$ and 
			$\Lip(\frac{\psi}{\Delta}) \leq 1$.
		We obtain
		\begin{align*}
			\int \psi \dd \mu 
			= \Delta \int \frac{\psi}{\Delta} \dd \mu 
			\leq \Delta \| \mu \|_{\KR}.
		\end{align*}
		Moreover, $|\mu(X)| \leq \| \mu\|_{\KR}$ as can be seen by taking $\psi = \pm 1$ in \eqref{eq:KR}
		It follows that 
		\begin{align*}
			\| \mu \|_{\tKR}
			\leq (1 + \Delta) 
			 \| \mu \|_{\KR},
		\end{align*}
		as desired.
	\end{proof}

	\begin{proposition}[Relation to $\bW_1$]
		\label{rem:W1}
	Let $(X,d)$ be a compact metric space. 	
	If $\mu_1, \mu_2 \in \cM_+(X)$ 
	are nonnegative measures of equal total mass, 
	we have 
		$\| \mu_1 - \mu_2 \|_{\tKR}
			 = 
		\bW_1(\mu_1, \mu_2)
		$.
	\end{proposition}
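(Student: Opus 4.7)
The plan is to recognize this as the classical Kantorovich--Rubinstein duality for $\bW_1$, applied after two simplifications of the right-hand side. First, since $\mu_1(X) = \mu_2(X)$, the signed measure $\mu := \mu_1 - \mu_2$ satisfies $\mu(X) = 0$, so the term $|\mu(X)|$ in the definition of $\|\cdot\|_{\tKR}$ drops out. Second, for any $c \in \R$ and any continuous function $\psi$ with $\Lip(\psi) \leq 1$, we have $\int_X (\psi + c)\dd\mu = \int_X \psi \dd\mu$, again because $\mu(X) = 0$. Thus, given any 1-Lipschitz $\psi \in \cC(X)$, we can replace it by $\psi - \psi(x_0)$ without changing the integral, which shows that the anchoring constraint $\psi(x_0) = 0$ is redundant. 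Together these two observations yield
\begin{equation*}
	\| \mu_1 - \mu_2 \|_{\tKR(X)}
	= \sup\bigg\{ \int_X \psi \dd(\mu_1 - \mu_2) \suchthat \psi \in \cC(X),\ \Lip(\psi) \leq 1 \bigg\}.
\end{equation*}

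Next, I would invoke the Kantorovich--Rubinstein duality formula, which asserts exactly that the right-hand side above equals $\bW_1(\mu_1,\mu_2)$. This is classical for probability measures (see \cite[Theorem 5.10]{Villani:2009} or \cite[Section 8.10(viii)]{Bogachev}), and extends to finite nonnegative measures of equal total mass $m$ by homogeneity: normalising $\tilde\mu_i := \mu_i / m$ gives probability measures on the compact space $X$, both $\bW_1$ and the Kantorovich--Rubinstein supremum scale linearly in $m$, so the identity in the probabilistic case transfers directly.

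There is no real obstacle here: the only subtle point is the reduction step (dropping the anchoring $\psi(x_0) = 0$), which uses crucially that $\mu_1$ and $\mu_2$ have the same total mass; the rest is a direct citation of a well-known duality. If one wanted a self-contained proof of the duality, the standard path is to apply the Fenchel--Rockafellar theorem to the primal problem $\inf_\gamma \int d\, \dd\gamma$ over couplings of $\mu_1$ and $\mu_2$, but for the purposes of this appendix a reference to \cite{Villani:2009, Bogachev} should suffice.
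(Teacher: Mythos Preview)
Your proposal is correct and follows the same approach as the paper, which simply states that the identity follows from the Kantorovich duality for $\bW_1$. You have merely spelled out the reduction steps (dropping $|\mu(X)|$ and the anchoring constraint using $\mu_1(X)=\mu_2(X)$) that the paper leaves implicit.
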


	\begin{proof}
		This follows from the Kantorovich duality for the distance $\bW_1$. 
	\end{proof}

	On the subset of \emph{nonnegative} measures, the $\KR$-norm induces the weak$^*$ topology:

	\begin{proposition}[Relation to weak$^*$-convergence]
	\label{prop:KR-weakstar}
	Let $(X,d)$ be a compact metric space. 	
	For
		$\mu_n, \mu \in \cM_+(X)$ 
	we have
	\begin{align*}
		\mu_n \to \mu \, \text{weakly}
	\quad \text{if and only if} \quad
		\| \mu_n - \mu \|_{\KR} \to 0.
	\end{align*}
	\end{proposition}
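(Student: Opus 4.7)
The proof splits into two directions, both of which rely on the compactness of $(X,d)$ in different ways. Throughout, note that the constant function $\psi \equiv 1$ is admissible in \eqref{eq:KR}, so $|\mu_n(X) - \mu(X)| \leq \|\mu_n - \mu\|_{\KR}$; in particular, KR-convergence implies $\mu_n(X) \to \mu(X)$, while weak convergence implies the same by testing against $\psi \equiv 1 \in \cC_{\mathrm b}(X)$. Either way, the total masses $\mu_n(X)$ are uniformly bounded.

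For the implication $\|\mu_n - \mu\|_{\KR} \to 0 \Rightarrow \mu_n \to \mu$ weakly, I would fix $\psi \in \cC(X)$ and approximate it uniformly by a function $\psi_\eta$ with $\|\psi_\eta\|_\infty \leq \|\psi\|_\infty$ and $\Lip(\psi_\eta) < \infty$ — for instance, the inf-convolution
\[
  \psi_\eta(x) := \inf_{y \in X} \Big\{\psi(y) + \tfrac{1}{\eta} d(x,y)\Big\},
\]
which is $\tfrac{1}{\eta}$-Lipschitz and satisfies $\|\psi - \psi_\eta\|_\infty \to 0$ as $\eta \to 0$ by uniform continuity of $\psi$ on the compact space $X$. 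Rescaling by $L_\eta := \max(\|\psi_\eta\|_\infty, \Lip(\psi_\eta))$ makes $\psi_\eta/L_\eta$ admissible in \eqref{eq:KR}, giving
\[
  \Big|\!\int \psi_\eta \dd(\mu_n - \mu)\Big| \leq L_\eta \, \|\mu_n - \mu\|_{\KR}.
\]
Combining with $|\int (\psi - \psi_\eta) \dd(\mu_n - \mu)| \leq \|\psi - \psi_\eta\|_\infty \bigl(\mu_n(X) + \mu(X)\bigr)$ and first sending $n \to \infty$ then $\eta \to 0$ yields weak convergence.

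The reverse implication $\mu_n \to \mu$ weakly $\Rightarrow \|\mu_n - \mu\|_{\KR} \to 0$ is the one requiring care, and here the key tool is the Arzelà--Ascoli theorem. The admissible class
\[
  \cL := \bigl\{\psi \in \cC(X) \suchthat \|\psi\|_\infty \leq 1,\ \Lip(\psi) \leq 1\bigr\}
\]
is uniformly bounded and equicontinuous, hence relatively compact in $\bigl(\cC(X), \|\cdot\|_\infty\bigr)$. Given $\eps > 0$, I would therefore cover $\cL$ by finitely many $\|\cdot\|_\infty$-balls of radius $\eps$ centered at functions $\psi_1, \ldots, \psi_N \in \cL$. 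For any $\psi \in \cL$ choose $i = i(\psi)$ with $\|\psi - \psi_i\|_\infty < \eps$; then
\[
  \Big|\!\int \psi \dd(\mu_n - \mu)\Big|
  \leq \max_{1 \leq j \leq N} \Big|\!\int \psi_j \dd(\mu_n - \mu)\Big| + \eps\bigl(\mu_n(X) + \mu(X)\bigr).
\]
Since $N$ is finite and weak convergence gives $\int \psi_j \dd(\mu_n - \mu) \to 0$ for each $j$, taking the supremum over $\psi \in \cL$ and then $\limsup_{n \to \infty}$ bounds $\|\mu_n - \mu\|_{\KR}$ by $\eps \cdot \sup_n (\mu_n(X) + \mu(X))$, which is arbitrarily small.

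The only technical point I expect to require a moment of thought is verifying that Arzelà--Ascoli applies in the reverse direction: the essential ingredients are compactness of $X$ (giving equicontinuity $\Rightarrow$ relative compactness in $\cC(X)$) and uniform boundedness of $\{\mu_n(X)\}$ (which follows from weak convergence applied to $\psi \equiv 1$). Everything else is routine.
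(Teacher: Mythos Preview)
Your proof is correct; both directions are handled by standard arguments (inf-convolution for Lipschitz approximation and Arzel\`a--Ascoli for uniform convergence over the unit Lipschitz ball), and the technical points you flag are exactly the right ones. The paper, however, does not give an independent proof at all: it simply cites \cite[Theorem 8.3.2]{Bogachev}. So your write-up is a self-contained substitute for a reference rather than a different route to the same argument.
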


	\begin{proof}
		See \cite[Theorem 8.3.2]{Bogachev}.
	\end{proof}
	
	\begin{remark}[Testing against smooth functions]
		If $X = \T^d$,
		the space of $\cC^1$ functions $\psi$ with 
			$\Lip(\psi) \leq 1$ 
		is dense in the set of Lipschitz functions with $\Lip(\psi) \leq 1$; see, e.g., \cite[Proposition A.5]{Schmitzer-Wirth:2019}.
		Consequently, 
		\begin{align}
			\label{eq:smooth_case}
			\| \mu \|_{\KR(X)}
			= 
		\sup\bigg\{ \int_X 
					\psi \dd \mu
				\ : \
				\psi \in \cC^1(\T^d), \ 
				\| \psi \|_\infty \leq 1, \
				\|\nabla \psi\|_\infty \leq 1
			\bigg\}.
		\end{align}
	\end{remark}		

	\begin{remark}
		The identity \eqref{eq:smooth_case} shows that  $\|\cdot\|_{\KR}$ is the dual norm of the separable Banach space $\cC^1(Q)$. 
		The dual space of $\cC^1(Q)$ is a strict superset of the finite Borel measures.
	\end{remark}	
	
	\section{Norms on curves in the space of measures}
	\label{sec:norms}

	We work with curves of bounded variation taking values in the space $\cM_+(\T^d)$.

	\begin{definition}[Curves of bounded variation]
	The space
		$\BVKR$
	consists of all curves of measures 	
		$\bfmu : \cI \to \cM_+(\Td)$ 
	such that the $\BV$-seminorm
	\begin{align}
		\label{eq:BV-def}
		\|\bfmu \|_{\BVKR} 
		:= \sup\left\{ 	
			\int_\cI 
				\int_{\T^d}
					\partial_t \phi_t 
				\dd \mu_t		
			\dd t 
			\ : \ 
			\phi \in \cC_c^1\big(\cI; \cC^1(\Td)\big), 
			\ 
			\max_{t\in \cI}
			\|\phi\|_{\cC^1(\Td)} 
			\leq 1 
			\right\}
	\end{align}
	is finite.
	\end{definition}
	
	\begin{remark}
		The space 
			$\BVKR$
		is a (non-closed) subset of the space 
			$\BV(\cI; X^*)$,
		where $X$ is the separable Banach space 
			$\cC^1(\T^d)$.
		We refer to 
			\cite[Section 2]{Heida-Patterson-Renger:2019} 
		for the equivalence of several definitions of 
		$\BV\big(\cI; X^* \big)$.
	\end{remark}

	\begin{definition}
		The space 
			$W^{1,1}_\KR(\cI; \cM_+(\T^d))$
		consists of all curves 
			$(\mu_t)_{t \in \cI}$ 
		in the Banach space-valued Sobolev space 
			$W^{1,1}\big(\cI; (\cC^1(\T^d))^*\big)$
		such that 
			$\mu_t \in \cM_+(\T^d)$ 
		for a.e.\ $t \in \cI$.
	\end{definition}

\section{Domain property of convex functions}

\begin{lemma}[Domain properties of convex functions]
	\label{lemma:prop_domf}
	Let $f : \R^n \to \R \cup \{ + \infty\}$ be convex, 
	and let 
		$x^\circ \in \Dom(f)^\circ$. 
	For every $\lambda \in (0, 1)$ and every bounded  set 
		$K \subseteq \Dom(f)$, 
	there exists a compact convex set 
		$K_{\lambda} \subseteq \Dom(f)^\circ$ 
	such that 
	\begin{align*}
		(1-\lambda) K + \lambda x^\circ 
			\subseteq
		K_{\lambda}.
	\end{align*}
	\end{lemma}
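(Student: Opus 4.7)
The plan is to take $K_\lambda := (1-\lambda)\overline{\conv(K)} + \lambda x^\circ$ and verify that this compact convex set lies inside $\Dom(f)^\circ$ and contains the required Minkowski sum. Since $K$ is bounded, $\overline{\conv(K)}$ is compact and convex in $\R^n$, so $K_\lambda$ (the translate of a scaled compact convex set) is itself compact and convex. The containment $(1-\lambda)K + \lambda x^\circ \subseteq K_\lambda$ is immediate from $K \subseteq \overline{\conv(K)}$.

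The substantive step is showing $K_\lambda \subseteq \Dom(f)^\circ$. This relies on the standard convex analysis fact that if $C \subseteq \R^n$ is convex, $p \in \overline{C}$, and $q \in C^\circ$, then $(1-t)p + tq \in C^\circ$ for all $t \in (0,1]$. I would apply this with $C = \Dom(f)$ (convex since $f$ is convex), $q = x^\circ \in \Dom(f)^\circ$ by hypothesis, and $p$ arbitrary in $\overline{\conv(K)}$. Here one uses $\conv(K) \subseteq \Dom(f)$ (by convexity of $\Dom(f)$), hence $\overline{\conv(K)} \subseteq \overline{\Dom(f)}$, so $p \in \overline{\Dom(f)}$. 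Taking $t = \lambda \in (0,1)$ yields the claim.

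The only point requiring real care—which I expect to be the main (but still routine) obstacle—is justifying the segment lemma above in a self-contained way. I would argue as follows: fix $r > 0$ with $\overline{B_r(x^\circ)} \subseteq \Dom(f)$, and for $p \in \overline{\Dom(f)}$ pick $p_n \in \Dom(f)$ with $p_n \to p$. Set $w_n := (1-\lambda)p_n + \lambda x^\circ$ and $w := (1-\lambda)p + \lambda x^\circ$. Convexity of $\Dom(f)$ gives
\[
w_n + \lambda \overline{B_r(0)} = (1-\lambda)p_n + \lambda \overline{B_r(x^\circ)} \subseteq \Dom(f),
\]
so each $w_n$ admits a closed $\lambda r$-ball inside $\Dom(f)$. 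Since $w_n \to w$, for $n$ large $|w - w_n| < \lambda r/2$, yielding $B_{\lambda r/2}(w) \subseteq \Dom(f)$, i.e.\ $w \in \Dom(f)^\circ$. This completes the proof.
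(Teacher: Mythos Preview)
Your proof is correct and follows essentially the same approach as the paper: both rely on the ``segment lemma'' (a convex combination of a closure point with an interior point lies in the interior), proved via the same ball-plus-approximation argument. Your construction $K_\lambda := (1-\lambda)\overline{\conv(K)} + \lambda x^\circ$ is in fact slightly cleaner than the paper's, which builds $K_\lambda$ as the closed convex hull of a union of small buffer balls around the points $(1-\lambda)y + \lambda x^\circ$; the extra buffer is unnecessary once the segment lemma is in hand.
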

	
	\begin{proof}
		Let $K \subseteq \Dom(f)$ be  bounded  
		and $\lambda \in (0,1)$.
		Since $x^\circ \in \Dom(f)^\circ$, 
		we can pick $r > 0$ such that 
			$B(x^\circ, r) \subseteq \Dom(f)^\circ$.
		Fix  $y \in \bar K$  and set 
			$y_\lambda := (1-\lambda) y + \lambda x^\circ$.
		We claim that 
			$B( y_\lambda, \lambda r) \subseteq \Dom(f)^\circ$.
		
		To prove the claim, it suffices to show that
		$B( y_\lambda, \lambda r) \subseteq \Dom(f)$, since $B( y_\lambda, \lambda r)$ is open. 
		Take $z \in B( y_\lambda, \lambda r)$  and pick a sequence $(y_n)_n \subset K$ such that $y_n \to y$. Observe that 
			$z = (1-\lambda) y_n + \lambda \tilde x_n$
		with $\tilde x_n \in B(x^\circ, r)$ if $n$ is large enough (indeed, 
			$\tilde x_n - x^\circ 
				= \frac{1}{\lambda}(z - y_\lambda)+ \frac{1-\lambda}{\lambda}(y-y_n)
			$ and $|z - y_\lambda| < \lambda r$
		).
		Since $y_n, \tilde x_n \in \Dom(f)$, 
		the claim follows by convexity of $f$.

		We now define			
		\begin{align*}
			C_{\lambda} 
				:= \bigcup_{y \in K} 
						B\Big(y_\lambda, \frac{\lambda r}{3} \Big)
		\tand 
			K_\lambda
				:= \Conv( \overline{ C_\lambda } ).
		\end{align*}
		By construction, $K_{\lambda}$ is convex, bounded, and closed, thus compact. 
		Let us show that 
			$K_{\lambda} 	
				\subseteq \Dom(f)^\circ$. 
		
		By convexity of $f$, it suffices to show that 
			$\overline{C_\lambda} 
				\subseteq \Dom(f)^\circ$. 
		Pick $z \in \overline{C_\lambda}$
		and $\{z_n\}_n \subseteq C_\lambda$ such that 
			$z_n \to z$.
		Then there exists $y_n \in K$ such that 
			$z_n \in 
				B\big(
					(y_n)_\lambda, \frac{\lambda r}{3} 
				\big)
			$. 
		Passing to a subsequence, we may assume that 
			$y_n \to \bar y$ for some $\bar y \in \bar K$ 
		and $z_n \in 
				B\big(
					{\bar y}_\lambda, \frac{\lambda r}{2} 
				\big)
			$ for $n \geq \bar{n} \in \N$. 
		Taking the limit as $n \to +\infty$ we infer that
			$z \in \overline{
						B\big(
							{\bar y}_\lambda, 
							\frac{\lambda r}{2} 
						\big)}$.
		Since 
			$B\big(
				{\bar y}_\lambda, 
				\lambda r
			\big) 
			\subseteq \Dom(f)^\circ$, 
		it follows that $z \in \Dom(f)^\circ$.
	\end{proof}

	\section{Notation}
	For the convenience of the reader we collect some notation used in this paper.
	
	\begin{tabular}{ll}
		$A^\circ$ & topological interior of a set $A$
		\\
		$\Dom(F)$ & domain of a functional $F$: $\Dom(F) = \{x \in \cX : F(x) < \infty \}$.		\\
	$\cI$ & bounded open time interval.  
		\\
	$\cM^d(A)$ 
	& 
	the space of finite $\R^d$-valued Radon measures on $A$.
		\\
	$\cM_+(A)$ 
	& 
	the space of finite (positive) Radon measures on $A$. 
		\\
	$\cX^Q$
	&
	the set of all $x \in \cX$ with $x_\sz = 0$.
		\\
	$\cE^Q$
	&
	the set of all $(x,y) \in \cE$ with $x_\sz = 0$.
		\\
	$\R_a^\cE$ 
	& 
	the set of anti-symmetric real functions on $\cE$.
		\\
	$\T_\eps^d$, $\Z_\eps^d$
	&
	the discrete torus of mesh size $\eps>0$:
		$\T_\eps^d = ( \eps \Z/\Z )^d = \eps \Z_\eps^d$.
		\\
	$\Eff(J)$
	&
	the effective flux of $J$: 
		$\Eff(J) = \frac12 \sum_{(x,y) \in \cE^Q} J(x,y) (y_\sz - x_\sz)$.
		\\
	$\Rep(\rho)$
	&
	the set of representatives of $\rho \in \R_+$, i.e, all $m \in \R_+^\cX$ s.t. $\sum_{x \in \cX^Q} m(x) =\rho$.
	\\
	$\Rep(j)$
	&
	the set of representatives of $j \in \R^d$, i.e, all $J \in \R_a^\cE$ divergence-free and s.t.
		\\
			&$\frac12\sum_{(x,y) \in \cX^Q} J(x,y) (y_\sz - x_\sz) = j$.
	\\
	$\Rep(\rho,j)$
	&
	the set of representatives of $\rho \in \R_+$, $j \in \R^d$: $\Rep(\rho,j) = \Rep(\rho) \times \Rep(j)$.
	\\
	$Q_\eps^z$
	&
	the cube of size $\eps>0$ centered in $\eps z \in \Td$: for $z \in \Z_\eps^d$, $Q_\eps^z := [0,\eps)^d + \eps z$.
		\\
	$S^{\bar z}$
	& 
	shift operator: $S_\eps^{\bar z} : \cX \to \cX$, \ 
	$
		S_\eps^{\bar z}(x) = (\bar z + z, v)$
	for
		$x = (z,v) \in \cX$.
		\\
	& 
	shift operator: $S_\eps^{\bar z}: \cE \to \cE$, \ 
	$
		S_\eps^{\bar z}(x,y) := 
			\big(
				S_\eps^{\bar z}(x), 
				S_\eps^{\bar z}(y) 
			\big)	
	$
	for $(x,y) \in \cE_\eps$\\
	$\sigma^z$ & 
	$\sigma_\eps^{\bar z} \psi: \cX_\eps \to \R,
	\quad 
	(\sigma_\eps^{\bar z} \psi)(x) := \psi(S_\eps^{\bar z}(x)) 
	\quad 
	\text{for} \; x \in \cX_\eps.$
	\\
		&
	$\sigma_\eps^{\bar z}J: \cE_\eps \to \R, \quad
		(\sigma_\eps^{\bar z} J)(x,y) := J(S_\eps^{\bar z}(x,y)) 
		\quad 
		\text{for} \; (x,y) \in \cE_\eps.
	$\\
	$T_\eps^{\bar z}$ 
		& 
	rescaling operator: $T_\eps^{\bar z} : \cX \to \cX_\eps$: 
	$
		T_\eps^{\bar z}(x) = (\eps(\bar z + z), v)$
	for
		$x = (z,v) \in \cX$.\\
	$\tau_\eps^z$ & 
		$\tau_\eps^{\bar z} \psi 
		: \cX \to \R, 
		\quad
		\big(\tau_\eps^{\bar z} \psi \big)(x)
		:=
		\psi\big( T_\eps^{\bar z}(x) \big)
		\quad \text{ for } 
		x \in \cX$.
	\\
		&
		$\tau_\eps^{\bar z} J : \cE \to \R, \quad  
		\big( \tau_\eps^{\bar z} J \big)(x,y)
		:=
		J
		\big( 
		T_\eps^{\bar z}(x),
		T_\eps^{\bar z}(y)
		\big)
		\quad \text{ for } 
		(x,y) \in \cE$.
	\\
	$\cCE$
	&
	discrete continuity equation: $(\bfm,\bfJ) \in \cCE$ iff
	$
		\partial_t m_t + \dive J = 0
	$ on $(\cX,\cE)$.
	\\
	$\bCE$
	&
	continuous continuity equation: $(\bfmu,\bfnu) \in \bCE$ iff
	$
	\partial_t \mu_t + \nabla \cdot \bfnu = 0
	$ on $\Td$.
	\\
	$\BV$
	& 
	more precisely $\BVKR$: the space of time-dependent curves of 
		\\
			&(positive) measures with bounded variation with respect to the $\KR$ norm
			\\
				&(Kantorovich-Rubenstein) on $\cM_+(\Td)$.
	\\
	$W^{1.1}$
	&
	more precisely $W^{1,1}_\KR(\cI; \cM_+(\T^d))$: the space of time-dependent curves of 
		\\
			&(positive) measures belonging to the Banach space $W^{1,1}\big(\cI; (\cC^1(\T^d))^*\big)$.
	\\
	$\tP_\eps\mu, \tP_\eps \nu$
	&
	discretisation of $\mu \in \cM_+(\Td)$, $\nu \in \cM^d(\Td)$: for $z \in \Z_\eps^d$, ($\tP_\eps \mu(z), \tP_\eps \nu(z)) \in$
		\\
			&$ \R_+ \times \R^d$, given by $\tP_\eps \mu(z) = \mu(Q_\eps^z)$, $\tP_\eps \nu(z) = 
			\big(
				(\nu \cdot e_i)(\partial Q_\eps^z\cap \partial 
				Q_\eps^{z+ e_i})
			\big)_i
			$.
	\end{tabular}

		\medskip
	In the paper we use some standard terminology from graph theory. 
	Let $(\cX, \cE)$ be a locally finite graph. 
	A \emph{discrete vector field} is an anti-symmetric function $J : \cE \to \R$.
	Its \emph{discrete divergence} is the function $\dive J : \cX \to \R$ defined by	
	\begin{align}
	\label{eq:divergence}
		\dive J(x) 
		:= \sum_{y \sim x} J(x,y). 
	\end{align}
	We say that $J$ is \textit{divergence-free} if $\dive J = 0$.

\subsection*{Acknowledgements}

J.M. gratefully acknowledges support by the European Research Council (ERC) under the European Union's Horizon 2020 research and innovation programme (grant agreement No 716117).
J.M and L.P. also acknowledge support from the Austrian Science Fund (FWF), grants No F65 and W1245. 
E.K. gratefully acknowledges support by the German Research Foundation through the Hausdorff Center for Mathematics and the Collaborative Research Center 1060.
P.G. is partially funded by the Deutsche Forschungsgemeinschaft (DFG, German Research Foundation) -- 350398276.

\bibliographystyle{my_alpha}
\bibliography{Multiple-dimensions}{}
\end{document}